\documentclass[11pt]{amsart}
\usepackage[backref]{hyperref}
\usepackage{amsmath,ascmac,color,graphicx,amsfonts,amssymb,latexsym,amsthm,enumerate,comment,bm,pifont,mathtools,stmaryrd,tikz-cd,cite}
\usepackage[T1]{fontenc}
\usepackage{lmodern}
\usepackage{tikz}
\def\Z{\mathbb{Z} } 
 
\def\Q{\mathbb{Q} } 
\def\R{\mathbb{R} } 

\def\N{\mathbb{N} }
\def\F{\mathbb{F} }
\def\FF{\mathcal{F}}
\def\AA{\mathcal{A}}
\def\BB{\mathcal{B}}

\def\vp{\varpi}
\def\aut{\mathrm{Aut}}
\def\gal{\mathrm{Gal}}
\def\deg{\mathrm{deg}}
\def\out{\mathrm{Out}}
\def\inn{\mathrm{Inn}}
\def\id{\mathrm{id}}

\theoremstyle{plain} 
\newtheorem{theorem}{Theorem}[section]
\newtheorem{proposition}[theorem]{Proposition}
\newtheorem{lemma}[theorem]{Lemma}
\newtheorem{corollary}[theorem]{Corollary}

\theoremstyle{definition}
\newtheorem{definition}{Definition}
\newtheorem{example}{Example}

\newtheorem{question}{Question}
\newtheorem{remark}{Remark}

\title[Galois Theory and Automorphism Groups of SFTs]{Galois Theory for Subshifts of Finite Type and Representations of Automorphism Groups}

\author{Kazutoyo Iketake}

\address{Mathematics, Electronics and Informatics Division, Graduate School of Science and Engineering, Saitama University, 255 Shimo-Okubo, Sakura-ku, Saitama-shi, Saitama 338-8570, Japan}

\subjclass[2020]{Primary 37B10; Secondary 20E18, 20J06, 20B25}

\keywords{Subshifts of finite type, Galois group, Factor map, Automorphism group, Galois cohomology}

\date{May 27, 2026}

\email{iketake2115@gmail.com}

\begin{document}

\begin{abstract}
    The purpose of this paper is to constructively develop a Galois theory on irreducible shifts of finite type (SFTs) and to analyze the automorphism groups of SFTs using this framework. Let $X$ and $Y$ be irreducible SFTs. We demonstrate that a factor map $Y \to X$ satisfying algebraic conditions exhibits Galois-theoretic behavior. Specifically, we prove that a Galois correspondence, analogous to those known in the Galois theory of fields and covering spaces in topology, holds for irreducible SFTs. Furthermore, we introduce the absolute Galois group and its cohomology as conjugacy invariants for irreducible SFTs. Finally, we construct a representation of the automorphism group of an irreducible SFT into this cohomology to extract information from the automorphism group, whose structure is not yet fully understood.
\end{abstract}

\maketitle

\tableofcontents

\section{Introduction}

\subsection{Background}
The automorphism group of a subshift of finite type (SFT) has been studied in conjunction with the strong shift equivalence problem \cite{williams_shift_equiv}, constituting one of the central topics in symbolic dynamics. SFTs arise naturally as a class of dynamical systems via Markov partitions of hyperbolic dynamical systems, and their conjugacy problem represents a fundamental subject of inquiry in dynamical systems.
The conjugacy problem for SFTs was reduced by Williams \cite{williams_shift_equiv} to the strong shift equivalence problem, which is known to be closely related to several fields outside dynamical systems, such as topological quantum field theory \cite{TQFT}. The automorphism group $\aut(X)$ of an SFT $X$, consisting of all shift-commuting homeomorphisms on $X$, describes the internal symmetries of the dynamical system and serves as an invariant of its strong shift equivalence class. By the Curtis-Lyndon-Hedlund theorem, $\aut(X)$ can be identified with the group of all reversible cellular automata on $X$, thereby possessing connections to computer science and coding theory. Consequently, elucidating the algebraic structure of $\aut(X)$ is motivated by both the dynamical perspective and the viewpoint of interdisciplinary applications. \\
\indent In recent years, it has been demonstrated through the works of \cite{cyr_franks_kra_petite, cyr_kra2,cyr_kra1,donoso,pavlov_schmieding} and others that under conditions of low word complexity or zero entropy, the automorphism group $\aut(X)$ of a subshift $X$ is highly constrained and exhibits a relatively simple structure. In contrast, for SFTs, which serve as prototypical systems with positive entropy, it is known from \cite{Hedlund,boyle_lind_rudolph} that the automorphism group possesses an algebraically unwieldy structure, containing free groups and arbitrary finite groups as subgroups. To unravel the structure of the automorphism groups of SFTs and to understand the conjugacy problem, various algebraic invariants, such as dimension groups \cite{dimensiongroup} and Bowen-Franks groups \cite{bowen_franks}, have been constructed based on shift equivalence \cite{williams_shift_equiv}. Furthermore, by focusing on the properties of permutations induced by automorphisms on periodic orbits, group-theoretic invariants such as the sign homomorphism \cite{BoyleKrieger} have been introduced. Many of these existing invariants ensure computability and tractability through matrix calculations and have been extensively applied to the classification problem of dynamical systems. On the other hand, it is known that when the automorphism group acts on these abelian invariants, a non-negligible kernel arises where the action is trivial. For instance, the internal structure of the inert subgroup, defined as the kernel of the representation into the dimension group, often evades capture by existing invariants. Consequently, non-abelian structures, typified by the commutator subgroup $[\aut(X), \aut(X)]$, are sometimes compressed into this inert subgroup. Therefore, a natural need arises for a framework to directly extract the non-abelian information of the automorphism group hidden within the inert subgroup and the commutator subgroup. \\
\indent The purpose of this paper is to constructively establish an analogue of the theory of Galois extensions in field theory and covering spaces in topology on irreducible SFTs, and to introduce algebraic invariants for capturing the non-abelian structure of the automorphism group. It has long been known that there exist analogies between dynamical systems and number theory, typified by the correspondence between periodic orbits and prime ideals. In the book by Parry and Pollicott \cite{parry1990zeta}, the existence of Galois-theoretic phenomena in dynamical systems is suggested; however, their Galois-theoretic approach was primarily directed towards analyzing zeta functions and the distribution of orbits rather than establishing a purely algebraic theoretical framework. Thus, building upon the works of \cite{Nasu,Jung}, this paper demonstrates that the fundamental theorem of Galois theory holds by regarding constant-to-one maps satisfying certain conditions as analogues of finite Galois extensions in SFTs. Furthermore, based on this framework, we construct conjugacy invariants such as the absolute Galois group and Galois cohomology, and by considering representations of the automorphism group $\aut(X)$ into them, we aim to capture the non-abelian structure of the automorphism group.

\subsection{Main Results}
Let $X$ and $Y$ be irreducible SFTs. A map $\vp: Y \to X$ is called a factor map if it is a continuous and shift-commuting surjection. A factor map $\vp$ is said to be unramified if it is finite-to-one and constant-to-one, and we denote the cardinality of its fibers by $\deg(\vp)$. We define the relative automorphism group $\aut(Y/X,\vp)$, a subgroup of $\aut(Y)$, by
\[
\aut(Y/X,\vp)=\{\varphi\in\aut(Y)\mid \vp\circ\varphi=\vp\}.
\]
When $\#\aut(Y/X,\vp)=\deg(\vp)$ holds, $\vp$ is called a Galois factor, and we denote $\aut(Y/X,\vp)$ by $\gal(Y/X,\vp)$, which is referred to as the Galois group of $\vp$. An intermediate factor of a Galois factor $\vp: Y \to X$ is a triple $(Z, \alpha, \beta)$ consisting of an irreducible SFT $Z$ and unramified factor maps $\alpha$ and $\beta$ satisfying $\beta \circ \alpha = \vp$. Let $\mathcal{I}(Y/X,\vp)$ denote the set of all topological conjugacy classes of intermediate factors, and $\mathcal{S}(\gal(Y/X,\vp))$ denote the set of all subgroups of $\gal(Y/X,\vp)$. The following is our first main theorem, which is analogous to the fundamental theorem of Galois theory.

\begin{theorem}\label{mainthm1}
    There is a one-to-one correspondence between the two sets $\mathcal{I}(Y/X,\vp)$ and $\mathcal{S}(\gal(Y/X,\vp))$. 
\end{theorem}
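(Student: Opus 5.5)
Our plan is to imitate the covering-space proof. We define maps in both directions and show they are mutually inverse. From subgroups to intermediate factors: to $H\le\gal(Y/X,\vp)$ we assign the orbit space $Y/H$ with the quotient maps $\alpha_H:Y\to Y/H$ and $\beta_H:Y/H\to X$ induced by $\vp$. From intermediate factors to subgroups: to $(Z,\alpha,\beta)$ we assign $H_{(Z,\alpha,\beta)}=\aut(Y/Z,\alpha)\le\gal(Y/X,\vp)$. A conjugacy of intermediate factors $\theta:Z\to Z'$ with $\alpha'=\theta\circ\alpha$, $\beta=\beta'\circ\theta$ gives $\aut(Y/Z,\alpha)=\aut(Y/Z',\alpha')$, so the second map is well defined on $\mathcal{I}(Y/X,\vp)$.

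\emph{Step 1: freeness and transitivity of $\gal$ on fibres.} We first show that a nonidentity $\varphi\in\aut(Y/X,\vp)$ has no fixed point. Suppose $\varphi(y)=y$. Since $\vp$ is finite-to-one, we use the standard fact that $\vp$ is right-resolving up to recoding, or that every point has a doubly transitive point nearby in the sense of Nasu/Jung, to conclude that $\varphi$ agrees with the identity on a doubly transitive point. By continuity and shift-commutation, $\varphi$ then agrees with the identity on that point's dense orbit, so $\varphi=\id$. Hence $\gal$ acts freely on each fibre. Each fibre has $\deg(\vp)=\#\gal$ points, so the action is simply transitive on every fibre.

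\emph{Step 2: $Y/H$ is an irreducible SFT.} Because $H$ is finite and acts freely by automorphisms, the quotient $Y/H$ is a compact shift space, and it is irreducible as a factor of $Y$. The maps $\alpha_H$ and $\beta_H$ are constant-to-one, of degrees $\#H$ and $[\gal:H]$ respectively. The key issue is that $Y/H$ is an SFT. We would prove this via a recoding. Pass to a higher block presentation where each $\varphi\in H$ is a $0$-block map and $\vp$ has memory $0$. Freeness on periodic points, combined with compactness and expansiveness, gives a uniform window $N$ such that the $H$-orbit of $y$ is determined by $y_{[-N,N]}$ up to $H$. Then $Y/H$ is conjugate to the edge shift whose symbols are $H$-orbits of $(2N+1)$-blocks. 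Alternatively, we could invoke the result (presumably from Nasu/Jung, cited earlier) that an irreducible sofic factor of an SFT by a constant-to-one map onto an SFT is itself an SFT, applied to $\beta_H$.

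\emph{Step 3: the composites are identities.} First, $\aut(Y/(Y/H),\alpha_H)=H$. The inclusion $\supseteq$ is clear. For $\subseteq$, an element fixing the $H$-orbit of a single doubly transitive point coincides on it with some $h\in H$, and therefore equals $h$ by density; here we use Step 1. Conversely, given $(Z,\alpha,\beta)$, put $H=\aut(Y/Z,\alpha)$. We must show $Z\cong Y/H$ over $X$. This is the main obstacle, namely showing that $\alpha$ is itself Galois. Fibres of $\alpha$ lie in fibres of $\vp$, and $\gal$ acts simply transitively on the latter. So we show that the set $\{g\in\gal:\alpha\circ g=\alpha\}$ acts transitively on each $\alpha$-fibre. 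Take a doubly transitive $y$ and $y'$ with $\alpha(y)=\alpha(y')$, and let $g$ be the element with $g y=y'$. Then $\alpha\circ g$ and $\alpha$ agree at $y$, hence along its orbit, hence everywhere. This gives transitivity on the fibres over doubly transitive points. Degree counting, using that $\alpha$ is constant-to-one of degree $\deg\alpha$, then gives $\#H=\deg\alpha$. Thus $\alpha$ factors through $Y/H\to Z$, a bijective, hence conjugating, map commuting with $\beta$. The delicate point is making "agree at one doubly transitive point implies equal" rigorous and handling fibres over non-transitive points. For this we would rely on the constant-to-one hypothesis, which forces every fibre to have full cardinality $\deg\alpha$, so the bijection extends by continuity.
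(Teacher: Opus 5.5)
Your architecture is the paper's: the orbit space $Y/H$, recoded as the edge shift of a quotient graph on which $H$ acts freely, and $H(Z)=\aut(Y/Z,\alpha)$. Your Step~3 is a genuinely lighter route than the paper's use of uniqueness of lifts along $\beta$ in Theorem~\ref{conjugatesamegroupequiv}. Two continuous shift-commuting maps that agree at a point with dense orbit agree everywhere, so $Hy=\alpha^{-1}(\alpha(y))$ for transitive $y$. Counting then gives $\#H=\deg\alpha$, and the induced continuous bijection $Y/H\to Z$ (compact to Hausdorff) is a conjugacy over $X$ without any appeal to openness of $\alpha$.

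The gap is Step~1, on which everything else rests. Neither fact you cite yields ``$\varphi$ agrees with the identity on a doubly transitive point.''
A finite-to-one code can be recoded to a right-resolving one only when it is right-closing. Even then, right-resolving only propagates agreement of $y'$ and $\varphi(y')$ forward in time. Density of doubly transitive points gives you a transitive $y'$ near $y$ with $\varphi(y')_{[-n,n]}=y'_{[-n,n]}$, but nothing in your sketch upgrades this to $\varphi(y')=y'$.

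The missing ingredient is two-sided local injectivity of $\vp$: there is $N$ such that $\vp(a)=\vp(b)$ and $a_{[-N,N]}=b_{[-N,N]}$ imply $a=b$. This is Lemma~\ref{separationlemma}, which the paper obtains from bi-closingness via Nasu and Jung. With it, $\varphi(y)=y$ forces $\varphi$ to fix every point close to $y$. So $\{y:\varphi(y)=y\}$ is a nonempty clopen shift-invariant set, and it equals $Y$ by transitivity (Theorem~\ref{liftunique}, Lemma~\ref{rigidity}).

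Alternatively, you can use openness of $\vp$ (Lemma~\ref{openmap}). It lets you approximate every point of an arbitrary fibre by points of fibres through points with dense orbit, where $\gal$ is automatically simply transitive. This gives $\gal\cdot y=\vp^{-1}(\vp(y))$, hence freeness at every $y$ since $|\gal|=\deg\vp$.

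This is not cosmetic, because you use freeness at non-transitive points twice.
In Step~2, the uniform window needs $\min_{y\in Y}d(\varphi(y),y)>0$ for each $\varphi\neq\id$ in $H$. Freeness on periodic points does not give this, since that minimum equals the infimum over the dense set of periodic points and can be $0$.
In Step~3, passing from transitive fibres of $\alpha$ to all fibres is not an ``extension by continuity''; constant fibre cardinality plus continuity does not by itself control nearby fibres. It is the count $Hy\subseteq\alpha^{-1}(\alpha(y))$ with $|Hy|=|H|=\deg\alpha=|\alpha^{-1}(\alpha(y))|$, and $|Hy|=|H|$ is precisely freeness of $H$ at $y$.

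Once Step~1 is supplied as above, the rest of your argument goes through.
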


This theorem is proved in Subsection~\ref{subsection_pf_of_fundamental_thm}. In Subsection~\ref{subsection_pf_of_fundamental_thm}, we also show that other properties holding in the Galois theory of fields and covering spaces analogously hold here, such as the necessary and sufficient conditions for a subgroup of the Galois group to be normal, and conditions regarding subgroup inclusions. In Section~\ref{subsection_ex_of_gal}, we provide several examples of determining the structure of Galois factors using Theorem~\ref{mainthm1}.\\
\indent Let $x_0\in X$, and consider the pointed space $(X,x_0)$. We denote the set of topological conjugacy classes of pointed Galois factors over $(X,x_0)$ by $\mathcal{I}(X,x_0)$. The set $\mathcal{I}(X,x_0)$ naturally admits a partial order, forming a directed set. For each $i\in \mathcal{I}(X,x_0)$, let $G_i$ be its Galois group, and we define its projective limit as $G_{X,x_0}=\varprojlim G_i$. It is shown in Section~\ref{section_Abs_galois_group} that $G_{X,x_0}$ is independent of the base point $x_0$ up to isomorphism, and we simply denote it by $G_X$. 
The absolute Galois group $G_X$ is equipped with a natural profinite topology. A profinite group is a topological group described as the projective limit of finite groups endowed with the discrete topology. Viewed as topological spaces, profinite spaces are contained in the class of spectral spaces in algebraic geometry; in particular, Hausdorff spectral spaces coincide with profinite spaces. We can consider the first cohomology group of a profinite group $H^1(G_X,\F_p)$, where $p$ is a prime number and $\F_p=\Z/p\Z$. 
The following is our second main theorem.

\begin{theorem}\label{mainthm2}
    The absolute Galois group $G_X$ and its first cohomology $H^1(G_X,\F_p)$ are topological conjugacy invariants. 
\end{theorem}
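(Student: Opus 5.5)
The plan is to transport the whole inverse system $\{G_i\}_{i\in\mathcal{I}(X,x_0)}$ along a conjugacy. Let $h\colon X\to X'$ be a topological conjugacy, put $x_0'=h(x_0)$, and define a map $h_*\colon\mathcal{I}(X,x_0)\to\mathcal{I}(X',x_0')$ by sending the class of a pointed Galois factor $\vp\colon (Y,y_0)\to(X,x_0)$ to the class of $h\circ\vp\colon (Y,y_0)\to(X',x_0')$.

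\textbf{Step 1: $h\circ\vp$ is again a pointed Galois factor with the same Galois group.} Composing with a homeomorphism that commutes with the shift preserves all the relevant properties.
\begin{itemize}
\item $h\circ\vp$ is continuous, shift-commuting and surjective.
\item Its fibers are the fibers of $\vp$ over $h^{-1}(x')$, so it is constant-to-one with $\deg(h\circ\vp)=\deg(\vp)$.
\item For $\varphi\in\aut(Y)$, the condition $h\circ\vp\circ\varphi=h\circ\vp$ holds if and only if $\vp\circ\varphi=\vp$, because $h$ is injective.
\end{itemize}
Hence $\aut(Y/X',h\circ\vp)=\gal(Y/X,\vp)$ as subgroups of $\aut(Y)$, and $h\circ\vp$ is Galois with literally the same Galois group. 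A conjugacy of pointed factors over $X$ is also a conjugacy over $X'$, so $h_*$ is well defined on classes. Using $h^{-1}$ gives an inverse map, so $h_*$ is a bijection.

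\textbf{Step 2: $h_*$ respects the order and the transition maps.} Suppose $i\le j$ is witnessed by an intermediate pointed factor map $\alpha\colon Y_j\to Y_i$ with $\vp_i\circ\alpha=\vp_j$. Then $h\circ\vp_i\circ\alpha=h\circ\vp_j$, so the same $\alpha$ witnesses $h_*i\le h_*j$.

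The transition homomorphisms $G_j\to G_i$ are built from the Galois correspondence of Theorem~\ref{mainthm1}: an element of $G_j$ is sent to the unique automorphism of $Y_i$ that it descends to via $\alpha$, with uniqueness coming from the base points. This construction refers only to $Y_j$, $Y_i$, $\alpha$ and the base points, not to the base $X$. So under the identifications of Step 1 the transition maps coincide exactly.

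Therefore $h_*$ is an isomorphism of inverse systems of finite groups, and it induces a topological isomorphism $G_{X,x_0}\cong G_{X',x_0'}$ of profinite groups. Combined with the base-point independence already established in Section~\ref{section_Abs_galois_group}, this gives $G_X\cong G_{X'}$.

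\textbf{Step 3: the cohomology.} Any isomorphism of profinite groups $\Phi\colon G_X\to G_{X'}$ induces an isomorphism
\[
\Phi^*\colon H^1(G_{X'},\F_p)\to H^1(G_X,\F_p).
\]
With trivial action this is just pulling back continuous homomorphisms to $\F_p$. Since $\Phi$ is a homeomorphism, continuity is preserved, so this yields the invariance of $H^1(G_X,\F_p)$.

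\textbf{Main obstacle.} I expect the delicate point to be Step 2: verifying that the transition homomorphisms are canonical and independent of auxiliary choices. One concern is that a class in $\mathcal{I}(X,x_0)$ is only defined up to pointed conjugacy, so the identifications must be checked to be compatible. I would handle this by working with the projective limit over chosen representatives and showing that the connecting maps commute with the pointed conjugacies, which works because such conjugacies intertwine the Galois groups by conjugation.
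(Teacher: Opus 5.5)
Your proposal is correct and matches the paper's proof: you push pointed Galois factors forward along the conjugacy, note that the Galois groups and the transition maps $p_{ij}$ are literally unchanged, invoke base-point independence, and deduce the $H^1$ statement formally. One small correction: the transition maps come from Lemma~\ref{transition_map1}, not from the fundamental theorem; there $p_\alpha(\varphi)$ is the unique $\psi$ with $\psi\circ\alpha=\alpha\circ\varphi$, its uniqueness follows from surjectivity of $\alpha$, and the base points are what make $\alpha$ itself unique.
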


\indent While the absolute Galois group and its cohomology can be used as conjugacy invariants, they also provide a stage for representing $\aut(X)$. 
Indeed, the Galois cohomology $H^1(G_X,\F_p)$ naturally possesses the structure of an $\F_p$-vector space, and a representation into the automorphism group, $\Upsilon_p:\aut(X)\to \mathrm{GL}(H^1(G_X,\F_p))$, is constructed in Section~\ref{section_gal_cohom}. Our third main theorem concerns how much information this representation can capture. 
Let $m\ge1$ be an integer, and let $\mathcal{P}_m$ denote the set of all $m$-periodic orbits. An automorphism $f$ on $X$ can be regarded as a permutation on $\mathcal{P}_m$. We define a homomorphism $\mathrm{OS}_m:\aut(X)\to\{-1,1\}$ that associates an automorphism with the sign of this permutation. This is called the sign homomorphism, pioneered by the works of Boyle and Krieger \cite{BoyleKrieger} and others. The third main theorem below asserts that the representation $\Upsilon_p$ contains the information of the sign homomorphism.

\begin{theorem}
    There exists a subspace $W \subseteq H^1(G_X,\F_p)$ invariant under the representation such that for any $f\in\aut(X)$, the following hold:
    \begin{enumerate}
        \item The matrix representation of 
        \[
        \overline{\Upsilon_p}(f)_m:H^1(G_X,\F_p)/W\to H^1(G_X,\F_p)/W
        \] with respect to a certain basis of $H^1(G_X,\F_p)/W$ is exactly the permutation matrix of the permutation on $\mathcal{P}_m$ induced by $f$. 
        \item The equality
        \[
        \det(\overline{\Upsilon_p}(f)_m) \equiv \mathrm{OS}_m(f) \pmod p
        \]
        holds.
        \item The characteristic polynomial, determinant, and trace of $\overline{\Upsilon_p}(\cdot)_m:\aut(X)\to \mathrm{GL}(H^1(G_X,\F_p)/W)$ are conjugacy invariants of automorphisms. 
    \end{enumerate}
    Here, $\overline{\Upsilon_p}(\cdot)_m$ is the quotient representation of $\Upsilon_p$ with respect to $W$. 
\end{theorem}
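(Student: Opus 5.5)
The plan is to realize the permutation action of $f$ on $\mathcal{P}_m$ inside $H^1(G_X,\F_p)=\mathrm{Hom}_{\mathrm{cont}}(G_X,\F_p)$ through a family of cyclic Galois factors of degree $p$, one attached to each $m$-periodic orbit. First I identify $H^1(G_X,\F_p)$ with the direct limit $\varinjlim_i \mathrm{Hom}(G_i,\F_p)$ over $\mathcal{I}(X,x_0)$, so that a class is given by a Galois factor $\vp:Y\to X$ together with a surjection $\gal(Y/X,\vp)\to\F_p$. By Theorem~\ref{mainthm1} this amounts to an intermediate Galois factor $Z\to X$ with group $\F_p$, i.e.\ a $\Z/p$-skew product (a cocycle extension) over $X$. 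Next I recall how $\Upsilon_p(f)$ acts: $f\in\aut(X)$ pulls Galois factors back along $f$, and the independence of $G_X$ from the base point (Section~\ref{section_Abs_galois_group}) turns this into a continuous automorphism of $G_X$ well defined up to inner automorphisms. Inner automorphisms act trivially on $H^1$ with trivial coefficients, so $\Upsilon_p(f)$ is well defined.

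For each orbit $O\in\mathcal{P}_m$ I then define a linear functional $\lambda_O:H^1(G_X,\F_p)\to\F_p$. Take a class $c$ realized by a $\Z/p$-extension $Z\to X$ given by a locally constant cocycle $\phi:X\to\F_p$. I set $\lambda_O(c)=\sum_{k=0}^{m-1}\phi(\sigma^k x)$ for any $x\in O$. This is the monodromy of the extension around the loop $O$, equivalently the image of the Frobenius element of $O$ in $\F_p$. It is independent of cohomologous changes of $\phi$ and of the chosen point of $O$, and it is linear in $c$. Let $W=\bigcap_{O\in\mathcal{P}_m}\ker\lambda_O$. Naturality of the pullback gives $\lambda_O(\Upsilon_p(f)c)=\lambda_{f^{\pm1}(O)}(c)$, so $W$ is invariant, and $\Upsilon_p(f)$ permutes the functionals $\lambda_O$ exactly as $f$ permutes $\mathcal{P}_m$. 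The sign convention is to be fixed by whether $\Upsilon_p$ is defined covariantly or contravariantly. (The notation $\overline{\Upsilon_p}(f)_m$ indicates that $W$ depends on $m$; I take $W=W_m$.)

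The main obstacle is showing that $(\lambda_O)_{O\in\mathcal{P}_m}$ are linearly independent. This makes the induced map $H^1/W\to\F_p^{\mathcal{P}_m}$ an isomorphism, and then the dual basis gives the permutation matrix in (1). I would prove independence by construction. For a fixed orbit $O$, I use irreducibility and the finite-type property to pick a block length $N\ge m$ large enough that the $N$-blocks occurring along $O$ occur along no other orbit in $\mathcal{P}_m$. I then build a locally constant $\phi$ that is $1$ on exactly one $N$-cylinder meeting $O$ and $0$ elsewhere. The delicate point is that the resulting skew product must be irreducible, so that it really defines an unramified Galois factor and a nonzero class. To get this I would use mixing of a suitable power, or add to $\phi$ a cocycle supported on a further periodic orbit of a coprime period, so that the cocycle values over cycles generate $\F_p$. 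That extra orbit has period different from $m$, so it contributes nothing to any $\lambda_{O'}$. The resulting $\phi$ satisfies $\lambda_{O'}(c)=\delta_{O,O'}$.

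Once (1) holds, the remaining parts follow quickly. For (2), the determinant of a permutation matrix is the sign of the permutation, and this is $\mathrm{OS}_m(f)$ read in $\F_p$. For (3), $W$ and the quotient representation are built intrinsically from $X$. If $f$ and $g=hfh^{-1}$ are conjugate, then $\overline{\Upsilon_p}(g)_m$ is conjugate to $\overline{\Upsilon_p}(f)_m$ by $\overline{\Upsilon_p}(h)_m$. For a conjugacy $X\cong X'$, Theorem~\ref{mainthm2} transports $H^1$, $W$ and $\Upsilon_p$ compatibly. Hence the characteristic polynomial, trace and determinant are conjugacy invariants.
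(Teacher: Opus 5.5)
Your plan is the paper's proof in cocycle language. Your $\lambda_O$ is the paper's $\mathrm{Frob}_\Gamma$, and $W=\bigcap_O\ker\lambda_O$ is $\mathrm{Ker}(\mathrm{Ev}_m)$. Your naturality identity is Corollary~\ref{frob_equiv_cor}. Your independence construction is the paper's proof that $\mathrm{Ev}_m$ is surjective (Theorem~\ref{ev_surj}), after which (1)--(3) follow exactly as you describe.

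The paper works with the Galois action on fibres rather than with cocycles. It defines $\mathrm{Frob}_\Gamma$ by $\sigma_Y^m(y)=y+\mathrm{Frob}_\Gamma(\chi)$. It gets linearity from $\mathrm{Frob}_\Gamma(\chi)=\chi(g)$ for $g$ in the absolute Frobenius class; you mention this identification but do not establish it. So the paper never needs your claim that every $\F_p$-Galois factor is a skew product. That claim is true, because a finite covering of a compact zero-dimensional space has a continuous section, but you do not argue it.

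The one step where your sketch is off target is irreducibility of the skew product. Mixing of the base is irrelevant: a cocycle cohomologous to $0$ gives $p$ disjoint copies of $X$ over any base. No auxiliary orbit of coprime period is needed either. Your $\phi$ already has sum $1\neq 0$ around $O$ itself, and since $p$ is prime, a single periodic orbit with nonzero cocycle sum forces transitivity.

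This is exactly Lemma~\ref{skew_sft}. The paper proves it by splicing $q$ copies of the periodic block $w_z$ between the two cylinders and choosing $q$ with $a+C+qcS\equiv b \pmod p$. Equivalently, in the skew-product graph the weights of closed paths at a vertex form a submonoid, hence a subgroup, of $\F_p$, and this subgroup is all of $\F_p$ once one weight is nonzero. That lemma is the only non-formal ingredient in the whole argument. You name the right criterion (cycle values generating $\F_p$) but do not prove it.
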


This theorem is described in Subsection~\ref{permu_of_per_orb} and Subsection~\ref{subsection_application_conj}. In these subsections, the subspace $W$ is explicitly constructed. Furthermore, Subsection~\ref{subsection_application_conj} provides an example of distinguishing elements of the commutator subgroup by using the trace of $\overline{\Upsilon_p}(\cdot)_m$.

\subsection{Organization of the paper}
In Section~\ref{section_pre}, we recall some facts and prove some lemmas that follow immediately from general theory. In Section~\ref{section_three}, we define Galois factors and prove their basic properties. In Section~\ref{section_fundamental}, we prove our first main result, the fundamental theorem of Galois theory, and provide examples of determining covering structures using this theorem. In Section~\ref{section_Abs_galois_group}, we construct the absolute Galois group and show that it is a conjugacy invariant. Finally, in Section~\ref{section_gal_cohom}, we define Galois cohomology and the representation $\Upsilon_p$ of $\aut(X)$. Our third main theorem is proved in Subsections~\ref{permu_of_per_orb} and \ref{subsection_application_conj}.

\section{Preliminaries}\label{section_pre}
In this section, we recall some facts, basic concepts, and notation. 

\subsection{Symbolic Dynamics}
In this subsection, we recall basic concepts and facts from symbolic dynamics. Most of the material in this subsection is based on \cite[Chapters 1, 6, and 8]{introduction}. \\
\indent Let $\AA$ be a finite set and let $n=\# \AA$. Let $\Sigma_n=\AA^\Z$ be the full shift over the alphabet $\AA$. We denote the shift map by $\sigma_{\Sigma_n} :\AA^\Z\to\AA^\Z$. The space $\Sigma_n$ is naturally equipped with a metric $d$ defined by
\[
d(x, y) = \begin{cases} 0 & (x = y) \\ 2^{-k} & (x \neq y) \end{cases}
,\quad x,y\in \Sigma_n,
\]
where $k=\min\{|i|: x_i\ne y_i\}$.
A closed and $\sigma_{\Sigma_n}$-invariant subset $X\subseteq \Sigma_n$ is called a shift space. We denote the restriction of $\sigma_{\Sigma_n}$ to $X$ by $\sigma_X:X\to X$, and the restriction of the metric $d$ to $X$ by $d_X$. Thus, $(X,\sigma_X)$ becomes a discrete dynamical system. When it is clear from the context, we simply write $\sigma$ and $d$ for $\sigma_X$ and $d_X$, respectively. Since $\AA^\Z$ is compact and a closed subset of a compact space is compact, any shift space is compact.\\
\indent From now on, unless specified otherwise, we assume that $X$ is a shift space. A finite sequence of elements in $\AA$ is called a block or a word. We denote the length of a block $w$ by $|w|$. Let $\BB_n(X)$ denote the set of all blocks of length $n$ occurring in points of $X$, and define $\BB(X)=\bigcup_{n=0}^\infty \BB_n(X)$. The empty word is denoted by $\varepsilon$. For $x\in X$ and $i,j\in\Z$, we write
\[
x_{[i,j]}=x_ix_{i+1}\cdots x_j\in \BB_{j-i+1}(X).
\]
If $i>j$, we set $x_{[i,j]}=\varepsilon$. In accordance with this notation, we sometimes write $x_{[i]}$ to emphasize $x_i$. For $w\in \BB(X)$ and $i\in \Z$, the cylinder set of $w$ at position $i$ is defined by $[w]_i=\{ x \in X \mid x_{[i, i+|w|-1]}=w\}$. Then, the collection $\{[w]_i \mid w\in \BB(X), i\in\Z\}$ forms a basis for the topology of $X$. \\
\indent Let $\FF\subset \BB(\AA^\N)$. We denote the shift space with the set of forbidden words $\FF$ by $\mathsf{X}_\FF$. A shift space $X$ is called a shift of finite type if there exists a finite set of forbidden words $\FF$ (i.e., $\#\FF<\infty$) such that $X=\mathsf{X}_{\FF}$. In some literature, this is also called a subshift of finite type. We will abbreviate ``shift of finite type'' as SFT. \\
\indent A shift space $X$ is said to be irreducible if for any $u, v \in \BB(X)$, there exists a word $w \in \BB(X)$ with $w \ne \varepsilon$ such that $uwv \in \BB(X)$. The following lemma can be found in \cite[Example 6.3.2]{introduction}.
\begin{lemma}\label{irred_toptrans_equiv}
    A shift space is irreducible if and only if it is topologically transitive.
\end{lemma}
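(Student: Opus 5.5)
The equivalence in Lemma~\ref{irred_toptrans_equiv} is proved by translating between words in $\BB(X)$ and cylinder sets. Here topological transitivity means: for every pair of nonempty open sets $U,V\subseteq X$ there is $n\ge 1$ with $\sigma^{n}(U)\cap V\neq\emptyset$. Since the cylinders $[w]_i$ form a basis, and since $\sigma$ shifts cylinders to cylinders ($\sigma^{k}([w]_i)=[w]_{i-k}$), it suffices to test transitivity on pairs of cylinders $U=[u]_0$ and $V=[v]_0$ with $u,v\in\BB(X)$ nonempty. Every nonempty open set contains a cylinder $[w]_i$, and $[w]_i=\sigma^{-i}([w]_0)$, so shifting both sets by the same power of $\sigma$ lets me move the cylinder to position $0$.

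For \emph{irreducible $\Rightarrow$ transitive}, fix $U=[u]_0$ and $V=[v]_0$. By irreducibility there is a nonempty $w$ with $uwv\in\BB(X)$. Pick a point $x\in X$ with $x_{[0,|uwv|-1]}=uwv$; such a point exists because $uwv$ occurs in some point of $X$, and shifting that point puts the occurrence at position $0$. Then $x\in[u]_0$ and $\sigma^{n}(x)\in[v]_0$ with $n=|u|+|w|\ge 1$, so $\sigma^n(U)\cap V\ne\emptyset$.

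For \emph{transitive $\Rightarrow$ irreducible}, take $u,v\in\BB(X)$. If one of them is empty, replace it by any nonempty word extending it, since if $u'wv'$ is allowed then so is any subword. For nonempty $u,v$, apply transitivity to $[u]_0$ and $[v]_0$ to get $n\ge1$ and $x\in[u]_0$ with $\sigma^n(x)\in[v]_0$. The only subtlety is that $n$ may satisfy $n\le |u|$, so the occurrences of $u$ and $v$ could overlap and no nonempty gap word appears. To avoid this, I use $V'=\sigma^{-N}([v]_0)=[v]_N$ with $N>|u|$ large, or equivalently pad $u$. Transitivity gives $x\in[u]_0$ with $x_{[n+N,\,n+N+|v|-1]}=v$ and $n+N>|u|$. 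Then $w=x_{[|u|,\,n+N-1]}$ is nonempty and $uwv=x_{[0,\,n+N+|v|-1]}\in\BB(X)$.

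The main obstacle is exactly this overlap issue in the second direction, together with being careful that the definition of transitivity used has $n\ge1$ and not $n\in\Z$. If the intended definition is a dense forward orbit, I would first invoke the standard equivalence (Baire category on the compact metric space $X$) with the open-set formulation, and then proceed as above. Because the lemma is cited from \cite[Example 6.3.2]{introduction}, a brief proof along these lines suffices.
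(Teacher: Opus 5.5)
Your proof is correct. The paper gives no argument for this lemma and simply cites \cite[Example 6.3.2]{introduction}; your cylinder-set translation, including the use of $[v]_N=\sigma^{-N}([v]_0)$ with $N\ge|u|$ to force a nonempty gap word, is the standard argument behind that citation.

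One step should be tightened: the reduction to $[u]_0$ and $[v]_0$. A single power of $\sigma$ moves both cylinders to position $0$ only if they sit at the same position. So choose the cylinders inside $U$ and $V$ to be central cylinders $\{x\mid x_{[-k,k]}=\cdot\}$ with a common $k$, i.e.\ metric balls of the same radius $2^{-k}$, which is always possible.
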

Here, a shift space $X$ is said to be topologically transitive if for any non-empty open sets $U, V \subseteq X$, there exists $n \in \N$ such that $\sigma^n(U) \cap V \ne \emptyset$. This is equivalent to the condition that for any non-empty open sets $U, V \subseteq X$, there exists $n \in \N$ such that $\sigma^{-n}(U) \cap V \ne \emptyset$. Let $O^+(x)$ denote the forward orbit of $x \in X$.
Then, by Lemma~\ref{irred_toptrans_equiv}, we obtain the following lemma.

\begin{lemma}\label{toptransitive}
If $X$ is irreducible, then $X$ has a point with a dense forward orbit.
\end{lemma}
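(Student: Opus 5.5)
The plan is a Baire category argument, using Lemma~\ref{irred_toptrans_equiv} to turn irreducibility into topological transitivity. Fix an enumeration $\{U_k\}_{k\ge 1}$ of the nonempty cylinder sets $[w]_i$ with $w\in\BB(X)$, $w\ne\varepsilon$ and $i\in\Z$. This family is countable, and by the remark before Lemma~\ref{irred_toptrans_equiv} it is a basis for the topology of $X$. A point $x$ has dense forward orbit exactly when $O^+(x)$ meets every $U_k$. So the set of such points is
\[
D=\bigcap_{k\ge1}\bigcup_{n\ge0}\sigma^{-n}(U_k).
\]

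Each set $G_k=\bigcup_{n\ge0}\sigma^{-n}(U_k)$ is open, because $\sigma$ is continuous and cylinder sets are open. Each $G_k$ is also dense. Given any nonempty open $V$, Lemma~\ref{irred_toptrans_equiv} and the equivalent form of transitivity stated just after it give some $n\in\N$ with $\sigma^{-n}(U_k)\cap V\ne\emptyset$. Hence $G_k\cap V\ne\emptyset$.

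The space $X$ is a nonempty compact metric space with the metric $d_X$, so it is complete. The Baire category theorem then says the countable intersection $D$ of dense open sets is dense, and in particular nonempty. Any $x\in D$ has dense forward orbit. (If $X$ is empty the statement is vacuous, and irreducibility is implicitly applied to nonempty $X$.)

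The only delicate point is ensuring that the transitivity provided by Lemma~\ref{irred_toptrans_equiv} is in the preimage form $\sigma^{-n}(U)\cap V\neq\emptyset$. The paper notes this form is equivalent, since $\sigma$ is a homeomorphism and $\sigma^n(V)\cap U\ne\emptyset$ iff $V\cap\sigma^{-n}(U)\ne\emptyset$. There is also a small issue that $n$ may be $0$, but that is harmless since $n\ge0$ is allowed in $G_k$.

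An alternative, fully combinatorial route also works. Enumerate $\BB(X)$ as $w_1,w_2,\dots$. Use irreducibility to build inductively a word $u_k=w_1v_1w_2v_2\cdots w_k\in\BB(X)$ in which each new $w_j$ is appended via a connecting word. Then take a limit point $x$ of points of $X$ containing $u_k$ starting at coordinate $0$; this uses compactness, and the limit lies in $X$ since $X$ is closed. Every block then appears in $x_{[0,\infty)}$, so $O^+(x)$ meets every cylinder $[w]_0$, and every cylinder $[w]_i$ is a shifted copy of such a cylinder. I would present the Baire argument as the main proof, since it relies directly on Lemma~\ref{irred_toptrans_equiv}.
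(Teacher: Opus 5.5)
Your argument is correct and follows the paper's route. The paper derives this lemma directly from Lemma~\ref{irred_toptrans_equiv} without further detail, and your Baire category argument (each $\bigcup_{n\ge 0}\sigma^{-n}(U_k)$ is open and dense, so their countable intersection is nonempty) is the standard step that turns topological transitivity into a point with dense forward orbit.
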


\indent When two shift spaces $X$ and $Y$ satisfy $X\subseteq Y$, $X$ is called a subshift of $Y$. In this context, the following holds.
\begin{lemma}\label{clopen_subshift}
    Let $Y$ be an irreducible shift space, and let $X \subseteq Y$ be a non-empty subshift. If $X$ is open, then $X=Y$.
\end{lemma}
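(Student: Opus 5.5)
The plan is to use Lemma~\ref{toptransitive}: an irreducible shift space has a point whose forward orbit is dense, and I will play this point off against the open set $X$. Since $X$ is a subshift, it is closed in $Y$ by definition (shift spaces are closed) and it is $\sigma$-invariant. By hypothesis it is also open, so both $X$ and $Y\setminus X$ are open in $Y$.

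Choose $y\in Y$ with $O^+(y)$ dense in $Y$. Because $X$ is non-empty and open, density gives some $n\ge 0$ with $\sigma^n(y)\in X$. Then $\sigma^{n+k}(y)\in X$ for all $k\ge 0$, since $\sigma(X)\subseteq X$. So all but finitely many points of $O^+(y)$ lie in $X$.

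Now suppose, for a contradiction, that $U=Y\setminus X$ is non-empty. It is open, so it must contain infinitely many points of $O^+(y)$. The main point to check here is that a dense forward orbit meets every non-empty open set infinitely often. I would argue this as follows. If $Y$ is finite, then $U$ itself is finite. Since $U$ is open, it contains an isolated point $z$, and irreducibility forces $Y$ to be a single periodic orbit. Every point of a periodic orbit is visited infinitely often, so the claim holds. If instead $Y$ has no isolated points, then a set obtained by removing finitely many points from $O^+(y)$ is still dense, and the claim again follows.

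A cleaner route that avoids this case split is to use transitivity directly. By Lemma~\ref{irred_toptrans_equiv}, applied to the non-empty open sets $X$ and $U$, there is $n\in\N$ with $\sigma^n(X)\cap U\neq\emptyset$. But $\sigma^n(X)\subseteq X$, which is disjoint from $U$. This is a contradiction, so $U=\emptyset$ and $X=Y$. I would present this second argument as the proof, since the only fact it needs is $\sigma$-invariance of $X$ together with the equivalence of irreducibility and topological transitivity.
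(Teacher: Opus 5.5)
Your final argument is correct, but it differs from the paper's. The paper uses Lemma~\ref{toptransitive} to pick $y$ with dense forward orbit. It notes that some $\sigma^n(y)$, and hence the whole tail $O^+(\sigma^n(y))$, lies in $X$. It then asserts that this tail is still dense, giving $\overline{X}=Y$ and so $X=Y$ by closedness.

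You instead apply topological transitivity (Lemma~\ref{irred_toptrans_equiv}) directly to the disjoint non-empty open sets $X$ and $Y\setminus X$. The contradiction uses only $\sigma^n(X)\subseteq X$, which holds for every $n\ge 0$, so the convention for whether $0\in\N$ does not matter.

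What your version buys:
\begin{enumerate}
\item It is shorter and needs only the definition of transitivity, not the existence of a point with dense forward orbit.
\item It avoids the one delicate step in the paper's proof, namely that the orbit of $\sigma^n(y)$ remains dense. This is exactly the issue you flagged. The paper states it without justification. It is automatic when $Y$ has no isolated points, but otherwise needs an extra argument: an irreducible shift space with an isolated point is a single periodic orbit, since $\sigma$ is a homeomorphism.
\end{enumerate}
The paper's formulation has the modest advantage of matching the dense-orbit pattern it reuses in Theorem~\ref{liftunique}.

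Your abandoned case split is not exhaustive as written. It covers finite $Y$ and $Y$ without isolated points, but omits infinite $Y$ with isolated points. That case cannot occur, but excluding it needs the argument above. So presenting the transitivity argument as your proof is the right choice.
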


\begin{proof}
    Since $Y$ is an irreducible shift space, by Lemma~\ref{toptransitive}, there exists a point $y \in Y$ with a dense forward orbit. As $X$ is a non-empty open set, the density of the orbit implies that there exists an integer $n \ge 1$ such that $\sigma^n(y) \in X$. Since $X$ is a shift space, we have $\sigma(X) \subseteq X$, which means $\sigma^k(y) \in X$ for any $k \ge n$. This implies that $O^+(\sigma^n(y)) \subseteq X$. Since $y$ has a dense orbit, the orbit of $\sigma^n(y)$ is also dense. Therefore, $\overline{O^+(\sigma^n(y))} = Y$, which yields $\overline{X} = Y$. Furthermore, since $X$ is a subshift, it is closed. Then $X = \overline{X}$ holds. Consequently, we have $X = Y$.
\end{proof}

\indent A map $\varphi: X \to Y$ between shift spaces $X$ and $Y$ is called a homomorphism if it is continuous and shift-commuting (i.e., $\sigma_Y \circ \varphi = \varphi \circ \sigma_X$). By the Curtis-Lyndon-Hedlund theorem, the class of homomorphisms coincides with the class of maps induced by local rules. Because of this, a homomorphism is also referred to as a sliding block map or a sliding block code. Moreover, the map representing the local rule that induces a homomorphism is called a block code, and the homomorphism induced by a block code $\Phi$ is denoted by $\Phi_\infty$. In most of the arguments in this paper, by passing to a higher block presentation, we may assume that the sliding block code is induced by $\Phi:\BB_1(X)\to\BB_1(X)$. For more details on these concepts, see \cite[Chapter 1]{introduction}. A bijective sliding block code is a topological conjugacy. A surjective sliding block code is called a factor map or a factor code. A topological conjugacy from $X$ onto itself is called an automorphism. We denote the set of all automorphisms on $X$ by $\aut(X)$. While it is also common to denote this by $\aut(\sigma_X)$, in this paper we adopt the notation $\aut(X)$ to make the analogy with the theory of covering spaces more transparent. \\
\indent Two points $x, y \in X$ are said to be left-asymptotic if there exists $N \in \Z$ such that $x_i = y_i$ for all $i \le N$. Similarly, $x, y \in X$ are right-asymptotic if there exists $N \in \Z$ such that $x_i = y_i$ for all $i \ge N$. Let $X$ and $Y$ be shift spaces, and let $\varphi: X \to Y$ be a sliding block code. We say that $\varphi$ is right-closing if for any $x, y \in X$, whenever $x$ and $y$ are left-asymptotic and $\varphi(x) = \varphi(y)$, we have $x = y$. We say that $\varphi$ is left-closing if for any $x, y \in X$, whenever $x$ and $y$ are right-asymptotic and $\varphi(x) = \varphi(y)$, we have $x = y$. If $\varphi$ is both right-closing and left-closing, it is called bi-closing. According to \cite[Proposition 8.1.9]{introduction}, $\varphi$ is right-closing if and only if there exists an integer $K$ such that if $\Phi(x_{[-K,K]}) = \Phi(y_{[-K,K]})$ and $x_{[-K,0]} = y_{[-K,0]}$, then $x_1 = y_1$. Similarly, $\varphi$ is left-closing if and only if there exists an integer $K$ such that if $\Phi(x_{[-K,K]}) = \Phi(y_{[-K,K]})$ and $x_{[0,K]} = y_{[0,K]}$, then $x_{-1} = y_{-1}$.

\subsection{Invariant Measures and Non-wandering Points}
In this subsection, we consider a dynamical system $(X, \mathcal{B}(X), \mu, T)$, where $X$ is a compact metric space, $T: X \to X$ is a continuous map, $\mathcal{B}(X)$ is the Borel $\sigma$-algebra on $X$, and $\mu$ is a $T$-invariant Borel probability measure on $X$. We say that $\mu$ has full support if the support of $\mu$ coincides with the entire space $X$, that is, if $\mu(U) > 0$ for any non-empty open set $U$. A point $x \in X$ is called a non-wandering point if for any open neighborhood $U$ of $x$, there exists an integer $n \ge 1$ such that $T^n(U) \cap U \ne \emptyset$. A point that is not non-wandering is called a wandering point. We denote the set of all non-wandering points by $\Omega(T) = \Omega(X, T)$. 

\begin{lemma}\label{non_wandering_full_supp}
    If $\mu$ has full support, every point in $X$ is a non-wandering point. That is, $\Omega(T) = X$. 
\end{lemma}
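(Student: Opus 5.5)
The plan is to verify the definition of a non-wandering point directly at an arbitrary point, using invariance of $\mu$ together with the Poincaré recurrence principle. Fix $x \in X$ and an open neighborhood $U$ of $x$. Since $\mu$ has full support, $\mu(U) > 0$. We must find $n \ge 1$ with $T^n(U) \cap U \neq \emptyset$. It suffices to find $n \ge 1$ with $U \cap T^{-n}(U) \neq \emptyset$: if $y \in U$ and $T^n(y) \in U$, then $T^n(y) \in T^n(U) \cap U$.

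To produce such an $n$, I will use the standard pigeonhole argument behind Poincaré recurrence. Consider the sets $U, T^{-1}(U), T^{-2}(U), \dots$. Each is Borel, since $T$ is continuous, and by $T$-invariance each has measure $\mu(T^{-k}(U)) = \mu(U) > 0$. Suppose, for contradiction, that $U \cap T^{-n}(U) = \emptyset$ for every $n \ge 1$. Then for $0 \le j < k$ we get
\[
T^{-j}(U) \cap T^{-k}(U) = T^{-j}\bigl(U \cap T^{-(k-j)}(U)\bigr) = \emptyset ,
\]
so the sets $T^{-k}(U)$, $k \ge 0$, are pairwise disjoint. Summing their measures gives $\sum_{k \ge 0} \mu(U) = \infty$, which contradicts $\mu(X) = 1$.

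Hence some $n \ge 1$ satisfies $U \cap T^{-n}(U) \neq \emptyset$, and so $T^n(U) \cap U \neq \emptyset$. Since $U$ was an arbitrary neighborhood of $x$, the point $x$ is non-wandering, and since $x$ was arbitrary, $\Omega(T) = X$.

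There is no real obstacle here. The only points needing care are the measurability of the preimages $T^{-k}(U)$, which holds because $T$ is continuous, and the fact that invariance is stated for preimages, $\mu(T^{-1}A) = \mu(A)$, rather than images. That is why the argument works with $T^{-n}(U)$ and converts back to $T^n(U)$ only at the end.
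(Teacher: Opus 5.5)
Your proposal is correct and follows essentially the same route as the paper: both obtain the contradiction from Poincar\'e recurrence applied to an open set $U$ with $\mu(U)>0$. The only difference is that the paper cites the recurrence theorem as a black box, whereas you prove the weak form it needs directly, via the disjointness of the sets $T^{-k}(U)$.
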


\begin{proof}
    We prove this by contradiction. Suppose that there exists $x\in X\setminus\Omega(T)$. Then $x$ is a wandering point, which means there is an open neighborhood $U$ of $x$ such that $T^n(U)\cap U=\emptyset$ for all $n\ge1$. On the other hand, since $\mu$ has full support, we have $\mu(U)>0$. However, this contradicts the Poincar\'e recurrence theorem, which states that almost every point in $U$ returns to $U$.
\end{proof}

\subsection{Maps and General Topology}
In this subsection, we recall some basic facts about maps and general topology.
Let $X$ and $Y$ be topological spaces, and $q: X \to Y$ be a surjective map. The map $q$ is called a quotient map if for any subset $U \subseteq Y$, $U$ is open in $Y$ if and only if $q^{-1}(U)$ is open in $X$. Any continuous surjective open map is a quotient map. 

\begin{lemma}[Universal property of quotient maps]\label{universalityofquotientmap}
    Let $q: Y \to X$ be a quotient map. Then, for any topological space $Z$ and any map $g: X \to Z$, $g$ is continuous if and only if $g \circ q$ is continuous.
\end{lemma}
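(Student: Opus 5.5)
The statement is the universal property of quotient maps, and I would prove it directly from the definition, treating the two implications separately.

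For the forward direction, suppose $g$ is continuous. A quotient map is in particular continuous, since for any open $U \subseteq X$ the definition says $q^{-1}(U)$ is open in $Y$. Hence $g\circ q$ is a composition of continuous maps and is continuous.

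For the converse, suppose $g\circ q$ is continuous, and let $V\subseteq Z$ be open. I want to show that $g^{-1}(V)$ is open in $X$. The key identity is
\[
q^{-1}\bigl(g^{-1}(V)\bigr)=(g\circ q)^{-1}(V),
\]
and the right-hand side is open in $Y$ by continuity of $g\circ q$. By the defining property of a quotient map, applied to the subset $U=g^{-1}(V)\subseteq X$, the set $U$ is open in $X$ exactly when $q^{-1}(U)$ is open in $Y$. So $g^{-1}(V)$ is open, and $g$ is continuous.

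There is no real obstacle here. The only point to watch is that the converse uses the ``if'' half of the quotient condition: openness of the preimage forces openness in $X$. Surjectivity of $q$ is not needed explicitly, since it is already built into the definition.
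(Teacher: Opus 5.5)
Your proof is correct: the forward direction uses the ``only if'' half of the quotient condition to get continuity of $q$, and the converse uses the identity $q^{-1}(g^{-1}(V))=(g\circ q)^{-1}(V)$ together with the ``if'' half. This is the standard argument; the paper gives no proof of its own and simply recalls the lemma as a well-known fact from general topology.
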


In the following, we state some properties concerning the composition of constant-to-one maps. Here, we assume that the cardinality of each fiber of a constant-to-one map is finite.

\begin{lemma}\label{const_to_one}
    Let $g:A\to B$ and $f:B\to C$ be maps, and set $h=f\circ g:A\to C$. Then the following hold:
\begin{enumerate}
\item If $f$ and $g$ are constant-to-one, then $h$ is also constant-to-one.
\item If $g$ and $h$ are constant-to-one, then $f$ is also constant-to-one.
\end{enumerate}
\end{lemma}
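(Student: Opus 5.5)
Both parts follow from one counting identity, obtained by decomposing fibers of $h$ along $B$. As in the surrounding text, I take a constant-to-one map to have all fibers of the same finite cardinality. I also take that cardinality to be at least $1$ (these maps are factor maps, hence surjective); this assumption is needed in part (2). Write $d_f$, $d_g$, $d_h$ for the common fiber cardinalities whenever they exist.

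\emph{The identity.} Fix $c\in C$. Then
\[
h^{-1}(c)=g^{-1}\bigl(f^{-1}(c)\bigr)=\bigsqcup_{b\in f^{-1}(c)} g^{-1}(b),
\]
and the union is disjoint because distinct points $b$ have disjoint $g$-fibers. Taking cardinalities gives
\[
\#h^{-1}(c)=\sum_{b\in f^{-1}(c)} \#g^{-1}(b).
\]

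\emph{Part (1).} If $f$ and $g$ are constant-to-one, every summand equals $d_g$ and there are exactly $d_f$ summands. Hence $\#h^{-1}(c)=d_f d_g$ for every $c$, a finite constant independent of $c$, so $h$ is constant-to-one with $d_h=d_f d_g$.

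\emph{Part (2).} If $g$ and $h$ are constant-to-one, every summand again equals $d_g$, so $d_h=\#h^{-1}(c)=d_g\cdot \#f^{-1}(c)$. Since $d_g\ge 1$, this gives $\#f^{-1}(c)=d_h/d_g$. This is independent of $c$ and finite, so $f$ is constant-to-one with $d_f=d_h/d_g$. (If some $f$-fiber were infinite, the sum would be infinite, contradicting finiteness of $d_h$, so the division is legitimate.)

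The only delicate point is the nondegeneracy assumption $d_g\ge1$, equivalently surjectivity of $g$, in part (2). Without it, $g$ could have empty fibers and the identity would give no information about $\#f^{-1}(c)$. I would state explicitly that constant-to-one maps are understood to have nonempty fibers, as they do for the factor maps considered in this paper. Beyond that, the argument is routine bookkeeping.
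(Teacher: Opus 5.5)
Your proof is correct and follows the paper's argument: the paper also uses the disjoint decomposition $h^{-1}(c)=\bigsqcup_{b\in f^{-1}(c)} g^{-1}(b)$ and leaves the counting implicit, which you carry out. Your remark that part (2) requires $g$ to have nonempty fibers (i.e.\ $d_g\ge 1$) is a valid refinement that the paper does not state, though it holds automatically for the surjective unramified factor maps to which the lemma is applied.
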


\begin{proof}
    For any $c \in C$, we have
    \[
    h^{-1}(c) = \{a\in A\mid f(g(a))=c\} = \bigsqcup_{b\in f^{-1}(c)} g^{-1}(b).
    \]
    The claims follow from this.
\end{proof}

\subsection{Group Actions}
    In this subsection, we recall basic concepts concerning group actions. Consider the action of a group $G$ on a set $X$. For $x\in X$, we denote its isotropy subgroup (also called the stabilizer) by
    \[
    I_G(x)=\{g\in G\mid gx=x\}.
    \]
    We also denote the fixed-point set by
    \[
    X^G=\{x\in X\mid gx=x \ (g\in G)\}.
    \]
    Furthermore, we write the orbit of $x\in X$ under $G$ as
    \[
    Gx =G\cdot x= \{ gx \mid g \in G \}.
    \]
    A group action $G\curvearrowright X$ is said to be free if $I_G(x)=\{e\}$ for all $x\in X$. A group action $G\curvearrowright X$ is said to be transitive if for any $x,y\in X$, there exists $g\in G$ such that $y=gx$.
    Regarding the order, the following result, widely known as the orbit-stabilizer theorem, holds.
\begin{lemma}\label{orbitstabilizer}
    Let $G$ be a finite group. Then for any $x\in X$, we have $|G|=|Gx||I_G(x)|$.
\end{lemma}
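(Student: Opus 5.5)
The plan is to use the map from the group onto the orbit of $x$ given by $g \mapsto gx$, and to count its fibers. By definition of $Gx$ this map $\pi\colon G \to Gx$ is surjective. The claim $|G|=|Gx||I_G(x)|$ then follows from three steps: identify each fiber of $\pi$ with a left coset of $I_G(x)$, note that cosets all have the same size as $I_G(x)$, and sum over the partition of $G$ into fibers.

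For the first step, fix $y = gx \in Gx$. For $h\in G$, we have $hx=gx$ if and only if $g^{-1}hx = x$, which in turn holds if and only if $g^{-1}h\in I_G(x)$. Hence
\[
\pi^{-1}(gx) = gI_G(x).
\]
Here $I_G(x)$ is a subgroup of $G$: it contains $e$, it is closed under products since $(gh)x=g(hx)$, and it is closed under inverses since $gx=x$ implies $x=g^{-1}x$. So the fibers of $\pi$ are exactly the left cosets of $I_G(x)$.

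For the second step, left multiplication by $g$ is a bijection $G\to G$. It restricts to a bijection $I_G(x)\to gI_G(x)$, so every fiber of $\pi$ has exactly $|I_G(x)|$ elements. Thus $\pi$ is constant-to-one in the sense used in Lemma~\ref{const_to_one}.

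Finally, since $G$ is finite, $G=\bigsqcup_{y\in Gx}\pi^{-1}(y)$ is a finite disjoint union, and summing fiber sizes gives $|G| = |Gx|\cdot|I_G(x)|$. I expect no step to be a genuine obstacle. The only point needing care is the equivalence $hx=gx \iff g^{-1}h\in I_G(x)$, which uses the action axioms $(g^{-1}h)x=g^{-1}(hx)$ and $ex=x$.
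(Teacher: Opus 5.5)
Your argument is correct: the fibers of $g\mapsto gx$ are exactly the left cosets $gI_G(x)$, each of cardinality $|I_G(x)|$, and summing over the $|Gx|$ fibers gives the identity. It needs no finiteness assumption on $X$, which matters because the paper applies the lemma to $H\subseteq\aut(Y)$ acting on the infinite space $Y$. The paper gives no proof of its own and simply recalls this as the well-known orbit--stabilizer theorem, so your fiber-counting argument is the standard proof it implicitly relies on.
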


Let $Y$ be an irreducible SFT, and let $H$ be a finite subgroup of $\aut(Y)$. For $y, y' \in Y$, we write $y \sim_H y'$ if and only if there exists $h \in H$ such that $y' = h(y)$. This defines an equivalence relation $\sim_H$ on $Y$. We denote the quotient space (or orbit space) by $Z = Y/\sim_H = Y/H$. Let $[y]_H$ denote the equivalence class of $y \in Y$, and let $q_H: Y \to Z$, $y \mapsto [y]_H$ be the natural projection. We define a map $\sigma^*_Z: Z \to Z$ by $\sigma^*_Z([y]_H) = [\sigma_Y(y)]_H$. By construction, this is well-defined. This naturally induces a dynamical system $(Z, \sigma^*_Z)$. The following lemma is essentially folklore, but we provide its proof in the Appendix to keep the paper self-contained.

\begin{lemma}\label{orbitspaceSFT}
    Let $Y$ be an irreducible SFT, and let $H$ be a finite subgroup of $\aut(Y)$ acting freely on $Y$. Then, the dynamical system $(Z=Y/H, \sigma^*_Z)$ naturally induced by the action is topologically conjugate to an irreducible SFT. 
\end{lemma}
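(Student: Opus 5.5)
We prove Lemma~\ref{orbitspaceSFT} by giving $Z=Y/H$ a shift-space structure directly, through a ``canonical block recoding'' of $H$-orbits, and then checking that it is of finite type and irreducible.

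\textbf{Step 1: uniform coding and separation.} Since $H$ is finite, we pass to a higher block presentation. By the Curtis-Lyndon-Hedlund theorem, every $h\in H$ and every $h^{-1}$ is then induced by a block code with one common memory and anticipation $r$. Freeness gives $h(y)\neq y$ for every $y$ and every $h\neq e$. The map $y\mapsto \min_{h\ne e} d(h(y),y)$ is continuous and positive on the compact space $Y$, so it is bounded below. Hence there is $N$ such that $h(y)_{[-N,N]}\ne y_{[-N,N]}$ for all $y$ and all $h\neq e$; by shift-commutation this holds in every window $[i-N,i+N]$. Enlarging $N$ if needed, we may assume $N\ge r$.

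\textbf{Step 2: the quotient as a shift space.} We define a new alphabet $\mathcal{C}$ consisting of the $H$-orbits of windows. Concretely, a symbol is a set $\{h(y)_{[-M,M]} : h\in H\}$ of $(2M+1)$-blocks, for a suitable $M\ge N+r$. Define $\Psi:Y\to\mathcal{C}^{\Z}$ by
\[
\Psi(y)_i=\{h(y)_{[i-M,i+M]}:h\in H\}.
\]
This map is a sliding block code, it is constant on $H$-orbits, and it commutes with the shift.

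The key point is that $\Psi$ separates orbits. Suppose $\Psi(y)=\Psi(y')$. At each coordinate $i$ there is some $h_i$ with $h_i(y)_{[i-M,i+M]}=y'_{[i-M,i+M]}$. The windows at $i$ and $i+1$ overlap in $2M$ coordinates, and $M\ge N+r$. On this overlap, $h_i(y)$ and $h_{i+1}(y)$ agree on a block long enough that the $r$-local codes let us compare $h_{i+1}^{-1}h_i$ applied to $y$ with $y$ itself on a window of length $2N+1$. Step 1 then forces $h_i=h_{i+1}$, so $h_i\equiv h$ is constant and $y'=h(y)$. Therefore $\Psi$ induces a continuous bijection from the compact space $Z$ onto the Hausdorff space $\Psi(Y)$. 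It is a homeomorphism, and it intertwines $\sigma^*_Z$ with the shift, using Lemma~\ref{universalityofquotientmap} for continuity on the quotient. Consequently $\Psi(Y)$ is a shift space conjugate to $(Z,\sigma^*_Z)$. Irreducibility transfers directly: $\Psi(Y)$ is a factor of the irreducible space $Y$, so a point with dense forward orbit (Lemma~\ref{toptransitive}) maps to one, and we conclude with Lemma~\ref{irred_toptrans_equiv}.

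\textbf{Step 3: finite type.} This is the main obstacle. We must show that $\Psi(Y)$ is an $(M')$-step SFT for some $M'$. The natural approach is to prove that $\Psi$ lifts sequences locally.

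Take $u\in\mathcal{C}^{\Z}$ all of whose $K$-blocks occur in $\Psi(Y)$, for large $K$. Choose representatives consecutively. Given a representative window $w_i$ for $u_i$, the next symbol $u_{i+1}$ contains exactly one block whose overlap with $w_i$ matches; uniqueness holds by the Step 1 separation argument. This produces a sequence of windows that agree on overlaps and so glue to a point $y$. Assuming $Y$ is $M$-step, which we can arrange by taking $M$ large, $y\in Y$ and $\Psi(y)=u$.

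The delicate point is existence of the matching block, which is why $K$ must be at least $2$ windows long. Any two consecutive symbols that occur together in $\Psi(Y)$ come from a genuine point, and applying an element of $H$ aligns a representative of that point with $w_i$. I would first verify this carefully with two-block constraints, so that the SFT is $1$-step in the alphabet $\mathcal{C}$. This shows that $\Psi(Y)$ is an SFT and completes the proof.
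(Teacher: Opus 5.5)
Your Steps 1 and 2 are correct and mirror the paper. Your alphabet $\mathcal{C}$ of $H$-orbits of windows plays the role of the edge set of the quotient graph $G/\hat H$. Your overlap argument forcing $h_i=h_{i+1}$ is the paper's Claim that $p_\infty$ identifies exactly the $H$-orbits.

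The gap is in Step 3, at ``glue to a point $y$ \dots\ $y\in Y$ and $\Psi(y)=u$''. The symbol $\Psi(y)_i=\{h(y)_{[i-M,i+M]}:h\in H\}$ is not a function of the central window $y_{[i-M,i+M]}$, because each $h(y)_{[i-M,i+M]}$ reads $y_{[i-M-r,i+M+r]}$. The glued point is only known to satisfy $y_{[i-M,i+M]}=w_i\in u_i$. That pins down one member of $\Psi(y)_i$ (the $h=e$ one) and nothing else, and two points with the same central window can have different orbit-sets. So the existence of the matching block, which you single out as the delicate point, is actually fine. The identification $\Psi(y)_i=u_i$, which is the real content of the finite-type claim, is asserted without argument.

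You can repair this by using freeness once more, in either of two ways.
\begin{enumerate}
\item Allow $K\ge\max\{2,2r+1\}$. A point $x$ with $\Psi(x)_{[i-r,i+r]}=u_{[i-r,i+r]}$ has, by your Step 2 argument applied to the consecutive windows $w_{i-r},\dots,w_{i+r}$, a single $H$-translate agreeing with $y$ on $[i-r-M,i+r+M]$. Hence $\Psi(y)_i=\Psi(x)_i=u_i$.
\item To keep your $1$-step claim, let $x^{(k)}$ be the aligned witness for $u_ku_{k+1}$, so $x^{(k)}$ agrees with $y$ on $[k-M,k+1+M]$. Show that $h(x^{(k)})_{[k+1-M,k+1+M]}=h(x^{(k+1)})_{[k+1-M,k+1+M]}$ for all $h\in H$. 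Since $\Psi(x^{(k)})_{k+1}=u_{k+1}=\Psi(x^{(k+1)})_{k+1}$, the left side equals $g(x^{(k+1)})_{[k+1-M,k+1+M]}$ for some $g$. But $h(x^{(k)})$ and $h(x^{(k+1)})$ agree on $[k+1-M+r,k+1+M-r]$, so Step 1 forces $g=h$. Chaining these equalities, together with $h(y)_j=h(x^{(k)})_j$ whenever $[j-r,j+r]\subseteq[k-M,k+1+M]$, gives $h(y)_{[i-M,i+M]}=h(x^{(i)})_{[i-M,i+M]}$ for every $h$.
\end{enumerate}

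The paper avoids the issue by first arranging that each element of $H$ is a $1$-block code acting freely on the edges of a graph presentation, so orbit-symbols are genuinely local. The target is then the edge shift of $G/\hat H$, so finite type is automatic, and surjectivity is unique path lifting through the covering $G\to G/\hat H$. If you take that route, note that a plain higher block presentation does not shrink the window of $h$. A recoding such as $y\mapsto(h(y)_{[i-M,i+M]})_{h\in H}$, on which $H$ acts by permuting coordinates, does.
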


\subsection{Graph Theory and SFTs}
In this subsection, we recall the relationship between graph theory and SFTs. Some of the material in this subsection is based on \cite[Chapters 2]{introduction}. 
Let $G=(V,E)$ be a finite directed graph with vertex set $V$ and edge set $E$. In this paper, directed graphs are allowed to have multiple edges and self-loops. For an edge $e$, we denote its initial and terminal vertices by $i(e)$ and $t(e)$, respectively. For any $v\in V$, let $E_v(G)$ denote the set of all outgoing edges from $v$, and $E^v(G)$ denote the set of all incoming edges to $v$. Let $G'=(V',E')$ be another directed graph. A map $\varphi:V\sqcup E\to V'\sqcup E'$ satisfying $\varphi(V)\subseteq V'$ and $\varphi(E)\subseteq E'$ is called a graph homomorphism if it satisfies $i(\varphi(e))=\varphi(i(e))$ and $t(\varphi(e))=\varphi(t(e))$ for any $e\in E$. We denote such a graph homomorphism by $\varphi:G\to G'$. A bijective homomorphism is called a graph isomorphism, and the set of all automorphisms of a graph is denoted by $\aut(G)$. \\
\indent We denote the edge shift defined by a graph $G$ by $\mathsf{X}_G$. Any edge shift is an SFT, and conversely, every SFT is topologically conjugate to some edge shift. In most of the arguments in this paper, we may assume that an SFT is an edge shift defined by an essential graph $G$. From now on, we implicitly assume $G$ to be essential when necessary.
A graph $G$ is said to be strongly connected if for any vertices $v, w \in V$, there exists a path from $v$ to $w$. A graph is strongly connected if and only if its edge shift is irreducible. In what follows, we recall the strongly connected component (SCC) decomposition.

\begin{definition}\label{SCC}
    Let $G=(V,E)$ be a finite directed graph. For vertices $v, v' \in V$, we define an equivalence relation $\sim_V$ on $V$ by declaring $v \sim_V v'$ if there is a path from $v$ to $v'$ and a path from $v'$ to $v$. Here, we assume the existence of a path of length $0$ from any vertex $v$ to itself. This equivalence relation partitions the vertex set $V$ into a finite number of equivalence classes. That is, for some $r \ge 1$, we have the decomposition $V = C_1 \sqcup \cdots \sqcup C_r$. Along with this vertex partition, the edge set $E$ is also partitioned. Specifically, for each $1 \le j \le r$, let $D_j = \{e \in E \mid i(e), t(e) \in C_j\}$, and let $T = \{e \in E \mid i(e) \in C_k, t(e) \in C_l, k \ne l\}$. Then $E$ decomposes as $E = D_1 \sqcup \cdots \sqcup D_r \sqcup T$. In this case, for each $1 \le j \le r$, the subgraph $(C_j, D_j)$ is strongly connected, and thus this decomposition is called the SCC decomposition of $G$. By taking the quotient set $V/\sim_V$ as the vertex set and $T$ as the edge set, we obtain a finite directed graph $(V/\sim_V, T)$, which has no cycles. When $T = \emptyset$, there are no edges connecting different vertex components, meaning $G$ simply decomposes as $G = G_1 \sqcup \cdots \sqcup G_r$, where $G_j = (C_j, D_j)$. 
    \end{definition}

\subsection{Profinite Groups and Group Cohomology}\label{section_profinite_cohom}
    In this subsection, we recall the concept of profinite groups and their cohomology. We denote the identity element of a group $G$ by $1_G$.
    A topological group $G$ is called a profinite group if it is isomorphic to the projective limit of a family of finite groups. Here, the finite groups are endowed with the discrete topology. In this subsection, let $\{G_i\}_{i\in\Lambda}$ be a family of finite groups indexed by a directed set $\Lambda$, and let $(\{G_i\}_{i\in\Lambda}, \{p_{ij}: G_j \to G_i\}_{i \le j})$ be a projective system. Let
    \[
    G=\varprojlim G_i=\left\{ (g_i)_{i\in \Lambda}\in \prod_{i\in \Lambda}G_i\ \Big| \ g_i=p_{ij}(g_j)\text{ for any }i\le j\right\}.
    \]
    For $i\in \Lambda$, we denote the projection onto the $i$-th coordinate by $\mathrm{pr}_i: G \to G_i$, $g=(g_j)_{j\in \Lambda} \mapsto g_i$.
    The group $G=\varprojlim G_i$ is equipped with the subspace topology induced by the product topology. That is, for each $i\in\Lambda$, if we define an open normal subgroup $N_i$ by
    \[
    N_i=\mathrm{Ker}(\mathrm{pr}_i)=\{g=(g_j)_{j\in \Lambda}\in G\mid g_i=1_{G_i}\},
    \]
    then the collection $\{N_i\}_{i\in\Lambda}$ forms a fundamental system of neighborhoods of $1_G$. Therefore, for any $h=(h_i)_{i\in\Lambda}\in G$, the family
    \[
    \{hN_i\}_{i\in\Lambda}=\{\{g=(g_j)_{j\in \Lambda}\in G\mid g_i=h_i\}\}_{i\in\Lambda}
    \]
    is a fundamental system of neighborhoods of $h$. Consequently, $\{gN_i\}_{g\in G, i\in \Lambda}$ forms a basis for the topology of the topological group $G$. A profinite group is compact, Hausdorff, and totally disconnected.\\
    \indent Let $A$ be a topological $G$-module (i.e., $A$ is equipped with a continuous left $G$-action). By general theory, we can define the cohomology group $H^1(G, A)$ with coefficients in $A$. In this paper, we restrict our attention to the case where $A$ is equipped with the discrete topology and the action of $G$ on $A$ is trivial. Under these assumptions, the following holds:
    \begin{lemma}\label{cohom_homconti}
        Assume that the action of $G$ on $A$ is trivial. Then
        \[
        H^1(G,A)= \mathrm{Hom}_{\mathrm{cont}}(G,A),
        \]
        where $\mathrm{Hom}_{\mathrm{cont}}(G,A)$ denotes the group of all continuous homomorphisms from $G$ to $A$.
    \end{lemma}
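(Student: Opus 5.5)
The plan is to use the standard description of $H^1$ by inhomogeneous continuous cochains and to check that, when the action is trivial, both the cocycle condition and the coboundary relation collapse. I will take $H^1(G,A)=Z^1(G,A)/B^1(G,A)$. Here $Z^1(G,A)$ is the group of continuous maps $c:G\to A$ satisfying
\[
c(gh)=c(g)+g\cdot c(h)\qquad (g,h\in G),
\]
and $B^1(G,A)$ is the group of maps of the form $g\mapsto g\cdot a-a$ for some $a\in A$. This is the usual definition of the cohomology of a profinite group with coefficients in a discrete module. It agrees with the direct limit $\varinjlim H^1(G/N_i,A^{N_i})$ over the open normal subgroups $N_i$ of Subsection~\ref{section_profinite_cohom}, so either description may be taken as the starting point.

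First, since $g\cdot a=a$ for all $g\in G$ and $a\in A$, every coboundary $g\mapsto g\cdot a-a$ is identically $0$. Hence $B^1(G,A)=0$ and $H^1(G,A)=Z^1(G,A)$. Next, with the trivial action the cocycle identity becomes $c(gh)=c(g)+c(h)$, which says exactly that $c$ is a group homomorphism $G\to A$. Continuity is part of the definition on both sides, so $Z^1(G,A)=\mathrm{Hom}_{\mathrm{cont}}(G,A)$ as sets. The group structures also agree, since both are given by pointwise addition in $A$. This proves the claimed equality.

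The only point needing care is which definition of $H^1$ is adopted. If one starts instead from the direct-limit definition, I would argue as follows. Each continuous homomorphism $\chi:G\to A$ has open kernel because $A$ is discrete. That kernel therefore contains some $N_i$, and $\chi$ factors through the finite group $G/N_i\cong \mathrm{pr}_i(G)$, which uses the neighbourhood basis $\{N_i\}$ described above. For finite groups with trivial action, the same argument as in the previous paragraph gives $H^1(G/N_i,A)=\mathrm{Hom}(G/N_i,A)$. The inflation maps correspond to composing with the quotient maps, so the direct limit of these groups is $\mathrm{Hom}_{\mathrm{cont}}(G,A)$. I expect this compatibility check with inflation to be the only mildly nontrivial step, and it is routine.
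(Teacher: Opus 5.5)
Your argument is correct. With trivial action every coboundary $g\mapsto g\cdot a-a$ vanishes and the continuous cocycle identity reduces to additivity, so $H^1(G,A)=Z^1(G,A)=\mathrm{Hom}_{\mathrm{cont}}(G,A)$; your direct-limit cross-check (open kernels contain some $N_i$, inflation is composition with the quotient maps) is also sound. The paper gives no proof of this lemma and simply appeals to ``general theory,'' and your cochain computation is the standard verification behind that appeal.
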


\section{Definition and Basic Properties of Galois Groups}\label{section_three}

\subsection{Unramified Factor Maps}\label{subsection}
Based on previous results on constant-to-one maps in SFTs, this subsection shows that a $d$-to-1 map acts as a local homeomorphism. As a result, the unramified factor maps defined below can be regarded as analogues of covering spaces in topology and field extensions in field theory.

\begin{definition}
    Let $X$ and $Y$ be irreducible SFTs. A factor map $\vp: Y \to X$ is said to be unramified if there exists an integer $d \ge 1$ such that $|\vp^{-1}(x)| = d$ for all $x \in X$. The integer $d$ is called the degree of $\vp$, and is denoted by $\deg(\vp) = d$. 
\end{definition}

The following two facts are obtained from the results of Nasu~\cite{Nasu} and Jung~\cite{Jung}.

\begin{lemma}\label{openmap}
    Let $X$ and $Y$ be irreducible SFTs, and let $\vp: Y \to X$ be an unramified factor map. Then $\vp$ is an open map.
\end{lemma}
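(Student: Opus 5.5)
We reduce to a statement about sliding block codes and then use the known structure of constant-to-one maps between irreducible SFTs.

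\emph{Step 1: bi-closing.} Since $X$ and $Y$ are irreducible SFTs and $\vp$ is constant-to-one (in particular finite-to-one), the results of Nasu~\cite{Nasu} give that $\vp$ is bi-closing. By \cite[Proposition 8.1.9]{introduction} we may then pass to a higher block presentation. After recoding, $\vp=\Phi_\infty$ with $\Phi$ a $1$-block map, and there is an integer $K$ witnessing both right- and left-closing in the form recalled in the preliminaries.

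\emph{Step 2: openness of the image of a cylinder.} Because cylinders $[w]_i$ form a basis and $\vp$ commutes with $\sigma$, it suffices to show that $\vp([w]_0)$ is open for every $w\in\BB(Y)$. Fix $y\in[w]_0$ and set $x=\vp(y)$. We want $N$ such that every $x'\in X$ with $x'_{[-N,N]}=x_{[-N,N]}$ has a preimage in $[w]_0$.

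To get one, we use Jung~\cite{Jung}: for a finite-to-one factor code between irreducible SFTs, being constant-to-one is equivalent to being open. This is essentially the statement we are proving. If we wish to avoid citing it directly, the direct argument goes as follows.
\begin{enumerate}
\item Fix $d=\deg(\vp)$. Every point $x'$ has exactly $d$ preimages.
\item Bi-closing implies that preimages of a point are pairwise separated in a uniform way. In particular, the $d$ preimages of $x'$ are determined by their central $(2K+1)$-blocks, and the number of distinct $\Phi$-compatible central blocks over a long enough window of $x'$ is exactly $d$. This is a standard ``magic word / degree'' count: for a constant-to-one code, every word of $X$ is magic with the same depth.
\item Choose $N$ large. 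For $x'$ agreeing with $x$ on $[-N,N]$, compare the sets of central blocks of preimages of $x'$ and of $x$ over $[-N,N]$. Both sets have size $d$, and they are compatible with the same $X$-word. Counting then forces them to coincide, so the block $y_{[-|w|,|w|]}$ extends to a preimage of $x'$ lying in $[w]_0$.
\end{enumerate}

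The main obstacle is item~(2): showing that the number of distinct central preimage blocks over a long window equals $d$ uniformly, rather than only over magic words. For this I would use that for a constant-to-one code on irreducible SFTs, the degree equals the minimal number of preimage symbols at a position over any word. Constancy of fiber cardinality, together with compactness, rules out words where this count exceeds $d$. Concretely, suppose some word $u$ of $X$ admitted more than $d$ compatible central symbols. Extend $u$ to a point using the bi-closing property. Left- and right-closing let each such symbol be continued uniquely to a full preimage, so that point would have more than $d$ preimages, which is a contradiction. Once this uniform count is in hand, the openness in Step~2 follows, and hence $\vp$ is open.
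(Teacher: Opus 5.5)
\textbf{The gap is in item~(2).} That item carries the whole argument, and its justification does not work. Right- and left-closing are \emph{uniqueness} statements: a preimage, if it exists, is determined by the image together with a finite window of it. They say nothing about \emph{existence}. In particular, nothing in bi-closing guarantees that a $Y$-block compatible with $u$ extends to a full preimage of a prescribed point $x'$ with $x'_{[-N,N]}=u$. That existence is a continuing property, not a closing one, and it is essentially the openness you are trying to prove. So the sentence ``left- and right-closing let each such symbol be continued uniquely to a full preimage'' assumes the conclusion.

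For a fixed word the sentence is simply false. Let $X$ be the full $2$-shift, $Y=X^{[2]}$, and $\vp$ the $1$-block conjugacy $ab\mapsto a$, which has degree $1$ and is bi-closing. The word $u=0$ has two compatible symbols, $00$ and $01$. Yet any $x'$ with $x'_0=0$ has exactly one preimage, so only one of them continues, and your argument would produce a contradiction where there is none. The bound you want is true only for long windows. You do mention compactness, but the concrete argument you give never uses it, and it is exactly what is needed.

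\textbf{How to repair it.} Let the window grow and take limits, instead of fixing one extension $x'$. Fix $M\ge\max\{K,|w|\}$ and, for $u\in\BB_{2N+1}(X)$, set $S_M(u)=\{y_{[-M,M]} : y\in Y,\ \vp(y)_{[-N,N]}=u\}$. Suppose that for every $N$ some $u_N$ had $|S_M(u_N)|\ge d+1$. Choose $y^{(1,N)},\dots,y^{(d+1,N)}$ realizing $d+1$ distinct blocks, and pass to a subsequence along which $y^{(j,N)}\to z^{(j)}$ for each $j$. The $z^{(j)}$ still have pairwise distinct $[-M,M]$-blocks. On the other hand $\vp(z^{(1)})=\cdots=\vp(z^{(d+1)})$, because the images $\vp(y^{(j,N)})$ all agree on $[-N,N]$ with $N\to\infty$. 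This contradicts $\deg(\vp)=d$, so for some $N$ every $u$ satisfies $|S_M(u)|\le d$.

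Now take $u=x_{[-N,N]}$. Since $M\ge K$, bi-closing (the argument of Lemma~\ref{separationlemma}) shows that the $d$ preimages of $x$, and likewise those of $x'$, have pairwise distinct $[-M,M]$-blocks. Each family therefore fills $S_M(u)$. Your item~(3) then goes through and gives $\vp([w]_0)\supseteq[x_{[-N,N]}]_{-N}$.

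\textbf{Comparison with the paper.} Once repaired, your route genuinely differs from the paper's. The paper's proof is just a citation of Nasu~\cite{Nasu} for openness, and bi-closing is derived afterwards from openness via Jung~\cite{Jung}. You reverse that order. So your Step~1 must be a precise citation of a result that constant-to-one implies bi-closing. The detour through Jung in Step~2 should be dropped, since, as you note yourself, it is the statement being proved.
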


\begin{proof}
    This follows immediately from Nasu~\cite[Proposition 1.1, Proposition 1.3, and Corollary 6.6]{Nasu}.
\end{proof}

\begin{lemma}\label{biclosing}
    Let $X$ and $Y$ be irreducible SFTs, and let $\vp: Y \to X$ be an unramified factor map. Then $\vp$ is bi-closing.
\end{lemma}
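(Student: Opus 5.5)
We need to prove that an unramified factor map $\vp: Y \to X$ between irreducible SFTs is bi-closing. The plan is to follow the same route as Lemma~\ref{openmap}: reduce to the known results of Nasu and Jung on constant-to-one maps, supplemented by a direct argument via periodic points and doubly transitive points in case the citation needs unpacking. The guiding principle is that for a finite-to-one factor code from an irreducible SFT, being right-closing is equivalent to being constant-to-one on right-transitive points. Being genuinely constant-to-one everywhere should then force both closing properties.

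\textbf{Step 1: finite-to-one and degree.} Since $\vp$ is constant-to-one with fibre size $d$, it is finite-to-one. By the standard theory \cite[Chapter 9]{introduction}, every doubly transitive point of $X$ has exactly $d^*(\vp)$ preimages, so here $d^*(\vp)=d$.

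\textbf{Step 2: right-closing, by contradiction.} Suppose $y \ne y'$ are left-asymptotic with $\vp(y)=\vp(y')=x$.
\begin{enumerate}
\item Using irreducibility of $Y$, extend the right tails of $y$ and $y'$ so that they agree from some coordinate on, giving a diamond.
\item Then glue a right-transitive tail, keeping the disagreement in the middle.
\item This produces a point of $X$ that is right transitive but has a "bubble" of extra preimages.
\end{enumerate}
The standard count in \cite[Theorem 9.1.11 and its proof]{introduction}, or Nasu \cite[Corollary 6.6]{Nasu} together with Jung's result that a constant-to-one code is bi-closing \cite{Jung}, shows the following: the number of preimages of a point whose left half is not transitive can exceed $d$ exactly when the code fails to be right-closing. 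More concretely, one takes a left-transitive point $x$ of $X$. If $\vp$ were not right-closing, the point $x$ would have at least $d+1$ preimages, because the distinct left-asymptotic pair contributes an extra branch that cannot be merged. This contradicts $|\vp^{-1}(x)|=d$.

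\textbf{Step 3: left-closing.} Apply Step 2 to the transposed shifts (reverse all sequences). Reversal preserves irreducibility, the SFT property and constant-to-one-ness, and it swaps left- and right-closing.

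\textbf{Main obstacle.} The main difficulty is the counting argument in Step 2: showing that a non-right-closing finite-to-one code has a point with strictly more than $d^*$ preimages. In practice I would cite it directly, either as Nasu's result that constant-to-one codes between irreducible SFTs are bi-closing or as Jung's theorem characterizing right-closing codes via $d^*$ on left-transitive points. The proof would then be a short citation, mirroring that of Lemma~\ref{openmap}.
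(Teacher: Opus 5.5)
There is a genuine gap in the self-contained argument you sketch in Step~2, though citing the literature is fine. The paper's own proof is also a one-line citation, but it goes through openness: Lemma~\ref{openmap} (from Nasu) shows that $\vp$ is open, and Jung's Theorem~1.1 turns openness into bi-closing. Your plan skips openness and cites a direct ``constant-to-one $\Rightarrow$ bi-closing'' statement, or a degree count on transitive points. That only works if the reference states exactly that, so the safe citation is the paper's chain.

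Step~2 fails as written, for three reasons.
\begin{enumerate}
\item Step~2(1) cannot be carried out. Making the right tails of $y,y'$ re-join while keeping their images equal would produce a diamond, and a finite-to-one factor code from an irreducible SFT has no diamonds. Irreducibility of $Y$ lets you connect paths but gives no control over their images.
\item The ``concrete'' claim is false for an arbitrary left-transitive point. A doubly transitive point is left-transitive and has exactly $d$ preimages whether or not $\vp$ is right-closing.
\item Your guiding principle has the sides swapped. Right-closing is governed by left-transitive points; a right-resolving code can have right-transitive points with more than $d$ preimages.
\end{enumerate}

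The missing idea is that the left-transitive point must be built from the bad pair. Recode so that $\vp$ is a $1$-block code on an edge shift. Let $y\neq y'$ agree on $(-\infty,n]$ with $\vp(y)=\vp(y')$.

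Replace the common ray $y_{(-\infty,n]}$ by a left-transitive left-infinite path ending at the same vertex, keeping $y_{[n+1,\infty)}$ and $y'_{[n+1,\infty)}$. This gives distinct left-asymptotic points $\hat y,\hat y'$ with a common image $\hat x$. The point $\hat x$ is left-transitive because $\vp$ is onto.

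Let $w$ be a magic word with magic coordinate $i$; it occurs in $\hat x$ at positions $m_k\to-\infty$. For every word $uwv$ of $X$, the symbols at coordinate $|u|+i$ in preimages of $uwv$ form a subset of the $d$ magic symbols of $w$. By minimality of $d$, this subset has at least $d$ elements, so it is all of them. By compactness, each magic symbol then occurs at coordinate $m_k+i$ in some preimage of $\hat x$. This gives $d$ preimages of $\hat x$ with pairwise distinct symbols there.

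For $k$ large, $\hat y$ and $\hat y'$ carry the same symbol at $m_k+i$, so at least one of them is a further preimage. Hence $|\vp^{-1}(\hat x)|\ge d+1$, contradicting constant-to-one; here $d$ is the degree by your Step~1. Left-closing then follows from your reversal in Step~3.
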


\begin{proof}
    This follows immediately from Jung~\cite[Theorem 1.1]{Jung} and Lemma~\ref{openmap}.
\end{proof}

We prove the local injectivity of unramified factor maps.

\begin{lemma}\label{separationlemma}
    Let $X$ and $Y$ be irreducible SFTs, and let $\vp: Y \to X$ be an unramified factor map. Then there exists a $\delta > 0$ such that for any $x, y \in Y$, if $\vp(x)=\vp(y)$ and $d(x,y)<\delta$, then $x=y$.
\end{lemma}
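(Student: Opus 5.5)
Since $\vp$ is unramified, Lemma~\ref{biclosing} tells us it is bi-closing, and the plan is to get the uniform separation $\delta$ directly from the closing constants. After passing to a higher block presentation, we may assume $\vp=\Phi_\infty$ with $\Phi$ a $1$-block code. By \cite[Proposition 8.1.9]{introduction}, there is an integer $K\ge 0$ with the following two properties. If $\Phi(y_{[-K,K]})=\Phi(y'_{[-K,K]})$ and $y_{[-K,0]}=y'_{[-K,0]}$, then $y_1=y'_1$ (right-closing). If $\Phi(y_{[-K,K]})=\Phi(y'_{[-K,K]})$ and $y_{[0,K]}=y'_{[0,K]}$, then $y_{-1}=y'_{-1}$ (left-closing). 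We take $K$ large enough to serve for both conditions and set $\delta=2^{-K}$.

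Now suppose $\vp(y)=\vp(y')$ and $d(y,y')<\delta$. The metric condition gives $y_{[-K,K]}=y'_{[-K,K]}$. The equality of images gives $\Phi(y_i)=\Phi(y'_i)$ for all $i$, so the hypothesis on $\Phi$-images holds at every position and after every shift. We then induct forward. Assuming $y_{[-K,n]}=y'_{[-K,n]}$ for some $n\ge K$, we apply the right-closing property to $\sigma^{n}(y)$ and $\sigma^{n}(y')$. These two points agree on $[-K,0]$, since $n-K\ge -K$, and their $\Phi$-images agree, so we obtain $y_{n+1}=y'_{n+1}$. Hence $y_i=y'_i$ for all $i\ge -K$.

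The symmetric induction with the left-closing property extends the agreement to all $i\le K$. Therefore $y=y'$.

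The only point needing care is the precise form of the closing criterion: it is stated for $1$-block codes, with window $[-K,K]$ on images and $[-K,0]$ (resp.\ $[0,K]$) on preimages. The shifting argument must respect these windows, which is why we need agreement on a central block of radius $K$ before starting the induction.

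As an alternative that avoids the coding, one could argue topologically. If no such $\delta$ existed, we would have pairs $y^{(n)}\ne y'^{(n)}$ in a common fibre with $d(y^{(n)},y'^{(n)})\to 0$. Shifting each pair so that the first disagreement near the origin sits at a fixed coordinate, and passing to a limit by compactness, would produce distinct points in one fibre that are left- or right-asymptotic. This contradicts Lemma~\ref{biclosing}. The direct block-code argument above is cleaner, so it is the one to write out.
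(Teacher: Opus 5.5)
Your proposal is correct and follows essentially the same route as the paper. You get bi-closing from Lemma~\ref{biclosing}, take a common closing constant $K$ from \cite[Proposition 8.1.9]{introduction}, set $\delta=2^{-K}$, and extend agreement outward by induction, using right-closing for positive coordinates and left-closing for negative ones. The paper runs a single symmetric induction on $|m|$ where you run two one-sided ones, and your compactness alternative is also valid but not needed.
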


\begin{proof}
    By Lemma~\ref{biclosing}, $\vp$ is bi-closing. 
    Therefore, there exists an integer $K_1$ such that for any $x, y \in X$, if $\Phi(x_{[-K_1,K_1]})=\Phi(y_{[-K_1,K_1]})$ and $x_{[-K_1,0]}=y_{[-K_1,0]}$, then $x_1 = y_1$, where $\Phi$ is the block code inducing $\vp$.
    Similarly, there exists an integer $K_2$ such that for any $x, y \in X$, if $\Phi(x_{[-K_2,K_2]})=\Phi(y_{[-K_2,K_2]})$ and $x_{[0,K_2]}=y_{[0,K_2]}$, then $x_{-1} = y_{-1}$. Let $K = \max\{K_1, K_2\}$ and $\delta = 2^{-K}$. 
    Fix $x, y \in Y$ satisfying $d(x, y)<\delta$ and $\vp(x)=\vp(y)$. What we want to show is $x_m = y_m$ and $x_{-m} = y_{-m}$ for all $m = 0, 1, 2, \dots$. Therefore, we will prove this by induction on $m$.
    First, since $d(x, y)<\delta=2^{-K}$, the definition of the metric on $X$ implies that $x_m=y_m$ and $x_{-m}=y_{-m}$ for all $m\le K$. Next, suppose $m > K$. By the induction hypothesis, we have $x_{[-m+1, m-1]}=y_{[-m+1, m-1]}$.
    Thus, $(\sigma^{m-1}(x))_{[-2m+2, 0]}=(\sigma^{m-1}(y))_{[-2m+2, 0]}$, and in particular, 
    \[
    (\sigma^{m-1}(x))_{[-K_1,0]}=(\sigma^{m-1}(y))_{[-K_1,0]}
    \]
    holds. Moreover, since $\vp$ commutes with the shift and $\vp(x)=\vp(y)$, we obtain $\vp(\sigma^{m-1}(x))=\vp(\sigma^{m-1}(y))$, which yields
    \[
    \Phi((\sigma^{m-1}(x))_{[-K_1, K_1]}) = \Phi((\sigma^{m-1}(y))_{[-K_1, K_1]}).
    \]
    By the definition of $K_1$, it follows that $(\sigma^{m-1}(x))_{[1]}=(\sigma^{m-1}(y))_{[1]}$, which implies $x_m = y_m$. By a similar argument, we also obtain $x_{-m} = y_{-m}$. This completes the induction.
\end{proof}

By restating Lemma \ref{separationlemma} using the definition of the metric on the shift space, we obtain the following.

\begin{lemma}\label{separationlemmavar2}
     Let $X$ and $Y$ be irreducible SFTs, and let $\vp: Y \to X$ be an unramified factor map. Then there exists an integer $N > 0$ such that for any $x, y \in Y$, if $\vp(x)=\vp(y)$ and $x_{[-N,N]} = y_{[-N,N]}$, then $x = y$.
\end{lemma}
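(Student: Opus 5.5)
This is a direct translation of Lemma~\ref{separationlemma} into the language of coordinates, using the definition of the metric $d$ on the shift space. First, I apply Lemma~\ref{separationlemma} to obtain $\delta>0$ such that $\vp(x)=\vp(y)$ and $d(x,y)<\delta$ together imply $x=y$. Then I choose an integer $N>0$ large enough that $2^{-(N+1)}<\delta$, for instance any $N$ with $2^{-N}<\delta$.

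Next, suppose $x,y\in Y$ satisfy $\vp(x)=\vp(y)$ and $x_{[-N,N]}=y_{[-N,N]}$. If $x=y$ there is nothing to prove. Otherwise, let $k=\min\{|i| : x_i\ne y_i\}$. Since $x$ and $y$ agree on every coordinate $i$ with $|i|\le N$, we get $k\ge N+1$, and therefore
\[
d(x,y)=2^{-k}\le 2^{-(N+1)}<\delta .
\]
Lemma~\ref{separationlemma} now gives $x=y$, contradicting $x\ne y$. Hence $x=y$ in all cases.

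No step here is a real obstacle; the only point requiring care is the off-by-one relation between the agreement window $[-N,N]$ and the exponent in the metric. Alternatively, one could take $N=K$ directly from the proof of Lemma~\ref{separationlemma}, since its induction only uses agreement on $[-K,K]$ as the base case.
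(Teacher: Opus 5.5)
Your proof is correct and is essentially the paper's argument: the paper justifies this lemma only as a restatement of Lemma~\ref{separationlemma} via the definition of the metric, which is exactly the translation you carry out, including the off-by-one point that agreement on $[-N,N]$ gives $d(x,y)\le 2^{-(N+1)}$. Your alternative of taking $N=K$ from that proof also works, provided you use $\max\{K,1\}$ to ensure $N>0$.
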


\subsection{Lifts}

\begin{definition}
    Let $X, Y,$ and $Z$ be irreducible SFTs. Let $\vp: Y \to X$ be an unramified factor map, and let $\varphi: Z \to X$ be a sliding block code. A sliding block code $\tilde{\varphi}:Z\to Y$ that satisfies $\vp\circ\tilde{\varphi}=\varphi$ is said to be a lift of $\varphi$ (with respect to $\vp$).
\[
\begin{tikzcd}[row sep=large, column sep=large]
     & Y \arrow[d, "\vp"] \\
    Z \arrow[r, "\varphi"'] \arrow[ru, "\tilde{\varphi}"] & X
\end{tikzcd}
\]
   
\end{definition}

As the uniqueness of lifts plays a key role in covering space theory, a similar uniqueness property holds for SFTs.

\begin{theorem}[Uniqueness of lifts]\label{liftunique}
Let $X, Y,$ and $Z$ be irreducible SFTs, $\vp: Y \to X$ be an unramified factor map, and $\varphi:Z \to X$ be a sliding block code. Let $\tilde{\varphi}_1, \tilde{\varphi}_2: Z \to Y$ be two lifts of $\varphi$. If there exists a $z_0 \in Z$ such that $\tilde{\varphi}_1(z_0) = \tilde{\varphi}_2(z_0)$, then $\tilde{\varphi}_1=\tilde{\varphi}_2$.
\end{theorem}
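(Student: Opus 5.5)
The plan is to imitate the covering-space proof: show that the coincidence set
\[
A=\{z\in Z\mid \tilde{\varphi}_1(z)=\tilde{\varphi}_2(z)\}
\]
is a non-empty subshift of $Z$ that is also open. Then Lemma~\ref{clopen_subshift} gives $A=Z$.

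\emph{$A$ is a subshift.} Non-emptiness holds because $z_0\in A$. Closedness follows from the continuity of $\tilde{\varphi}_1,\tilde{\varphi}_2$ and the Hausdorffness of $Y$, since $A$ is the preimage of the diagonal under $(\tilde{\varphi}_1,\tilde{\varphi}_2)$. Shift-invariance holds because both lifts are sliding block codes, hence commute with the shifts. So $\sigma_Z(A)\subseteq A$, and also $\sigma_Z^{-1}(A)\subseteq A$; in particular $A$ is closed and $\sigma_Z$-invariant, which is a subshift.

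\emph{$A$ is open.} This is where local injectivity (Lemma~\ref{separationlemma}) enters. Let $\delta>0$ be as in that lemma. Fix $z\in A$ and set $y=\tilde{\varphi}_1(z)=\tilde{\varphi}_2(z)$. By continuity there is a neighborhood $U$ of $z$ with $d(\tilde{\varphi}_i(w),y)<\delta/2$ for $i=1,2$ and all $w\in U$. Then $d(\tilde{\varphi}_1(w),\tilde{\varphi}_2(w))<\delta$ for $w\in U$. This step uses that $d$ is an ultrametric, or simply the triangle inequality. Moreover $\vp(\tilde{\varphi}_1(w))=\varphi(w)=\vp(\tilde{\varphi}_2(w))$, so Lemma~\ref{separationlemma} forces $\tilde{\varphi}_1(w)=\tilde{\varphi}_2(w)$. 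Hence $U\subseteq A$.

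\emph{Conclusion.} Since $Z$ is irreducible and $A$ is a non-empty open subshift, Lemma~\ref{clopen_subshift} gives $A=Z$, that is, $\tilde{\varphi}_1=\tilde{\varphi}_2$. The only substantive input is Lemma~\ref{separationlemma}, which is already available, so I expect no real obstacle. The one point needing care is that Lemma~\ref{clopen_subshift} requires a genuine subshift (closed and invariant), and shift-invariance of $A$ comes directly from the lifts commuting with the shifts.
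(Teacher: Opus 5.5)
Your proposal is correct and follows essentially the same route as the paper: the coincidence set is closed, shift-invariant, and open by the local injectivity of Lemma~\ref{separationlemma} with a $\delta/2$ triangle-inequality argument. The only difference is that you cite Lemma~\ref{clopen_subshift} to finish, whereas the paper repeats that lemma's dense-forward-orbit argument inline.
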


\begin{proof}
    Set $E=\{z \in Z \mid \tilde{\varphi}_1(z)=\tilde{\varphi}_2(z)\} \subseteq Z$. Since $z_0 \in E$, the set $E$ is non-empty. It suffices to show that $E = Z$.
    Since $Y$ is a Hausdorff space and $\tilde{\varphi}_1, \tilde{\varphi}_2$ are continuous, $E$ is a closed subset of $Z$. Next, we show that $E$ is open in $Z$. By Lemma \ref{separationlemma}, there exists a $\delta > 0$ such that for any $x, y \in Y$, if $\vp(x) = \vp(y)$ and $d(x, y) < \delta$ then $x = y$. Fix $z \in E$, and set $x = \tilde{\varphi}_1(z) = \tilde{\varphi}_2(z)$. By the continuity of $\tilde{\varphi}_1$ and $\tilde{\varphi}_2$, there exists a $\delta_0 > 0$ such that for any $w \in Z$ satisfying $d(w, z) < \delta_0$, we have $d(\tilde{\varphi}_1(w), x) < \delta/2$ and $d(\tilde{\varphi}_2(w), x) < \delta/2$.

    First, we show that $B(z,\delta_0) \subseteq E$. Fix $w \in B(z,\delta_0)$, where $B(z,\delta_0)$ denotes the open ball. By the triangle inequality, we have
    \[
    d(\tilde{\varphi}_1(w), \tilde{\varphi}_2(w)) \le d(\tilde{\varphi}_1(w), x) + d(x, \tilde{\varphi}_2(w)) < \frac{\delta}{2} + \frac{\delta}{2}=\delta.
    \]
    Moreover, since both $\tilde{\varphi}_1$ and $\tilde{\varphi}_2$ are lifts of $\varphi$, we have $\vp(\tilde{\varphi}_1(w)) = \vp(\tilde{\varphi}_2(w))$. By the choice of $\delta$, this implies $\tilde{\varphi}_1(w) = \tilde{\varphi}_2(w)$, and thus $w \in E$. This shows that $B(z,\delta_0) \subseteq E$, and hence $E$ is an open set. It is clear from the definition that the set $E$ is shift-invariant. Thus far, we have shown that $E$ is a clopen, shift-invariant set. From Lemma~\ref{toptransitive},  there exists a point $a \in Z$ which has a dense forward orbit. Because $E$ is an open set, there exists an integer $N$ such that $\sigma^N(a) \in E$. By the shift-invariance of $E$, we have
    $O^+(\sigma^N(a)) \subseteq E$. Since $\overline{O^+(\sigma^N(a))} = Z$ holds, we have $\overline{E} = Z$. Finally, since $E$ is a closed set, we conclude that $E=\overline{E}=Z$.
\end{proof}

\begin{remark}
In the proof of the uniqueness of lifts, it is observed that topological transitivity in dynamical systems plays the same role as connectedness in topological spaces.
\end{remark}

\subsection{Galois factor and Galois group}
In this subsection, we define Galois factors and their Galois groups. A Galois factor corresponds to a Galois extension in field theory and a Galois covering in covering space theory.

\begin{definition}
    Let $X$ and $Y$ be irreducible SFTs, and let $\vp: Y \to X$ be an unramified factor map. We define the relative automorphism group $\aut(Y/X,\vp)$ with respect to $\vp$ by
    \[
\aut(Y/X,\vp)=\{\varphi\in\aut(Y) \mid \vp\circ \varphi=\vp\}.
    \]
    If no confusion arises, we simply write $\aut(Y/X)$.
\end{definition}

    The relative automorphism group $\aut(Y/X,\vp)$ acts naturally on the fiber $\vp^{-1}(x)$ over each point $x \in X$. Specifically, for $a \in \vp^{-1}(x)$ and $\varphi \in \aut(Y/X,\vp)$, we set $\varphi \cdot a = \varphi(a)$. Since $x = \vp(a) = \vp(\varphi(a))$, we have $\varphi(a) \in \vp^{-1}(x)$, which implies that this action is well-defined. Furthermore, it naturally acts on $Y$ as well. \\
\indent In the following, we prove several propositions necessary for the definition of Galois factors.

\begin{lemma}\label{rigidity}
    Let $X$ and $Y$ be irreducible SFTs, $\vp: Y \to X$ be an unramified factor map, and $\varphi \in \aut(Y/X, \vp)$. If $\varphi(y) = y$ for some $y \in Y$, then $\varphi=\id_Y$.
\end{lemma}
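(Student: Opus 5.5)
The plan is to reduce the statement directly to the uniqueness of lifts, Theorem~\ref{liftunique}. I take $Z = Y$ and consider the sliding block code $\vp: Y \to X$ itself as the map to be lifted.

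First, both $\varphi$ and $\id_Y$ are lifts of $\vp$ with respect to $\vp$. Each is a sliding block code $Y \to Y$, since $\varphi \in \aut(Y)$ is a topological conjugacy and hence continuous and shift-commuting. Each also satisfies the lifting equation: $\vp \circ \varphi = \vp$ holds by the definition of $\aut(Y/X,\vp)$, and $\vp \circ \id_Y = \vp$ is trivial.

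Second, the two lifts agree at the point $y$, because $\varphi(y) = y = \id_Y(y)$. The hypotheses of Theorem~\ref{liftunique} are therefore satisfied: $X$, $Y$ and $Z = Y$ are irreducible SFTs, $\vp$ is unramified, and the two lifts coincide at $z_0 = y$. The theorem then gives $\varphi = \id_Y$.

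I expect no real obstacle here. The only point to check is that Theorem~\ref{liftunique} allows $Z$ to coincide with $Y$ and the map being lifted to be $\vp$ itself. It does, since the theorem only requires $Z$ to be an irreducible SFT and the lifted map to be a sliding block code into $X$. Alternatively, the argument could be redone directly: the fixed-point set of $\varphi$ is closed, shift-invariant, and open by Lemma~\ref{separationlemma}, so by Lemma~\ref{clopen_subshift} it equals $Y$. Invoking Theorem~\ref{liftunique} is cleaner, and that is the route I would take.
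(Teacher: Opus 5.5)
Your proposal is correct and is essentially the paper's proof. The paper also observes that $\varphi$ and $\id_Y$ are both lifts of $\vp$ with respect to $\vp$ that agree at $y$, and then applies Theorem~\ref{liftunique} with $Z=Y$, $\tilde{\varphi}_1=\varphi$ and $\tilde{\varphi}_2=\id_Y$.
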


\begin{proof}
    Since both $\varphi$ and $\id_Y$ are lifts of $\vp$ (with respect to $\vp$), the result follows by applying Theorem~\ref{liftunique} with $Z = Y$, $\tilde{\varphi}_1 = \varphi$, and $\tilde{\varphi}_2 = \id_Y$.
\end{proof}

\begin{corollary}\label{freeaction}
    Let $X$ and $Y$ be irreducible SFTs, and let $\vp: Y \to X$ be an unramified factor map. Then the natural action of $\aut(Y/X,\vp)$ on the fiber $\vp^{-1}(x)$ over each point $x \in X$ is free. Furthermore, its action on $Y$ is also free.
\end{corollary}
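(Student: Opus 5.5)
The corollary is a direct reformulation of Lemma~\ref{rigidity} in the language of group actions, so I will simply unwind the definition of freeness. Freeness of the action on a fiber $\vp^{-1}(x)$ means that for every $a \in \vp^{-1}(x)$ the isotropy subgroup $I_{\aut(Y/X,\vp)}(a)$ is trivial. Freeness of the action on $Y$ means the same for every $y \in Y$.

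First I treat the action on $Y$. Fix $y \in Y$ and let $\varphi \in I_{\aut(Y/X,\vp)}(y)$, so $\varphi(y) = y$. Since $\varphi \in \aut(Y/X,\vp)$ and $\vp$ is unramified, Lemma~\ref{rigidity} applies directly and gives $\varphi = \id_Y$. Hence $I_{\aut(Y/X,\vp)}(y) = \{\id_Y\}$, and the action on $Y$ is free.

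For the fiber statement, note that the action on $\vp^{-1}(x)$ is the restriction of the action on $Y$. This restriction is well defined because $\vp \circ \varphi = \vp$, as already observed in the text preceding Lemma~\ref{rigidity}. Therefore, for $a \in \vp^{-1}(x)$, the stabilizer of $a$ for the fiber action coincides with its stabilizer for the action on $Y$, which we have just shown to be trivial.

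The argument has no real obstacle: all the substance lies in Theorem~\ref{liftunique}, already packaged as Lemma~\ref{rigidity}. The only point to check is that the stabilizers for the fiber action and for the action on $Y$ agree, which is immediate from the definitions.
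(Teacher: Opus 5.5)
Your proposal is correct and follows the paper's approach: the paper states this corollary without a separate proof, as an immediate consequence of Lemma~\ref{rigidity}. That lemma is exactly what you use to show stabilizers in $Y$ are trivial, and the fiber case then follows by restriction.
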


\begin{corollary}\label{howmanyorbit}
    Let $X$ and $Y$ be irreducible SFTs, $\vp: Y \to X$ be an unramified factor map, and $x \in X$. Then, for any $y \in \vp^{-1}(x)$, we have $|\aut(Y/X,\vp)| = |\aut(Y/X,\vp)\cdot y|$, where $\aut(Y/X,\vp)\cdot y$ denotes the orbit of $y$ under the action.
\end{corollary}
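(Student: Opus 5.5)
The claim follows directly from the orbit-stabilizer theorem (Lemma~\ref{orbitstabilizer}) combined with the freeness of the action established in Corollary~\ref{freeaction}.

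The first step is to make sure Lemma~\ref{orbitstabilizer} applies, which requires $\aut(Y/X,\vp)$ to be finite. For $y\in\vp^{-1}(x)$, consider the evaluation map
\[
\aut(Y/X,\vp)\to\vp^{-1}(x),\quad \varphi\mapsto\varphi(y).
\]
It takes values in $\vp^{-1}(x)$ because $\vp\circ\varphi=\vp$. It is injective: if $\varphi_1(y)=\varphi_2(y)$, then $\varphi_2^{-1}\circ\varphi_1\in\aut(Y/X,\vp)$ fixes $y$, so Lemma~\ref{rigidity} gives $\varphi_1=\varphi_2$. Since $|\vp^{-1}(x)|=\deg(\vp)<\infty$, this shows $|\aut(Y/X,\vp)|\le\deg(\vp)$. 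In particular the group is finite.

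Next, restrict the action of $\aut(Y/X,\vp)$ to the fiber $\vp^{-1}(x)$. This is well defined, as noted before Lemma~\ref{rigidity}. By Corollary~\ref{freeaction}, the stabilizer $I_{\aut(Y/X,\vp)}(y)$ is trivial, so $|I_{\aut(Y/X,\vp)}(y)|=1$. Lemma~\ref{orbitstabilizer} then yields
\[
|\aut(Y/X,\vp)|=|\aut(Y/X,\vp)\cdot y|.
\]
Equivalently, the injective evaluation map above is a bijection onto the orbit $\aut(Y/X,\vp)\cdot y$, which gives the equality directly.

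No step is a genuine obstacle. The only point needing care is finiteness of the group, which the injectivity argument supplies before the orbit-stabilizer theorem is invoked.
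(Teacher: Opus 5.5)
Your proof is correct and follows the paper's argument. Freeness of the action (Corollary~\ref{freeaction}) makes the stabilizer trivial, and the orbit-stabilizer theorem (Lemma~\ref{orbitstabilizer}) then gives the equality. Your preliminary finiteness check is a worthwhile addition: the paper invokes Lemma~\ref{orbitstabilizer}, which assumes a finite group, before it proves finiteness in Proposition~\ref{cj_le_deg}. That later proof uses the same evaluation-map injectivity argument you give, which does not depend on this corollary, so there is no circularity.
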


\begin{proof}
    Fix $y \in \vp^{-1}(x)$. Since the action is free by Corollary~\ref{freeaction}, the stabilizer subgroup is $I_{\aut(Y/X,\vp)}(y) = \{\id_Y\}$. Thus, $|I_{\aut(Y/X,\vp)}(y)|=1$. By Lemma~\ref{orbitstabilizer}, we have
    \[
    |\aut(Y/X,\vp)|=|\aut(Y/X,\vp)\cdot y||I_{\aut(Y/X,\vp)}(y)|=|\aut(Y/X,\vp)\cdot y|,
    \]
    which completes the proof.
\end{proof}

\begin{proposition}\label{cj_le_deg}
    Let $X$ and $Y$ be irreducible SFTs, and let $\vp: Y \to X$ be an unramified factor map. Then $\aut(Y/X,\vp)$ is a finite group, and $|\aut(Y/X,\vp)| \le \deg(\vp)$.
\end{proposition}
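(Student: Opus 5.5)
Fix any point $x\in X$ and any $y\in\vp^{-1}(x)$, and consider the orbit map
\[
\mathrm{ev}_y:\aut(Y/X,\vp)\to \vp^{-1}(x),\qquad \varphi\mapsto \varphi(y).
\]
The plan is to show this map is injective. Since $|\vp^{-1}(x)|=\deg(\vp)$ is finite by the definition of an unramified factor map, injectivity gives at once that $\aut(Y/X,\vp)$ is finite and has cardinality at most $\deg(\vp)$.

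For injectivity, suppose $\varphi_1(y)=\varphi_2(y)$ with $\varphi_1,\varphi_2\in\aut(Y/X,\vp)$. There are two equivalent routes. The first sets $\psi=\varphi_2^{-1}\circ\varphi_1$; since $\vp\circ\varphi_2=\vp$ gives $\vp\circ\varphi_2^{-1}=\vp$, we get $\psi\in\aut(Y/X,\vp)$ and $\psi(y)=y$, so $\psi=\id_Y$ by Lemma~\ref{rigidity}. The second applies Theorem~\ref{liftunique} directly, with $Z=Y$ and the sliding block code $\vp$ itself, since $\varphi_1$ and $\varphi_2$ are both lifts of $\vp$ that agree at $y$. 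Either route gives $\varphi_1=\varphi_2$.

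One subtlety is that Corollary~\ref{howmanyorbit} implicitly uses finiteness of the group through Lemma~\ref{orbitstabilizer}. To avoid circularity, I will argue injectivity of $\mathrm{ev}_y$ directly rather than citing that corollary. With that handled, I see no real obstacle: the substantive input, uniqueness of lifts, was already established in Theorem~\ref{liftunique}.

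Once finiteness is known, Corollary~\ref{howmanyorbit} can be invoked consistently. It gives $|\aut(Y/X,\vp)|=|\aut(Y/X,\vp)\cdot y|\le|\vp^{-1}(x)|=\deg(\vp)$, which recovers the same bound.
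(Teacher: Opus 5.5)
Your proof is correct and takes the same approach as the paper. The paper also shows that the evaluation map $\varphi\mapsto\varphi(y)$ into $\vp^{-1}(x)$ is injective, by applying Lemma~\ref{rigidity} to $\psi^{-1}\circ\varphi$, and reads off finiteness and the bound $|\aut(Y/X,\vp)|\le\deg(\vp)$ directly; your observation that Corollary~\ref{howmanyorbit} tacitly presupposes finiteness is fair, and, like the paper's, your argument does not depend on it.
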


\begin{proof}
    Let $y \in Y$ and let $x = \vp(y)$. Define a map $f_y: \aut(Y/X,\vp) \to \vp^{-1}(x)$ by $\varphi \mapsto \varphi(y)$. Since $x = \vp(y) = \vp(\varphi(y))$, we have $\varphi(y) \in \vp^{-1}(x)$, which implies that $f_y$ is well-defined. First, we show that $f_y$ is injective. Suppose that $\varphi$ and $\psi$ satisfy $f_y(\varphi) = f_y(\psi)$. That is, $\varphi(y) = \psi(y)$. It follows that $(\psi^{-1} \circ \varphi)(y) = y$. By Lemma~\ref{rigidity}, we have $\psi^{-1} \circ \varphi = \id_Y$, and hence $\varphi = \psi$. This means that $f_y$ is injective. Therefore, $|\aut(Y/X,\vp)| \le |\vp^{-1}(x)| = \deg(\vp)$.
\end{proof}

\begin{definition}
    Let $X$ and $Y$ be irreducible SFTs, and let $\vp: Y \to X$ be a factor map. We say that $(Y, \vp)$ is a Galois factor of $X$, or simply that $\vp$ is a Galois factor, if $\vp$ is unramified and $|\aut(Y/X, \vp)|=\deg(\vp)$. When $\vp$ is a Galois factor, we denote $\aut(Y/X, \vp)$ by $\gal(Y/X, \vp)$. If no confusion arises, we simply write $\gal(Y/X)$.
\end{definition}

We state the characterization theorem for Galois factors.

\begin{proposition}\label{transitiveaction}
    Let $X$ and $Y$ be irreducible SFTs, and let $\vp: Y \to X$ be an unramified factor map. Then the following are equivalent:
    \begin{enumerate}
        \item $\vp$ is a Galois factor.
        \item For any $x \in X$, $\aut(Y/X)$ acts transitively on the fiber $\vp^{-1}(x)$.
    \end{enumerate}
\end{proposition}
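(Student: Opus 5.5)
The proof reduces both directions to a count of a single orbit inside a fibre, using Corollary~\ref{howmanyorbit}. Throughout, write $G=\aut(Y/X,\vp)$. By Proposition~\ref{cj_le_deg}, $G$ is finite. By Corollary~\ref{freeaction}, $G$ acts freely on each fibre $\vp^{-1}(x)$, and every fibre has exactly $\deg(\vp)$ points.

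For (1)$\Rightarrow$(2), assume $|G|=\deg(\vp)$. Fix $x\in X$ and any $y\in\vp^{-1}(x)$. The orbit $G\cdot y$ lies in $\vp^{-1}(x)$, since every element of $G$ commutes with $\vp$. By Corollary~\ref{howmanyorbit}, we have $|G\cdot y|=|G|=\deg(\vp)=|\vp^{-1}(x)|$. A subset of a finite set with the same cardinality is the whole set, so $G\cdot y=\vp^{-1}(x)$. Hence the action on the fibre is transitive.

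For (2)$\Rightarrow$(1), assume transitivity on every fibre. Pick any $x\in X$; such an $x$ exists because $X$ is nonempty, and $\vp$ is surjective, so we can pick $y\in\vp^{-1}(x)$. Transitivity gives $G\cdot y=\vp^{-1}(x)$, so $|G\cdot y|=\deg(\vp)$. Corollary~\ref{howmanyorbit} then yields $|G|=|G\cdot y|=\deg(\vp)$. Since $\vp$ is unramified by hypothesis, this is exactly the definition of a Galois factor.

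I expect no real obstacle here. The only point to state explicitly is that the orbit of a point of $\vp^{-1}(x)$ stays inside $\vp^{-1}(x)$; this is immediate from $\vp\circ\varphi=\vp$. The substantive input, namely freeness of the action, was already established in Lemma~\ref{rigidity} via uniqueness of lifts.
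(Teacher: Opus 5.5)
Your proof is correct and follows essentially the same route as the paper. Both directions compare the single orbit $G\cdot y$ with the fibre $\vp^{-1}(x)$ via Corollary~\ref{howmanyorbit}: for (1)$\Rightarrow$(2) you use equal cardinalities, and for (2)$\Rightarrow$(1) you read off $|G|=\deg(\vp)$ from transitivity at one point.
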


\begin{proof}
    Set $G = \aut(Y/X)$. Suppose that $\vp$ is a Galois factor. Fix $x \in X$, and  $y, y' \in \vp^{-1}(x)$. By Corollary~\ref{howmanyorbit}, we have $|G \cdot y| = |G| = \deg(\vp) = |\vp^{-1}(x)|$. Since $G \cdot y \subseteq \vp^{-1}(x)$ and both sets are finite with the same cardinality, we conclude that $G \cdot y = \vp^{-1}(x)$. Thus, there exists a $\varphi \in G$ such that $\varphi(y) = y'$. This means that the action is transitive.
    Conversely, suppose that for any $x \in X$, the group $\aut(Y/X)$ acts transitively on the fiber $\vp^{-1}(x)$. Let $x \in X$ and $y \in \vp^{-1}(x)$. Since the action is transitive, we have $G \cdot y = \vp^{-1}(x)$, and hence $|G \cdot y| = |\vp^{-1}(x)|$. By Corollary~\ref{howmanyorbit}, we obtain $|G| = |G \cdot y| = |\vp^{-1}(x)|$, which completes the proof.
\end{proof}

Next, we define unramified intermediate factors. This concept is the analogue of intermediate extensions in field theory and intermediate coverings in covering space theory.

\begin{definition}
    Let $X$ and $Y$ be irreducible SFTs, and let $\vp: Y \to X$ be an unramified factor map. An unramified intermediate factor of $\vp$ is a triple $(Z, \alpha, \beta)$ consisting of an irreducible SFT $Z$, an unramified factor map $\alpha: Y \to Z$, and a sliding block code $\beta: Z \to X$ such that $\beta \circ \alpha = \vp$. We sometimes simply refer to $Z$ itself as an intermediate factor. 
    Let $(Z_1, \alpha_1, \beta_1)$ and $(Z_2, \alpha_2, \beta_2)$ be unramified intermediate factors. We say that $(Z_1, \alpha_1, \beta_1)$ and $(Z_2, \alpha_2, \beta_2)$ are topologically conjugate as intermediate factors if there exists a topological conjugacy $\varphi: Z_1 \to Z_2$ of shift spaces such that $\varphi \circ \alpha_1 = \alpha_2$ and $\beta_1 = \beta_2 \circ \varphi$. That is, the following diagram commutes.
    \[
    \begin{tikzcd}
        & Y \arrow[ld, "\alpha_1"'] \arrow[rd, "\alpha_2"] & \\
        Z_1 \arrow[rr, "\varphi"] \arrow[rd, "\beta_1"'] & & Z_2 \arrow[ld, "\beta_2"] \\
        & X &
    \end{tikzcd}
    \]
    A map $\varphi$ is called a conjugacy of intermediate factors.
\end{definition}

\begin{remark}
    In the above setting, we have $\vp = \beta \circ \alpha$, and by Lemma~\ref{const_to_one}, the map $\beta$ is automatically unramified.
\end{remark}

The following is an analogue of the multiplicativity of extension degrees in field theory. This follows trivially from the definition.

\begin{lemma}\label{degreemultiplicativity}
    Let $X$ and $Y$ be irreducible SFTs, $\vp: Y \to X$ be an unramified factor map, and $(Z, \alpha, \beta)$ be an unramified intermediate factor. Then we have $\deg(\alpha)\deg(\beta) = \deg(\vp) = \deg(\beta \circ \alpha)$.
\end{lemma}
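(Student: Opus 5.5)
The equality $\deg(\beta\circ\alpha)=\deg(\vp)$ holds simply because $\beta\circ\alpha=\vp$ by the definition of an intermediate factor. The substantive content is $\deg(\alpha)\deg(\beta)=\deg(\vp)$. I would get it by counting one fiber, using the set-theoretic decomposition from the proof of Lemma~\ref{const_to_one}.

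First I check that all three degrees are defined. By hypothesis $\vp$ and $\alpha$ are unramified. By the Remark following the definition of unramified intermediate factors, $\beta$ is constant-to-one by Lemma~\ref{const_to_one}(2). It is also surjective, since $\vp=\beta\circ\alpha$ is surjective, and it is a sliding block code. Hence $\beta$ is an unramified factor map, and $\deg(\beta)$ makes sense.

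Next, fix any $x\in X$ and write, as in the proof of Lemma~\ref{const_to_one},
\[
\vp^{-1}(x)=(\beta\circ\alpha)^{-1}(x)=\bigsqcup_{z\in\beta^{-1}(x)}\alpha^{-1}(z).
\]
The union is disjoint because the sets $\alpha^{-1}(z)$ are fibers over distinct points $z$. Each $\alpha^{-1}(z)$ has exactly $\deg(\alpha)$ elements, and there are exactly $\deg(\beta)$ indices $z$. Taking cardinalities gives $\deg(\vp)=|\vp^{-1}(x)|=\deg(\beta)\deg(\alpha)$.

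There is no real obstacle here. The only point needing care is confirming that $\beta$ genuinely has a well-defined degree, that is, that it is surjective and constant-to-one, and the first step handles this.
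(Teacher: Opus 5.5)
Your proposal is correct and takes the same approach as the paper. The paper only says the lemma follows trivially from the definition, and the fiber decomposition $\vp^{-1}(x)=\bigsqcup_{z\in\beta^{-1}(x)}\alpha^{-1}(z)$ from Lemma~\ref{const_to_one} is the count that makes it trivial. Your check that $\beta$ is surjective and constant-to-one, so that $\deg(\beta)$ is defined, is the point the paper leaves to the preceding Remark.
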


\section{Fundamental Theorem of Galois Theory for SFTs}\label{section_fundamental}
\subsection{Proof of the Fundamental Theorem}\label{subsection_pf_of_fundamental_thm}

In this subsection, we provide a proof of the fundamental theorem of Galois theory. Analogous to field theory, where there is a one-to-one correspondence between subgroups of the Galois group and isomorphism classes of intermediate extensions, it will be shown that there is a one-to-one correspondence between subgroups of the Galois group and topological conjugacy classes of intermediate factors.

\begin{definition}\label{def_midfactor_to_gal}
    Let $X$ and $Y$ be irreducible SFTs, $\vp: Y \to X$ be a Galois factor, $G = \gal(Y/X)$, and $(Z, \alpha, \beta)$ be an unramified intermediate factor of $\vp$. We define $H(Z, \alpha, \beta)$ by
    \[
    H(Z,\alpha,\beta)=\{\varphi\in \aut(Y) \mid \alpha\circ \varphi=\alpha\}=\aut(Y/Z,\alpha).
    \]
    When the context is clear, we sometimes simply write $H(Z)$. 
    It is clear that $H(Z)$ is a subgroup of $G$.
     
\end{definition}

\begin{theorem}\label{conjugatesamegroupequiv}
    Let $X$ and $Y$ be irreducible SFTs, $\vp: Y \to X$ be a Galois factor, and $(Z_1, \alpha_1, \beta_1)$ and $(Z_2, \alpha_2, \beta_2)$ be unramified intermediate factors of $\vp$. Then $(Z_1, \alpha_1, \beta_1)$ and $(Z_2, \alpha_2, \beta_2)$ are topologically conjugate as intermediate factors if and only if $H(Z_1, \alpha_1, \beta_1) = H(Z_2, \alpha_2, \beta_2)$.
\end{theorem}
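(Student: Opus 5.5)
The forward direction is a direct computation. Suppose $\psi: Z_1 \to Z_2$ is a conjugacy of intermediate factors, so $\psi\circ\alpha_1=\alpha_2$. For $\varphi\in\aut(Y)$ we then have
\[
\alpha_1\circ\varphi=\alpha_1 \iff \psi\circ\alpha_1\circ\varphi=\psi\circ\alpha_1 \iff \alpha_2\circ\varphi=\alpha_2,
\]
where the first equivalence uses that $\psi$ is injective. Hence $H(Z_1)=H(Z_2)$.

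For the converse, write $H=H(Z_1)=H(Z_2)$. The key claim is that each $\alpha_j$ is a Galois factor whose fibers are exactly the $H$-orbits. First, $\alpha_j$ is unramified. Its fibers are contained in fibers of $\vp$, and by Proposition~\ref{transitiveaction} any two points $y,y'$ in the same $\vp$-fiber satisfy $y'=g(y)$ for a unique $g\in G$ (uniqueness by Lemma~\ref{rigidity}). I would show that for each such $g$ the set $\{y\in Y\mid \alpha_j(g(y))=\alpha_j(y)\}$ is either empty or all of $Y$. This follows from the uniqueness of lifts, Theorem~\ref{liftunique}, applied to the two lifts $\alpha_j\circ g$ and $\alpha_j$ of $\beta_j\circ\alpha_j=\vp$ with respect to the unramified map $\beta_j: Z_j\to X$. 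Indeed $\beta_j\circ\alpha_j\circ g=\vp\circ g=\vp$, so both are lifts, and if they agree at one point they agree everywhere, giving $g\in H$. Therefore $\alpha_j(y)=\alpha_j(y')$ iff $y'\in H\cdot y$, so the fibers of $\alpha_j$ are precisely the $H$-orbits, each of size $|H|$ because the action is free by Corollary~\ref{freeaction}.

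Next I would define $\psi: Z_1\to Z_2$ by $\psi(\alpha_1(y))=\alpha_2(y)$. This is well defined and injective because both maps have the same fibers (the $H$-orbits), and it is surjective since $\alpha_2$ is. It commutes with the shift because $\alpha_1,\alpha_2,\sigma_Y$ do. For continuity, $\alpha_1$ is a continuous open surjection by Lemma~\ref{openmap}, hence a quotient map, so Lemma~\ref{universalityofquotientmap} applied to $\psi\circ\alpha_1=\alpha_2$ gives that $\psi$ is continuous. The inverse is continuous by symmetry, or because $\psi$ is a continuous bijection of compact Hausdorff spaces. Finally, $\beta_2\circ\psi\circ\alpha_1=\beta_2\circ\alpha_2=\vp=\beta_1\circ\alpha_1$, and surjectivity of $\alpha_1$ gives $\beta_2\circ\psi=\beta_1$. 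So $\psi$ is a conjugacy of intermediate factors.

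The main obstacle is the fiber identification in the second paragraph. The whole argument hinges on applying Theorem~\ref{liftunique} with $\beta_j$ as the covering map, which needs $Z_j$ irreducible and $\beta_j$ unramified; the latter is supplied by the Remark via Lemma~\ref{const_to_one}. The rest is formal.
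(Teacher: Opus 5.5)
Your proposal is correct and follows essentially the same route as the paper: in both, the key step is applying the uniqueness of lifts (Theorem~\ref{liftunique}) to $\alpha_j\circ g$ and $\alpha_j$, viewed as lifts of $\vp$ along the unramified map $\beta_j$. Both then use the openness of $\alpha_1$ and the quotient-map property to get continuity. The only difference is packaging: you identify the $\alpha_j$-fibers with $H$-orbits, which gives well-definedness, injectivity, and incidentally that $\alpha_j$ is Galois. The paper instead checks well-definedness directly and builds the inverse map explicitly.
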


\begin{proof}
    Suppose that $(Z_1, \alpha_1, \beta_1)$ and $(Z_2, \alpha_2, \beta_2)$ are topologically conjugate as intermediate factors. Then, by definition, there exists a topological conjugacy $f: Z_1 \to Z_2$ of shift spaces such that $\alpha_2 = f \circ \alpha_1$. 
    First, we show that $H(Z_1, \alpha_1, \beta_1) \subseteq H(Z_2, \alpha_2, \beta_2)$. 
    Fix $\varphi \in H(Z_1, \alpha_1, \beta_1) = \{\varphi \in \aut(Y) \mid \alpha_1 \circ \varphi = \alpha_1\}$. Then we have
    \[
    \alpha_2\circ\varphi=f\circ\alpha_1\circ\varphi=
    f\circ\alpha_1=\alpha_2, 
    \]
    which implies $\varphi \in H(Z_2, \alpha_2, \beta_2)=\{\varphi \in \aut(Y) \mid \alpha_2 \circ \varphi = \alpha_2\}$. The reverse inclusion $H(Z_1, \alpha_1, \beta_1) \supseteq H(Z_2, \alpha_2, \beta_2)$ can be shown similarly. Therefore, we have $H(Z_1, \alpha_1, \beta_1) = H(Z_2, \alpha_2, \beta_2)$. \\
    \indent Conversely, suppose that $H(Z_1, \alpha_1, \beta_1) = H(Z_2, \alpha_2, \beta_2) =: H$. For any $z \in Z_1$, since $\alpha_1: Y \to Z_1$ is surjective, there exists a $y \in Y$ such that $\alpha_1(y) = z$. We define a map $f: Z_1 \to Z_2$ by setting $f(z) = \alpha_2(y)$. 
    First, we show that $f$ is well-defined. Let $z \in Z_1$, and fix $y, y' \in Y$ satisfying $\alpha_1(y) = \alpha_1(y') = z$. 
    It suffices to show that $\alpha_2(y) = \alpha_2(y')$. 
    By the definition of an intermediate factor, we have
    \[
    \vp(y)=\beta_1(\alpha_1(y))=\beta_1(\alpha_1(y'))=\vp(y'). 
    \]
    This means that $y$ and $y'$ are in the same fiber. Since $\vp$ is a Galois factor, Proposition~\ref{transitiveaction} implies that the Galois group $G = \gal(Y/X, \vp)$ acts transitively on the fiber. That is, there exists a $\varphi \in G$ such that $\varphi(y) = y'$. 
    Here, since $\varphi \in G$, we have $\vp \circ \varphi = \vp$, and thus
    \[
    \beta_1 \circ (\alpha_1 \circ \varphi) = (\beta_1 \circ \alpha_1) \circ \varphi = \vp \circ \varphi = \vp.
    \]
    This implies that $\alpha_1 \circ \varphi$ is a lift of $\vp$ with respect to $\beta_1$. Furthermore, the relation $\beta_1 \circ \alpha_1 = \vp$ implies that $\alpha_1$ is also a lift of $\vp$ with respect to $\beta_1$. Moreover, we have
    \[
    (\alpha_1\circ\varphi)(y)=\alpha_1(\varphi(y))=\alpha_1(y')=\alpha_1(y). 
    \]  
    By the uniqueness of lifts (Theorem \ref{liftunique}), we have $\alpha_1 \circ \varphi = \alpha_1$. 
    This means that $\varphi \in H(Z_1, \alpha_1, \beta_1)$. By the assumption $H(Z_1, \alpha_1, \beta_1) = H(Z_2, \alpha_2, \beta_2)$, we have $\varphi \in H(Z_2, \alpha_2, \beta_2)$, and therefore $\alpha_2 \circ \varphi = \alpha_2$. From the above, we obtain
    \[\alpha_2(y') = \alpha_2(\varphi(y)) = (\alpha_2 \circ \varphi)(y) = \alpha_2(y),
    \]
    which confirms that $f$ is well-defined.
    Next, we show that $f: Z_1 \to Z_2$ is a topological conjugacy as intermediate factors. We define $g: Z_2 \to Z_1$ similarly; for $z' \in Z_2$, take $y' \in Y$ such that $\alpha_2(y') = z'$ and set $g(z') = \alpha_1(y')$. The map $g$ is the inverse of $f$. Thus, $f$ is bijective. From the definition of $f$, it is clear that $f \circ \alpha_1 = \alpha_2$. By a simple calculation, we also have $\beta_1 = \beta_2 \circ f$ and $f\circ \sigma_{Z_1}=\sigma_{Z_2}\circ f$. 
    Finally, we show that $f$ is continuous. Since $\alpha_1: Y \to Z_1$ is an unramified factor map, it is an open map by Lemma~\ref{openmap}. Furthermore, as a factor map, $\alpha_1$ is a continuous surjection. Therefore, $\alpha_1$ is a quotient map. Since $f \circ \alpha_1 = \alpha_2$ holds and $\alpha_2$ is continuous, the universal property of quotient maps (Lemma~\ref{universalityofquotientmap}) implies that $f$ is continuous. From the above, it follows that $f$ is a topological conjugacy of intermediate factors.
\end{proof}

\begin{theorem}\label{subgrouptointermediatefactor}
    Let $X$ and $Y$ be irreducible SFTs, and $\vp: Y \to X$ be a Galois factor. Set $G = \gal(Y/X)$, and let $H$ be a subgroup of $G$. Then there exists an unramified intermediate factor $(Z, \alpha, \beta)$ of $\vp$ such that $\alpha: Y \to Z$ is a Galois factor and $H = \gal(Y/Z)$. 
    
\end{theorem}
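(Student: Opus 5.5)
The natural construction is the orbit space. Since $H \le G = \gal(Y/X)$ is finite and acts freely on $Y$ by Corollary~\ref{freeaction}, Lemma~\ref{orbitspaceSFT} shows that $(Z, \sigma^*_Z)$, with $Z = Y/H$, is topologically conjugate to an irreducible SFT. I would identify $Z$ with that SFT via the conjugacy and take $\alpha = q_H: Y \to Z$. This map is continuous, surjective and shift-commuting; that $\sigma^*_Z$ is well defined relies on elements of $H$ commuting with $\sigma_Y$. Because the action is free, each fiber $q_H^{-1}([y]_H) = Hy$ has exactly $|H|$ points. So $\alpha$ is unramified of degree $|H|$.

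Next I define $\beta: Z \to X$ by $\beta([y]_H) = \vp(y)$. This is well defined because $\vp \circ h = \vp$ for every $h \in H \subseteq G$. It is shift-commuting, and $\beta \circ \alpha = \vp$ holds by construction. For continuity I would use that $\alpha$ is a quotient map. This follows either from Lemma~\ref{openmap} applied to the unramified $\alpha$, or directly: $q_H^{-1}(q_H(U)) = \bigcup_{h\in H} h(U)$ is open, so $q_H$ is open. Lemma~\ref{universalityofquotientmap} then shows $\beta$ is continuous. Hence $(Z,\alpha,\beta)$ is an unramified intermediate factor; $\beta$ is unramified by the remark following the definition.

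It remains to show $\aut(Y/Z,\alpha) = H$. The inclusion $H \subseteq \aut(Y/Z,\alpha)$ is immediate, since $q_H \circ h = q_H$. For the reverse inclusion, Proposition~\ref{cj_le_deg} gives $|\aut(Y/Z,\alpha)| \le \deg(\alpha) = |H|$, which forces equality. The same count shows $|\aut(Y/Z,\alpha)| = \deg(\alpha)$, so $\alpha$ is a Galois factor with $\gal(Y/Z) = H$.

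The main point needing care is the transfer through the conjugacy of Lemma~\ref{orbitspaceSFT}. If $\theta: Y/H \to Z'$ is that conjugacy onto a genuine SFT, I replace $\alpha$ by $\theta\circ q_H$ and $\beta$ by $\beta\circ\theta^{-1}$. Fiber cardinalities, shift-commutation and the stabilizer $\aut(Y/Z',\theta\circ q_H)$ are unchanged, since $\theta$ is a bijection. So nothing substantive is lost, and the proof is essentially formal once Lemma~\ref{orbitspaceSFT} is available.
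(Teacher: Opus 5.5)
Your proof is correct and follows the paper's construction: the orbit space $Y/H$ from Lemma~\ref{orbitspaceSFT}, fibers $Hy$ of size $|H|$ by freeness, and $\beta$ obtained from the quotient property. The only difference is how you get $\aut(Y/Z,q_H)\subseteq H$ and the Galois property. You obtain both at once from the bound $|\aut(Y/Z,\alpha)|\le\deg(\alpha)=|H|$ of Proposition~\ref{cj_le_deg} applied to $\alpha$, whereas the paper composes with $h^{-1}$, applies Lemma~\ref{rigidity} to $\vp$, and then checks transitivity of $H$ on the fibers. Your counting shortcut is slightly cleaner; it needs $\alpha$ itself to be an unramified factor map between irreducible SFTs, which you have already established.
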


\begin{proof}
    Recall Lemma~\ref{orbitspaceSFT}. For $y, y' \in Y$, we write $y \sim_H y'$ if and only if there exists an $h \in H$ such that $y' = h(y)$. This defines an equivalence relation $\sim_H$ on $Y$. Let $Z = Y/{\sim_H} = Y/H$ be the quotient space (orbit space). We denote the equivalence class of $y \in Y$ by $[y]_H$, and let $q_H: Y \to Z$, $y \mapsto [y]_H$ be the natural projection. We define a map $\sigma^*_Z: Z \to Z$ by $\sigma^*_Z([y]_H) = [\sigma_Y(y)]_H$. By Lemma~\ref{orbitspaceSFT}, the dynamical system $(Z, \sigma^*_Z)$ is topologically conjugate to an irreducible SFT. Thus, we may identify $Z$ with an irreducible SFT.

    Since $q_H$ is a quotient map, it is a continuous surjection, and by definition, it commutes with the shifts. That is, $q_H: Y \to Z$ is a factor map. Furthermore, by the universal property of quotient spaces, there exists a unique continuous map $\beta: Z \to X$ such that $\vp = \beta \circ q_H$. In what follows, we will show that $(Z, q_H, \beta)$ is the desired intermediate factor. Then we have
    \begin{align*}
      \beta(\sigma^*_Z(z))&=\beta(\sigma^*_Z(q_H(y)))\\
      &=\beta(q_H(\sigma_Y(y)))\\
      &=\vp(\sigma_Y(y))\\
      &=\sigma_X(\vp(y))\\
      &=\sigma_X(\beta(q_H(y))\\
      &=\sigma_X(\beta(z)),
    \end{align*}
    which implies that $\beta$ commutes with the shifts.
    Next, we show that $q_H$ is unramified. Fix $z \in Z$. Since $q_H$ is a surjection, there exists a $y \in Y$ such that $q_H(y) = z$. Then we have
    \begin{align*}
    q_H^{-1}(z)=\{y'\in Y\mid [y']_H=[y]_H\}=H y, 
    \end{align*}
    where $H y$ is the orbit of $y$ under the action of $H$. By applying Corollary~\ref{freeaction} to $\vp$, we see that the action of $G$ on $Y$ is free. Thus, the action of $H$ on $q_H^{-1}(z)$ is also free, which implies $|I_H(y)| = 1$. By Lemma~\ref{orbitstabilizer}, we obtain 
    \[
    |q_H^{-1}(z)|=|Hy||I_H(y)|=|H|. 
    \]
    This shows that every fiber of $q_H$ has cardinality $|H|$, and hence $q_H$ is unramified. From the above, it follows that $(Z, q_H, \beta)$ is an unramified intermediate factor.\\
    \indent 
    In the following, we show that $H = \aut(Y/Z, q_H) = \{\varphi \in \aut(Y) \mid q_H \circ \varphi = q_H\}$. Fix $h \in H$ and $y \in Y$. From the definition of $\sim_H$, we have $y \sim_H h(y)$, which yields
    \[
    (q_H\circ h)(y)=[h(y)]_H=[y]_H=q_H(y), 
    \]
    and hence $h \in \aut(Y/Z, q_H)$. Therefore, we obtain the inclusion $H \subseteq \aut(Y/Z, q_H)$. 
    Conversely, fix $\varphi \in \aut(Y/Z, q_H)$ and $y_0 \in Y$. Then 
    \[
    [\varphi(y_0)]_H = (q_H \circ \varphi)(y_0) = q_H(y_0) = [y_0]_H
    \] 
    holds. This means that there exists an $h \in H$ such that $h(y_0) = \varphi(y_0)$. Set $\psi = h^{-1} \circ \varphi$. 
    As $h \in H \subseteq G = \gal(Y/X, \vp)$ and $\varphi \in \aut(Y/Z, q_H)$, we have 
    \begin{align*}
    \vp\circ\psi&=(\vp\circ h^{-1})\circ \varphi\\
    &=\vp \circ\varphi\\
    &=\beta\circ (q_H\circ\varphi)\\
    &=\beta\circ q_H\\
    &=\vp,
    \end{align*}
    which implies $\psi\in \gal(Y/X)$. Furthermore, since $h(y_0)=\varphi(y_0)$, we have $\psi(y_0) = y_0$. By Lemma~\ref{rigidity}, we obtain $\psi = \id_Y$. Therefore, $\varphi = h \in H$. This proves the reverse inclusion $H \supseteq \aut(Y/Z, q_H)$. \\
    \indent Finally, we show that $q_H: Y \to Z$ is a Galois factor. By Proposition~\ref{transitiveaction}, it suffices to show that $\aut(Y/Z, q_H)$ acts transitively on every fiber. Let $z \in Z$, and let $a, a' \in q_H^{-1}(z)$. By the definition of $q_H$, there exists a $y \in Y$ such that the fiber is the orbit of $y$. That is, $q_H^{-1}(z) = H y = \{h(y) \in Y \mid h \in H\}$. Therefore, there exist $h_1, h_2 \in H$ such that $a = h_1(y)$ and $a' = h_2(y)$. Then we have $a = (h_1 \circ h_2^{-1})(a')$, which means that the action is transitive.
\end{proof}

\begin{definition}
    Let $X$ and $Y$ be irreducible SFTs, $\vp: Y \to X$ be a Galois factor, $G = \gal(Y/X)$, and $H$ be a subgroup of $G$. Then, by Theorem~\ref{conjugatesamegroupequiv}, the intermediate factors obtained by Theorem~\ref{subgrouptointermediatefactor} are topologically conjugate to each other. We denote this intermediate factor by $(Z(H), \alpha(H), \beta(H))$. Note that this factor is conjugate to $(Y/H, q_H, \beta)$, which was constructed in the proof of Theorem~\ref{subgrouptointermediatefactor}.
\end{definition}

As a corollary of Theorem \ref{subgrouptointermediatefactor}, we obtain the following.

\begin{corollary}
    Let $X$ and $Y$ be irreducible SFTs, $\vp: Y \to X$ be a Galois factor. Set $G = \gal(Y/X)$, let $H$ be a subgroup of $G$, and let $(Z, \alpha, \beta) = (Z(H), \alpha(H), \beta(H))$. Then the following hold:
    \begin{enumerate}
        \item $\deg(\alpha)=|H|$.
        \item $\deg(\beta)=[G:H]=|G|/|H|$.
    \end{enumerate}
\end{corollary}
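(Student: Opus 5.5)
Both degree formulas come from counting fibers, using the model $(Y/H, q_H, \beta)$ built in the proof of Theorem~\ref{subgrouptointermediatefactor}. First we observe that degree is invariant under conjugacy of intermediate factors. Suppose $f: Z \to Z'$ is a conjugacy with $f\circ\alpha = \alpha'$ and $\beta = \beta'\circ f$. Since $f$ is a bijection, $\alpha'^{-1}(f(z)) = \alpha^{-1}(z)$ for every $z\in Z$. Likewise $\beta^{-1}(x) = f^{-1}(\beta'^{-1}(x))$ for every $x\in X$. Hence $\deg(\alpha)=\deg(\alpha')$ and $\deg(\beta)=\deg(\beta')$. It therefore suffices to work with $Z = Y/H$, $\alpha = q_H$, and the induced map $\beta$.

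For (1), we quote the computation already made in the proof of Theorem~\ref{subgrouptointermediatefactor}. Each fiber of $q_H$ is an $H$-orbit $Hy$. The action of $H\subseteq G$ on $Y$ is free by Corollary~\ref{freeaction}. By Lemma~\ref{orbitstabilizer}, every fiber therefore has cardinality $|H|$, so $\deg(q_H)=|H|$.

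For (2), we apply Lemma~\ref{degreemultiplicativity}, which gives $\deg(\alpha)\deg(\beta)=\deg(\vp)$. Since $\vp$ is a Galois factor, $\deg(\vp)=|G|$. Dividing by $\deg(\alpha)=|H|\ge 1$ yields $\deg(\beta)=|G|/|H|=[G:H]$, the last equality by Lagrange's theorem.

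As a sanity check, (2) can also be seen directly. The fiber $\beta^{-1}(x)$ is the set of $H$-orbits inside $\vp^{-1}(x)$. The group $G$ acts simply transitively on $\vp^{-1}(x)$ by Proposition~\ref{transitiveaction} and Corollary~\ref{freeaction}. Fixing $y\in\vp^{-1}(x)$ identifies $\vp^{-1}(x)$ with $G$ via $g\mapsto g(y)$. Under this identification the $H$-orbits correspond to the cosets $Hg$, and there are $[G:H]$ of them. I do not expect any step to be an obstacle. The only point needing care is the conjugacy-invariance of degrees, which justifies passing to the concrete model $Y/H$.
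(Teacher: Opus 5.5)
Your proof is correct and follows essentially the route the paper intends: the corollary is read off from Theorem~\ref{subgrouptointermediatefactor} (fibers of $q_H$ are free $H$-orbits, so $\deg(\alpha)=|H|$), combined with Lemma~\ref{degreemultiplicativity} and $\deg(\vp)=|G|$. Your conjugacy-invariance step is valid but not needed for (1): Theorem~\ref{subgrouptointermediatefactor} already makes $\alpha$ a Galois factor with $\gal(Y/Z,\alpha)=H$, so $\deg(\alpha)=|H|$ by the definition of a Galois factor.
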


\begin{definition}
    Let $X$ and $Y$ be irreducible SFTs, $\vp: Y \to X$ be a Galois factor. Set $G = \gal(Y/X)$. We denote the set of all subgroups of $G$ by $\mathcal{S}(G)$, and denote the set of all topological conjugacy classes of unramified intermediate factors of $\vp$ by $\mathcal{I}(Y/X, \vp) = \mathcal{I}(Y/X)$. 
    Furthermore, we define the following two maps: 
    \begin{align*}
        &\Phi: \mathcal{I}(Y/X) \to \mathcal{S}(G), \quad [(Z, \alpha, \beta)] \mapsto H(Z, \alpha, \beta),\\
        &\Psi: \mathcal{S}(G) \to \mathcal{I}(Y/X), \quad H \mapsto [(Z(H), \alpha(H), \beta(H))].
    \end{align*}
    Note that the well-definedness of $\Phi$ is guaranteed by Theorem~\ref{conjugatesamegroupequiv}. 
\end{definition}

By Theorems~\ref{subgrouptointermediatefactor} and \ref{conjugatesamegroupequiv}, the following holds.

\begin{theorem}[The fundamental theorem of Galois theory for shifts of finite type]
    With the notation above, the maps $\Phi$ and $\Psi$ are mutually inverse maps. Therefore, there is a one-to-one correspondence between $\mathcal{S}(G)$ and $\mathcal{I}(Y/X, \vp)$.
\end{theorem}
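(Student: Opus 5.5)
We verify the two composites $\Phi\circ\Psi=\id_{\mathcal{S}(G)}$ and $\Psi\circ\Phi=\id_{\mathcal{I}(Y/X)}$ separately, using Theorems~\ref{subgrouptointermediatefactor} and \ref{conjugatesamegroupequiv}. Both $\Phi$ and $\Psi$ are already well defined. For $\Phi$ this is Theorem~\ref{conjugatesamegroupequiv}. For $\Psi$, any two intermediate factors produced by Theorem~\ref{subgrouptointermediatefactor} for the same $H$ have the same group $H$, so by Theorem~\ref{conjugatesamegroupequiv} they are conjugate and the class $[(Z(H),\alpha(H),\beta(H))]$ does not depend on choices.

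\textbf{Step 1: $\Phi\circ\Psi=\id$.} Fix a subgroup $H\le G$. Theorem~\ref{subgrouptointermediatefactor} gives an intermediate factor $(Z,\alpha,\beta)$ in the class $\Psi(H)$ with
\[
H=\gal(Y/Z,\alpha)=\aut(Y/Z,\alpha)=H(Z,\alpha,\beta).
\]
Therefore $\Phi(\Psi(H))=H$.

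\textbf{Step 2: $\Psi\circ\Phi=\id$.} Fix a class $[(Z,\alpha,\beta)]\in\mathcal{I}(Y/X)$ and put $H:=H(Z,\alpha,\beta)$. This $H$ is a subgroup of $G$: if $\alpha\circ\varphi=\alpha$, then $\vp\circ\varphi=\beta\circ\alpha\circ\varphi=\vp$. By construction $\Psi(H)$ is represented by $(Z(H),\alpha(H),\beta(H))$, and Step 1 shows that its associated group is also $H$. So the two unramified intermediate factors $(Z,\alpha,\beta)$ and $(Z(H),\alpha(H),\beta(H))$ have equal groups, and the ``if'' direction of Theorem~\ref{conjugatesamegroupequiv} makes them conjugate as intermediate factors. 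Hence $\Psi(\Phi([(Z,\alpha,\beta)]))=[(Z,\alpha,\beta)]$.

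The two composites being identities shows that $\Phi$ and $\Psi$ are mutually inverse bijections. The only substantive input is the converse direction of Theorem~\ref{conjugatesamegroupequiv}, which needs transitivity of $G$ on fibers and is available because $\vp$ is Galois. No new obstacle arises here; the one point to check is that elements of $\mathcal{I}(Y/X)$ are required to have $\alpha$ unramified, so that Theorem~\ref{conjugatesamegroupequiv} applies to arbitrary representatives.
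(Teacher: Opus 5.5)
Your proposal is correct and follows the paper's route. The paper simply states the theorem as a consequence of Theorems~\ref{subgrouptointermediatefactor} and \ref{conjugatesamegroupequiv}, and you have spelled out how: Theorem~\ref{subgrouptointermediatefactor} gives $\Phi\circ\Psi=\id$, and the converse direction of Theorem~\ref{conjugatesamegroupequiv} gives $\Psi\circ\Phi=\id$. Your extra checks, that $H(Z,\alpha,\beta)\le G$ and that $\Psi$ is well defined, are exactly the bookkeeping the paper leaves implicit.
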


Next, we state a theorem concerning the inclusion of subgroups.

\begin{theorem}
    Let $X$ and $Y$ be irreducible SFTs, $\vp: Y \to X$ be a Galois factor, and $G = \gal(Y/X)$. Let $(Z_1, \alpha_1, \beta_1)$ and $(Z_2, \alpha_2, \beta_2)$ be intermediate factors, and set $H_1 = H(Z_1, \alpha_1, \beta_1)$ and $H_2 = H(Z_2, \alpha_2, \beta_2)$. Then the following are equivalent: 
    \begin{enumerate}
        \item $H_1\subseteq H_2$
        \item There exists an unramified factor map $\gamma: Z_1 \to Z_2$ such that $(Z_1, \alpha_1, \gamma)$ is an unramified intermediate factor of $\alpha_2: Y \to Z_2$.
    \end{enumerate}
\end{theorem}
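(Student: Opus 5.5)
For (2) $\Rightarrow$ (1), I would start from $\alpha_2 = \gamma\circ\alpha_1$. Given $\varphi\in H_1$, we have $\alpha_1\circ\varphi=\alpha_1$. Hence
\[
\alpha_2\circ\varphi=\gamma\circ\alpha_1\circ\varphi=\gamma\circ\alpha_1=\alpha_2,
\]
so $\varphi\in H_2$. This direction is a one-line computation.

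For (1) $\Rightarrow$ (2), I would imitate the well-definedness argument in the proof of Theorem~\ref{conjugatesamegroupequiv}. For $z\in Z_1$, choose $y\in Y$ with $\alpha_1(y)=z$, using surjectivity of $\alpha_1$, and put $\gamma(z)=\alpha_2(y)$. To check that this is well defined, take $y,y'$ with $\alpha_1(y)=\alpha_1(y')$. Then $\vp(y)=\vp(y')$, so by Proposition~\ref{transitiveaction} there is $\varphi\in G$ with $\varphi(y)=y'$.

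Both $\alpha_1\circ\varphi$ and $\alpha_1$ are lifts of $\vp$ with respect to $\beta_1$, and they agree at $y$. Theorem~\ref{liftunique} then gives $\alpha_1\circ\varphi=\alpha_1$, so $\varphi\in H_1\subseteq H_2$. Consequently $\alpha_2(y')=\alpha_2(\varphi(y))=\alpha_2(y)$, and $\gamma$ is well defined.

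Next I would verify the properties of $\gamma$:
\begin{enumerate}
\item By construction $\gamma\circ\alpha_1=\alpha_2$.
\item Shift-commutation follows because both $\alpha_1$ and $\alpha_2$ commute with the shifts.
\item Surjectivity of $\gamma$ is inherited from that of $\alpha_2$.
\item For continuity, $\alpha_1$ is a continuous open surjection by Lemma~\ref{openmap}, hence a quotient map. Lemma~\ref{universalityofquotientmap} applied to $\gamma\circ\alpha_1=\alpha_2$ gives continuity of $\gamma$.
\end{enumerate}
So $\gamma$ is a factor map $Z_1\to Z_2$.

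It remains to check that $\gamma$ is unramified and that $(Z_1,\alpha_1,\gamma)$ is an unramified intermediate factor of $\alpha_2$. Here $\alpha_1$ is unramified by hypothesis and $\alpha_2=\gamma\circ\alpha_1$ is unramified. Lemma~\ref{const_to_one}(2) then shows $\gamma$ is constant-to-one, with degree $\deg(\alpha_2)/\deg(\alpha_1)$. The spaces $Z_1$ and $Z_2$ are irreducible SFTs by assumption. I also note that $\beta_2\circ\gamma=\beta_1$, although it is not needed for (2).

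The main subtlety is the well-definedness step. It works only because $\vp$ is Galois, which gives transitivity of $G$ on fibres, and because lift uniqueness forces the transporting element $\varphi$ into $H_1$. This is exactly where the hypothesis $H_1\subseteq H_2$ enters. Everything else is routine.
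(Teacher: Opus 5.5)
Your proof is correct and follows the paper's argument. The paper also defines $\gamma(z)=\alpha_2(y)$, gets continuity from $\alpha_1$ being a quotient map, gets surjectivity and unramifiedness from Lemma~\ref{const_to_one}, and proves the converse by the same one-line computation; the only small difference is in well-definedness, where the paper cites Theorem~\ref{subgrouptointermediatefactor} to say that $\alpha_1$ is Galois and hence $H_1$ is transitive on $\alpha_1$-fibres, whereas you take the transporting element in $G$ and push it into $H_1$ by lift uniqueness as in Theorem~\ref{conjugatesamegroupequiv}, which is slightly more self-contained.
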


\begin{proof}
    Suppose that $H_1 \subseteq H_2$. For any $z \in Z_1$, since $\alpha_1$ is surjective, we can take a $y \in Y$ such that $\alpha_1(y) = z$. We define $\gamma: Z_1 \to Z_2$ by setting $\gamma(z) = \alpha_2(y)$. 
    First, we show that $\gamma$ is well-defined. Fix $y, y' \in Y$ with $\alpha_1(y) = \alpha_1(y') = z$. By Theorem~\ref{subgrouptointermediatefactor}, $\alpha_1: Y \to Z_1$ is a Galois factor. By Proposition~\ref{transitiveaction}, the group $H_1$ acts transitively on every fiber. Therefore, since $y, y' \in \alpha_1^{-1}(z)$, there exists a $\varphi \in H_1$ such that $\varphi(y) = y'$. By the assumption, we have $\varphi \in H_1 \subseteq H_2 = \aut(Y/Z_2, \alpha_2)$, and hence $\alpha_2 \circ \varphi = \alpha_2$. Thus, we obtain
    \[
    \alpha_2(y')=\alpha_2(\varphi(y))=\alpha_2(y),
    \]
    which means that $\gamma$ is well-defined.
    It is clear from the definition of $\gamma$ that $\gamma \circ \alpha_1 = \alpha_2$. Therefore, it suffices to show that $\gamma$ is an unramified factor map. First, the fact that $\gamma$ is surjective and unramified follows immediately from $\gamma \circ \alpha_1 = \alpha_2$ and Lemma \ref{const_to_one}. Furthermore, a simple calculation shows that $\gamma$ commutes with the shifts.
    Next, we show that $\gamma$ is continuous. By Lemma~\ref{openmap}, $\alpha_1$ is an open map. Since $\alpha_1$ is a factor map, it is a continuous surjection. Therefore, $\alpha_1$ is a quotient map. Furthermore, since $\alpha_2$ is continuous and $\gamma \circ \alpha_1 = \alpha_2$, the universal property of quotient maps (Lemma~\ref{universalityofquotientmap}) implies that $\gamma$ is continuous. This shows that $\gamma$ is an unramified factor map.\\
    \indent Conversely, assume that there exists an unramified factor map $\gamma: Z_1 \to Z_2$ such that $(Z_1, \alpha_1, \gamma)$ is an unramified intermediate factor of $\alpha_2: Y \to Z_2$. 
    Fix $\varphi \in H_1 = \aut(Y/Z_1, \alpha_1)$. Then we have $\alpha_1 \circ \varphi = \alpha_1$. Thus, $\gamma \circ \alpha_1 \circ \varphi = \gamma \circ \alpha_1$. Since $\gamma \circ \alpha_1 = \alpha_2$, we obtain $\alpha_2 \circ \varphi = \alpha_2$. This implies that $\varphi \in H_2 = \aut(Y/Z_2, \alpha_2)$. Therefore, we obtain the inclusion $H_1 \subseteq H_2$.
\end{proof}

\begin{remark}
    When there exists an unramified factor map $\gamma: Z_1 \to Z_2$ such that $(Z_1, \alpha_1, \gamma)$ is an unramified intermediate factor of $\alpha_2: Y \to Z_2$, the relation $\beta_2 \circ \gamma = \beta_1$ follows automatically. Indeed, we have 
    \[
    (\beta_2\circ\gamma)(z)=\beta_2(\gamma(\alpha_1(y)))=\beta_2(\alpha_2(y))=\vp(y)=\beta_1(\alpha_1(y))=\beta_1(z).
    \]
    Therefore, the following diagram commutes. 
\[
\begin{tikzcd}[column sep=large]
    Y \arrow[r, "\alpha_1"] 
      \arrow[rr, "\alpha_2"', bend right=30] 
      \arrow[rrr, "\vp", bend left=40] 
    & Z_1 \arrow[r, "\gamma"] 
          \arrow[rr, "\beta_1", bend left=30] 
    & Z_2 \arrow[r, "\beta_2"] 
    & X
\end{tikzcd}
\]
\end{remark}

Next, we state a theorem about normal subgroups of the Galois group.

\begin{theorem}\label{normal_subgroup}
    Let $X$ and $Y$ be irreducible SFTs, $\vp: Y \to X$ be a Galois factor, and set $G = \gal(Y/X)$. Let $H$ be a subgroup of $G$, and set $(Z, \alpha, \beta) = (Z(H), \alpha(H), \beta(H))$ be the intermediate factor corresponding to $H$. Then the following hold:
    \begin{enumerate}
        \item $H$ is a normal subgroup of $G$ if and only if $\beta: Z \to X$ is a Galois factor. 
        \item In this case, the Galois group $\gal(Z/X, \beta)$ is isomorphic to the quotient group $G/H$.
    \end{enumerate}
\end{theorem}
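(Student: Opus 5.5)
We may take $(Z,\alpha,\beta)=(Y/H,q_H,\beta)$, the model constructed in the proof of Theorem~\ref{subgrouptointermediatefactor}. The plan is to build a homomorphism $\rho:N_G(H)\to\aut(Z/X,\beta)$ by descending elements of the normalizer of $H$ to $Z$, and to show it is surjective with kernel $H$. Both parts of the statement then follow from a count against $\deg(\beta)=[G:H]$.

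\textbf{Descent.} Take $g\in G$ with $gHg^{-1}=H$. Define $\bar g:Z\to Z$ by $\bar g([y]_H)=[g(y)]_H$.
\begin{itemize}
\item It is well defined, since $g(h(y))=(ghg^{-1})(g(y))$ and $ghg^{-1}\in H$.
\item It commutes with the shift, because $g$ and $q_H$ do.
\item It satisfies $\beta\circ\bar g=\beta$, because $\beta(\bar g(q_H y))=\vp(g y)=\vp(y)$.
\item It is continuous by Lemma~\ref{universalityofquotientmap}, since $q_H$ is open by Lemma~\ref{openmap} and hence a quotient map, and $\bar g\circ q_H=q_H\circ g$ is continuous.
\item It is bijective with inverse $\overline{g^{-1}}$.
\end{itemize}
So $g\mapsto\bar g$ is a homomorphism $N_G(H)\to\aut(Z/X,\beta)$.

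\textbf{Kernel.} If $\bar g=\id_Z$, then $q_H\circ g=q_H$, so $g\in\aut(Y/Z,q_H)=H$ by Theorem~\ref{subgrouptointermediatefactor}. Conversely, every $h\in H$ clearly maps to $\id_Z$, so the kernel is exactly $H$.

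\textbf{Surjectivity, the main step.} Let $\psi\in\aut(Z/X,\beta)$. Then $\psi\circ q_H:Y\to Z$ and $q_H:Y\to Z$ are both lifts of $\vp$ with respect to $\beta$. I would lift $\psi\circ q_H$ through $q_H$ using the Galois property of $\vp$.
\begin{enumerate}
\item Pick $y_0\in Y$ and choose $y_1\in q_H^{-1}(\psi(q_H(y_0)))$. Then $\vp(y_1)=\beta(\psi(q_H y_0))=\vp(y_0)$.
\item By Proposition~\ref{transitiveaction} there is $g\in G$ with $g(y_0)=y_1$.
\item The maps $q_H\circ g$ and $\psi\circ q_H$ are both lifts of $\vp$ along $\beta$, and they agree at $y_0$. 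By Theorem~\ref{liftunique} (applied with the unramified map $\beta$), $q_H\circ g=\psi\circ q_H$.
\item For $h\in H$ this gives $q_H\circ g h=\psi\circ q_H\circ h=\psi\circ q_H=q_H\circ g$. Hence $q_H\circ(ghg^{-1})=q_H$, so $ghg^{-1}\in H$ and $g\in N_G(H)$.
\item Finally $\bar g=\psi$, since $q_H$ is surjective.
\end{enumerate}
We conclude that $\aut(Z/X,\beta)\cong N_G(H)/H$.

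\textbf{Conclusion.} $\beta$ is unramified of degree $[G:H]$, so $\beta$ is Galois exactly when $|N_G(H)/H|=[G:H]$. This holds exactly when $N_G(H)=G$, that is, when $H$ is normal. In that case $\gal(Z/X,\beta)\cong G/H$. For an arbitrary representative $(Z(H),\alpha(H),\beta(H))$, transport these conclusions through the conjugacy of intermediate factors, which preserves both the Galois property and the Galois group. The only delicate point is step 3 of surjectivity: Theorem~\ref{liftunique} must be applied with the irreducible SFT $Y$ as the domain and the unramified map $\beta$ as the covering map.
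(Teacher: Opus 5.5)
Your proof is correct, but it is organized differently from the paper's.

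The paper proves the two directions separately:
\begin{itemize}
\item \emph{$H$ normal $\Rightarrow$ $\beta$ Galois.} It descends every $\varphi\in G$ to $\tilde\varphi\in\aut(Z/X,\beta)$; normality is exactly what makes this descent well defined. It then shows the kernel is $H$ and squeezes $|G/H|\le|\aut(Z/X,\beta)|\le\deg(\beta)=|G/H|$.
\item \emph{$\beta$ Galois $\Rightarrow$ $H$ normal.} It uses transitivity of $\gal(Z/X,\beta)$ on the fibers of $\beta$, together with uniqueness of lifts along $\beta$. This produces, for each $\varphi\in G$, some $\varphi'$ with $\alpha\circ\varphi^{-1}=\varphi'\circ\alpha$. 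A direct computation then gives $\varphi\chi\varphi^{-1}\in H$.
\end{itemize}

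You lift in the opposite direction. You use transitivity of $G$ on the fibers of $\vp$ and the same lift-uniqueness along $\beta$. This shows that every $\psi\in\aut(Z/X,\beta)$ comes from some $g\in G$, and that such a $g$ necessarily normalizes $H$.

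What your route buys:
\begin{itemize}
\item It gives $\aut(Z/X,\beta)\cong N_G(H)/H$ for an arbitrary subgroup $H$. This is the exact analogue of the statement in covering-space and field theory, and the paper never identifies $\aut(Z/X,\beta)$ for non-normal $H$.
\item Both implications of (1), and also (2), follow from a single cardinality comparison with $\deg(\beta)=[G:H]$.
\item Your kernel computation, via $\aut(Y/Z,q_H)=H$, is more direct than the paper's argument through transitivity on $\alpha$-fibers and rigidity.
\end{itemize}

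What the paper's route buys is modest: its converse direction needs neither the normalizer nor any counting.

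The final transport step to other representatives is harmless but unnecessary. Your argument only uses that $q_H$ is a surjective quotient map with $\aut(Y/Z,q_H)=H$, and this holds for any representative $(Z(H),\alpha(H),\beta(H))$.
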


\begin{proof}
    Suppose that $H$ is a normal subgroup of $G$. We now simultaneously show that $\beta: Z \to X$ is a Galois factor and  $\gal(Z/X, \beta)\cong G/H$. 
    Let $\varphi \in G$. We define a map $\tilde{\varphi}: Z \to Z$ as follows. For any $z \in Z$, since $\alpha$ is surjective, there exists a $y \in Y$ such that $\alpha(y) = z$. Using this, we set $\tilde{\varphi}(z) = \alpha(\varphi(y))$. 
    First, we show that $\tilde{\varphi}$ is well-defined. Fix $y, y' \in Y$ with $\alpha(y) = \alpha(y') = z$. By Theorem~\ref{subgrouptointermediatefactor}, $\alpha: Y \to Z$ is a Galois factor and $\gal(Y/Z, \alpha) = H$ holds. Thus, by Proposition~\ref{transitiveaction}, the group $H$ acts transitively on every fiber. Since $y, y' \in \alpha^{-1}(z)$, there exists a $\psi \in H$ such that $y' = \psi(y)$. By the assumption that $H$ is a normal subgroup, there exists a $\psi' \in H$ such that $\varphi \circ \psi = \psi' \circ \varphi$. Then we have
    \[
    \alpha(\varphi(y'))=\alpha(\varphi(\psi(y)))=
    \alpha(\psi'(\varphi(y)))=\alpha(\varphi(y)), 
    \]
    which implies that $\tilde{\varphi}$ is well-defined. Next, we show that $\tilde{\varphi} \in \aut(Z/X, \beta)$.
    From the definition, it is clear that $\tilde{\varphi} \circ \alpha = \alpha \circ \varphi$. Since $\widetilde{\varphi^{-1}}$ is the inverse of $\tilde{\varphi}$, it is also immediately clear that $\tilde{\varphi}$ is bijective.
    By Lemma~\ref{openmap}, $\alpha$ is an open map. Because $\alpha$ is continuous, it is a quotient map. Since $\alpha \circ \varphi$ is also continuous, the universality of quotient maps (Lemma~\ref{universalityofquotientmap}) implies that $\tilde{\varphi}$ is continuous. Next, we show that $\tilde{\varphi}$ commutes with the shifts. Fix $z \in Z$, and $y \in Y$ with $\alpha(y) = z$. Then we have 
    \begin{align*}
         (\sigma_Z\circ\tilde{\varphi})(z)&=(\sigma_Z\circ\tilde{\varphi}\circ\alpha)(y)\\
         &=(\sigma_Z\circ\alpha\circ\varphi)(y)\\
         &=(\alpha\circ\varphi\circ\sigma_Y)(y)\\
         &=(\tilde{\varphi}\circ\alpha\circ\sigma_Y)(y)\\
         &=(\tilde{\varphi}\circ\sigma_Z\circ\alpha)(y)\\
         &=(\tilde{\varphi}\circ\sigma_Z)(z). 
    \end{align*}
    Finally, we show that $\beta \circ \tilde{\varphi} = \beta$. Since $\varphi \in G = \aut(Y/X, \vp)$, we have $\vp \circ \varphi = \vp$. Furthermore, since $(Z, \alpha, \beta)$ is an intermediate factor, we have $\vp = \beta \circ \alpha$. Thus, we obtain
    \[
    \beta\circ\tilde{\varphi}\circ\alpha=
    \beta\circ\alpha\circ\varphi=\vp\circ\varphi=\vp=\beta\circ\alpha.
    \]
    Since $\alpha$ is surjective, we have $\beta \circ \tilde{\varphi} = \beta$. Therefore, we have shown that $\tilde{\varphi} \in \aut(Z/X, \beta)$. \\
    \indent We define a group homomorphism $F: G \to \aut(Z/X, \beta)$ by $F(\varphi) = \tilde{\varphi}$. First, we show that $\mathrm{Ker}(F) = H$. Fix $\varphi \in \mathrm{Ker}(F)$. Then we have $\tilde{\varphi} = F(\varphi) = \id_Z$. Let $y \in Y$, and $z = \alpha(y)$. Then we obtain
    \[
    \alpha(\varphi(y))=\tilde{\varphi}(z)=\id_Z(z)=z=\alpha(y). 
    \]
    This means that $y, \varphi(y) \in \alpha^{-1}(z)$. Since $\alpha$ is a Galois factor, Proposition~\ref{transitiveaction} implies that the Galois group $\gal(Y/Z, \alpha) = H$ acts transitively on the fibers. Therefore, there exists a $\psi \in H$ such that $\varphi(y) = \psi(y)$. That is, $(\psi^{-1} \circ \varphi)(y) = y$. Thus, Lemma~\ref{rigidity} implies that $\psi^{-1} \circ \varphi = \id_Y$, which yields $\varphi = \psi \in H$. This shows that $\mathrm{Ker}(F) \subseteq H$. Conversely, let $\varphi \in H = \gal(Y/Z, \alpha)$. Then we have 
    \[
    \tilde{\varphi}\circ\alpha=\alpha\circ\varphi=
    \alpha=\id_{Z}\circ\alpha.
    \]
    Since $\alpha$ is surjective, we have $\tilde{\varphi} = \id_Z$. This implies that $\varphi \in \mathrm{Ker}(F)$, and hence $\mathrm{Ker}(F) \supseteq H$. From the above, we conclude that $\mathrm{Ker}(F) = H$. \\
    \indent By applying the first isomorphism theorem to $F: G \to \aut(Z/X, \beta)$, we obtain
    \[
    G/H\cong \mathrm{Im} (F)\subseteq \aut(Z/X, \beta). 
    \]
    Comparing their orders, we have
    \[
    |\aut(Z/X, \beta)|\ge |G/H|=|G|/|H|.
    \]
    Here, since $\vp$ and $\alpha$ are Galois factors, we have $\deg(\vp) = |G|$ and $\deg(\alpha) = |H|$. From this and Lemma~\ref{degreemultiplicativity}, it follows that
    \[
    \deg(\beta)=\deg(\vp)/\deg(\alpha)=|G|/|H|\le|\aut(Z/X, \beta)|.
    \]
    On the other hand, Proposition~\ref{cj_le_deg} states that $\deg(\beta) \ge |\aut(Z/X, \beta)|$. Therefore $\deg(\beta) = |\aut(Z/X, \beta)|$. This implies that $\beta$ is a Galois factor. Furthermore, since
    \[
    |\mathrm{Im}(F)|=|G/H|=\deg(\beta)=|\aut(Z/X, \beta)|,
    \]
    and $\mathrm{Im}(F) \subseteq \aut(Z/X, \beta)$, we conclude that $G/H \cong \mathrm{Im}(F) = \gal(Z/X, \beta)$.
    Conversely, assume that $\beta: Z \to X$ is a Galois factor. Let $\varphi \in G$ and $\chi \in H = \gal(Y/Z, \alpha)$. Consider the sliding block code $\alpha \circ \varphi^{-1}: Y \to Z$. Since we have
    \[
    \beta\circ(\alpha\circ\varphi^{-1})=\vp\circ\varphi^{-1}=\vp, 
    \]
    the map $\alpha \circ \varphi^{-1}$ is a lift of $\vp$ along $\beta$. Let $y_0 \in Y$. Furthermore, we have
    \[
    \beta((\alpha\circ\varphi^{-1})(y_0))=\vp(y_0)=\beta(\alpha(y_0)).
    \]
    That is, $(\alpha \circ \varphi^{-1})(y_0)$ and $\alpha(y_0)$ are elements in the same fiber of $\beta$. 
    Furthermore, since $\beta: Z \to X$ is a Galois factor, Proposition~\ref{transitiveaction} implies that $\gal(Z/X, \beta)$ acts transitively on every fiber. Therefore, there exists a $\varphi' \in \gal(Z/X, \beta)$ such that $(\alpha \circ \varphi^{-1})(y_0) = (\varphi' \circ \alpha)(y_0)$. 
    Considering the map $\varphi' \circ \alpha$, we have
    \[
    \beta\circ(\varphi'\circ\alpha)=\beta\circ\alpha=\vp. 
    \]
    Thus, $\varphi' \circ \alpha$ is also a lift of $\vp$ along $\beta$.
    Therefore, by the uniqueness of lifts (Theorem~\ref{liftunique}), we have $\alpha \circ \varphi^{-1} = \varphi' \circ \alpha$, which implies $\varphi'^{-1}\circ\alpha=\alpha\circ\varphi$.
    Hence, we have 
    \begin{align*}
        \alpha\circ(\varphi\circ\chi\circ\varphi^{-1})&= (\alpha\circ\varphi)\circ\chi\circ\varphi^{-1}\\
        &=(\varphi'^{-1}\circ\alpha)\circ\chi\circ\varphi^{-1}\\
        &=\varphi'^{-1}\circ(\alpha\circ\chi)\circ\varphi^{-1}\\
        &=\varphi'^{-1}\circ\alpha\circ\varphi^{-1}\\
        &=\alpha\circ\varphi\circ\varphi^{-1}\\
        &=\alpha.
    \end{align*}
    This implies that $\varphi \circ \chi \circ \varphi^{-1} \in \gal(Y/Z, \alpha) = H$. This means that $H$ is a normal subgroup.
    \end{proof}

\subsection{Examples of Determining the Structure of Galois Factors}\label{subsection_ex_of_gal}
    In this subsection, we provide some examples of structure determination using the fundamental theorem.

    \begin{example}\label{ex_two_shift}
    Consider the full shift $X = \AA^\Z$ on the alphabet $\AA = \{0, 1\}$. Let $Y = \AA^\Z$. We define $\vp: Y \to X$ by
    \[
    \vp(y)_i= y_{i+1}-y_i\pmod{2}
    \]
    for each $i \in \Z$. It is easy to see that $\vp$ is a factor map. Let $x \in X$, and consider $y \in \vp^{-1}(x)$. If we set $y_0 = 0$, all other coordinates are uniquely determined by the relation $\vp(y)_i = y_{i+1} - y_i \pmod{2}$. The same holds when $y_0 = 1$. That is, we have $|\vp^{-1}(x)| = 2$, which implies that $\vp$ is an unramified factor map of degree $\deg(\vp) = 2$. 
    Define $\tau: Y \to Y$ by $\tau(y)_i=y_i+1$ for each $i \in \Z$. 
    Clearly, $\tau \in \aut(Y)$. Furthermore, for each $i \in \Z$, we have
    \begin{align*}
        \vp(\tau(y))_i&=\tau(y)_{i+1}-\tau(y)_i\pmod 2\\
        &=(y_{i+1}+1)-(y_i+1) \pmod 2\\
        &=y_{i+1}-y_i \pmod 2\\
        &=\vp(y)_i.
    \end{align*}
    Therefore, $\tau \in \aut(Y/X, \vp)$. By definition, we also have $\id_Y \in \aut(Y/X, \vp)$. This implies that $|\aut(Y/X, \vp)| \ge 2$. On the other hand, by Proposition~\ref{cj_le_deg}, we have
    \[
    |\aut(Y/X,\vp)|\le\deg(\vp)=2.
    \]
Thus, $|\aut(Y/X, \vp)| = 2$, and we obtain
    \[
    \aut(Y/X, \vp) = \{\id_Y, \tau\}.
    \]
Therefore, $\vp$ is a Galois factor, and
    \[
    \gal(Y/X, \vp) \cong \Z/2\Z.
    \]
Since $\Z/2\Z$ has no non-trivial subgroups, $\vp$ has no non-trivial unramified intermediate factors. 
    \end{example}

The following example is a generalization of the previous one. Since the calculations are similar to those in the previous example, we omit the details.

    \begin{example}\label{ex_z_over_nz}
    Let $n \ge 2$ be an integer, and let $X = Y = \AA^\Z$ be the full shifts on the alphabet $\AA = \{0, 1, 2, \dots, n-1\}$. We define $\vp: Y \to X$ by
    \[
    \vp(y)_i=y_{i+1}-y_i \pmod{n}
    \]
    for each $i \in \Z$. For the same reason as in Example~\ref{ex_two_shift}, we have $|\vp^{-1}(x)| = n$ for each $x \in X$. That is, $\vp$ is an unramified factor map with $\deg(\vp) = n$. 
    Let $c \in \{0, 1, 2, \dots, n-1\}$, and define $\tau_c: Y \to Y$ by
    \[
    \tau_c(y)_i=y_i+c \pmod{n}
    \]
    for each $i \in \Z$. Then we have
    \[
    \gal(Y/X,\vp)=\{\id_Y=\tau_0,\tau_1,\tau_2,\dots,\tau_{n-1}\}=\langle\tau_1\rangle\cong\Z/n\Z. 
    \]
    In the following, we consider the case $n = 6$. In this case, there are two non-trivial subgroups of $G = \gal(Y/X, \vp)$:
    \begin{align*}
        H_1&=\{\tau_0,\tau_3\}\cong\Z/2\Z\\
        H_2&=\{\tau_0,\tau_2,\tau_4\}\cong\Z/3\Z.
    \end{align*}
    Let us construct the unramified intermediate factors corresponding to these subgroups. 
    By the proof of Theorem~\ref{subgrouptointermediatefactor}, it suffices to consider the orbit space $Y/H_1$. In $Y/H_1$, an element $y$ is identified with the sequence obtained by adding $3$ to all of its coordinates. Thus, let us consider the following construction.
    Let $\AA_1 = \Z/3\Z \oplus \Z/6\Z$, and define
    \[
    Z_1=\{(a,b)\in\AA_1^\Z\mid a_{i+1}-a_i=b_i \pmod{3}\text{ for any }i\in \Z\}.
    \]
    We define the maps $\alpha_1: Y \to Z_1$ and $\beta_1: Z_1 \to X$ by 
    \begin{align*}
        &\alpha_1(y)_i=(y_i\bmod 3, \ y_{i+1}-y_i\bmod 6)\\
        &\beta_1(a, b)_i=b_i
    \end{align*}
    for each $i \in \Z$. A direct computation shows that $\vp=\beta_1\circ\alpha_1$ and $H_1=\gal(Y/Z_1,\alpha_1)$. 
    Since $H_1$ is a normal subgroup, Theorem~\ref{normal_subgroup} implies that $\beta_1$ should be a Galois factor and its Galois group should be $G/H_1$. Let us verify this. For $c \in \{0, 1, 2\}$, define $\tau'_c: Z_1 \to Z_1$ by
    \[
    \tau'_c(a, b)_i=(a_i+c\pmod 3, \ b_i)
    \]
    for each $i \in \Z$. Then we have $\tau'_c \in \aut(Z_1/X, \beta_1)$, and thus
    \[
    \gal(Z_1/X,\beta)=\{\tau'_0, \tau'_1, \tau'_2\}=
    \langle\tau'_1\rangle\cong\Z/3\Z\cong (\Z/6\Z)/(\Z/2\Z)\cong G/H_1
    \]
    holds. 
    \end{example}

    In the following, we provide two examples in which the Galois group is not a cyclic group.
    
    \begin{example}\label{ex_klein}
    Let $\AA = \{(0,0), (0,1), (1,0), (1,1)\} = \Z/2\Z \oplus \Z/2\Z$, and let $X = Y = \AA^\Z$. We write the $i$-th coordinate of $y \in Y$ as $y_i = (y_i^{(1)}, y_i^{(2)})$. We define $\vp: Y \to X$ by
    \[
    \vp(y)_i=(y_{i+1}^{(1)}-y_i^{(1)}, \ y_{i+1}^{(2)}-y_i^{(2)} ). 
    \]
    Then, $\vp$ is an unramified factor map with $\deg(\vp) = 4$. 
    For $c \in \AA = \Z/2\Z \oplus \Z/2\Z$, we define $\tau_c: Y \to Y$ by
\[
    \tau_c(y)_i = y_i + c \pmod{(2,2)}
\]
for each $i \in \Z$. 
Then we can see that the Galois group $G = \gal(Y/X, \vp)$ is the Klein four-group:
\[
    G = \{\id_Y = \tau_{(0,0)}, \tau_{(1,0)}, \tau_{(0,1)}, \tau_{(1,1)}\} \cong \Z/2\Z \oplus \Z/2\Z.
\]
The group $G$ has three non-trivial proper subgroups:
\begin{align*}
    H_1 &= \{\tau_{(0,0)}, \tau_{(1,0)}\} \cong \Z/2\Z, \\
    H_2 &= \{\tau_{(0,0)}, \tau_{(0,1)}\} \cong \Z/2\Z, \\
    H_3 &= \{\tau_{(0,0)}, \tau_{(1,1)}\} \cong \Z/2\Z.
\end{align*}
 Let $Z_1 = Z_2 = Z_3 = \AA^\Z$. The intermediate factors corresponding to these subgroups are given as follows:
\begin{align*}
    \alpha_1: Y \to Z_1, &\quad \alpha_1(y)_i = (y_{i+1}^{(1)} - y_i^{(1)}, \ y_i^{(2)}), \\
    \beta_1: Z_1 \to X, &\quad \beta_1(z)_i = (z_i^{(1)}, \ z_{i+1}^{(2)} - z_i^{(2)}), \\
    \alpha_2: Y \to Z_2, &\quad \alpha_2(y)_i = (y_i^{(1)}, \ y_{i+1}^{(2)} - y_i^{(2)}), \\
    \beta_2: Z_2 \to X, &\quad \beta_2(z)_i = (z_{i+1}^{(1)} - z_i^{(1)}, \ z_i^{(2)}), \\
    \alpha_3: Y \to Z_3, &\quad \alpha_3(y)_i = (y_{i+1}^{(1)} - y_i^{(1)}, \ y_i^{(2)} - y_i^{(1)}), \\
    \beta_3: Z_3 \to X, &\quad \beta_3(z)_i = (z_i^{(1)}, \ z_{i+1}^{(2)} + z_i^{(1)} - z_i^{(2)}),
\end{align*}
    for each $i \in \Z$. 
    This structure is identical to that of the Galois extension $\Q(\sqrt{2}, \sqrt{3}) / \Q$ in field theory. The Galois group of $\Q(\sqrt{2}, \sqrt{3}) / \Q$ is also $\Z/2\Z \oplus \Z/2\Z$, and its intermediate fields are $\Q(\sqrt{2})$, $\Q(\sqrt{3})$, and $\Q(\sqrt{6})$. The following two diagrams share the same structure:
    \[
    \begin{tikzcd}[row sep=3em, column sep=2.5em]
    & \mathbb{Q}(\sqrt{2}, \sqrt{3}) \arrow[dl, dash] \arrow[d, dash] \arrow[dr, dash] & \\
    \mathbb{Q}(\sqrt{2}) \arrow[dr, dash] & \mathbb{Q}(\sqrt{3}) \arrow[d, dash] & \mathbb{Q}(\sqrt{6}) \arrow[dl, dash] \\
    & \mathbb{Q} & 
\end{tikzcd}
\hspace{2em}
\begin{tikzcd}[row sep=3em, column sep=2.5em]
    & Y \arrow[dl, "\alpha_1"'] \arrow[d, "\alpha_3" description] \arrow[dr, "\alpha_2"] & \\
    Z_1 \arrow[dr, "\beta_1"'] & Z_2 \arrow[d, "\beta_3" description] & Z_3 \arrow[dl, "\beta_2"] \\
    & X &
\end{tikzcd}
    \]
    \end{example}

    Next, we present an example where the Galois group is non-abelian.

    \begin{example}\label{ex_sym_shift}
    Let $\mathfrak{S}_3$ be the symmetric group of degree 3, $\AA = \mathfrak{S}_3$, and $X = Y = \AA^\Z$. We define $\vp: Y \to X$ for each $i \in \Z$ by
     \[
     \vp(y)_i=y^{-1}_iy_{i+1}.
     \]
    Let $x \in X$. We consider $y \in \vp^{-1}(x)$. Once $y_0$ is fixed, all other coordinates of $y$ are uniquely determined by the relation $x_i = y_i^{-1} y_{i+1}$. Since there are $|\mathfrak{S}_3| = 6$ choices for $y_0$, we have $|\vp^{-1}(x)| = 6$, which implies that $\vp$ is an unramified factor map. 
    For $g \in \mathfrak{S}_3$, we define $\tau_g: Y \to Y$ by
    \[
    \tau_g(y)_i = g y_i
    \]
    for each $i \in \Z$. Then we have
    \begin{align*}
            \vp(\tau_g(y))_i
            &=(\tau_g(y)_i)^{-1} (\tau_g(y)_{i+1})\\
            &=(g y_i)^{-1} (g y_{i+1})\\
            &=y_i^{-1} g^{-1} g y_{i+1}\\
            &=y_i^{-1} y_{i+1}\\
            &=\vp(y)_i. 
        \end{align*}
    Thus, we obtain $\vp \circ \tau_g = \vp$. This means that $\tau_g \in \gal(Y/X, \vp)$. Set $G = \gal(Y/X, \vp)$. Furthermore, for each $i \in \Z$ and $g, h \in \mathfrak{S}_3$, we have
    \[
    \tau_h(\tau_g(y))_i = h(g y_i) = (hg)y_i = \tau_{hg}(y)_i.
    \]
    Therefore, the map $f: \mathfrak{S}_3 \to G$ defined by $g \mapsto \tau_g$ is a homomorphism. Since its inverse map is given by $\tau_g \mapsto g$, $f$ is an isomorphism. From the above, we conclude that
    \[
    \gal(Y/X, \vp) \cong \mathfrak{S}_3.
    \]
\end{example}

\subsection{Inverse Galois Problem for SFT}\label{subsection_inverse galois}
Let $G$ be a finite group. A natural question arises: does there exist a Galois factor with Galois group $G$? This is analogous to the corresponding problem in field theory. In fact, if we place no restrictions on the base space, the answer is relatively simple. We can prove the following theorem by replacing $\mathfrak{S}_3$ with $G$ in Example~\ref{ex_sym_shift}. The key idea is to use group shifts. This allows us to embed the group structure directly into the shift space.

\begin{theorem}\label{inverse_gal}
    Let $G$ be a finite group. Then, there exist irreducible SFTs $X$ and $Y$, and a Galois factor $\vp: Y \to X$ such that $\gal(Y/X, \vp) \cong G$.
\end{theorem}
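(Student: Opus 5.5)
We generalize Example~\ref{ex_sym_shift}: take the alphabet $\AA = G$ and set $X = Y = G^\Z$, the full shift on $|G|$ symbols. Full shifts are SFTs (no forbidden words), and they are irreducible since any concatenation of words is allowed. Define $\vp: Y \to X$ by
\[
\vp(y)_i = y_i^{-1} y_{i+1}, \qquad i \in \Z.
\]
This is a sliding block code with memory $0$ and anticipation $1$, so it is continuous and commutes with the shift.

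To show that $\vp$ is unramified of degree $|G|$, fix $x \in X$ and $g \in G$. The recursions $y_{i+1} = y_i x_i$ and $y_{i-1} = y_i x_{i-1}^{-1}$ with $y_0 = g$ determine a unique $y$ with $\vp(y) = x$. Hence $y \mapsto y_0$ is a bijection $\vp^{-1}(x) \to G$. In particular $\vp$ is surjective, so it is a factor map, and every fiber has exactly $|G|$ points, so $\deg(\vp) = |G|$.

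Next consider left multiplication. For $g \in G$ set $\tau_g(y)_i = g y_i$. This is a $1$-block code and commutes with $\sigma$. Its inverse is $\tau_{g^{-1}}$, so $\tau_g \in \aut(Y)$. The computation in Example~\ref{ex_sym_shift} gives $\vp \circ \tau_g = \vp$ and $\tau_h \circ \tau_g = \tau_{hg}$, so $g \mapsto \tau_g$ is a homomorphism $G \to \aut(Y/X,\vp)$. It is injective, since evaluating at the constant point $y_i = e$ recovers $g$ from $\tau_g$.

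It follows that $|\aut(Y/X,\vp)| \ge |G| = \deg(\vp)$. Proposition~\ref{cj_le_deg} gives the reverse inequality, so equality holds. Therefore $\vp$ is a Galois factor, the injective homomorphism is onto, and $\gal(Y/X,\vp) \cong G$. There is no real obstacle here; the only point needing care is the bijectivity of the fiber parametrization, which rests on unique solvability in a group. The trivial group also works: then $\vp$ is the identity on a one-point shift.
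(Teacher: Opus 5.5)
Your proposal is correct and follows the paper's proof: the full shift on the alphabet $G$, the map $\vp(y)_i = y_i^{-1}y_{i+1}$, the deck transformations $\tau_g$ given by left multiplication, and Proposition~\ref{cj_le_deg} to force $|\aut(Y/X,\vp)| = \deg(\vp) = |G|$. Your explicit recursion for the fibers, your check of injectivity via the constant point, and your remark on the trivial group fill in details that the paper leaves to the analogy with Example~\ref{ex_sym_shift}.
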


\begin{proof}
    Set $\AA = G$, and $X = Y = \AA^\Z$. We define a factor map $\vp: Y \to X$ by
    \[
    \vp(y)_i=y^{-1}_iy_{i+1}
    \]
    for each $i \in \Z$. 
    By an argument similar to that in Example~\ref{ex_sym_shift}, $\vp$ is an unramified factor map with $\deg(\vp) = |G|$. Furthermore, for $g \in G$, we define $\tau_g: Y \to Y$ by $\tau_g(y)_i = g y_i$ for each $i \in \Z$. Then we have $\tau_g \in \aut(Y/X, \vp)$. Thus, we obtain
    \[
    \{\tau_g\mid g\in G\}\subseteq \aut(Y/X,\vp). 
    \]
    That is, we have 
    \[
    |G|=|\{\tau_g\mid g\in G\}|\le |\aut(Y/X,\vp)|.
    \]
    On the other hand, By Proposition~\ref{cj_le_deg}, we have 
    \[
    |\aut(Y/X,\vp)|\le\deg(\vp)=|G|.
    \]
    From the above, we have $|\aut(Y/X, \vp)| = \deg(\vp)$, which implies that $\vp$ is a Galois factor and $ \gal(Y/X,\vp)=\{\tau_g\mid g\in G\}$.
    For the same reason as in Example~\ref{ex_sym_shift}, the map $f: G \to \gal(Y/X, \vp)$ given by $g \mapsto \tau_g$ is an isomorphism. Therefore, we conclude that $\gal(Y/X,\vp)\cong G$. This completes the proof.
\end{proof}

    If a base space $X$ is fixed, the inverse Galois problem appears to be non-trivial. For instance, for a specific irreducible SFT such as the full shift on $n$ symbols $X = \Sigma_n$ or the golden mean shift, and for an arbitrary finite group $G$, does there exist an irreducible SFT $Y$ and a Galois factor $\vp: Y \to X$ such that $\gal(Y/X, \vp) \cong G$? Furthermore, we can ask an even stronger question: for an arbitrary finite group $G$, does there exist a Galois factor $\vp: X \to X$ such that $\gal(X/X, \vp) \cong G$? This version of the inverse Galois problem for irreducible SFTs has the potential to provide an analogy to the classical inverse Galois problem over the field of rational numbers. We will revisit these questions in the final chapter.

\section{Absolute Galois Group}\label{section_Abs_galois_group}
\subsection{Definition of the Absolute Galois Group}
In this subsection, we define the absolute Galois group. A pair $(X, x_0)$ consisting of an SFT $X$ and an element $x_0 \in X$ is called a pointed SFT. Note that the subscript of $x_0$ does not indicate a coordinate. A map $\varphi: (X, x_0) \to (Y, y_0)$ is said to be a pointed map if $\varphi(x_0) = y_0$. Similarly, a pointed unramified intermediate factor of a pointed unramified factor map $\vp: (Y, y_0) \to (X, x_0)$ is a tuple $(Z, z_0, \alpha, \beta)$ such that $(Z, \alpha, \beta)$ is an unramified intermediate factor of $\vp: Y \to X$, and furthermore, the maps $\alpha: (Y, y_0) \to (Z, z_0)$ and $\beta: (Z, z_0) \to (X, x_0)$ are pointed maps.
Any other ``pointed'' notions are defined analogously. \\
\indent In the following, we prove the lemmas necessary for the definition of the absolute Galois group.

\begin{lemma}\label{medium_unique}
    Let $(X, x_0)$, $(Y_1, y_{1,0})$, and $(Y_2, y_{2,0})$ be pointed irreducible SFTs. Let $\vp_1: (Y_1, y_{1,0}) \to (X, x_0)$ and $\vp_2: (Y_2, y_{2,0}) \to (X, x_0)$ be pointed Galois factors. Then, there is at most one pointed unramified factor map $\alpha: (Y_2, y_{2,0}) \to (Y_1, y_{1,0})$ such that $\vp_2 = \vp_1 \circ \alpha$.
\end{lemma}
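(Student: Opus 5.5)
Suppose $\alpha, \alpha' : (Y_2, y_{2,0}) \to (Y_1, y_{1,0})$ are two pointed unramified factor maps with $\vp_2 = \vp_1 \circ \alpha = \vp_1 \circ \alpha'$. The plan is to treat both as lifts and apply Theorem~\ref{liftunique}.

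First, regard $\vp_2 : Y_2 \to X$ as the sliding block code to be lifted. Take $Z = Y_2$ and $\varphi = \vp_2$. Use the unramified factor map $\vp_1 : Y_1 \to X$ in the role of the map $\vp$ in Theorem~\ref{liftunique}. Then $\alpha$ and $\alpha'$ are sliding block codes $Y_2 \to Y_1$ with $\vp_1 \circ \alpha = \vp_2 = \vp_1 \circ \alpha'$, so both are lifts of $\vp_2$ with respect to $\vp_1$. All spaces involved ($X$, $Y_1$, $Y_2$) are irreducible SFTs, and $\vp_1$ is unramified because it is a Galois factor. So the hypotheses of Theorem~\ref{liftunique} are met.

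Second, use the pointedness: $\alpha(y_{2,0}) = y_{1,0} = \alpha'(y_{2,0})$. The two lifts therefore agree at the point $z_0 = y_{2,0}$, and Theorem~\ref{liftunique} gives $\alpha = \alpha'$.

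There is no real obstacle here. The only step needing care is checking the hypotheses of Theorem~\ref{liftunique}: the lifting is done along $\vp_1$, not $\vp_2$, and only $\vp_1$ needs to be unramified. The Galois property and the unramifiedness of $\alpha$ play no role in uniqueness; they matter only for existence questions elsewhere.
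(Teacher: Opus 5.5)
Your proof is correct and is the same as the paper's: $\alpha$ and $\alpha'$ are both lifts of $\vp_2$ along $\vp_1$ that agree at $y_{2,0}$, so Theorem~\ref{liftunique} gives $\alpha=\alpha'$. Your observation is also right that the argument uses only that $\vp_1$ is unramified, and not the Galois property or the unramifiedness of $\alpha$.
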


\begin{proof}
    Suppose that $\alpha, \alpha': (Y_2, y_{2,0}) \to (Y_1, y_{1,0})$ are two pointed unramified factor maps such that $\vp_2 = \vp_1 \circ \alpha = \vp_1 \circ \alpha'$. Then, both $\alpha$ and $\alpha'$ are lifts of $\vp_2$ along $\vp_1$. Since $\alpha(y_{2,0}) = \alpha'(y_{2,0}) = y_{1,0}$, the uniqueness of lifts (Theorem~\ref{liftunique}) implies that $\alpha = \alpha'$.
\end{proof}

\begin{lemma}\label{directed_set}
    Let $(X, x_0)$, $(Y_1, y_{1,0})$, and $(Y_2, y_{2,0})$ be pointed irreducible SFTs. Let $\vp_1: (Y_1, y_{1,0}) \to (X, x_0)$ and $\vp_2: (Y_2, y_{2,0}) \to (X, x_0)$ be pointed Galois factors. Then, there exist a pointed irreducible SFT $(Y, y_0)$, a pointed Galois factor $\vp: (Y, y_0) \to (X, x_0)$, and pointed unramified factor maps $\alpha_1: (Y, y_0) \to (Y_1, y_{1,0})$ and $\alpha_2: (Y, y_0) \to (Y_2, y_{2,0})$ such that $(Y_1, y_{1,0}, \alpha_1, \vp_1)$ and $(Y_2, y_{2,0}, \alpha_2, \vp_2)$ are pointed unramified intermediate factors of $\vp: (Y, y_0) \to (X, x_0)$. That is, the following conditions hold:
    
    \begin{enumerate}
        \item $\vp: Y \to X$ is a Galois factor satisfying $\vp(y_0) = x_0$.
        \item $\alpha_1: Y \to Y_1$ and $\alpha_2: Y \to Y_2$ are unramified factor maps satisfying $\alpha_1(y_0) = y_{1,0}$ and $\alpha_2(y_0) = y_{2,0}$.
        \item $\vp=\vp_1\circ\alpha_1=\vp_2\circ\alpha_2$ holds. 
    \end{enumerate}
\end{lemma}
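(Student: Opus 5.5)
The plan is to build $Y$ as the fibre product of $\vp_1$ and $\vp_2$, cut down to the irreducible component that contains the base point. Set
\[
P = Y_1\times_X Y_2=\{(a,b)\in Y_1\times Y_2 \mid \vp_1(a)=\vp_2(b)\}.
\]
This is an SFT. Indeed, after recoding so that $\vp_1,\vp_2$ are $1$-block codes on edge shifts, $P$ is the edge shift of the fibre-product graph: its edges are pairs $(e,f)$ with $\Phi_1(e)=\Phi_2(f)$. The map $\pi:P\to X$, $(a,b)\mapsto \vp_1(a)$, is exactly $(\deg\vp_1\deg\vp_2)$-to-one, and the two coordinate projections $p_1,p_2$ are $\deg\vp_2$- and $\deg\vp_1$-to-one respectively. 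The group $G_1\times G_2$, with $G_j=\gal(Y_j/X,\vp_j)$, acts on $P$ coordinatewise. By Proposition~\ref{transitiveaction}, this action is transitive on each fibre $\pi^{-1}(x)\cong\vp_1^{-1}(x)\times\vp_2^{-1}(x)$. By Corollary~\ref{freeaction}, it is free.

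The main obstacle is that $P$ need not be irreducible. For instance, taking $\vp_1=\vp_2$, the diagonal is a proper component. So the key step is to show that $P$ decomposes as a disjoint union of finitely many clopen irreducible SFTs $P=Y^{(1)}\sqcup\cdots\sqcup Y^{(r)}$, permuted transitively by $G_1\times G_2$.

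First I would show that $P$ has no transient part. $P$ is a closed subshift, and the map $\pi$ is finite-to-one onto the irreducible SFT $X$. Pulling back (via the Poincaré recurrence/full-support idea of Lemma~\ref{non_wandering_full_supp}) a fully supported invariant measure on $X$, such as the Parry measure, spread by the transitive free action, should give an invariant measure on $P$ with full support. Then Lemma~\ref{non_wandering_full_supp} shows every point of $P$ is non-wandering. For an edge shift this forces every edge of the essential graph to lie on a cycle, so the SCC decomposition of Definition~\ref{SCC} has $T=\emptyset$ and $P=\bigsqcup_j \mathsf{X}_{G_j}$ with each $G_j$ strongly connected.

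Let $Y$ be the component containing $y_0:=(y_{1,0},y_{2,0})$. Each $g\in G_1\times G_2$ maps components homeomorphically onto components. Let $S\le G_1\times G_2$ be the stabiliser of $Y$. Restricting $\pi$ to $Y$ gives $\vp:Y\to X$. Surjectivity of $\vp$ holds because $\pi(Y)$ is a subshift of $X$ which is open: $\pi$ is a local homeomorphism, by Lemma~\ref{separationlemmavar2} applied to $\vp_1,\vp_2$ together with Lemma~\ref{openmap}. Lemma~\ref{clopen_subshift} then gives $\pi(Y)=X$.

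Next I would count fibres. Fix $x$ and $y\in\vp^{-1}(x)$. For $g\in G_1\times G_2$ we have $gy\in Y$ iff $gY=Y$ iff $g\in S$, and the action on $\pi^{-1}(x)$ is simply transitive. Hence $\vp^{-1}(x)=Sy$ has exactly $|S|$ elements. So $\vp$ is unramified of degree $|S|$, and $S\subseteq\aut(Y/X,\vp)$ acts transitively on every fibre. Proposition~\ref{transitiveaction} then shows that $\vp$ is Galois.

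Finally set $\alpha_j=p_j|_Y$. These are sliding block codes with $\vp=\vp_j\circ\alpha_j$ and $\alpha_j(y_0)=y_{j,0}$. Since $\vp$ and $\vp_j$ are constant-to-one, Lemma~\ref{const_to_one}(2) shows $\alpha_j$ is constant-to-one. Surjectivity of $\alpha_j$ again follows from openness plus Lemma~\ref{clopen_subshift}, applied to $\alpha_j(Y)\subseteq Y_j$. Alternatively, it follows because $S$ projects onto $G_j$: given $a\in\vp_j^{-1}(x)$ and $y\in\vp^{-1}(x)$, transitivity of $G_j$ together with the projection $S\to G_j$ moves $\alpha_j(y)$ to $a$. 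This establishes conditions (1)–(3).
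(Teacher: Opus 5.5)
Your construction is the paper's own: the fibre product, a fully supported invariant measure lifted from $X$ to rule out transient edges, the component $Y$ through $(y_{1,0},y_{2,0})$ with stabiliser $S\le G_1\times G_2$, surjectivity of $\vp$ from openness plus Lemma~\ref{clopen_subshift}, and the fibre count $|\vp^{-1}(x)|=|S|$. Everything up to the definition of $\alpha_j$ is sound. \textbf{The gap is your claim that $\alpha_j=p_j|_Y$ is constant-to-one ``by Lemma~\ref{const_to_one}(2)''.} That lemma goes the other way. For $h=f\circ g$, it deduces that $f$ is constant-to-one from $g$ and $h$. You need the reverse: that $g=\alpha_j$ is constant-to-one given that $f=\vp_j$ and $h=\vp$ are. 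For bare maps this is false. Send $a_1,a_2,a_3\mapsto b_1$, $a_4\mapsto b_2$, and $b_1,b_2\mapsto c$. Then $h$ is $4$-to-$1$ and $f$ is $2$-to-$1$, but $g$ has fibres of sizes $3$ and $1$. So constancy of $|\alpha_j^{-1}(a)|$ has to come from the symmetry, not from counting alone.

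The paper closes this gap with ingredients you already have. First prove $\alpha_j$ is surjective by your openness argument. For $j=1$, $p_1\bigl((U_1\times U_2)\cap P\bigr)=U_1\cap\vp_1^{-1}(\vp_2(U_2))$ is open, and $Y$ is open in $P$. Hence $\alpha_1(Y)$ is a nonempty open subshift of $Y_1$, so it equals $Y_1$ by Lemma~\ref{clopen_subshift}. Next take $a,b\in\vp_j^{-1}(x)$ and choose $y_a\in\alpha_j^{-1}(a)$, $y_b\in\alpha_j^{-1}(b)$. Both lie in $\vp^{-1}(x)$, so some $g=(g_1,g_2)\in S$ satisfies $g(y_a)=y_b$. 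The equivariance $\alpha_j\circ g=g_j\circ\alpha_j$ then shows that $g$ maps $\alpha_j^{-1}(a)$ bijectively onto $\alpha_j^{-1}(b)$. Thus $c_x=|\alpha_j^{-1}(a)|$ depends only on $x$. Finally, $\deg\vp=\sum_{a\in\vp_j^{-1}(x)}|\alpha_j^{-1}(a)|=c_x\deg\vp_j$ gives $c_x=\deg\vp/\deg\vp_j$ for every $x$.

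Also drop your ``alternative'' proof of surjectivity. The assertion that $S$ projects onto $G_j$ is unproved. It is in fact equivalent to $\alpha_j$ mapping each $\vp$-fibre onto the corresponding $\vp_j$-fibre, so it cannot serve as an independent justification.
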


\begin{proof}
    Set $G_1 = \gal(Y_1/X, \vp_1)$ and $G_2 = \gal(Y_2/X, \vp_2)$. Let $W$ be the fiber product of $Y_1$ and $Y_2$ over $X$ with respect to $\vp_1$ and $\vp_2$. That is,
    \[
    W = Y_1\times_X Y_2=\{ (y_1, y_2)\in Y_1\times Y_2\mid \vp_1(y_1)=\vp_2(y_2) \}.
    \]
    Since $Y_1$ and $Y_2$ are SFTs, $W$ is also an SFT. Let $\pi: W \to X$ be the natural projection. That is, we define $\pi(y_1, y_2) = \vp_1(y_1) = \vp_2(y_2)$ for each $(y_1, y_2) \in W$. By construction, $\pi$ is a factor map. Furthermore, since $\vp_1$ and $\vp_2$ are open maps by Lemma \ref{openmap}, $\pi$ is also an open map. Moreover, for each $x \in X$, we have
    \[
    \pi^{-1}(x)=\{(y_1,y_2)\in W\mid \vp_1(y_1)=x \text{ and }\vp_2(y_2)=x\}=\vp_1^{-1}(x)\times \vp_2^{-1}(x). 
    \]
    Therefore, we have
    \[
    |\pi^{-1}(x)|=|\vp_1^{-1}(x)\times \vp_2^{-1}(x)|=\deg(\vp_1)\deg(\vp_2).
    \]
    Letting $d = \deg(\vp_1)\deg(\vp_2)$, we see that $\pi$ is a $d$-to-1 map.
    Next, we consider the action of the direct product group. The direct product group $G_1 \times G_2$ acts naturally on $W$. That is, for $(y_1, y_2) \in W$ and $(\varphi_1, \varphi_2) \in G_1 \times G_2$, the action is given by
    \[
    (\varphi_1, \varphi_2) \cdot (y_1, y_2) = (\varphi_1, \varphi_2) (y_1, y_2)= (\varphi_1(y_1), \varphi_2(y_2)). 
    \]
    Since $G_1$ and $G_2$ act freely and transitively on their respective fibers by Corollary~\ref{freeaction} and Proposition~\ref{transitiveaction}, the direct product group $G_1 \times G_2$ acts freely and transitively on each fiber $\pi^{-1}(x)$ of $W$. To ensure that $W$ contains an irreducible component, we employ an ergodic theory argument.
    Since $X$ is an irreducible SFT, there exists a fully supported, shift-invariant Borel probability measure $\mu_X$ on $X$ (for instance, the measure of maximal entropy). Here, we define a Borel probability measure $\mu_W$ on $W$ by 
    \[
    \mu_W(A) = \frac{1}{d}\int_X |A \cap \pi^{-1}(x)| d\mu_X(x)
    \]
    for any Borel set $A \subseteq W$. 
    Since $\mu_X$ is a fully supported shift-invariant probability measure, so is $\mu_W$. By Lemma~\ref{non_wandering_full_supp}, we have $W = \Omega(W, \sigma_W)$. Let $G = (V, E)$ be an essential graph that presents $W$. That is, we may assume $W = \mathsf{X}_G$. From Definition~\ref{SCC}, we decompose $V$ and $E$ as $V = C_1 \sqcup \cdots \sqcup C_r$ and $E = D_1 \sqcup \cdots \sqcup D_r \sqcup T$, where we use the same notation as in Definition~\ref{SCC}.
    First, we prove that $T = \emptyset$ by contradiction. Suppose that there exists $e \in T$. Then, the cylinder set $[e]_0 = \{x \in W \mid x_{[0]} = e\}$ is an open set. Since the condensed graph $(V/{\sim_V}, T)$ has no cycles, we have $\sigma_W^n([e]_0) \cap [e]_0 = \emptyset$ for any $n \ge 1$. This means that every element of $[e]_0$ is a wandering point. That is, $[e]_0 \subseteq W \setminus \Omega(W, \sigma_W)$, which contradicts $W = \Omega(W, \sigma_W)$. Therefore, $T = \emptyset$.
    Therefore, the graph $G$ decomposes into strongly connected graphs $G_1, \dots, G_r$ as $G = G_1 \sqcup \cdots \sqcup G_r$. Thus, by setting $Z_j = \mathsf{X}_{G_j}$ for $1 \le j \le r$, the SFT $W$ decomposes into irreducible components as $W = Z_1 \sqcup \cdots \sqcup Z_r$. Since each $Z_j$ is a shift space, it is a closed subset of $W$. Furthermore, its complement $Z_j^c$ is a finite union of closed sets, which implies it is also closed. Hence, each $Z_j$ is a clopen set. Without loss of generality, we may assume that $(y_{1,0}, y_{2,0}) \in Z_1 \subseteq W$. \\
    \indent Set $Y=\mathsf{X}_{G_1}$, $y_0=(y_{1,0},y_{2,0})$, and $\vp=\pi|_{Y}:Y\to X$. Since $Y$ is the edge shift of a strongly connected finite directed graph, it is an irreducible SFT.
    It is clear that $\vp: (Y, y_0) \to (X, x_0)$ is a pointed sliding block code. We show that $\vp$ is an unramified Galois factor.
    Set $G = G_1 \times G_2$. Note that since each $\varphi \in G$ is an automorphism on $W$, it maps an irreducible component to an irreducible component. Then we define a subgroup $H \subseteq G$ by
    \[
    H=\{ \varphi:W\to W\in G\mid \varphi(Y)=Y\}. 
    \]
    The condition $\varphi(Y) = Y$ allows us to consider its restriction $\varphi|_Y: Y \to Y$. Since $\varphi|_Y \in \aut(Y)$, we can naturally regard $H$ as a subgroup of $\aut(Y)$, i.e., $H \subseteq \aut(Y)$. Since there is no danger of confusion, we simply write $\varphi$ for $\varphi|_Y$ in what follows.
    Fix $x \in X$. First, we show that $H$ acts transitively on $Y \cap \pi^{-1}(x)$. 
    Let $a, b \in Y \cap \pi^{-1}(x)$. Since $G$ acts transitively on $\pi^{-1}(x)$, there exists $\varphi \in G$ such that $\varphi(a) = b$. Since $a, b \in Y$, we have $\varphi(Y) \cap Y \neq \emptyset$. Now, since the elements of $G$ map irreducible components of $W$ to irreducible components, we have $\varphi(Y) = Y$. This means that $\varphi \in H$, which shows that $H$ acts transitively on $Y \cap \pi^{-1}(x)$. 
    Because the action is transitive, for any $y \in Y \cap \pi^{-1}(x)$, we have $Hy=Y \cap \pi^{-1}(x)$, where $Hy$ is the orbit of the group action. Furthermore, since the action of $G$ is free, the action of $H$ is also free. Hence, the stabilizer subgroup $I_H(y)$ is trivial. Therefore, by Lemma~\ref{orbitstabilizer}, we obtain
    \[
    |\vp^{-1}(x)|=|Y \cap \pi^{-1}(x)|=|H x|=|H|.
    \]
    This means that the cardinality of the fibers is constant and equal to $|H|$, which implies that $\vp$ is a constant-to-one map. \\
    \indent Next, we will show that $\vp$ is a Galois factor. We first prove the surjectivity of $\vp$. Since $\pi$ is an open map, $\vp(Y)$ is an open subset of $X$, and since $\vp$ is a sliding block code, $\vp(Y)$ is a shift space. That is, $\vp(Y)$ is a non-empty open subshift of $X$. Thus, by Lemma~\ref{clopen_subshift}, we have $\vp(Y) = X$. Therefore, $\vp$ is surjective. 
    Next, we will show that $H = \aut(Y/X, \vp)$. For any $\varphi = (\varphi_1, \varphi_2) \in H$ and any $y = (y_1, y_2) \in Y$, a straightforward calculation shows that $\vp \circ \varphi = \vp$. This immediately implies that $H \subseteq \aut(Y/X, \vp)$. 
    Now, by Proposition~\ref{cj_le_deg}, we have
    \[
    |\aut(Y/X,\vp)|\le \deg(\vp)=|H|.
    \]
    Since $H \subseteq \aut(Y/X, \vp)$, this forces the equality $H = \aut(Y/X, \vp)$. So far, we have seen that $\vp$ is an unramified factor map and that the action of $H = \aut(Y/X, \vp)$ on the fibers is transitive. Therefore, by Proposition~\ref{transitiveaction}, we conclude that $\vp$ is a Galois factor. \\
    \indent Let $\alpha_1: Y \to Y_1$ and $\alpha_2: Y \to Y_2$ be the projections. That is, we define $\alpha_1(y_1, y_2) = y_1$ and $\alpha_2(y_1, y_2) = y_2$ for each $(y_1, y_2) \in Y$. To complete the proof, it suffices to show that $(Y_1, y_{1,0}, \alpha_1, \vp_1)$ and $(Y_2, y_{2,0}, \alpha_2, \vp_2)$ are pointed unramified intermediate factors of the Galois factor $\vp: (Y, y_0) \to (X, x_0)$. We only show this for $(Y_1, y_{1,0}, \alpha_1, \vp_1)$ since the proof for the other case is analogous. It is clear that $\alpha_1: (Y, y_0) \to (Y_1, y_{1,0})$ is pointed, that $\alpha_1$ is a sliding block code, and that $\vp = \vp_1 \circ \alpha_1$ holds. 
    Let us show the surjectivity of $\alpha_1$. We first show that $\alpha_1(Y)$ is an open set in $Y_1$. Fix $y_1 \in \alpha_1(Y)$. Then, there exists $y \in Y$ such that $\alpha_1(y) = y_1$. By applying Lemma~\ref{separationlemma} to $\vp_1$, we can choose $\delta > 0$ such that $\vp_1|_{B_{Y_1}(y_1, \delta)}$ is injective on the open $\delta$-ball $B_{Y_1}(y_1, \delta)$. Set $V = \alpha_1^{-1}(B_{Y_1}(y_1, \delta))$. Then $V$ is an open neighborhood of $y$. Since $\vp$ is an open map by Lemma~\ref{openmap}, $\vp(V)$ is an open subset of $X$. Here, let us show that $\alpha_1(V) = B_{Y_1}(y_1, \delta) \cap \vp_1^{-1}(\vp(V))$. Since $\alpha_1(V) \subseteq B_{Y_1}(y_1, \delta)$ and $\vp_1(\alpha_1(V)) = \vp(V)$, it is clear that $\alpha_1(V) \subseteq B_{Y_1}(y_1, \delta) \cap \vp_1^{-1}(\vp(V))$.
    Conversely, fix $w \in B_{Y_1}(y_1, \delta) \cap \vp_1^{-1}(\vp(V))$. 
    Then, there exists $v \in V = \alpha_1^{-1}(B_{Y_1}(y_1, \delta))$ such that $\vp_1(w) = \vp(v) = \vp_1(\alpha_1(v))$. Since $w, \alpha_1(v) \in B_{Y_1}(y_1, \delta)$ and $\vp_1|_{B_{Y_1}(y_1, \delta)}$ is injective, we have $w = \alpha_1(v) \in \alpha_1(V)$. Thus, we obtain $\alpha_1(V) \supseteq B_{Y_1}(y_1, \delta) \cap \vp_1^{-1}(\vp(V))$, which proves that $\alpha_1(V) = B_{Y_1}(y_1, \delta) \cap \vp_1^{-1}(\vp(V))$. 
    Since $\vp(V)$ is an open set, $\alpha_1(V)$ is an open neighborhood of $y_1$. As $\alpha_1(V) \subseteq \alpha_1(Y)$, we see that $\alpha_1(Y)$ is an open set.
    Since $Y_1$ is an irreducible SFT and $\alpha_1(Y)$ is a subshift of $Y_1$, we have $Y_1 = \alpha_1(Y)$ by Lemma~\ref{clopen_subshift}. This completes the proof of the surjectivity of $\alpha_1$. \\
    \indent Finally, we show that $\alpha_1$ is unramified. Fix $x \in X$, and $a, b \in \vp_1^{-1}(x)$. First, we show that $|\alpha_1^{-1}(a)| = |\alpha_1^{-1}(b)|$. Since $\alpha_1$ is surjective, we have $\alpha_1^{-1}(a) \neq \emptyset$ and $\alpha_1^{-1}(b) \neq \emptyset$. Therefore, we can choose elements $y_a \in \alpha_1^{-1}(a)$ and $y_b \in \alpha_1^{-1}(b)$. Since
    \[
    \vp(y_a) = (\vp_1 \circ \alpha_1)(y_a) = \vp_1(a) = x = \vp_1(b) = (\vp_1 \circ \alpha_1)(y_b) = \vp(y_b),
    \]
    we obtain $y_a, y_b \in \vp^{-1}(x)$. Recall that $H = \mathrm{Gal}(Y/X, \vp)$ acts transitively on $\vp^{-1}(x)$. Thus, there exists $\varphi = (\varphi_1, \varphi_2) \in H$ such that $\varphi(y_a) = y_b$. For any $y \in \alpha_1^{-1}(a)$, we have $\alpha_1(y) = a$. Then, 
    \[
    \alpha_1(\varphi(y)) = \varphi_1(\alpha_1(y)) = \varphi_1(a) = \alpha_1(\varphi(y_a)) = \alpha_1(y_b) = b.
    \]
    This implies that $\varphi(\alpha_1^{-1}(a)) \subseteq \alpha_1^{-1}(b)$. By applying a similar argument to $\varphi^{-1} = (\varphi_1^{-1}, \varphi_2^{-1})$, it also follows that $\varphi^{-1}(\alpha_1^{-1}(b)) \subseteq \alpha_1^{-1}(a)$. Consequently, the restrictions $\varphi|_{\alpha_1^{-1}(a)}: \alpha_1^{-1}(a) \to \alpha_1^{-1}(b)$ and $\varphi^{-1}|_{\alpha_1^{-1}(b)}: \alpha_1^{-1}(b) \to \alpha_1^{-1}(a)$ are well-defined and are inverses of each other. That is, $\varphi^{-1}|_{\alpha_1^{-1}(b)}$ is a bijection, which proves that $|\alpha_1^{-1}(a)| = |\alpha_1^{-1}(b)|$. 
    For each $x \in X$, we define $c_x = |\alpha_1^{-1}(a)|$, where $a \in \vp_1^{-1}(x)$. As shown above, $c_x$ is independent of the choice of $a \in \vp_1^{-1}(x)$. If we show that $c_x$ is a constant independent of $x$, it follows that $\alpha_1$ is unramified. Fix $x \in X$. Since $\vp = \vp_1 \circ \alpha_1$, we have
    \[
    \vp^{-1}(x)=\bigsqcup_{a\in\vp_1^{-1}(x)}\alpha_1^{-1}(a). 
    \]
    Therefore
    \[
    \deg(\vp)=|\vp^{-1}(x)|=
        \sum_{a\in\vp_1^{-1}(x)}|\alpha_1^{-1}(a)|
        =c_x|\vp_1^{-1}(x)|
        =c_x\deg(\vp_1)
    \]
    holds. Hence, we have
    \[
    c_x=\frac{ \deg(\vp)}{\deg(\vp_1)}, 
    \]
    which implies that $c_x$ is a constant independent of $x$. Therefore, $\alpha_1$ is unramified, and this completes the proof.
\end{proof}

\begin{lemma}\label{transition_map1}
    Let $(X, x_0)$, $(Y_1, y_{1,0})$, and $(Y_2, y_{2,0})$ be pointed irreducible SFTs. Let $\vp_1: (Y_1, y_{1,0}) \to (X, x_0)$ and $\vp_2: (Y_2, y_{2,0}) \to (X, x_0)$ be pointed Galois factors. Furthermore, let $\alpha: (Y_2, y_{2,0}) \to (Y_1, y_{1,0})$ be a pointed unramified factor map with $\vp_2 = \vp_1 \circ \alpha$. Then, for any $\varphi \in \gal(Y_2/X, \vp_2)$, there exists a unique $\psi \in \gal(Y_1/X, \vp_1)$ such that $\psi \circ \alpha = \alpha \circ \varphi$.
\end{lemma}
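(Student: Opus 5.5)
The plan is to handle existence and uniqueness separately, using only transitivity of the Galois action on fibers (Proposition~\ref{transitiveaction}) and the uniqueness of lifts (Theorem~\ref{liftunique}). The argument mirrors the converse direction of Theorem~\ref{normal_subgroup}.

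\emph{Existence.} Fix $\varphi \in \gal(Y_2/X,\vp_2)$ and consider the two sliding block codes $\alpha\circ\varphi$ and $\alpha$ from $Y_2$ to $Y_1$. Both are lifts of $\vp_2$ along $\vp_1$, since
\[
\vp_1\circ\alpha\circ\varphi=\vp_2\circ\varphi=\vp_2=\vp_1\circ\alpha .
\]
Evaluate at the base point. The points $\alpha(\varphi(y_{2,0}))$ and $\alpha(y_{2,0})=y_{1,0}$ lie in the same fiber $\vp_1^{-1}(x_0)$. Because $\vp_1$ is a Galois factor, Proposition~\ref{transitiveaction} gives $\psi\in\gal(Y_1/X,\vp_1)$ with
\[
\psi(y_{1,0})=\alpha(\varphi(y_{2,0})).
\]
Now $\psi\circ\alpha$ is also a lift of $\vp_2$ along $\vp_1$, because $\vp_1\circ\psi\circ\alpha=\vp_1\circ\alpha=\vp_2$. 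It agrees with $\alpha\circ\varphi$ at the point $y_{2,0}$. Theorem~\ref{liftunique} applies with $Z=Y_2$, since $Y_2$ is an irreducible SFT and $\vp_1$ is unramified, and it yields $\psi\circ\alpha=\alpha\circ\varphi$.

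\emph{Uniqueness.} Suppose $\psi,\psi'\in\gal(Y_1/X,\vp_1)$ both satisfy $\psi\circ\alpha=\alpha\circ\varphi=\psi'\circ\alpha$. Since $\alpha$ is surjective (it is a factor map), we get $\psi=\psi'$. Alternatively, $\psi$ and $\psi'$ agree at the point $\alpha(y_{2,0})$, so Lemma~\ref{rigidity} applied to $\psi'^{-1}\circ\psi$ gives the same conclusion.

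I expect no real obstacle here. The only point needing care is to check that the hypotheses of Theorem~\ref{liftunique} are met, namely that the source $Y_2$ is irreducible and that $\vp_1$ is unramified. Both hold by assumption. The pointedness of $\alpha$ is used only to locate $\psi$ concretely through the fiber over $x_0$; it is not needed for the argument itself.
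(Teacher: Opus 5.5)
Your proposal is correct and follows essentially the same argument as the paper. The paper also picks $\psi$ by transitivity on $\vp_1^{-1}(x_0)$ so that $\psi(y_{1,0})=\alpha(\varphi(y_{2,0}))$, then applies uniqueness of lifts to $\psi\circ\alpha$ and $\alpha\circ\varphi$, and gets uniqueness of $\psi$ from the surjectivity of $\alpha$.
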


\begin{proof}
    Fix $\varphi\in \gal(Y_2/X, \vp_2)$. First, we prove the existence. Since $y_{1,0},\alpha(\varphi(y_{2,0}))\in \vp_1^{-1}(x_0)$, and the Galois group $\gal(Y_1/X, \vp_1)$ acts transitively on the fiber (Proposition~\ref{transitiveaction}), there exists a $\psi\in \gal(Y_1/X, \vp_1)$ such that $\psi(y_{1,0})=\alpha(\varphi(y_{2,0}))$. Then, we have $(\psi\circ\alpha)(y_{2,0})=(\alpha\circ\varphi)(y_{2,0})$. Furthermore, we have
    \[
        \vp_1\circ(\psi\circ\alpha)= 
        (\vp_1\circ\psi)\circ\alpha=
        \vp_1\circ\alpha=\vp_2
    \]
    \[
        \vp_1\circ(\alpha\circ\varphi)=
        (\vp_1\circ\alpha)\circ\varphi=
        \vp_2\circ\varphi=\vp_2.
    \]
    That is, both $\psi\circ\alpha$ and $\alpha\circ\varphi$ are lifts of $\vp_2$ along $\vp_1$. Therefore, by the uniqueness of lifts (Theorem~\ref{liftunique}), we obtain $\psi\circ\alpha=\alpha\circ\varphi$. This proves the existence.\\
    \indent Next, we prove the uniqueness. Suppose that $\psi, \psi' \in \gal(Y_1/X, \vp_1)$ satisfy $\psi \circ \alpha = \alpha \circ \varphi = \psi' \circ \alpha$. Since $\alpha$ is surjective, we have $\psi = \psi'$. This means uniqueness. 
\end{proof}

\begin{definition}\label{def_p_alpha}
    Let $(X,x_0)$, $(Y_1,y_{1,0})$, and $(Y_2,y_{2,0})$ be pointed irreducible SFTs. Let $\vp_1: (Y_1, y_{1,0}) \to (X, x_0)$ and $\vp_2: (Y_2, y_{2,0}) \to (X, x_0)$ be pointed Galois factors. Let $\alpha: (Y_2, y_{2,0}) \to (Y_1, y_{1,0})$ be a pointed unramified factor map such that $\vp_2 = \vp_1 \circ \alpha$.
    Then, by Lemma~\ref{transition_map1}, for any $\varphi \in \gal(Y_2/X, \vp_2)$, there exists a unique $\psi \in \gal(Y_1/X, \vp_1)$ such that $\psi \circ \alpha = \alpha \circ \varphi$. We denote $p_\alpha(\varphi)=\psi$. This defines a map $p_\alpha: \gal(Y_2/X, \vp_2) \to \gal(Y_1/X, \vp_1)$.
\end{definition}

\begin{lemma}
    Under the same assumptions as above, the map 
    \[
    p_\alpha: \gal(Y_2/X, \vp_2) \to \gal(Y_1/X, \vp_1)
    \]
    is a surjective group homomorphism. 
\end{lemma}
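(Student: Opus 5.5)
The plan has two parts: the homomorphism property, which follows from the uniqueness clause in Lemma~\ref{transition_map1}, and surjectivity, which comes from lifting an element of $\gal(Y_1/X,\vp_1)$ through $\alpha$ using transitivity on the fibers of $\vp_2$ and the uniqueness of lifts (Theorem~\ref{liftunique}).

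\emph{Homomorphism.} Fix $\varphi,\varphi'\in\gal(Y_2/X,\vp_2)$. Then
\[
p_\alpha(\varphi)\circ p_\alpha(\varphi')\circ\alpha=p_\alpha(\varphi)\circ\alpha\circ\varphi'=\alpha\circ\varphi\circ\varphi'.
\]
The element $p_\alpha(\varphi)\circ p_\alpha(\varphi')$ lies in $\gal(Y_1/X,\vp_1)$. By the uniqueness part of Lemma~\ref{transition_map1}, it must equal $p_\alpha(\varphi\circ\varphi')$. The same uniqueness shows $p_\alpha(\id_{Y_2})=\id_{Y_1}$, so $p_\alpha$ is a group homomorphism.

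\emph{Surjectivity.} Fix $\psi\in\gal(Y_1/X,\vp_1)$ and set $a=\psi(y_{1,0})$. Since $\vp_1(a)=\vp_1(y_{1,0})=x_0$ and $\alpha$ is surjective, we may pick $b\in Y_2$ with $\alpha(b)=a$. Then $\vp_2(b)=\vp_1(\alpha(b))=\vp_1(a)=x_0$, so $b$ and $y_{2,0}$ lie in the same fiber of $\vp_2$. Because $\vp_2$ is a Galois factor, Proposition~\ref{transitiveaction} gives $\varphi\in\gal(Y_2/X,\vp_2)$ with $\varphi(y_{2,0})=b$.

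Next compare $\alpha\circ\varphi$ and $\psi\circ\alpha$. Both are sliding block codes $Y_2\to Y_1$, and both are lifts of $\vp_2$ along $\vp_1$:
\[
\vp_1\circ\alpha\circ\varphi=\vp_2\circ\varphi=\vp_2,\qquad \vp_1\circ\psi\circ\alpha=\vp_1\circ\alpha=\vp_2 .
\]
They also agree at $y_{2,0}$, since $\alpha(\varphi(y_{2,0}))=\alpha(b)=a=\psi(\alpha(y_{2,0}))$. Theorem~\ref{liftunique} applies because $\vp_1$ is unramified and $Y_2$ is irreducible, so $\alpha\circ\varphi=\psi\circ\alpha$. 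By the definition of $p_\alpha$, this means $p_\alpha(\varphi)=\psi$, which proves surjectivity.

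The main point needing care is the choice of $b$ in the surjectivity step. We use surjectivity of $\alpha$ together with the identity $\vp_1\circ\alpha=\vp_2$ to place $b$ in the correct fiber of $\vp_2$, and only then can transitivity of $\gal(Y_2/X,\vp_2)$ be invoked. Once $\varphi$ is found, the rest is a direct application of the uniqueness of lifts.
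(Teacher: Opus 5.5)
Your proof is correct and follows essentially the same route as the paper: the homomorphism property comes from the uniqueness clause of Lemma~\ref{transition_map1} applied to the same composition identity, and surjectivity comes from lifting $\psi(y_{1,0})$ through $\alpha$, using transitivity of $\gal(Y_2/X,\vp_2)$ on $\vp_2^{-1}(x_0)$, and concluding with the uniqueness of lifts (Theorem~\ref{liftunique}). Your separate check that $p_\alpha(\id_{Y_2})=\id_{Y_1}$ is harmless but redundant, since a multiplicative map between groups automatically preserves the identity.
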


\begin{proof}
    First, we show that $p_\alpha$ is a group homomorphism. Fix $\varphi,\varphi'\in \gal(Y_2/X, \vp_2)$. Then we have
    \begin{align*}
        \alpha\circ(\varphi\circ\varphi')&=(\alpha\circ\varphi)\circ\varphi'\\
        &=(p_\alpha(\varphi)\circ\alpha)\circ
        \varphi'\\
        &=p_\alpha(\varphi)\circ(\alpha\circ
        \varphi')\\
        &=p_\alpha(\varphi)\circ (p_\alpha(\varphi')\circ\alpha)\\
        &=(p_\alpha(\varphi)\circ p_\alpha(\varphi'))\circ\alpha.
    \end{align*}
    Therefore, we obtain $p_\alpha(\varphi\circ\varphi')=p_\alpha(\varphi)\circ p_\alpha(\varphi')$. \\
    \indent Next, we show the surjectivity. Fix $\psi\in \gal(Y_1/X, \vp_1)$. Since $\alpha$ is surjective, there exists a $y_2 \in Y_2$ such that $\alpha(y_2) = \psi(y_{1,0})$. Since $\vp_2 = \vp_1 \circ \alpha$ and $\psi(y_{1,0}) \in \vp_1^{-1}(x_0)$, we have $y_2 \in \vp_2^{-1}(x_0)$. Furthermore, since $y_{2,0} \in \vp_2^{-1}(x_0)$ and $\gal(Y_2/X, \vp_2)$ acts transitively on $\vp_2^{-1}(x_0)$ by Proposition~\ref{transitiveaction}, there exists a $\varphi \in \gal(Y_2/X, \vp_2)$ such that $\varphi(y_{2,0}) = y_2$.
    Here, we have
    \[
    (\alpha\circ\varphi)(y_{2,0})=\alpha(y_2)=\psi(y_{1,0})=(\psi\circ\alpha)(y_{2,0}). 
    \]
    By a similar argument to the proof of Lemma~\ref{transition_map1}, both $\alpha \circ \varphi$ and $\psi \circ \alpha$ are lifts of $\vp_2$ along $\vp_1$. Therefore, by the uniqueness of lifts (Theorem~\ref{liftunique}), we obtain $\alpha \circ \varphi = \psi \circ \alpha$. This means that $p_\alpha(\varphi) = \psi$.
\end{proof}

\begin{definition}\label{induce_mor}
    Let $(X, x_0)$, $(Y, y_0)$, and $(Y', y'_0)$ be pointed irreducible SFTs. Assume that $(Y, y_0)$ and $(Y', y'_0)$ are conjugate via a pointed conjugacy $f: (Y, y_0) \to (Y', y'_0)$. Let $\vp: (Y, y_0) \to (X, x_0)$ and $\vp': (Y', y'_0) \to (X, x_0)$ be pointed Galois factors. Then, $f$ induces an isomorphism between the Galois groups $\gal(Y/X,\vp)$ and $\gal(Y'/X,\vp')$. Specifically, the group isomorphism $\gal(Y/X,\vp) \to \gal(Y'/X,\vp')$ is given by $\varphi \mapsto f \circ \varphi \circ f^{-1}$.
\end{definition}

\begin{lemma}\label{well_def_transition_map}
    Let $(X, x_0)$, $(Y_1, y_{1,0})$, $(Y'_1, y'_{1,0})$, $(Y_2, y_{2,0})$, and $(Y'_2, y'_{2,0})$ be pointed irreducible SFTs.
    Assume that $(Y_1, y_{1,0})$ and $(Y'_1, y'_{1,0})$ are conjugate via a pointed conjugacy $f_1: (Y_1, y_{1,0}) \to (Y'_1, y'_{1,0})$.
    Similarly, assume that $(Y_2, y_{2,0})$ and $(Y'_2, y'_{2,0})$ are conjugate via a pointed conjugacy $f_2: (Y_2, y_{2,0}) \to (Y'_2, y'_{2,0})$.
    Let $\vp_1: (Y_1, y_{1,0}) \to (X, x_0)$, $\vp'_1: (Y'_1, y'_{1,0}) \to (X, x_0)$, $\vp_2: (Y_2, y_{2,0}) \to (X, x_0)$, and $\vp'_2: (Y'_2, y'_{2,0}) \to (X, x_0)$ be pointed Galois factors with $\vp_1 = \vp'_1 \circ f_1$ and $\vp_2 = \vp'_2 \circ f_2$.
    Let $\alpha: (Y_2, y_{2,0}) \to (Y_1, y_{1,0})$ and $\alpha': (Y'_2, y'_{2,0}) \to (Y'_1, y'_{1,0})$ be pointed unramified factor maps with $\vp_2 = \vp_1 \circ \alpha$ and $\vp'_2 = \vp'_1 \circ \alpha'$.
    Then, for the group isomorphisms induced by $f_1$ and $f_2$, namely $\Phi_1: \gal(Y_1/X, \vp_1) \to \gal(Y'_1/X, \vp'_1)$ given by $\varphi \mapsto f_1 \circ \varphi \circ f_1^{-1}$ and $\Phi_2: \gal(Y_2/X, \vp_2) \to \gal(Y'_2/X, \vp'_2)$ given by $\varphi \mapsto f_2 \circ \varphi \circ f_2^{-1}$, we have $\Phi_1 \circ p_\alpha = p_{\alpha'} \circ \Phi_2$. That is, the following diagram commutes:
    \[
    \begin{tikzcd}[row sep=large, column sep=large]
        \gal(Y_2/X, \vp_2)\arrow[r, "\Phi_2"] \arrow[d, "p_\alpha"'] & \gal(Y'_2/X, \vp'_2) \arrow[d, "p_{\alpha'}"] \\
        \gal(Y_1/X,\vp_1)\arrow[r, "\Phi_1"] &\gal(Y'_1/X, \vp'_1)
    \end{tikzcd}
    \]
\end{lemma}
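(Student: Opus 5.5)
The plan is to first establish the compatibility $\alpha' \circ f_2 = f_1 \circ \alpha$ between the two transition maps and the conjugacies. Then I will check the diagram elementwise, using the uniqueness clause of Lemma~\ref{transition_map1}, which characterizes $p_{\alpha'}$.

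\emph{Step 1.} Both $f_1\circ\alpha$ and $\alpha'\circ f_2$ are sliding block codes $Y_2\to Y'_1$, and both are lifts of $\vp_2$ along $\vp'_1$. Indeed,
\[
\vp'_1\circ f_1\circ\alpha=\vp_1\circ\alpha=\vp_2,
\qquad
\vp'_1\circ\alpha'\circ f_2=\vp'_2\circ f_2=\vp_2 .
\]
Since all the maps are pointed, both send $y_{2,0}$ to $y'_{1,0}$. Theorem~\ref{liftunique} therefore gives $f_1\circ\alpha=\alpha'\circ f_2$, i.e.\ $\alpha'=f_1\circ\alpha\circ f_2^{-1}$. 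This is the same argument as in Lemma~\ref{medium_unique}, applied to lifts along $\vp'_1$. It is the only non-formal input, so I regard it as the main point: the statement never assumes that $\alpha'$ is related to $\alpha$, and this relation must be extracted from the pointedness.

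\emph{Step 2.} Fix $\varphi\in\gal(Y_2/X,\vp_2)$ and set $\psi=p_\alpha(\varphi)$, so that $\psi\circ\alpha=\alpha\circ\varphi$. Using Step 1, we compute
\[
\Phi_1(\psi)\circ\alpha' = f_1\psi f_1^{-1}\, f_1\alpha f_2^{-1} = f_1\psi\alpha f_2^{-1} = f_1\alpha\varphi f_2^{-1} = \alpha' f_2\varphi f_2^{-1} = \alpha'\circ\Phi_2(\varphi).
\]

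\emph{Step 3.} By Definition~\ref{induce_mor}, $\Phi_1(\psi)\in\gal(Y'_1/X,\vp'_1)$. (If desired, this is checked directly: $\vp'_1 f_1\psi f_1^{-1}=\vp_1\psi f_1^{-1}=\vp_1 f_1^{-1}=\vp'_1$.) Likewise $\Phi_2(\varphi)\in\gal(Y'_2/X,\vp'_2)$. The uniqueness part of Lemma~\ref{transition_map1}, applied to $\alpha'$, then yields $p_{\alpha'}(\Phi_2(\varphi))=\Phi_1(\psi)=\Phi_1(p_\alpha(\varphi))$. Since $\varphi$ was arbitrary, $\Phi_1\circ p_\alpha=p_{\alpha'}\circ\Phi_2$, and the diagram commutes.
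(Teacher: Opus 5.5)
Your proposal is correct and follows the paper's proof essentially step for step. You first obtain $f_1\circ\alpha=\alpha'\circ f_2$ from the uniqueness of lifts along $\vp'_1$ at the base point $y_{2,0}$. You then verify $\Phi_1(p_\alpha(\varphi))\circ\alpha'=\alpha'\circ\Phi_2(\varphi)$, which identifies $p_{\alpha'}(\Phi_2(\varphi))$; the only difference is that you substitute $\alpha'=f_1\alpha f_2^{-1}$, whereas the paper cancels $f_2$ on the right by bijectivity.
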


\begin{proof}
    First, we show $f_1 \circ\alpha=\alpha'\circ f_2$. A direct calculation shows that $(f_1 \circ \alpha)(y_{2,0})= (\alpha' \circ f_2)(y_{2,0})$. 
    Since both $f_1 \circ \alpha$ and $\alpha' \circ f_2$ are lifts of $\vp_2$ along $\vp'_1$, by Theorem~\ref{liftunique}, we obtain $f_1 \circ \alpha = \alpha' \circ f_2$. \\
    Fix $\varphi\in \gal(Y_2/X, \vp_2)$. Set $\psi = p_\alpha(\varphi)$. By the definition of $p_\alpha$, we have $\alpha \circ \varphi = \psi \circ \alpha$. Furthermore, set $\varphi' = \Phi_2(\varphi) = f_2 \circ \varphi \circ f_2^{-1}$. It immediately follows that $\varphi' \circ f_2 = f_2 \circ \varphi$. Moreover, from $\Phi_1(\psi) = f_1 \circ \psi \circ f_1^{-1}$, we obtain $f_1 \circ \psi = \Phi_1(\psi) \circ f_1$.
    Then, we have
    \begin{align*}
        \alpha'\circ\varphi'\circ f_2&=\alpha'\circ f_2\circ\varphi\\
        &=f_1\circ \alpha \circ\varphi\\
        &=f_1\circ\psi\circ\alpha\\
        &=\Phi_1(\psi)\circ f_1\circ\alpha\\
        &=\Phi_1(\psi)\circ\alpha'\circ f_2.
    \end{align*}
    Since $f_2$ is bijective, we obtain $\alpha' \circ \varphi' = \Phi_1(\psi) \circ \alpha'$. Thus, we have $\alpha' \circ \Phi_2(\varphi) = \Phi_1(p_\alpha(\varphi)) \circ \alpha'$, which implies that $p_{\alpha'}(\Phi_2(\varphi)) = \Phi_1(p_\alpha(\varphi))$. Therefore, $\Phi_1 \circ p_\alpha = p_{\alpha'} \circ \Phi_2$ holds. 
\end{proof}

\begin{lemma}\label{projective_system}
    Let $(X,x_0)$, $(Y_1, y_{1,0})$, $(Y_2, y_{2,0})$, and $(Y_3, y_{3,0})$ be pointed irreducible SFTs. Let $\vp_1: (Y_1, y_{1,0}) \to (X, x_0)$, $\vp_2: (Y_2, y_{2,0}) \to (X, x_0)$, and $\vp_3: (Y_3, y_{3,0}) \to (X, x_0)$ be pointed Galois factors. Let $\alpha: (Y_2, y_{2,0}) \to (Y_1, y_{1,0})$ and $\beta: (Y_3, y_{3,0}) \to (Y_2, y_{2,0})$ be pointed unramified factor maps such that $\vp_2 = \vp_1 \circ \alpha$ and $\vp_3 = \vp_2 \circ \beta$. Then, we have $p_{\alpha \circ \beta} = p_\alpha \circ p_\beta$. 
\end{lemma}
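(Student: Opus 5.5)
First we check that $p_{\alpha\circ\beta}$ is defined at all. The composition $\alpha\circ\beta:(Y_3,y_{3,0})\to(Y_1,y_{1,0})$ is a pointed map, since $\alpha(\beta(y_{3,0}))=\alpha(y_{2,0})=y_{1,0}$. It is a sliding block code and surjective, being a composition of factor maps. It is constant-to-one by Lemma~\ref{const_to_one}(1), so it is an unramified factor map. Moreover $\vp_1\circ(\alpha\circ\beta)=\vp_2\circ\beta=\vp_3$. Hence Lemma~\ref{transition_map1} and Definition~\ref{def_p_alpha} apply to $\alpha\circ\beta$, and $p_{\alpha\circ\beta}:\gal(Y_3/X,\vp_3)\to\gal(Y_1/X,\vp_1)$ is characterized as follows: $p_{\alpha\circ\beta}(\varphi)$ is the unique $\psi\in\gal(Y_1/X,\vp_1)$ with $\psi\circ(\alpha\circ\beta)=(\alpha\circ\beta)\circ\varphi$.

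By this uniqueness, it suffices to show that $\psi:=p_\alpha(p_\beta(\varphi))$ satisfies the same intertwining relation. Fix $\varphi\in\gal(Y_3/X,\vp_3)$. By definition, $\beta\circ\varphi=p_\beta(\varphi)\circ\beta$ and $\alpha\circ p_\beta(\varphi)=p_\alpha(p_\beta(\varphi))\circ\alpha$. Chaining these gives
\[
(\alpha\circ\beta)\circ\varphi=\alpha\circ p_\beta(\varphi)\circ\beta=p_\alpha(p_\beta(\varphi))\circ(\alpha\circ\beta).
\]
Since $p_\alpha(p_\beta(\varphi))\in\gal(Y_1/X,\vp_1)$, the uniqueness clause of Lemma~\ref{transition_map1} yields $p_{\alpha\circ\beta}(\varphi)=p_\alpha(p_\beta(\varphi))$. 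Equivalently, uniqueness follows directly from the surjectivity of $\alpha\circ\beta$.

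There is no real obstacle. The only point requiring care is the preliminary verification that $\alpha\circ\beta$ satisfies the hypotheses needed to define $p_{\alpha\circ\beta}$, namely that it is pointed, unramified and surjective. This is handled by Lemma~\ref{const_to_one}.
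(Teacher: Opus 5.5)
Your proof is correct and follows the paper's argument. The paper uses the same chain $(\alpha\circ\beta)\circ\varphi=\alpha\circ p_\beta(\varphi)\circ\beta=p_\alpha(p_\beta(\varphi))\circ(\alpha\circ\beta)$ and concludes by uniqueness; your extra check that $\alpha\circ\beta$ is a pointed unramified factor map with $\vp_1\circ(\alpha\circ\beta)=\vp_3$, so that $p_{\alpha\circ\beta}$ is defined, fills in a detail the paper leaves implicit.
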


\begin{proof}
    Fix $\varphi\in \gal(Y_3/X,\vp_3)$. 
    Then we have  
    \[
    (\alpha\circ\beta)\circ\varphi=\alpha\circ p_\beta(\varphi)\circ\beta=p_\alpha(p_\beta(\varphi))\circ(\alpha\circ\beta).
    \]
    This means that $p_{\alpha \circ \beta}(\varphi)=(p_\alpha \circ p_\beta)(\varphi)$. 
\end{proof}

\begin{definition}
    Let $(X, x_0)$ be a pointed irreducible SFT. A pointed Galois factor over $(X, x_0)$ is a triple $(Y, y_0, \vp)$ consisting of a pointed irreducible SFT $(Y, y_0)$ and a pointed Galois factor map $\vp: (Y, y_0) \to (X, x_0)$.
    Let $\mathcal{I}(X, x_0)$ be the set of conjugacy classes (isomorphism classes) of pointed Galois factors over $(X, x_0)$. That is, for $[(Y, y_0, \vp)], [(Y', y'_0, \vp')] \in \mathcal{I}(X, x_0)$, we say $[(Y, y_0, \vp)] = [(Y', y'_0, \vp')]$ if there exists a pointed conjugacy $\gamma: (Y, y_0) \to (Y', y'_0)$ such that $\vp = \vp' \circ \gamma$.
    Let $i = [(Y_1, y_{1,0}, \vp_1)]$ and $j = [(Y_2, y_{2,0}, \vp_2)] \in \mathcal{I}(X, x_0)$. We define $i \le j$ if there exists a pointed unramified factor map $\alpha: (Y_2, y_{2,0}) \to (Y_1, y_{1,0})$ such that $\vp_2 = \vp_1 \circ \alpha$. In other words, we write $i \le j$ when $(Y_1, y_{1,0}, \alpha, \vp_1)$ is a pointed unramified intermediate factor of $\vp_2$. This defines a partial order $\le$ on $\mathcal{I}(X, x_0)$ (it is straightforward to verify that this relation is a partial order). Furthermore, by Lemma~\ref{directed_set}, the poset $(\mathcal{I}(X, x_0), \le)$ is a directed set. \\
    \indent For each $i \in \mathcal{I}(X, x_0)$, we fix a representative of $i$ and denote it by $(Y_i, y_{i,0}, \vp_i)$. We denote the corresponding Galois group by $G_i = \gal(Y_i/X)$. When $i \le j$, there exists a pointed unramified factor map from $(Y_j, y_{j,0})$ to $(Y_i, y_{i,0})$, which is unique by Lemma~\ref{medium_unique}. We denote this map by $\alpha_{ij}$. We simply write $p_{ij}$ for the induced surjective group homomorphism $p_{\alpha_{ij}}: G_j \to G_i$. By Lemma~\ref{projective_system}, the pair $(\{G_i\}_{i\in \mathcal{I}(X, x_0)}, \{p_{ij}: G_j \to G_i\}_{i\le j})$ forms a projective system. We define $G_{X,x_0}$ to be the projective limit of this system. That is, we define
    \[
    G_{X,x_0}=\varprojlim G_i
        =\left\{ (\varphi_i)_{i\in \mathcal{I}(X, x_0)}\in \prod_{i\in \mathcal{I}(X, x_0)}G_i\ \Big| \ \varphi_i =p_{ij}(\varphi_j)\text{ for any }i\le j\right\}.
    \]
    We call $G_{X,x_0}$ the absolute Galois group of $(X,x_0)$.
    Since each $G_i$ is a finite group, by endowing it with the discrete topology, the absolute Galois group acquires a topology induced by the product topology. In other words, just as in the Galois theory of fields, the absolute Galois group is a profinite group.\\
    \indent This construction of the absolute Galois group is well-defined. Indeed, for each $i \in \mathcal{I}(X, x_0)$, consider another representative $(Y'_i, y'_{i,0}, \vp'_i)$. Let $G'_i = \gal(Y'_i/X, \vp'_i)$, and construct a projective system $(\{G'_i\}_{i\in \mathcal{I}(X, x_0)}, \{p'_{ij}: G'_j \to G'_i\}_{i\le j})$ following the same procedure as above. By Definition~\ref{induce_mor}, the group isomorphism $\Phi_i: G_i \to G'_i$ is determined for each $i \in \mathcal{I}(X, x_0)$. By Lemma~\ref{well_def_transition_map}, the family $\{\Phi_i: G_i \to G'_i\}_{i\in \mathcal{I}(X, x_0)}$ gives an isomorphism of projective systems between $(\{G_i\}_{i\in \mathcal{I}(X, x_0)}, \{p_{ij}\}_{i\le j})$ and $(\{G'_i\}_{i\in \mathcal{I}(X, x_0)}, \{p'_{ij}\}_{i\le j})$, which implies that their projective limits are isomorphic as topological groups.
\end{definition}

\begin{remark}
    From the characterization of profinite groups, it follows that the absolute Galois group is a compact, totally disconnected, Hausdorff group in which the family of open normal subgroups forms a fundamental system of neighborhoods at $1$. For a proof of this fact, see, for example, \cite[Chapter I, \S 1, Theorem 2]{ShatzShatz2016}. 
\end{remark}

We show that the Galois group is independent of the base point.

\begin{lemma}\label{base_point_change}
    Let $X$ be an irreducible SFT, and $x_0, x'_0 \in X$. Then, the absolute Galois groups $G_{X,x_0}$ and $G_{X,x'_0}$ are isomorphic as topological groups.
\end{lemma}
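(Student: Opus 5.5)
We will mimic the change-of-base-point argument for fundamental groups, but since there are no paths here, we move base points inside the fibers using Galois groups. The plan is to build an order isomorphism of index sets together with compatible group isomorphisms.

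\textbf{Step 1: the map of index sets.} Given $i=[(Y,y_0,\vp)]\in\mathcal{I}(X,x_0)$, the point $x'_0$ has a nonempty fiber $\vp^{-1}(x'_0)$, since $\vp$ is surjective. We would like to send $i$ to $[(Y,y',\vp)]$ for some $y'\in\vp^{-1}(x'_0)$, but this class may depend on the choice of $y'$, which creates a coherence problem.

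To fix this, note that any $y',y''\in\vp^{-1}(x'_0)$ are related by some $g\in\gal(Y/X,\vp)$ (Proposition~\ref{transitiveaction}). Then $g:(Y,y')\to(Y,y'')$ is a pointed conjugacy with $\vp\circ g=\vp$. So the class $[(Y,y',\vp)]\in\mathcal{I}(X,x'_0)$ is independent of $y'$. We therefore define $\theta:\mathcal{I}(X,x_0)\to\mathcal{I}(X,x'_0)$ by $[(Y,y_0,\vp)]\mapsto[(Y,y',\vp)]$. It is well defined on conjugacy classes, because a pointed conjugacy $\gamma$ of pointed Galois factors over $(X,x_0)$ is in particular a conjugacy over $X$ and carries fibers over $x'_0$ to fibers over $x'_0$. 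The symmetric construction gives an inverse, so $\theta$ is a bijection.

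\textbf{Step 2: order preservation.} Suppose $i\le j$ via $\alpha:(Y_j,y_{j,0})\to(Y_i,y_{i,0})$. Choose $y'_j\in\vp_j^{-1}(x'_0)$ and set $y'_i=\alpha(y'_j)$. This point lies in $\vp_i^{-1}(x'_0)$, so $\alpha$ is pointed for the new base points and $\theta(i)\le\theta(j)$. The same argument applied to $\theta^{-1}$ shows that $\theta$ is an isomorphism of directed sets.

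\textbf{Step 3: compatible group isomorphisms.} For each $i$, the Galois group $G_i=\gal(Y_i/X,\vp_i)$ does not depend on base points at all; only the transition maps $p_{ij}$ do. Let $(Y'_i,y'_{i,0},\vp'_i)$ be the fixed representative of $\theta(i)$. Choose a pointed conjugacy $f_i:(Y_i,y'_i)\to(Y'_i,y'_{i,0})$ over $X$, where $y'_i$ is a suitable point of $\vp_i^{-1}(x'_0)$, and let $\Phi_i:G_i\to G'_{\theta(i)}$ be the induced isomorphism of Definition~\ref{induce_mor}.

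The main obstacle is showing $p'_{\theta(i)\theta(j)}\circ\Phi_j=\Phi_i\circ p_{ij}$, which amounts to showing that $p_\alpha$ computed with base points $x_0$ agrees with $p_\alpha$ computed with base points $x'_0$. For this, observe that $p_\alpha(\varphi)$ was characterized in Lemma~\ref{transition_map1} solely by the identity $p_\alpha(\varphi)\circ\alpha=\alpha\circ\varphi$, which involves no base points. The only base-point-dependent ingredient is the connecting map $\alpha$ itself, i.e. the factor maps between the two representatives.

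At the new base points, the unique pointed map $\alpha'_{ij}$ of Lemma~\ref{medium_unique} equals $f_i\circ\alpha\circ f_j^{-1}$, provided the base points are chosen compatibly ($y'_i=\alpha(y'_j)$). This compatible choice is not automatic, since the $f_i$ are fixed once per index. I would handle it as follows. If a different choice is made, the resulting maps differ from the compatible ones by Galois elements of $G'_i$. Changing a pointed conjugacy by precomposition with $g\in G_i$ changes $\Phi_i$ by an inner automorphism. I would then argue, via Lemma~\ref{well_def_transition_map} and the uniqueness of lifts (Theorem~\ref{liftunique}), that the resulting square still commutes: by Lemma~\ref{medium_unique}, both sides are conjugation by $f_i\circ h$ for the same $h$.

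Once the squares commute, $\{\Phi_i\}$ is an isomorphism of projective systems over $\theta$. Hence $\varprojlim G_i\cong\varprojlim G'_{i'}$ as topological groups, since each $\Phi_i$ is an isomorphism of discrete finite groups.
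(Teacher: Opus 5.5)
Your Steps 1 and 2 are correct. Step 3, however, has a genuine gap, and the repair you sketch for non-compatible choices does not work.

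Suppose the points $y'_i\in\vp_i^{-1}(x'_0)$ and the pointed conjugacies $f_i:(Y_i,y'_i)\to(Y'_{\theta(i)},y'_{\theta(i),0})$ are chosen independently for each $i$. For $i\le j$, let $g_{ij}\in G_i$ be the unique element with $g_{ij}(\alpha_{ij}(y'_j))=y'_i$. Uniqueness of lifts gives $\alpha'_{\theta(i)\theta(j)}=f_i\circ g_{ij}\circ\alpha_{ij}\circ f_j^{-1}$. A direct computation from $p_{ij}(\varphi)\circ\alpha_{ij}=\alpha_{ij}\circ\varphi$ then yields
\[
p'_{\theta(i)\theta(j)}(\Phi_j(\varphi))=\Phi_i\bigl(g_{ij}\,p_{ij}(\varphi)\,g_{ij}^{-1}\bigr)\qquad(\varphi\in G_j).
\]
Since $p_{ij}$ is surjective, the square commutes only when $g_{ij}$ is central in $G_i$. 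Galois groups need not be abelian (Example~\ref{ex_sym_shift}), so this fails in general. You only get commutativity up to inner automorphisms that depend on the pair $(i,j)$. That is not a morphism of projective systems and does not induce a map of limits; your claim that ``both sides are conjugation by $f_i\circ h$ for the same $h$'' is exactly what fails. Absorbing all the $g_{ij}$ into the $f_i$ simultaneously is equivalent to choosing the $y'_i$ compatibly, so the compatibility issue cannot be sidestepped.

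The missing idea is that a compatible choice always exists. The fibers $F_i=\vp_i^{-1}(x'_0)$ are finite and nonempty, and $\alpha_{ij}(F_j)\subseteq F_i$. They form a projective system over the directed set $\mathcal{I}(X,x_0)$, so $\varprojlim F_i\neq\emptyset$ by compactness. Taking $(y'_i)_i\in\varprojlim F_i$ makes every $\alpha_{ij}$ pointed for the new base points, so $g_{ij}=1$ for all $i\le j$. With $(Y_i,y'_i,\vp_i)$ as representatives of $\theta(i)$, the connecting maps and the $p_{ij}$ are literally unchanged. For other representatives, Lemmas~\ref{medium_unique} and \ref{well_def_transition_map} give the commuting squares. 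This is exactly the paper's route: it builds the map on index sets from such an element of $\varprojlim F_i$ from the outset.
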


\begin{proof}
    For each $i \in \mathcal{I}(X, x_0)$, we fix a representative and denote it by $(Y_i, y_{i,0}, \vp_i)$. Let $F_i = \vp_i^{-1}(x'_0)$ be the fiber. When $i \le j$, since $\vp_j = \vp_i \circ \alpha_{ij}$, $\alpha_{ij}$ maps $F_j$ to $F_i$. It directly follows that $(\{F_i\}_{i\in \mathcal{I}(X, x_0)}, \{\alpha_{ij}|_{F_j}: F_j \to F_i\})$ is a projective system. Therefore, we can consider its projective limit $\varprojlim F_i$. Since each $F_i$ is a finite set and the projective limit of finite sets is non-empty, we can choose an element $(y'_{i,0})_{i \in \mathcal{I}(X,x_0)} \in \varprojlim F_i$. Then, for each $i \in \mathcal{I}(X, x_0)$, we have $y'_{i,0} \in \vp_i^{-1}(x'_0)$, and if $i \le j$, we have $\alpha_{ij}(y'_{j,0}) = y'_{i,0}$. This defines a map $\Psi: \mathcal{I}(X, x_0) \to \mathcal{I}(X, x'_0)$ by $\Psi([(Y_i, y_{i,0}, \vp_i)]) = [(Y_i, y'_{i,0}, \vp_i)]$. \\
    \indent First, we show that this map is well-defined. 
    Fix $i \in \mathcal{I}(X, x_0)$, and two representatives $(Y^{(1)}_i, y^{(1)}_{i,0}, \vp^{(1)}_i)$ and $(Y^{(2)}_i, y^{(2)}_{i,0}, \vp^{(2)}_i)$ of $i$. Then, there exists a pointed conjugacy $f: (Y^{(1)}_i, y^{(1)}_{i,0}) \to (Y^{(2)}_i, y^{(2)}_{i,0})$ such that $\vp^{(1)}_i = \vp^{(2)}_i \circ f$. Set $F^{(1)}_i = (\vp^{(1)}_i)^{-1}(x'_0)$ and $F^{(2)}_i = (\vp^{(2)}_i)^{-1}(x'_0)$, and fix elements $(y'^{(1)}_{i,0})_{i \in \mathcal{I}(X,x_0)} \in \varprojlim F_i^{(1)}$ and $(y'^{(2)}_{i,0})_{i \in \mathcal{I}(X,x_0)} \in \varprojlim F_i^{(2)}$. Then, we have 
    \[
    \vp^{(2)}_i(f(y'^{(1)}_{i,0})) = \vp^{(1)}_i(y'^{(1)}_{i,0}) = x'_0,
    \]
    which implies that $f(y'^{(1)}_{i,0}) \in F^{(2)}_i$. Thus, $f: (Y^{(1)}_i, y'^{(1)}_{i,0}) \to (Y^{(2)}_i, f(y'^{(1)}_{i,0}))$ is a pointed conjugacy, which means that 
    \[
    [(Y^{(1)}_i, y'^{(1)}_{i,0}, \vp^{(1)}_i)] = [(Y^{(2)}_i, f(y'^{(1)}_{i,0}), \vp^{(2)}_i)].
    \]
    
    Next, since $f(y'^{(1)}_{i,0}), y'^{(2)}_{i,0} \in F^{(2)}_i$, Proposition~\ref{transitiveaction} ensures that the Galois group $\gal(Y^{(2)}_i/X, \vp^{(2)}_i)$ acts transitively on $F^{(2)}_i$. Thus, there exists a $\varphi \in \gal(Y^{(2)}_i/X, \vp^{(2)}_i)$ such that $\varphi(f(y'^{(1)}_{i,0})) = y'^{(2)}_{i,0}$. This implies that $\varphi: (Y^{(2)}_i, f(y'^{(1)}_{i,0})) \to (Y^{(2)}_i, y^{(2)}_{i,0})$ is a pointed conjugacy satisfying $\vp^{(2)}_i \circ \varphi = \vp^{(2)}_i$.
    Consequently, we have 
    \[
    [(Y^{(2)}_i, f(y'^{(1)}_{i,0}), \vp^{(2)}_i)] = [(Y^{(2)}_i, y'^{(2)}_{i,0}, \vp^{(2)}_i)].
    \]
    From this, we obtain $[(Y^{(1)}_i, y'^{(1)}_{i,0}, \vp^{(1)}_i)] = [(Y^{(2)}_i, y'^{(2)}_{i,0}, \vp^{(2)}_i)]$, which establishes the well-definedness.\\
    \indent Next, we show that $\Psi: \mathcal{I}(X, x_0) \to \mathcal{I}(X, x'_0)$ is an order isomorphism. Now, by reversing the roles of $x_0$ and $x'_0$, we can similarly construct a map $\Psi': \mathcal{I}(X, x'_0) \to \mathcal{I}(X, x_0)$. By construction, $\Psi$ and $\Psi'$ are mutually inverse maps. Thus, $\Psi$ is bijective. Next, we show that $\Psi$ preserves the order. Suppose that $i \le j$. Then, we have $\alpha_{ij}(y'_{j,0}) = y'_{i,0}$, which implies that $\alpha_{ij}: (Y_j, y'_{j,0}) \to (Y_i, y'_{i,0})$ is a pointed factor map. That is, we have $\Psi(i) \le \Psi(j)$. \\
    \indent Consequently, the index sets $\mathcal{I}(X, x_0)$ and $\mathcal{I}(X, x'_0)$ of the projective systems in the definition of the Galois group are isomorphic. Since the Galois groups $G_i$ and the maps $p_{ij}$ are independent of the base point, the isomorphism of the index sets implies that their projective limits coincide. 
\end{proof}

\begin{definition}
    By Lemma~\ref{base_point_change}, the absolute Galois group is independent of the choice of base point. Therefore, when there is no need to specify the base point, we simply denote the absolute Galois group by $G_X$.
\end{definition}

Next, we show that the Galois group is a conjugacy invariant.

\begin{theorem}\label{gal_invariant}
    Let $X$ and $X'$ be irreducible SFTs. If $X$ and $X'$ are topologically conjugate, then their absolute Galois groups $G_X$ and $G_{X'}$ are isomorphic as topological groups. 
\end{theorem}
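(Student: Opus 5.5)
Let $h: X \to X'$ be a topological conjugacy, fix $x_0 \in X$, and set $x'_0 = h(x_0)$. By Lemma~\ref{base_point_change} it suffices to show that $G_{X,x_0}\cong G_{X',x'_0}$ as topological groups. The plan is to transport every pointed Galois factor over $(X,x_0)$ along $h$. This should give an order isomorphism of the index sets $\mathcal{I}(X,x_0)\to\mathcal{I}(X',x'_0)$, together with compatible isomorphisms of the finite Galois groups, and hence an isomorphism of projective systems.

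First, define $\Theta:\mathcal{I}(X,x_0)\to\mathcal{I}(X',x'_0)$ by $\Theta([(Y,y_0,\vp)])=[(Y,y_0,h\circ\vp)]$. We must check that $h\circ\vp$ is again a pointed Galois factor. It is a factor map, since it is a composition of a factor map with a conjugacy. Its fibers are $\vp^{-1}(h^{-1}(x'))$, so it is unramified with the same degree. Moreover
\[
\aut(Y/X',h\circ\vp)=\{\varphi\in\aut(Y)\mid h\circ\vp\circ\varphi=h\circ\vp\}=\aut(Y/X,\vp),
\]
because $h$ is injective. Hence $|\aut(Y/X',h\circ\vp)|=\deg(h\circ\vp)$, and the two Galois groups literally coincide. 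Well-definedness on conjugacy classes is immediate: if $\gamma$ is a pointed conjugacy with $\vp=\vp'\circ\gamma$, then $h\circ\vp=(h\circ\vp')\circ\gamma$. The same construction with $h^{-1}$ gives the inverse map, so $\Theta$ is a bijection.

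Next, show that $\Theta$ preserves and reflects the order. If $\vp_2=\vp_1\circ\alpha$ with $\alpha$ pointed and unramified, then $h\circ\vp_2=(h\circ\vp_1)\circ\alpha$, and the converse follows by composing with $h^{-1}$. Choose as representatives of $\Theta(i)$ the triples $(Y_i,y_{i,0},h\circ\vp_i)$. With this choice, the connecting map $\alpha_{ij}$ for the new system is the same map $\alpha_{ij}$, by the uniqueness in Lemma~\ref{medium_unique}. The transition homomorphisms $p_{\alpha_{ij}}$ are defined purely by the relation $\psi\circ\alpha=\alpha\circ\varphi$ inside $\aut(Y_i)$ and $\aut(Y_j)$, so they are also unchanged. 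The independence of the choice of representatives, established after the definition of $G_{X,x_0}$ via Lemma~\ref{well_def_transition_map}, lets us use these particular representatives.

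It follows that the two projective systems are identical after reindexing along the order isomorphism $\Theta$. The identity maps $G_i\to G'_{\Theta(i)}$ then induce a homeomorphic group isomorphism $\varprojlim G_i\cong\varprojlim G'_{\Theta(i)}$, since the product topologies correspond coordinatewise.

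The only point needing real care is checking that the Galois condition and the transition maps are genuinely unchanged, which amounts to the injectivity of $h$ in the displayed equality. I expect no serious obstacle beyond bookkeeping of representatives and base points.
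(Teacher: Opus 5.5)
Your proposal is correct and follows the paper's route. Both arguments postcompose each pointed Galois factor with the conjugacy, which gives an order isomorphism $\mathcal{I}(X,x_0)\to\mathcal{I}(X',x'_0)$. Both then observe that $\gal(Y_i/X',h\circ\vp_i)=\gal(Y_i/X,\vp_i)$ and that the transition maps $p_{ij}$ are unchanged, so the two projective systems, and hence their limits, coincide. You also write out the routine checks that the paper calls ``straightforward'': the Galois condition via injectivity of $h$, well-definedness on classes, bijectivity via $h^{-1}$, and order reflection.
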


\begin{proof}
    Let $f: X \to X'$ be a conjugacy. Fix a base point $x_0 \in X$ and set $x'_0 = f(x_0)$. Since the absolute Galois group is independent of the base point, it suffices to show that $G_{X,x_0}$ and $G_{X',x'_0}$ are isomorphic. A straightforward calculation shows that for $[(Y, y_0, \vp)] \in \mathcal{I}(X, x_0)$, the triple $(Y, y_0, f \circ \vp)$ is a pointed irreducible Galois factor over $(X', x'_0)$. Thus, we define a map $\Psi: \mathcal{I}(X, x_0) \to \mathcal{I}(X', x'_0)$ by $\Psi([(Y, y_0, \vp)]) = [(Y, y_0, f \circ \vp)]$. This map is well-defined, and $\Psi$ is an order isomorphism.
    Now, considering the two projective systems $(\{G_i\}_{i\in \mathcal{I}(X, x_0)}, \{p_{ij}: G_j \to G_i\}_{i\le j})$ and $(\{G'_i\}_{i\in \mathcal{I}(X', x'_0)}, \{p'_{ij}: G'_j \to G'_i\}_{i\le j})$, their index sets are isomorphic. Furthermore, for each conjugacy class, we have $\gal(Y_i/X', f \circ \vp_i) = \gal(Y_i/X, \vp_i)$, and for $i\le j$, we have $p_{ij} = p'_{ij}$. Then, these projective systems are identical, and therefore their projective limits also coincide.   
\end{proof}

    We now consider subgroups of the absolute Galois group. For each $i\in \mathcal{I}(X,x_0)$, let $\mathrm{pr}_i:G_{X,x_0}\to G_i$ denote the projection onto the $i$-th coordinate.

\begin{lemma}\label{open_normal_subgroup}
    Let $(X, x_0)$ be a pointed SFT. Then, for any open normal subgroup $N$ of $G_{X,x_0}$, there exists a unique $[(Z, z_0, \vp)] \in \mathcal{I}(X, x_0)$ such that $\gal(Z/X, \vp) \cong G_{X,x_0}/N$. 
\end{lemma}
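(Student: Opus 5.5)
Since $N$ is open, it contains a basic neighbourhood of $1$, so there is $i_1\in\mathcal{I}(X,x_0)$ with $N_{i_1}=\mathrm{Ker}(\mathrm{pr}_{i_1})\subseteq N$. The plan is to show that $N$ is exactly such a kernel for a suitable index $i$. Then $G_{X,x_0}/N\cong G_i$, provided $\mathrm{pr}_i$ is surjective, and we take $[(Z,z_0,\vp)]=i$.

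\textbf{Step 1: surjectivity of the projections.} Each $p_{ij}$ is surjective by the lemma following Definition~\ref{def_p_alpha}. An inverse limit of nonempty finite sets over a directed set is nonempty, so for $g\in G_{i}$ we apply this to the system of preimages $p_{ij}^{-1}(g)\subseteq G_j$, $j\ge i$. This gives an element of $G_{X,x_0}$ projecting to $g$. Hence $\mathrm{pr}_i$ is surjective for every $i$.

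\textbf{Step 2: existence.} Set $i=i_1$ and $\bar N=\mathrm{pr}_{i}(N)$. Since $\mathrm{pr}_i$ is surjective, $\bar N$ is a normal subgroup of $G_i=\gal(Y_i/X,\vp_i)$. Because $N\supseteq\mathrm{Ker}(\mathrm{pr}_i)$, we have $N=\mathrm{pr}_i^{-1}(\bar N)$ and $G_{X,x_0}/N\cong G_i/\bar N$. By Theorem~\ref{subgrouptointermediatefactor} and Theorem~\ref{normal_subgroup}, the intermediate factor $(Z,\alpha,\beta)=(Z(\bar N),\alpha(\bar N),\beta(\bar N))$ has $\beta:Z\to X$ a Galois factor with $\gal(Z/X,\beta)\cong G_i/\bar N$. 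We point it by $z_0=\alpha(y_{i,0})$. Then $[(Z,z_0,\beta)]\in\mathcal{I}(X,x_0)$ has Galois group isomorphic to $G_{X,x_0}/N$. It will also be useful to record that this class $k$ satisfies $k\le i$, with $\alpha_{ki}=\alpha$. Moreover $\mathrm{Ker}(\mathrm{pr}_k)=N$, since $\mathrm{pr}_k=p_{\alpha}\circ\mathrm{pr}_i$ and $\mathrm{Ker}(p_\alpha)=\bar N$; here $p_\alpha$ agrees with the map $F$ in the proof of Theorem~\ref{normal_subgroup}.

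\textbf{Step 3: uniqueness.} This is the main obstacle. A bare isomorphism $\gal(Z/X)\cong G_{X,x_0}/N$ cannot pin down the class, because different Galois factors can have isomorphic groups. So I would read the uniqueness as referring to the canonical correspondence: $[(Z,z_0,\vp)]=k$ with $\mathrm{Ker}(\mathrm{pr}_k)=N$, the isomorphism being induced by $\mathrm{pr}_k$.

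Suppose $k'$ is another index with $\mathrm{Ker}(\mathrm{pr}_{k'})=N$. Choose $m\ge k,k'$ by Lemma~\ref{directed_set}. In $G_m$, the groups $\mathrm{Ker}(p_{km})$ and $\mathrm{Ker}(p_{k'm})$ both equal $\mathrm{pr}_m(N)$. I expect $\mathrm{Ker}(p_{km})=\gal(Y_m/Y_k,\alpha_{km})$, by the same argument as $\mathrm{Ker}(F)=H$ in Theorem~\ref{normal_subgroup}. Then Theorem~\ref{conjugatesamegroupequiv} gives a conjugacy of intermediate factors $\gamma:Y_k\to Y_{k'}$ with $\gamma\circ\alpha_{km}=\alpha_{k'm}$ and $\vp_k=\vp_{k'}\circ\gamma$. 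This $\gamma$ sends $y_{k,0}=\alpha_{km}(y_{m,0})$ to $\alpha_{k'm}(y_{m,0})=y_{k',0}$, so it is pointed, and therefore $k=k'$.
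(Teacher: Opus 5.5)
Your proposal is correct and follows the paper's proof essentially step for step. Existence goes through $\mathrm{Ker}(\mathrm{pr}_i)\subseteq N$, the normal subgroup $\mathrm{pr}_i(N)\trianglelefteq G_i$ and Theorem~\ref{normal_subgroup}. Uniqueness is read, as the paper also reads it, relative to the natural projection with kernel $N$, and is proved by passing to a common upper bound, identifying $\mathrm{Ker}(p_{km})$ with $H(Y_k,\alpha_{km},\vp_k)$, and applying Theorem~\ref{conjugatesamegroupequiv}.

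Two small remarks. Your Step~1 supplies the compactness argument for the surjectivity of $\mathrm{pr}_i$, which the paper only asserts. The identity you say you ``expect'' in Step~3 is in fact immediate from the defining relation, since $p_{km}(\varphi)=\id$ exactly when $\alpha_{km}\circ\varphi=\alpha_{km}$.
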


\begin{proof}
    Let $N$ be an open normal subgroup of $G_{X,x_0}$.
    We first show the existence. Recall from Subsection~\ref{section_profinite_cohom} that $\{\mathrm{Ker}(\mathrm{pr}_i)\}_{i\in\mathcal{I}(X,x_0)}$ is a neighborhood basis at the identity element in $G_{X,x_0}$. Thus, there exists an $i\in\mathcal{I}(X,x_0)$ such that $\mathrm{Ker}(\mathrm{pr}_i)\subseteq N$. Let $(Y_i,y_{i,0},\vp_i)$ be a representative of $i$, and set $G_i=\gal(Y_i/X, \vp_i)$. Since the transition maps of the projective system are surjective, $\mathrm{pr}_i$ is also surjective, and $N_i=\mathrm{pr}_i(N)$ is a normal subgroup of $G_i$. 
    By Theorem~\ref{normal_subgroup}, for the intermediate factor $(Z,\alpha,\beta)$ corresponding to $N_i$, the map $\beta:Z\to X$ is a Galois factor, and $\gal(Z/X,\beta)\cong G_i/N_i$. Set $z_0=\alpha(y_{i,0})$. 
    Then $\beta:(Z,z_0)\to (X,x_0)$ preserves the base point. Let $\pi:G_i\to G_i/N_i$ be the natural projection. Consider the map $\Phi=\pi\circ\mathrm{pr}_i:G_{X,x_0}\to G_i/N_i$. Since $\pi$ and $\mathrm{pr}_i$ are surjective homomorphisms, $\Phi$ is also a surjective homomorphism. Furthermore, a straightforward calculation shows that $\mathrm{Ker}\ \Phi = N$. Applying the first isomorphism theorem to $\Phi$, we have
    \[
    G_{X,x_0}/N \cong G_i/N_i \cong \gal(Z/X, \beta).
    \]
    This proves the existence.\\
    \indent From this proof of existence, we see that a natural surjective homomorphism from $G_{X,x_0}$ to $\gal(Z/X, \vp)$ is constructed. The kernel of this natural surjective homomorphism is $N$.\\
    \indent Next, we show the uniqueness. Let $(Z_1, z_{1,0}, \vp_1)$ and $(Z_2, z_{2,0}, \vp_2)$ be two such pointed Galois factors over $(X, x_0)$, and let $j_1, j_2 \in \mathcal{I}(X, x_0)$ be their respective conjugacy classes. Let $q_1: G_X \to \gal(Z_1/X, \vp_1)$ and $q_2: G_X \to \gal(Z_2/X, \vp_2)$ be the natural surjective homomorphisms from the absolute Galois group. 
    Then we have $\mathrm{Ker}(q_1) = \mathrm{Ker}(q_2) = N$. Since $(\mathcal{I}(X, x_0), \le)$ is a directed set by Lemma~\ref{directed_set}, there exists a $k \in \mathcal{I}(X, x_0)$ such that $j_1 \le k$ and $j_2 \le k$. Let $(Y, y_0, \vp)$ be a representative of this $k$, and set $G = \gal(Y/X, \vp)$. Furthermore, take $\alpha_1: Y \to Z_1$ and $\alpha_2: Y \to Z_2$ such that $(Z_1, z_{1,0}, \alpha_1, \vp_1)$ and $(Z_2, z_{2,0}, \alpha_2, \vp_2)$ are intermediate factors preserving the base points. Letting $p_{j_1, k}: G \to \gal(Z_1/X, \vp_1)$ and $p_{j_2, k}: G \to \gal(Z_2/X, \vp_2)$ be the transition maps, we have $q_1 = p_{j_1, k} \circ \mathrm{pr}_k$ and $q_2 = p_{j_2, k} \circ \mathrm{pr}_k$. Here
    \[
    \mathrm{pr}_k^{-1}(\mathrm{Ker}(p_{j_1, k}))=\mathrm{Ker}(q_1)=N=\mathrm{Ker}(q_2)=\mathrm{pr}_k^{-1}(\mathrm{Ker}(p_{j_2, k}))
    \]
    holds and since $\mathrm{pr}_k$ is surjective, we obtain
    \[
    \mathrm{Ker}(p_{j_1,k}) = \mathrm{pr}_k(N) = \mathrm{Ker}(p_{j_2,k}).
    \]
    Recalling the definition of the transition maps in Definition~\ref{def_p_alpha} and Definition~\ref{def_midfactor_to_gal}, we have
    \[
    H(Z_1,\alpha_1,\vp_1)=\mathrm{Ker}(p_{j_1, k})=\mathrm{Ker}(p_{j_2, k})=H(Z_2,\alpha_2,\vp_2). 
    \]
    Theorem~\ref{conjugatesamegroupequiv} implies that $(Z_1, \alpha_1, \vp_1)$ and $(Z_2, \alpha_2, \vp_2)$ are conjugate as intermediate factors. Since the conjugacy map $f: Z_1 \to Z_2$ as intermediate factors satisfies $f \circ \alpha_1 = \alpha_2$, we have
    \[
    f(z_{1,0})=f(\alpha_1(y_0))=\alpha_2(y_0)=z_{2,0},
    \]
    which implies that $(Z_1, z_{1,0}, \alpha_1, \vp_1)$ and $(Z_2, z_{2,0}, \alpha_2, \vp_2)$ are conjugate as pointed intermediate factors. This completes the proof of uniqueness.
    \end{proof}

\subsection{Calculation of the Absolute Galois Group of the 1-Full Shift}

In general, it seems difficult to calculate the absolute Galois group directly from the definition. However, the absolute Galois group of the 1-full shift, the simplest shift space, can be easily calculated. \\
\indent Let $X = \Sigma_1 = \{0^\infty\}$. The only possible choice for the base point is $0^\infty$. Let us consider the form of a Galois factor over $(\Sigma_1, 0^\infty)$. Let $(Y,y_0 \in Y)$ be a pointed irreducible SFT, and let $\vp: Y \to X$ be a pointed Galois factor with $\deg(\vp) = d$. Since $Y = \vp^{-1}(0^\infty)$, $Y$ is a $d$-point set.
Since $Y$ is irreducible, the shift map must cycle through all the points, that is, $O_{\sigma_Y}(y_0)=Y$. This means that there is only one Galois factor of degree $n$ up to conjugacy. Since the elements of $\aut(Y)$ can only be powers of the shift map, we have
\[
    \aut(Y)=\{\id_Y,\sigma_Y,\sigma_Y^2,\dots,\sigma_Y^{d-1}\}=\langle\sigma_Y\rangle\cong\Z/d\Z.
\]
Thus, $|\aut(Y)|=d$. From $\gal(Y/X,\vp)\subseteq\aut(Y)$ and Proposition~\ref{cj_le_deg}, we obtain
\[
    \gal(Y/X,\vp)=\aut(Y)\cong\Z/d\Z. 
\]
Next, we consider the structure of $\mathcal{I}(\Sigma_1, 0^\infty)$. Let $n, m \in \N$. Let $(Y_n, y_{n,0}, \vp_n)$ and $(Y_m, y_{m,0}, \vp_m)$ be representatives of the conjugacy classes of Galois factors of degrees $n$ and $m$, respectively. Since $Y_n$ and $Y_m$ are sets of $n$ and $m$ points respectively, an unramified factor map $\alpha: Y_m \to Y_n$ that preserves the base points and satisfies $\vp_m = \vp_n \circ \alpha$ exists if and only if $n \mid m$. This means that $\mathcal{I}(\Sigma_1, 0^\infty)$ is order-isomorphic to $\Z$ equipped with the divisibility relation $n \mid m$. Therefore, we obtain
\[
    G_{X}\cong\varprojlim_n\Z/n\Z=\widehat{\Z}.
\]
Thus, we obtain the following theorem.

\begin{theorem}\label{one_fullshift_abgal}
    The absolute Galois group $G_{\Sigma_1}$ is isomorphic to the profinite integers $\widehat{\Z}$. 
\end{theorem}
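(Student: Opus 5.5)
The plan is to make rigorous the computation sketched just before the statement. It has three steps: classify the pointed Galois factors over $(\Sigma_1,0^\infty)$ up to pointed conjugacy, identify the directed set $\mathcal{I}(\Sigma_1,0^\infty)$ with $(\N,\mid)$, and identify the transition maps $p_{nm}$ with the reductions $\Z/m\Z\to\Z/n\Z$. The projective limit is then $\widehat{\Z}$, with its profinite topology, by definition.

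For the classification, let $\vp:(Y,y_0)\to(\Sigma_1,0^\infty)$ be a pointed Galois factor of degree $d$. Then $Y=\vp^{-1}(0^\infty)$ has $d$ points. Since $Y$ is finite and irreducible, it is topologically transitive by Lemma~\ref{irred_toptrans_equiv}. With the discrete topology on $Y$, Lemma~\ref{toptransitive} gives $O^+(y)=Y$ for some $y$. Then $\sigma_Y$ is a permutation of $Y$ with a single orbit, so it is a $d$-cycle, and $Y$ is a periodic orbit of least period $d$.

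Any two such pointed systems are conjugate. The map $\sigma_Y^k(y_0)\mapsto\sigma_{Y'}^k(y'_0)$ is a well-defined, shift-commuting bijection, and it is trivially continuous on finite discrete spaces. Compatibility with the maps to $\Sigma_1$ is automatic. Conversely, a $d$-cycle $Y_d$ gives a pointed Galois factor: it is unramified of degree $d$, and $\aut(Y_d)$ is the centralizer of a $d$-cycle in the symmetric group on $Y_d$, namely $\langle\sigma_{Y_d}\rangle\cong\Z/d\Z$. This has order $d=\deg$, so the factor is Galois. Hence $\mathcal{I}(\Sigma_1,0^\infty)=\{[(Y_d,y_{d,0})]: d\ge1\}$.

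For the order, suppose $\alpha:(Y_m,y_{m,0})\to(Y_n,y_{n,0})$ is pointed and shift-commuting. Then $\alpha(\sigma^k y_{m,0})=\sigma^k y_{n,0}$. Taking $k=m$ gives $\sigma^m y_{n,0}=y_{n,0}$, so $n\mid m$. Conversely, if $n\mid m$, this formula defines a well-defined surjection, and it is constant-to-one with fibers of size $m/n$. Thus $i_n\le i_m$ if and only if $n\mid m$.

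For the transition maps, identify $G_d$ with $\Z/d\Z$ via $k\mapsto\sigma_{Y_d}^k$. Since $\alpha_{nm}\circ\sigma_{Y_m}^k=\sigma_{Y_n}^k\circ\alpha_{nm}$, the uniqueness in Lemma~\ref{transition_map1} gives $p_{nm}(\sigma^k)=\sigma^k$. In other words, $p_{nm}$ is the reduction $\Z/m\Z\to\Z/n\Z$. Thus the projective system is isomorphic to $(\{\Z/n\Z\},\text{reductions})$ indexed by $(\N,\mid)$, and $G_{\Sigma_1,0^\infty}\cong\widehat{\Z}$ as topological groups. By Lemma~\ref{base_point_change} this is $G_{\Sigma_1}$.

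The main obstacle is the classification step, specifically the claim that an irreducible finite SFT is a single periodic orbit. Everything else is bookkeeping. That claim reduces cleanly to transitivity on a finite discrete space, which is what I would write out carefully.
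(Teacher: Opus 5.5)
Your proof is correct and follows the same route as the paper. You classify the pointed Galois factors over $(\Sigma_1,0^\infty)$ as single $d$-cycles with Galois group $\langle\sigma_Y\rangle\cong\Z/d\Z$, identify $\mathcal{I}(\Sigma_1,0^\infty)$ with the positive integers ordered by divisibility, and pass to the limit; you are also more careful than the paper, which never explicitly checks that the transition maps $p_{nm}$ are the reductions $\Z/m\Z\to\Z/n\Z$, and your argument that $\alpha_{nm}$ commutes with the shift, so $p_{nm}(\sigma^k_{Y_m})=\sigma^k_{Y_n}$ by the uniqueness in Lemma~\ref{transition_map1}, fills exactly that gap.
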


\begin{remark}
    The calculation in Theorem~\ref{one_fullshift_abgal} is analogous to the calculation of the absolute Galois group of $\mathbb{F}_p$ in the Galois theory of fields. The Galois group of a finite extension over $\mathbb{F}_p$ is generated by the Frobenius map $\mathrm{Frob}_p$. In Theorem~\ref{one_fullshift_abgal}, the Galois group is generated by the shift map $\sigma_Y$, which shows that the Frobenius map and the shift map play similar roles. 
    There is another example where the shift map acts similarly to the Frobenius map. Recall Example~\ref{ex_z_over_nz}. Here, we defined $\vp(y)_i=y_{i+1}-y_i$, which means that
    \[
    \vp(y)=\sigma_Y(y)-y.
    \]
    On the other hand, recall Artin-Schreier theory in the Galois theory of fields. In this theory, the map
    \[
    \wp(x)=x^p-x=\mathrm{Frob}_p(x)-x
    \]
    played a key role. 
\end{remark}

\subsection{Representation of the Automorphism Group}
In this subsection, we construct a representation of the automorphism group of SFTs into the Galois group.

\begin{lemma}\label{forward_gal_factor}
    Let $X$ be an irreducible SFT, $x_0 \in X$ be a base point, and $f \in \aut(X)$ be an automorphism of $X$. Set $x_1 = f(x_0)$. Then, if $(Y, y_0, \vp)$ is a pointed Galois factor over $(X, x_0)$, the composition map $f \circ \vp: (Y, y_0) \to (X, x_1)$ is a pointed Galois factor over $(X, x_1)$, and their Galois groups coincide. That is, we have
    \[ 
    \gal(Y/X,f\circ\vp)=\gal(Y/X, \vp) .
    \]
\end{lemma}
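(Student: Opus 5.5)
The plan is to check the definitions directly. First I will verify that $f\circ\vp$ is an unramified factor map. It is a composition of sliding block codes, so it is continuous and shift-commuting. Since $f$ is a bijection and $\vp$ is surjective, $f\circ\vp$ is surjective. For each $x\in X$ the fiber is
\[
(f\circ\vp)^{-1}(x)=\vp^{-1}(f^{-1}(x)),
\]
whose cardinality is $\deg(\vp)$. So $f\circ\vp$ is unramified with $\deg(f\circ\vp)=\deg(\vp)$; alternatively, invoke Lemma~\ref{const_to_one}(1), since $f$ is $1$-to-$1$. The base point is preserved: $(f\circ\vp)(y_0)=f(x_0)=x_1$.

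Next I will show that the two relative automorphism groups are equal as subgroups of $\aut(Y)$. Take $\varphi\in\aut(Y)$. If $\vp\circ\varphi=\vp$, then composing with $f$ on the left gives $f\circ\vp\circ\varphi=f\circ\vp$. Conversely, if $f\circ\vp\circ\varphi=f\circ\vp$, then composing with $f^{-1}$ on the left recovers $\vp\circ\varphi=\vp$; this is the only place where invertibility of $f$ is used. Hence
\[
\aut(Y/X,f\circ\vp)=\aut(Y/X,\vp).
\]

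Combining the two steps, $|\aut(Y/X,f\circ\vp)|=|\aut(Y/X,\vp)|=\deg(\vp)=\deg(f\circ\vp)$, because $\vp$ is a Galois factor. So $f\circ\vp$ is a Galois factor, its Galois group coincides with $\gal(Y/X,\vp)$, and with the base-point computation above it is a pointed Galois factor over $(X,x_1)$. No step looks like a real obstacle; the argument only needs the bijectivity of $f$, used twice, and the equality of degrees.
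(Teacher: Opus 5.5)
Your proposal is correct and takes the same approach as the paper. The paper proves this lemma by pointing to the proof of Theorem~\ref{gal_invariant} with $X'=X$, where it simply asserts the ``straightforward calculation''. You have written that calculation out: the fibers of $f\circ\vp$ are fibers of $\vp$ over $f^{-1}(x)$, the base point is preserved, and $\vp\circ\varphi=\vp \iff f\circ\vp\circ\varphi=f\circ\vp$ because $f$ is bijective.
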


\begin{proof}
    In the proof of Theorem~\ref{gal_invariant}, it suffices to let $X' = X$.
\end{proof}

\begin{definition}
    Let $X$ be an irreducible SFT, $x_0 \in X$, and $f \in \aut(X)$. Set $x_1 = f(x_0)$. We define the order isomorphism $f_*: \mathcal{I}(X, x_0) \to \mathcal{I}(X, x_1)$ induced by $f$ as
    \[
   f_*([(Y,y_0,\vp)])=[(Y,y_0,f\circ\vp)].
   \]
    Similarly to the proof of Theorem~\ref{gal_invariant}, this is well-defined. By Lemma~\ref{forward_gal_factor}, $f_*$ induces a group isomorphism $f_\sharp: G_{X, x_0} \to G_{X, x_1}$ between the absolute Galois groups. That is, for $g = (\varphi_i)_{i \in \mathcal{I}(X, x_0)} \in G_{X, x_0}$, if we write $f_\sharp(g) = (\psi_i)_{i \in \mathcal{I}(X, x_1)}$, then we have $\psi_i=\varphi_{f_*^{-1}(i)}$.
\end{definition}

The following lemma follows directly from this definition.

\begin{lemma}\label{star_functor}
    For $f, f' \in \aut(X)$, we have $(f \circ f')_* = f_* \circ f'_*$ and $(f \circ f')_\sharp = f_\sharp \circ f'_\sharp$.
Moreover, we also have $(f^{-1})_* = (f_*)^{-1}$ and $(f^{-1})_\sharp = (f_\sharp)^{-1}$.
\end{lemma}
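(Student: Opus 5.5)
We check the two identities at the level of index sets first and then at the level of group elements, directly from the definitions of $f_*$ and $f_\sharp$. Let $x_0$ be the base point and put $x_1=f'(x_0)$, $x_2=f(x_1)=(f\circ f')(x_0)$. For a class $[(Y,y_0,\vp)]\in\mathcal{I}(X,x_0)$, the definitions give
\[
(f_*\circ f'_*)([(Y,y_0,\vp)])=f_*([(Y,y_0,f'\circ\vp)])=[(Y,y_0,f\circ f'\circ\vp)]=(f\circ f')_*([(Y,y_0,\vp)]).
\]
This uses only associativity of composition. Well-definedness on classes is already known from the definition of $f_*$: a pointed conjugacy $\gamma$ with $\vp=\vp'\circ\gamma$ also gives $f\circ\vp=(f\circ\vp')\circ\gamma$. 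Hence $(f\circ f')_*=f_*\circ f'_*$ as maps $\mathcal{I}(X,x_0)\to\mathcal{I}(X,x_2)$. Taking $f'=f^{-1}$ in this identity, and noting that $(\id_X)_*$ is the identity, gives $(f^{-1})_*=(f_*)^{-1}$.

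For the Galois groups, we use the explicit formula $f_\sharp((\varphi_i)_i)=(\varphi_{f_*^{-1}(i)})_i$. The only subtlety is that the coordinate groups $G_i$ are defined through chosen representatives. By Lemma~\ref{forward_gal_factor}, the representative $(Y,y_0,\vp)$ of $i$ gives the representative $(Y,y_0,f\circ\vp)$ of $f_*(i)$, and the two Galois groups are literally equal. The comparison between different choices of representatives is handled by the isomorphisms of Definition~\ref{induce_mor} and Lemma~\ref{well_def_transition_map}, exactly as in the well-definedness discussion of $G_{X,x_0}$. So we may fix representatives compatibly along $f'_*$ and $f_*$, and under this identification the $i$-th coordinate of $f_\sharp(f'_\sharp(g))$ is
\[
\varphi_{f'^{-1}_*(f_*^{-1}(i))}=\varphi_{(f_*\circ f'_*)^{-1}(i)}=\varphi_{(f\circ f')_*^{-1}(i)},
\]
which is the $i$-th coordinate of $(f\circ f')_\sharp(g)$. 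The inverse statement then follows as before, since $(\id_X)_\sharp=\id$.

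The main obstacle is bookkeeping rather than mathematics. One has to make sure that the identification of $G_i$ with $G_{f_*(i)}$ through fixed representatives is coherent under composition. If the representative chosen for $f_*(i)$ is not $(Y,y_0,f\circ\vp)$ itself, one inserts the conjugation isomorphism $\Phi$ of Definition~\ref{induce_mor}. Lemma~\ref{well_def_transition_map}, together with the fact that conjugating by composed pointed conjugacies composes, shows that these insertions cancel. So the identities hold on the nose in the projective limit.
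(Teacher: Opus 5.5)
Your argument is correct and is exactly the verification the paper intends. The paper gives no separate proof and only says the lemma follows directly from the definitions $(f\circ f')_*[(Y,y_0,\vp)]=[(Y,y_0,f\circ f'\circ\vp)]$ and $\psi_i=\varphi_{f_*^{-1}(i)}$. Your extra care with representatives, which the paper passes over silently, is sound: the identifications cohere because the pointed conjugacy between two representatives of a class is unique (Lemma~\ref{medium_unique}), not really because of Lemma~\ref{well_def_transition_map}. For the inverse, apply the composition rule with both $f'=f^{-1}$ and $f=f^{-1}$ to get a two-sided inverse.
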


\begin{remark}
    By Lemma~\ref{star_functor}, notation such as $f^{-1}_*$ causes no confusion. 
\end{remark}

\begin{definition}\label{action_lim_fiber_def}
    Let $(X,x_0)$ be a pointed irreducible SFT.
    Fix a representative $(Y_i, y_{i,0}, \vp_i)$ for each $i \in \mathcal{I}(X, x_0)$.
    Let $F_i = \vp_i^{-1}(x_0)$ be the fiber, and consider the projective limit $F(x_0) = \varprojlim F_i$ of the projective system $(\{F_i\}_{i \in \mathcal{I}(X, x_0)}, \{\alpha_{ij}: F_j \to F_i\}_{i \le j})$.
    Then, for any $g = (\varphi_i)_{i \in \mathcal{I}(X, x_0)} \in G_{X, x_0}$ and $\xi = (y_i)_{i \in \mathcal{I}(X, x_0)} \in F(x_0)$, we define an action of $G_{X, x_0}$ on $F(x_0)$ by
    \[
        g\cdot\xi=(\varphi_i(y_i))_{i \in \mathcal{I}(X,x_0)}.
    \]
\end{definition}

\begin{lemma}\label{action_lim_fiber_welldef_free_transitive}
    In the above setting, this action is well-defined, free, and transitive.
\end{lemma}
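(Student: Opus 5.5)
To prove the statement, I would verify three things in turn: that $g\cdot\xi$ again lies in $F(x_0)$, that the action is free, and that it is transitive. Throughout, $g=(\varphi_i)\in G_{X,x_0}$ and $\xi=(y_i)\in F(x_0)$, so that $\varphi_i=p_{ij}(\varphi_j)$ and $y_i=\alpha_{ij}(y_j)$ whenever $i\le j$.

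\emph{Well-definedness.} First, $\varphi_i(y_i)\in F_i$, because $\vp_i\circ\varphi_i=\vp_i$. Next, $\alpha_{ij}\circ\varphi_j=p_{ij}(\varphi_j)\circ\alpha_{ij}=\varphi_i\circ\alpha_{ij}$ by the definition of $p_{\alpha_{ij}}$ (Definition~\ref{def_p_alpha} and Lemma~\ref{transition_map1}). Applying this to $y_j$ gives $\alpha_{ij}(\varphi_j(y_j))=\varphi_i(y_i)$, so the tuple $g\cdot\xi$ is compatible and lies in $F(x_0)$. The action axioms, $(gh)\cdot\xi=g\cdot(h\cdot\xi)$ and $1\cdot\xi=\xi$, hold coordinatewise.

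\emph{Freeness.} Suppose $g\cdot\xi=\xi$. Then $\varphi_i(y_i)=y_i$ for every $i$, so Lemma~\ref{rigidity} gives $\varphi_i=\id_{Y_i}$ for every $i$. Hence $g=1$.

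\emph{Transitivity.} This is the step I expect to be the main obstacle. Take $\xi=(y_i)$ and $\eta=(y'_i)$ in $F(x_0)$. For each $i$, Proposition~\ref{transitiveaction} and Corollary~\ref{freeaction} show that $G_i$ acts simply transitively on $F_i$. So there is a unique $\varphi_i\in G_i$ with $\varphi_i(y_i)=y'_i$. It remains to check compatibility, namely $p_{ij}(\varphi_j)=\varphi_i$ for $i\le j$. Using again $p_{ij}(\varphi_j)\circ\alpha_{ij}=\alpha_{ij}\circ\varphi_j$, we compute
\[
p_{ij}(\varphi_j)(y_i)=p_{ij}(\varphi_j)(\alpha_{ij}(y_j))=\alpha_{ij}(\varphi_j(y_j))=\alpha_{ij}(y'_j)=y'_i.
\]
Both $p_{ij}(\varphi_j)$ and $\varphi_i$ therefore send $y_i$ to $y'_i$, and freeness in $G_i$ forces them to be equal. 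Thus $g=(\varphi_i)\in G_{X,x_0}$ and $g\cdot\xi=\eta$.

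The key point is that uniqueness of the element $\varphi_i$ at each finite level makes the levelwise choices automatically coherent, so no compactness argument is needed. Nonemptiness of $F(x_0)$, shown in the proof of Lemma~\ref{base_point_change}, is not needed for these three claims, though it makes the statement non-vacuous.
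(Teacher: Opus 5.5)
Your proposal is correct and follows the paper's proof essentially step for step. Well-definedness comes from the intertwining relation $\alpha_{ij}\circ\varphi_j=p_{ij}(\varphi_j)\circ\alpha_{ij}=\varphi_i\circ\alpha_{ij}$, freeness from rigidity of each $G_i$ on $F_i$, and transitivity from levelwise choices made coherent by freeness in $G_i$; you simply spell out the ``simple calculation'' the paper leaves implicit, and $F(x_0)$ is in fact trivially nonempty since it contains $(y_{i,0})_i$.
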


\begin{proof}
    First, we show that the action is well-defined. Fix $g=(\varphi_i)_{i \in \mathcal{I}(X,x_0)} \in G_{X,x_0}$ and $\xi=(y_i)_{i \in \mathcal{I}(X,x_0)}\in F(x_0)$. It suffices to show that $g\cdot\xi\in F(x_0)$. For each $i$, since $\varphi_i\in G_i=\gal(Y_i/X, \vp_i)$ and $y_i \in F_i$, we have $\vp_i(\varphi_i(y_i))=\vp_i(y_i)=x_0$, which implies $\varphi_i(y_i) \in \vp_i^{-1}(x_0)=F_i$. A simple calculation shows that for $i\le j$, we have $\alpha_{ij}(\varphi_j(y_j))=\varphi_i(y_i)$, which means that $g\cdot\xi=(\varphi_i(y_i))_{i\in \mathcal{I}(X,x_0)}\in F(x_0)$.\\
    \indent Next, we show that the action is free. Fix $g=(\varphi_i)_{i \in \mathcal{I}(X,x_0)} \in G_{X,x_0}$. Suppose there exists $\xi = (y_i) \in F(x_0)$ such that $g \cdot \xi = \xi$.
    Then, for any $i \in \mathcal{I}(X,x_0)$, we have $\varphi_i(y_i) = y_i$. By Corollary~\ref{freeaction}, we have $\varphi_i = \id_{Y_i}$. Thus, $g$ is the identity element of $G_{X,x_0}$. Therefore, we have shown that the action is free.\\
    \indent Finally, we show the transitivity of the action. Fix $\xi=(y_i)_{i\in \mathcal{I}(X,x_0)}$ and $\eta=(z_i)_{i\in\mathcal{I}(X,x_0)}$. For each $i\in \mathcal{I}(X,x_0)$, since $y_i, z_i\in F_i$, the transitivity of the Galois group action (Proposition~\ref{transitiveaction}) ensures that there exists $\varphi_i \in G_i$ such that $\varphi_i(y_i)=z_i$. For any $i \le j$, we have 
    \[
    p_{ij}(\varphi_j)(y_i) = \alpha_{ij}(\varphi_j(y_j)) = \alpha_{ij}(z_j) = z_i=\varphi_i(y_i).
    \]
    Since the action of $G_i$ on $F_i$ is free (Corollary~\ref{freeaction}), $p_{ij}(\varphi_j) = \varphi_i$ holds. Set $g=(\varphi_i)_{i\in \mathcal{I}(X,x_0)}\in G_{X,x_0}$, then we have $g\cdot\xi=\eta$.
\end{proof}

So far, we have shown the well-definedness of the absolute Galois group and its independence from the base point. Here, we formally name the maps that appeared in those proofs.

\begin{definition}\label{hogehoge}
    Let $X$ be an irreducible SFT and let $x_0 \in X$. Let $f \in \aut(X)$ and set $x_1 = f(x_0)$. For each $i \in \mathcal{I}(X, x_0)$, fix a representative $(Y_i, y_{i,0}, \vp_i)$, and set $F_i = \vp_i^{-1}(x_1)$. The pair $(\{F_i\}_{i \in \mathcal{I}(X, x_0)}, \{\alpha_{ij}|_{F_j}: F_j \to F_i\})$ forms a projective system, and we denote its projective limit by $F(x_1) = \varprojlim F_i$. Fix $\xi(f) = \xi = (y'_{i,0})_{i \in \mathcal{I}(X, x_0)} \in F(x_1)$. Then, we define the order isomorphism $\Psi_\xi: \mathcal{I}(X, x_0) \to \mathcal{I}(X, x_1)$ by
    \[
    \Psi_\xi([(Y_i, y_{i,0}, \vp_i)]) = [(Y_i, y'_{i,0}, \vp_i)].
    \]
    This is the same as the map $\Psi$ constructed in the proof of Lemma~\ref{base_point_change}.
    Furthermore, let us define $\gamma_\xi: G_{X, x_1} \to G_{X, x_0}$ as follows. For $g = (\psi_i)_{i \in \mathcal{I}(X, x_1)} \in G_{X, x_1}$, we define $\gamma_{\xi}(g) = (\varphi_i)_{i \in \mathcal{I}(X, x_0)} \in G_{X, x_0}$ such that
    \[
    \varphi_i=\psi_{\Psi_\xi(i)}.
    \]
    In other words, if $\gamma_{\xi}(g)_i$ denotes the $i$-th coordinate of $\gamma_{\xi}(g)$, then $\gamma_{\xi}(g)_i = \psi_{\Psi_\xi(i)}$. Since $\gamma_\xi$ simply reindexes the coordinates via $\Psi_\xi$, it is a group isomorphism. The composition of $f_\sharp: G_{X, x_0} \to G_{X, x_1}$ and $\gamma_\xi: G_{X, x_1} \to G_{X, x_0}$ yields an automorphism on $G_{X, x_0}$. We denote this by $\beta_{f, \xi} = \gamma_\xi \circ f_\sharp: G_{X, x_0} \to G_{X, x_0}$.
\end{definition}

This $\beta_{f, \xi}$ represents the action of the automorphism $f$ on the absolute Galois group. This representation depends on the choice of $\xi$, but we will show below that it is unique up to inner automorphism.
We denote the inner automorphism group of a group $G$ by $\inn(G)$, and the outer automorphism group by $\out(G)=\aut(G)/\inn(G)$.

\begin{lemma}\label{out_welldef}
    In the above setting, the equivalence class $[\beta_{f, \xi}]$ of the automorphism $\beta_{f, \xi} \in \aut(G_{X, x_0})$ in the outer automorphism group $\out(G_{X, x_0})=\aut(G_{X, x_0})/\inn(G_{X, x_0})$ is independent of the choice of $\xi\in F(x_1)$.
\end{lemma}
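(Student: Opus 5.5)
Since $f_\sharp$ does not depend on $\xi$, it suffices to compare $\gamma_\xi$ and $\gamma_\eta$ for two elements $\xi=(y'_{i,0})_i$ and $\eta=(y''_{i,0})_i$ of $F(x_1)$. Concretely, I would show that there is $h\in G_{X,x_0}$ with $\gamma_\eta(g)=h\,\gamma_\xi(g)\,h^{-1}$ for all $g\in G_{X,x_1}$. Then $\beta_{f,\eta}=c_h\circ\beta_{f,\xi}$, where $c_h$ is conjugation by $h$, and the two classes in $\out(G_{X,x_0})$ agree.

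\textbf{Step 1: produce $h$.} Each $F_i=\vp_i^{-1}(x_1)$ carries the action of $G_i$, which is free and transitive by Corollary~\ref{freeaction} and Proposition~\ref{transitiveaction}. So for each $i\in\mathcal{I}(X,x_0)$ there is a unique $h_i\in G_i$ with $h_i(y'_{i,0})=y''_{i,0}$. Compatibility $p_{ij}(h_j)=h_i$ follows exactly as in the transitivity part of Lemma~\ref{action_lim_fiber_welldef_free_transitive}. Indeed, $p_{ij}(h_j)(y'_{i,0})=\alpha_{ij}(h_j(y'_{j,0}))=\alpha_{ij}(y''_{j,0})=y''_{i,0}$, and freeness then gives $p_{ij}(h_j)=h_i$. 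Hence $h=(h_i)_i\in G_{X,x_0}$.

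\textbf{Step 2: compare the reindexings.} The classes $\Psi_\xi(i)=[(Y_i,y'_{i,0},\vp_i)]$ and $\Psi_\eta(i)=[(Y_i,y''_{i,0},\vp_i)]$ coincide, via the pointed conjugacy $h_i$, which satisfies $\vp_i\circ h_i=\vp_i$. So $\Psi_\xi=\Psi_\eta$ as maps of index sets. However, the coordinate $\psi_{\Psi(i)}$ of $g$ is an element of the Galois group of the \emph{fixed representative} of $\Psi(i)$. Identifying it with an element of $G_i=\gal(Y_i/X,\vp_i)$ requires a pointed conjugacy from $(Y_i,y'_{i,0})$, respectively $(Y_i,y''_{i,0})$, to that representative. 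This conjugacy is unique by Lemma~\ref{medium_unique}. Call these $k'_i$ and $k''_i$. Both are lifts of $\vp_i$ and satisfy $k''_i\circ h_i(y'_{i,0})=k'_i(y'_{i,0})$, so uniqueness of lifts (Theorem~\ref{liftunique}) gives $k''_i=k'_i\circ h_i^{-1}$. Following Definition~\ref{induce_mor}, the identifications are $\psi\mapsto k_i'^{-1}\psi k'_i$ and $\psi\mapsto k_i''^{-1}\psi k''_i=h_i\,(k_i'^{-1}\psi k'_i)\,h_i^{-1}$. Therefore $\gamma_\eta(g)_i=h_i\,\gamma_\xi(g)_i\,h_i^{-1}$ for every $i$, that is, $\gamma_\eta=c_h\circ\gamma_\xi$. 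These identifications commute with the transition maps by Lemma~\ref{well_def_transition_map}, so the coordinatewise statement is an identity in $G_{X,x_0}$.

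\textbf{Main obstacle.} The delicate point is Step 2. The paper's formula $\gamma_\xi(g)_i=\psi_{\Psi_\xi(i)}$ silently uses the identification of Galois groups of conjugate pointed representatives. The proof has to make that identification explicit and track how it changes with the base point; this is precisely where the conjugating element $h$ appears. Once the identification is set up via Definition~\ref{induce_mor}, the remaining verifications are routine. Finally, $c_h$ is conjugation by an element of $G_{X,x_0}$, so it is an inner automorphism, and $[\beta_{f,\eta}]=[\beta_{f,\xi}]$ in $\out(G_{X,x_0})$.
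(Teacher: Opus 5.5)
Your proposal is correct and follows essentially the same argument as the paper. You obtain $h$ from the free and transitive action of $G_{X,x_0}$ on $F(x_1)$; the paper cites this from Lemma~\ref{action_lim_fiber_welldef_free_transitive}, while you reprove it coordinatewise. You then use uniqueness of lifts to show that the pointed conjugacies to the fixed representative differ by $h_i$, which is the paper's identity $\theta_{\eta,i}^{-1}\circ\theta_{\xi,i}=\chi_i$, and conclude $\gamma_\eta=c_h\circ\gamma_\xi$ coordinatewise.
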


\begin{proof}
    Fix $\xi=(y'_{i,0})_{i \in \mathcal{I}(X, x_0)}, \eta=(z'_{i,0})_{i \in \mathcal{I}(X, x_0)} \in F(x_1)$. By Lemma~\ref{action_lim_fiber_welldef_free_transitive}, since the action of $G_{X, x_0}$ on $F(x_1)$ is free and transitive, there exists a unique $h=(\chi_i) \in G_{X, x_0}$ such that $\eta=h\cdot\xi$. Fix $i\in\mathcal{I}(X, x_0)$. Since $\eta=h\cdot\xi$ implies $\chi_i(y'_{i,0})=z'_{i,0}$, the map $\chi_i:(Y_i, y'_{i,0})\to(Y_i, z'_{i,0})$ is a pointed conjugacy. Furthermore, since $\chi_i\in\gal(Y_i/X,\vp_i)$, we have $\vp_i\circ\chi_i=\vp_i$, which yields
    \[
    [(Y_i, y'_{i,0}, \vp_i)]=[(Y_i, z'_{i,0}, \vp_i)].
    \]
    That is, $\Psi_\xi(i)=\Psi_\eta(i)$.
    Let $j=\Psi_\xi(i)=\Psi_\eta(i)\in\mathcal{I}(X, x_1)$ and fix a representative of $j$, denoting it by $(Y_j, y_{j,0}, \vp_j)$.
    Then, there exist base-point-preserving conjugacies $\theta_{\xi,i}:(Y_i, y'_{i,0})\to(Y_j, y_{j,0})$ and $\theta_{\eta,i}:(Y_i, z'_{i,0})\to(Y_j, y_{j,0})$.
    The map $\theta_{\eta,i}^{-1}\circ\theta_{\xi,i}:(Y_i, y'_{i,0})\to(Y_i, z'_{i,0})$ is a base-point-preserving conjugacy, and by the freeness of the action (or the uniqueness of the lift), we have $\theta_{\eta,i}^{-1}\circ\theta_{\xi,i}=\chi_i$.
    Fix $g=(\psi_k) \in G_{X, x_1}$. By the definition of $\gamma_\xi$, we have $\gamma_\xi(g)_i=\theta_{\xi,i}^{-1}\circ\psi_j \circ \theta_{\xi,i}$. Similarly, we also have $\gamma_\eta(g)_i=\theta_{\eta,i}^{-1} \circ \psi_j \circ \theta_{\eta,i}$. Then, we obtain
    \begin{align*}
        \gamma_\eta(g)_i&=\theta_{\eta,i}^{-1} \circ \psi_j \circ \theta_{\eta,i}\\
        &=\theta_{\eta,i}^{-1} \circ \theta_{\xi,i}\circ\gamma_\xi(g)_i \circ\theta_{\xi,i}^{-1}\circ \theta_{\eta,i}\\
        &=\chi_i\circ\gamma_\xi(g)_i\circ\chi_i^{-1}. 
    \end{align*}
    Since $i$ was chosen arbitrarily, we have $\gamma_\eta(g)=h\cdot\gamma_\xi(g)\cdot h^{-1}$. Thus, for any $k\in G_{X, x_0}$, we have
    \begin{align*}
        \beta_{f, \eta}(k)&=\gamma_\eta(f_\sharp(k))\\
        &=h\circ\gamma_\xi(f_\sharp(k))\circ h^{-1}\\
        &=h\circ\beta_{f, \xi}(k)\circ h^{-1}.
    \end{align*}
    Therefore, we conclude that
    \[
    \beta_{f, \eta}=\beta_{f, \xi}\pmod{\inn(G_{X,x_0})},
    \]
    which completes the proof.
\end{proof}

\begin{definition}
    Let $X$ be an irreducible SFT and $x_0 \in X$. Let $f \in \aut(X)$ and set $x_1 = f(x_0)$. For each $i \in \mathcal{I}(X, x_0)$, fix a representative $(Y_i, y_{i,0}, \vp_i)$ and set $F_i = \vp_i^{-1}(x_1)$. Let $F(x_1) = \varprojlim F_i$. Fix $\xi \in F(x_1)$. Then, we define the map $\rho_X = \rho: \aut(X) \to \out(G_{X, x_0})$ by
    \[
    \rho_X(f)=[\beta_{f,\xi}].
    \]
    By Lemma~\ref{out_welldef}, this map is well-defined.
\end{definition}

\begin{theorem}
    Let $X$ be an irreducible SFT and let $x_0 \in X$. The map $\rho: \aut(X) \to \out(G_{X,x_0})$ is a group homomorphism.
\end{theorem}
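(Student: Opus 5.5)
To prove that $\rho(f\circ f')=\rho(f)\rho(f')$ in $\out(G_{X,x_0})$, we compute $\beta_{f\circ f',\zeta}$ for a suitable $\zeta$ and compare it with $\beta_{f,\xi}\circ\beta_{f',\eta}$ up to an inner automorphism. Set $x_1=f'(x_0)$ and $x_2=f(x_1)=(f\circ f')(x_0)$. Fix $\eta\in F(x_1)$ (fibers over $x_1$ of the representatives indexed by $\mathcal{I}(X,x_0)$), so that
\[
\beta_{f',\eta}=\gamma_\eta\circ f'_\sharp .
\]
For $f$ we must use the base point $x_0$ as well, so we need $\xi\in F(f(x_0))$; note that $f(x_0)$ is in general not equal to $x_2$. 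By Lemma~\ref{out_welldef}, $\rho(f)$ is independent of the choice of $\xi$, and we are free to choose it conveniently.

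The key step is a naturality (intertwining) relation between $f_\sharp$ and the reindexing maps $\gamma$:
\[
f_\sharp\circ\gamma_\eta=\gamma_{\eta'}\circ f_\sharp \quad\text{as maps } G_{X,x_1}\to G_{X,f(x_0)},
\]
where $\eta'\in F(f(x_0))$ is the "pushed" element. This is defined as follows. The map $f_*$ sends a representative $(Y_i,y_{i,0},\vp_i)$ over $(X,x_0)$ to $(Y_i,y_{i,0},f\circ\vp_i)$ over $(X,f(x_0))$. The fiber of $f\circ\vp_i$ over $x_2=f(x_1)$ equals $\vp_i^{-1}(x_1)$, so $\eta$ also defines an element of the limit of fibers over $x_2$ for the system indexed by $\mathcal{I}(X,f(x_0))$. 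The relation itself is verified coordinatewise: both $f_*\circ\Psi_\eta$ and $\Psi_{\eta'}\circ f_*$ send $[(Y_i,y_{i,0},\vp_i)]$ to $[(Y_i,y'_{i,0},f\circ\vp_i)]$. Since $\gamma$ and $f_\sharp$ are pure reindexings (with the Galois groups identified via Lemma~\ref{forward_gal_factor}), equality of the index maps gives the identity.

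Next we compose, following Definition~\ref{hogehoge} in the same way as in the proof of Lemma~\ref{out_welldef}:
\[
\beta_{f,\xi}\circ\beta_{f',\eta}=\gamma_\xi\circ f_\sharp\circ\gamma_\eta\circ f'_\sharp=\gamma_\xi\circ\gamma_{\eta'}\circ f_\sharp\circ f'_\sharp=\gamma_\xi\circ\gamma_{\eta'}\circ(f\circ f')_\sharp ,
\]
where the last step uses Lemma~\ref{star_functor}. It remains to identify $\gamma_\xi\circ\gamma_{\eta'}$ with $\gamma_\zeta$ for some $\zeta\in F(x_2)$ over $\mathcal{I}(X,x_0)$. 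Composing the base-point changes $\Psi_\xi:\mathcal{I}(X,x_0)\to\mathcal{I}(X,f(x_0))$ and $\Psi_{\eta'}:\mathcal{I}(X,f(x_0))\to\mathcal{I}(X,x_2)$ yields a compatible choice of points in the fibers over $x_2$ of the original representatives. Concretely, one transports $\eta'$ back along the pointed conjugacies $\theta_{\xi,i}$ to obtain $\zeta$, and checks that
\[
\Psi_{\eta'}\circ\Psi_\xi=\Psi_\zeta,\qquad \gamma_\xi\circ\gamma_{\eta'}=\gamma_\zeta,
\]
where the second identity must hold at least up to an inner automorphism. This gives $\beta_{f,\xi}\circ\beta_{f',\eta}=\beta_{f\circ f',\zeta}$, and hence
\[
\rho(f)\rho(f')=\rho(f\circ f').
\]
In particular $\rho(\id)$ is trivial, by choosing $\xi=(y_{i,0})$, for which $\gamma_\xi=\id$.

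I expect the main obstacle to be bookkeeping rather than substance. The objects $\gamma_\xi$ depend on fixed representatives and on the identifying conjugacies $\theta_{\xi,i}$, so composing two reindexings introduces conjugation by those $\theta$'s. One must check that these conjugations assemble into a single element of $G_{X,x_0}$, that is, a compatible family across $i\le j$. Compatibility follows from Lemma~\ref{well_def_transition_map} together with the uniqueness of lifts (Theorem~\ref{liftunique}) and Lemma~\ref{medium_unique}. Any leftover discrepancy is exactly the kind absorbed by Lemma~\ref{out_welldef}, namely a change of $\xi$ by an element $h\in G_{X,x_0}$ acting freely and transitively on the limit fiber (Lemma~\ref{action_lim_fiber_welldef_free_transitive}), which only alters the result by an inner automorphism.
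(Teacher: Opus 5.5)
Your proposal is correct and is essentially the paper's argument. The paper writes down the same element directly, $\zeta_i=\eta_{f_*^{-1}(\Psi_\xi(i))}$ (its $\xi_3$), and verifies $\beta_{f,\xi}\circ\beta_{f',\eta}=\beta_{f\circ f',\zeta}$ in a single coordinatewise index computation; that computation is what you get by chaining your naturality relation $f_*\circ\Psi_\eta=\Psi_{\eta'}\circ f_*$ with $\Psi_{\eta'}\circ\Psi_\xi=\Psi_\zeta$ and Lemma~\ref{star_functor}. Two small corrections: $\eta'$ lives in the limit of fibers over $x_2$ for the system indexed by $\mathcal{I}(X,f(x_0))$ (not in $F(f(x_0))$), and by uniqueness of pointed lifts the identity $\gamma_\xi\circ\gamma_{\eta'}=\gamma_\zeta$ holds exactly, so your hedge about an inner-automorphism correction is unnecessary.
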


\begin{proof}
    Fix $f_1, f_2 \in \aut(X)$. Set $x_1=f_1(x_0)$, $x_2=f_2(x_0)$, and $x_{12}=f_1(x_2)$. Fix $\xi_1 \in F(x_1)=\varprojlim \vp_i^{-1}(x_1)$ and $\xi_2 \in F(x_2)=\varprojlim \vp_i^{-1}(x_2)$. Then, we have $\beta_{f_1, \xi_1}=\gamma_{\xi_1} \circ (f_1)_\sharp$ and $\beta_{f_2, \xi_2}=\gamma_{\xi_2} \circ (f_2)_\sharp$. Fix $i=[(Y_i, y_{i,0},\vp_i)]\in\mathcal{I}(X, x_0)$. A representative of $\Psi_{\xi_1}(i)\in \mathcal{I}(X, x_1)$ is $(Y_i, (\xi_1)_i, \vp_i)$. That is,
    \[
    \Psi_{\xi_1}(i)=[(Y_i, (\xi_1)_i, \vp_i)].
    \]
    Set $k=(f_1)_*^{-1}(\Psi_{\xi_1}(i)) \in \mathcal{I}(X, x_0)$. Then, $(Y_i, (\xi_1)_i, f_1^{-1} \circ \vp_i)$ is a representative of $k$. That is,
    \[
    k=[(Y_i, (\xi_1)_i, f_1^{-1} \circ \vp_i)].
    \]
    Now, considering the $k$-th coordinate of $\xi_2$, we have
    \[
    (\xi_2)_k\in(f_1^{-1}\circ\vp_i)^{-1}(x_2)=\vp_i^{-1}(f_1(x_2))=\vp_i^{-1}(x_{12}).
    \]
    Then, we define $\xi_3=((\xi_3)_i)_{i\in \mathcal{I}(X,x_0)}$ by
    \[
    (\xi_3)_i = (\xi_2)_k = (\xi_2)_{(f_1)_*^{-1}(\Psi_{\xi_1}(i))}.
    \]
    \indent Let us now show that $\xi_3\in F(x_{12})$. Suppose that $l,m\in\mathcal{I}(X,x_0)$ satisfy $l\le m$. Set $l'=\Psi_{\xi_1}(l)$ and $m'=\Psi_{\xi_1}(m)$. Since $\Psi_{\xi_1}$ is order-preserving, we have $l'\le m'$. Since $\xi_1 \in F(x_1)$, we have $\alpha_{ij}((\xi_1)_j)=(\xi_1)_i$. This means $\alpha_{i'j'}=\alpha_{ij}$. Furthermore, set $l''=(f_1)_*^{-1}(l')$ and $m''=(f_1)_*^{-1}(m')$. Since $f_*$ is an order isomorphism, we have $l''\le m''$. Moreover, since
    \[
    (f_1^{-1} \circ \vp_i) \circ \alpha_{ij} = f_1^{-1} \circ (\vp_i \circ \alpha_{ij}) = f_1^{-1} \circ \vp_j,
    \]
    we have $\alpha_{l''m''}=\alpha_{lm}$. Since $\xi_2 \in F(x_2)$, we have $\alpha_{l''m''}((\xi_2)_{m''})=(\xi_2)_{l''}$. By the definition of $\xi_3$, we also have $(\xi_3)_{l}=(\xi_2)_{l''}$ and $(\xi_3)_{m}=(\xi_2)_{m''}$. Therefore, we obtain
    \[
    \alpha_{lm}((\xi_3)_m)=\alpha_{l''m''}((\xi_2)_{m''})=(\xi_2)_{l''}=(\xi_3)_{l},
    \]
    which implies $\xi_3\in F(x_{12})$.\\
    \indent Fix $g \in G_{X, x_0}$. Set $h=\beta_{f_2, \xi_2}(g)$. Then, we have
    \begin{align*}
        (f_2)_*^{-1}(\Psi_{\xi_2}(k))&=(f_2)_*^{-1}(\Psi_{\xi_2}([(Y_i, (\xi_1)_i, f_1^{-1} \circ \vp_i)]))\\
        &=(f_2)_*^{-1}([(Y_i, (\xi_2)_k, f_1^{-1} \circ \vp_i)])\\
        &=(f_2)_*^{-1}([(Y_i, (\xi_3)_i, f_1^{-1} \circ \vp_i)])\\
        &=[(Y_i, (\xi_3)_i, f_2^{-1} \circ f_1^{-1} \circ \vp_i)]\\
        &=[(Y_i, (\xi_3)_i, (f_1 \circ f_2)^{-1} \circ \vp_i)]\\
        &= (f_1 \circ f_2)_*^{-1}(\Psi_{\xi_3}(i)).
    \end{align*}
    Thus, it follows that
    \begin{align*}
        ((\beta_{f_1, \xi_1}\circ\beta_{f_2, \xi_2})(g))_i&=(\beta_{f_1, \xi_1}(h))_i\\
        &=h_{(f_1)_*^{-1}(\Psi_{\xi_1}(i))}\\
        &= h_k\\
        &= g_{(f_2)_*^{-1}(\Psi_{\xi_2}(k))}\\
        &= g_{(f_1 \circ f_2)_*^{-1}(\Psi_{\xi_3}(i))}\\
        &=(\beta_{f_1 \circ f_2, \xi_3}(g))_i.
    \end{align*}
    Since $i$ was arbitrary, we have $\beta_{f_1, \xi_1}\circ\beta_{f_2, \xi_2}=\beta_{f_1 \circ f_2, \xi_3}$. Consequently, we obtain
    \[
    \rho(f_1) \circ \rho(f_2) = [\beta_{f_1, \xi_1} \circ \beta_{f_2, \xi_2}] = [\beta_{f_1 \circ f_2, \xi_3}] = \rho(f_1 \circ f_2).
    \]
\end{proof}

It is a natural question to ask how much information $\rho$ preserves and how much it discards. The following proposition implies that $\rho$ forgets the information of the shift. In other words, $\rho$ can be viewed as eliminating the information of time evolution and extracting the information of spatial symmetry.

\begin{proposition}\label{ker_shift}
    Let $X$ be an irreducible SFT. Then, we have $\sigma_X \in \mathrm{Ker } \rho_X$. 
\end{proposition}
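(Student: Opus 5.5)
Set $f=\sigma_X$ and $x_1=\sigma_X(x_0)$. The plan is to exhibit one specific $\xi\in F(x_1)$ for which $\beta_{\sigma_X,\xi}$ is the identity of $G_{X,x_0}$. By Lemma~\ref{out_welldef} the class $[\beta_{\sigma_X,\xi}]$ does not depend on $\xi$, so this gives $\rho_X(\sigma_X)=1$.

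The natural choice is $\xi=(\sigma_{Y_i}(y_{i,0}))_{i\in\mathcal{I}(X,x_0)}$. First check that $\xi\in F(x_1)$. We have $\vp_i(\sigma_{Y_i}(y_{i,0}))=\sigma_X(\vp_i(y_{i,0}))=\sigma_X(x_0)=x_1$. For $i\le j$, the maps $\alpha_{ij}$ are sliding block codes and commute with the shifts, so $\alpha_{ij}(\sigma_{Y_j}(y_{j,0}))=\sigma_{Y_i}(\alpha_{ij}(y_{j,0}))=\sigma_{Y_i}(y_{i,0})$. Hence $\xi$ is a compatible family.

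Next compute $\Psi_\xi$ and $(\sigma_X)_*$. By definition $\Psi_\xi(i)=[(Y_i,\sigma_{Y_i}(y_{i,0}),\vp_i)]$ and $(\sigma_X)_*(i)=[(Y_i,y_{i,0},\sigma_X\circ\vp_i)]$. The key observation is that $\sigma_{Y_i}\colon (Y_i,y_{i,0})\to(Y_i,\sigma_{Y_i}(y_{i,0}))$ is a pointed conjugacy. It satisfies
\[
\vp_i\circ\sigma_{Y_i}=\sigma_X\circ\vp_i ,
\]
so it is exactly a conjugacy of pointed Galois factors over $(X,x_1)$ from $(Y_i,y_{i,0},\sigma_X\circ\vp_i)$ to $(Y_i,\sigma_{Y_i}(y_{i,0}),\vp_i)$. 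Therefore $(\sigma_X)_*(i)=\Psi_\xi(i)$ for all $i$, that is, $\Psi_\xi=(\sigma_X)_*$, and the reindexing in $\gamma_\xi\circ(\sigma_X)_\sharp$ returns each coordinate to the same index $i$.

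The main obstacle is the bookkeeping with representatives. $\beta_{f,\xi}$ is defined by reindexing, but when the two representatives of the same class differ, the coordinates are also identified through the induced isomorphisms of Definition~\ref{induce_mor}. In the present case that identification is conjugation by $\sigma_{Y_i}$, so the $i$-th coordinate of $\beta_{\sigma_X,\xi}(g)$ is $\sigma_{Y_i}^{-1}\circ g_i\circ\sigma_{Y_i}$. This equals $g_i$ because every element of $\gal(Y_i/X,\vp_i)\subseteq\aut(Y_i)$ commutes with $\sigma_{Y_i}$. So $\beta_{\sigma_X,\xi}=\id$, and hence $\sigma_X\in\mathrm{Ker}\,\rho_X$.
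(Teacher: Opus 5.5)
Your proof is correct and follows the paper's route. You use the same choice $\xi=(\sigma_{Y_i}(y_{i,0}))_{i}$ and the same observation that $\sigma_{Y_i}$ is a pointed conjugacy from $(Y_i,y_{i,0},\sigma_X\circ\vp_i)$ to $(Y_i,\sigma_{Y_i}(y_{i,0}),\vp_i)$, which gives $(\sigma_X)_*=\Psi_\xi$ and hence $\beta_{\sigma_X,\xi}=\id$. Your added remark is also correct and makes explicit a step the paper's bare reindexing leaves implicit: the identification between representatives is conjugation by $\sigma_{Y_i}$, and this acts trivially because every element of $\aut(Y_i)$ commutes with the shift.
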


\begin{proof}
    Fix a base point $x_0$. Set $x_1=\sigma_X(x_0)$. Let $i \in \mathcal{I}(X, x_0)$ and fix a representative $(Y_i, y_{i,0}, \vp_i)$. Let $y'_{i,0}=\sigma_{Y_i}(y_{i,0})$ and set $\xi=(y'_{i,0})_{i \in \mathcal{I}(X, x_0)}$. It directly follows that $\xi\in F(x_1)$. Consider $h=\sigma_{Y_i}:(Y_i,y_{i,0}) \to (Y_i,\sigma_{Y_i}(y_{i,0}))$. The map $h$ is a pointed conjugacy and satisfies $\vp_i\circ h=\vp_i\circ\sigma_{Y_i}$, so we have
    \[
    [(Y_i, y_{i,0}, \vp_i \circ \sigma_{Y_i})]=[(Y_i, \sigma_{Y_i}(y_{i,0}), \vp_i)].
    \]
    Therefore, we obtain
    \begin{align*}
        (\sigma_X)_*([(Y_i, y_{i,0}, \vp_i)])&=[(Y_i, y_{i,0}, \sigma_X \circ \vp_i)] \\
        &=[(Y_i, y_{i,0}, \vp_i\circ \sigma_{Y_i})]\\
        &=[(Y_i, \sigma_{Y_i}(y_{i,0}), \vp_i)]\\
        &=[(Y_i, \xi_i, \vp_i)]\\
        &=\Psi_\xi([(Y_i, y_{i,0}, \vp_i)]),
    \end{align*}
    which implies $(\sigma_X)_*(i)=\Psi_\xi(i)$. Since $i$ is arbitrary, we have $(\sigma_X)_*=\Psi_\xi$. Thus, $(\sigma_X)_*^{-1}\circ \Psi_\xi=\id_{\mathcal{I}(X,x_0)}$.
    Therefore, for any $g\in G_{X,x_0}$ and $i\in \mathcal{I}(X,x_0)$, we have 
    \begin{align*}
        (\beta_{\sigma_X, \xi}(g))_i&=g_{(\sigma_X)_*^{-1}(\Psi_\xi(i))}=g_i,
    \end{align*}
    which yields $\beta_{\sigma_X, \xi}=\id_{G_{X,x_0}}$.
    From the above, we conclude that
    \[
    \rho(\sigma_X)=[\beta_{\sigma_X, \xi}]=[\id_{G_{X,x_0}}]=1_{\out(G_{X,x_0})},
    \]
    which proves that $\sigma_X\in\mathrm{Ker }\rho_X$.
\end{proof}

The following two corollaries immediately follow from Proposition~\ref{ker_shift}.

\begin{corollary}\label{cor_shiftker}
    Let $X$ be an irreducible SFT. Then, the subgroup $\langle\sigma_X\rangle$ generated by $\sigma_X$ is a subgroup of $\mathrm{Ker } \rho_X$.
\end{corollary}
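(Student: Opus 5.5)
The statement to prove is Corollary~\ref{cor_shiftker}: $\langle\sigma_X\rangle \subseteq \mathrm{Ker}\,\rho_X$. I will deduce it directly from Proposition~\ref{ker_shift} together with the fact, established in the preceding theorem, that $\rho_X:\aut(X)\to\out(G_{X,x_0})$ is a group homomorphism. The plan has three short steps.

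First, recall that the kernel of any group homomorphism is a subgroup of its domain. So $\mathrm{Ker}\,\rho_X$ is a subgroup of $\aut(X)$; in particular it is closed under composition and inverses and contains $\id_X$.

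Second, Proposition~\ref{ker_shift} gives $\sigma_X\in\mathrm{Ker}\,\rho_X$. By definition, $\langle\sigma_X\rangle$ is the smallest subgroup of $\aut(X)$ containing $\sigma_X$. Since $\mathrm{Ker}\,\rho_X$ is a subgroup containing $\sigma_X$, it follows at once that $\langle\sigma_X\rangle\subseteq\mathrm{Ker}\,\rho_X$.

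Third, to make the argument self-contained, I can also verify the inclusion elementwise. For $n\ge 0$, homomorphy gives $\rho_X(\sigma_X^n)=\rho_X(\sigma_X)^n=1$. For negative exponents, I use $\rho_X(\sigma_X^{-1})=\rho_X(\sigma_X)^{-1}=1$; this is legitimate because $\rho_X$, as a homomorphism between groups, automatically preserves inverses (alternatively, one can invoke Lemma~\ref{star_functor}).

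There is no real obstacle here. The only point needing care is that the homomorphism property of $\rho_X$ was proved for arbitrary $f_1,f_2\in\aut(X)$, so it applies to powers of $\sigma_X$ without further checking.
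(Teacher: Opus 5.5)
Your proof is correct and is the same argument the paper intends: the paper says this corollary follows immediately from Proposition~\ref{ker_shift}, implicitly using that $\mathrm{Ker}\,\rho_X$ is a subgroup because $\rho_X$ is a homomorphism. Your second step already finishes the proof; the elementwise check in the third step, including the parenthetical appeal to Lemma~\ref{star_functor}, is unnecessary.
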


\begin{corollary}
    Let $X$ be an irreducible SFT. If $\sigma_X \neq \id_X$, then the map $\rho: \aut(X) \to \out(G_X)$ is not injective. 
\end{corollary}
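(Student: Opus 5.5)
The statement to prove is the last corollary: if $\sigma_X \neq \id_X$, then $\rho_X$ is not injective. The plan is a direct appeal to Proposition~\ref{ker_shift}, which gives $\sigma_X \in \mathrm{Ker}\,\rho_X$.

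First, $\rho_X$ is a group homomorphism by the preceding theorem. A homomorphism is injective exactly when its kernel is trivial, that is, when $\mathrm{Ker}\,\rho_X=\{\id_X\}$.

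Next, $\sigma_X$ is shift-commuting, continuous and bijective, so it is an element of $\aut(X)$. By Proposition~\ref{ker_shift} (equivalently Corollary~\ref{cor_shiftker}), $\sigma_X$ lies in $\mathrm{Ker}\,\rho_X$.

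Finally, under the hypothesis $\sigma_X\neq\id_X$, the kernel contains a non-identity element. It is therefore nontrivial, and $\rho_X$ is not injective. Concretely, the distinct elements $\sigma_X$ and $\id_X$ both map to $1_{\out(G_X)}$.

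No step here is a real obstacle, since all the work was done in Proposition~\ref{ker_shift}. The only point to check is that $\rho_X$ does not depend on the choice of base point, which is needed to write $\out(G_X)$. This follows because the base-point isomorphism of Lemma~\ref{base_point_change} conjugates the two constructions, so $\rho_X$ is well defined up to that identification and its kernel is unchanged.
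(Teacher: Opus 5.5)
Your argument is correct and matches the paper's, which states this corollary as an immediate consequence of Proposition~\ref{ker_shift}: $\sigma_X$ is a non-identity element of $\mathrm{Ker}\,\rho_X$, so $\rho_X(\sigma_X)=\rho_X(\id_X)$ with $\sigma_X\neq\id_X$. Your closing paragraph on base-point independence is unnecessary, and the paper never proves its claim that the base-point isomorphism conjugates the two constructions; since Proposition~\ref{ker_shift} is proved for an arbitrary base point $x_0$, $\sigma_X\in\mathrm{Ker}\,\rho_X$ holds whichever $x_0$ is used to define $\rho_X$.
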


\begin{remark}
    It is known from \cite{ryan_shiftcenter} that the center of $\aut(X)$ is $\langle \sigma_X\rangle$. Therefore, by Corollary \ref{cor_shiftker}, all information about the center of the group is lost under $\rho_X$.
\end{remark}

\section{First Galois Cohomology and the Automorphism Group}\label{section_gal_cohom}
Let $p$ be a prime number and let $\F_p$ be the finite field of order $p$. In this section, we consider the first Galois cohomology with coefficients in $\F_p$, and construct a representation of the automorphism group on the cohomology. 

\subsection{Galois Cohomology and Representations}
Let $(X, x_0)$ be a pointed SFT, and $G_{X, x_0}$ be its absolute Galois group. Equip the finite field $\F_p$ with the discrete topology to view it as a topological group. Assume that the absolute Galois group $G_{X, x_0}$ acts trivially on $\F_p$. Then, by Lemma~\ref{cohom_homconti}, we have
\[
H^1(G_{X, x_0}, \mathbb{F}_p)=\mathrm{Hom}_{\mathrm{cont}}(G_{X, x_0}, \mathbb{F}_p).
\]
The operations in $\F_p$ induce an $\F_p$-vector space structure on $H^1(G_{X, x_0}, \mathbb{F}_p)$. We call $H^1(G_{X, x_0}, \mathbb{F}_p)$ the first Galois cohomology. \\
\indent The following lemma allows us to simply denote the first Galois cohomology by $H^1(G_X, \F_p)$.

\begin{lemma}\label{cohom_canonical}
    Let $X$ be an irreducible SFT, let $x_0, x_1 \in X$. Then, the map $\gamma_\xi: G_{X,x_1} \to G_{X,x_0}$ defined in Definition~\ref{hogehoge} induces an isomorphism between $H^1(G_{X,x_0}, \mathbb{F}_p)$ and $H^1(G_{X,x_1}, \mathbb{F}_p)$. Furthermore, this isomorphism is independent of the choice of $\xi$.
\end{lemma}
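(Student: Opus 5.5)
We take the induced map to be pullback along $\gamma_\xi$. For $c\in H^1(G_{X,x_0},\F_p)=\mathrm{Hom}_{\mathrm{cont}}(G_{X,x_0},\F_p)$ (Lemma~\ref{cohom_homconti}) we set
\[
\gamma_\xi^*(c)=c\circ\gamma_\xi\in \mathrm{Hom}_{\mathrm{cont}}(G_{X,x_1},\F_p)=H^1(G_{X,x_1},\F_p).
\]
The steps are: first show $\gamma_\xi$ is an isomorphism of topological groups, then check that $\gamma_\xi^*$ is an $\F_p$-linear bijection, and finally show independence of $\xi$ using that inner automorphisms act trivially on $H^1$ with trivial coefficients.

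For the first step, recall that $\gamma_\xi$ only reindexes coordinates through the order isomorphism $\Psi_\xi$ of Lemma~\ref{base_point_change}. Strictly, a coordinate is transported by the pointed conjugacies $\theta_{\xi,i}$ that identify chosen representatives, as in the proof of Lemma~\ref{out_welldef}, and Lemma~\ref{well_def_transition_map} shows this is compatible with the transition maps. Under this reindexing, the basic open normal subgroup $\mathrm{Ker}(\mathrm{pr}_{\Psi_\xi(i)})\subseteq G_{X,x_1}$ maps exactly onto $\mathrm{Ker}(\mathrm{pr}_i)\subseteq G_{X,x_0}$. Hence $\gamma_\xi$ and its inverse (reindexing by $\Psi_\xi^{-1}$) are continuous, and $\gamma_\xi$ is a topological group isomorphism.

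Given this, $c\circ\gamma_\xi$ is a continuous homomorphism whenever $c$ is. The map $c\mapsto c\circ\gamma_\xi$ is additive and $\F_p$-linear because the vector space operations on $H^1$ are defined pointwise in $\F_p$. Its inverse is $c'\mapsto c'\circ\gamma_\xi^{-1}$, so $\gamma_\xi^*$ is an isomorphism of $\F_p$-vector spaces.

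For independence of $\xi$, take $\xi,\eta\in F(x_1)$. The argument of Lemma~\ref{out_welldef} (using freeness and transitivity from Lemma~\ref{action_lim_fiber_welldef_free_transitive}) gives a unique $h\in G_{X,x_0}$ with
\[
\gamma_\eta(g)=h\,\gamma_\xi(g)\,h^{-1}
\]
for all $g\in G_{X,x_1}$. In that lemma the identity was derived coordinatewise from $\eta=h\cdot\xi$ and did not depend on $f$, so it applies here directly. Since $\F_p$ is abelian and the action is trivial, every $c\in\mathrm{Hom}(G_{X,x_0},\F_p)$ satisfies
\[
c(hkh^{-1})=c(h)+c(k)-c(h)=c(k).
\]
Therefore $c\circ\gamma_\eta=c\circ\gamma_\xi$, and so $\gamma_\eta^*=\gamma_\xi^*$.

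The main obstacle is the first step: stating precisely that $\gamma_\xi$ is continuous. This requires the choice of representatives and the conjugacies $\theta_{\xi,i}$ to be made compatibly with the projective system, which follows from Lemma~\ref{medium_unique} and Lemma~\ref{well_def_transition_map}. Once that is in place, the rest is formal.
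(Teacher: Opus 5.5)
Your proposal is correct and follows essentially the same route as the paper. Both define $\gamma_\xi^*(\chi)=\chi\circ\gamma_\xi$ and get independence of $\xi$ from the identity $\gamma_\eta(g)=h\,\gamma_\xi(g)\,h^{-1}$ in the proof of Lemma~\ref{out_welldef}, using that conjugation acts trivially on $\mathrm{Hom}_{\mathrm{cont}}(G_{X,x_0},\mathbb{F}_p)$; your explicit check that $\gamma_\xi$ is a homeomorphism, via $\mathrm{Ker}(\mathrm{pr}_{\Psi_\xi(i)})\mapsto\mathrm{Ker}(\mathrm{pr}_i)$, fills in a continuity detail the paper leaves implicit.
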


\begin{proof}
    Fix a representative for each element of $\mathcal{I}(X, x_0)$, let $\xi \in F(x_1) = \varprojlim \vp_i^{-1}(x_1)$, and consider the map $\gamma_\xi: G_{X, x_1} \to G_{X, x_0}$ defined in Definition~\ref{hogehoge}.
    This induces an isomorphism $\gamma_\xi^*: H^1(G_{X, x_0}, \mathbb{F}_p) \to H^1(G_{X, x_1}, \mathbb{F}_p)$ defined by $\gamma_\xi^*(\chi) = \chi \circ \gamma_\xi$.
    Let $\eta \in F(x_1)$. Then, from the proof of Lemma~\ref{out_welldef}, there exists $h \in G_{X, x_0}$ such that $\gamma_\eta(g) = h \circ \gamma_\xi(g) \circ h^{-1}$ for any $g \in G_{X, x_1}$. For any $\chi \in H^1(G_{X, x_0}, \mathbb{F}_p)$ and any $g \in G_{X, x_1}$, we have
    \begin{align*}
    (\gamma_\eta^*(\chi))(g) &= \chi(\gamma_\eta(g)) \\
    &= \chi(h \cdot \gamma_\xi(g) \cdot h^{-1}) \\
    &= \chi(h) + \chi(\gamma_\xi(g)) - \chi(h) \\
    &= \chi(\gamma_\xi(g)) \\
    &= (\gamma_\xi^*(\chi))(g),
    \end{align*}
    which implies $\gamma_\eta^* = \gamma_\xi^*$. 
\end{proof}

\begin{remark}
    The fact that the isomorphism induced by $\gamma_\xi$ is canonically determined is essentially due to the commutativity of $\F_p$ and the triviality of the action. Thus, it is expected that this proposition holds for coefficients other than $\F_p$ as well.
\end{remark}

As the absolute Galois group is a conjugacy invariant, we have the following.

\begin{theorem}
    The first Galois cohomology is a conjugacy invariant. 
\end{theorem}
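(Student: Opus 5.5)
The plan is to deduce the statement directly from Theorem~\ref{gal_invariant} together with Lemma~\ref{cohom_canonical}. Let $f: X \to X'$ be a topological conjugacy between irreducible SFTs. Fix a base point $x_0 \in X$ and set $x'_0 = f(x_0)$. By Lemma~\ref{cohom_canonical}, the cohomology groups $H^1(G_{X,x_0},\F_p)$ for different base points are canonically isomorphic, so it suffices to compare $H^1(G_{X,x_0},\F_p)$ with $H^1(G_{X',x'_0},\F_p)$.

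\textbf{Step 1: the absolute Galois groups.} The proof of Theorem~\ref{gal_invariant} gives an order isomorphism
\[
\Psi: \mathcal{I}(X,x_0)\to\mathcal{I}(X',x'_0), \qquad [(Y,y_0,\vp)]\mapsto[(Y,y_0,f\circ\vp)].
\]
It also gives the equalities $\gal(Y_i/X', f\circ\vp_i)=\gal(Y_i/X,\vp_i)$ and $p_{ij}=p'_{ij}$. Hence we obtain an isomorphism of profinite groups $\Theta: G_{X',x'_0}\to G_{X,x_0}$ that simply reindexes coordinates via $\Psi$. It is a homeomorphism, because it identifies the basic open normal subgroups $\mathrm{Ker}(\mathrm{pr}_i)$ with $\mathrm{Ker}(\mathrm{pr}'_{\Psi(i)})$.

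\textbf{Step 2: pulling back cohomology.} Pullback gives
\[
\Theta^*: \mathrm{Hom}_{\mathrm{cont}}(G_{X,x_0},\F_p)\to\mathrm{Hom}_{\mathrm{cont}}(G_{X',x'_0},\F_p), \qquad \chi\mapsto\chi\circ\Theta.
\]
This map is well defined, because a composition of continuous homomorphisms is again a continuous homomorphism. It is $\F_p$-linear, because addition and scalar multiplication are defined pointwise. It is bijective, with inverse $(\Theta^{-1})^*$. By Lemma~\ref{cohom_homconti}, with $\F_p$ discrete and the action trivial, this map is an isomorphism $H^1(G_X,\F_p)\cong H^1(G_{X'},\F_p)$ of $\F_p$-vector spaces.

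\textbf{Main obstacle.} The hardest point is the implicit claim that the resulting isomorphism class does not depend on the choices made: the representatives of the classes in $\mathcal{I}$ and the base point. Independence of representatives follows from the well-definedness discussion after the definition of $G_{X,x_0}$, which provides isomorphisms of projective systems commuting with the transition maps. Independence of the base point is exactly the content of Lemma~\ref{cohom_canonical}, where inner automorphisms act trivially on $\mathrm{Hom}(-,\F_p)$. Since the theorem only asserts an isomorphism invariant, these compatibility checks suffice. If one wants canonicity, one should also check that changing $x_0$ commutes with $\Theta$ up to an inner automorphism, and this again becomes invisible on $H^1$.
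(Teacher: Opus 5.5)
Your proposal is correct and is essentially the paper's own argument: the paper deduces the statement in one line from Theorem~\ref{gal_invariant}, because an isomorphism of topological groups $G_X\cong G_{X'}$ induces, by pullback, an $\F_p$-linear isomorphism $\mathrm{Hom}_{\mathrm{cont}}(G_X,\F_p)\cong\mathrm{Hom}_{\mathrm{cont}}(G_{X'},\F_p)$. Your ``main obstacle'' discussion about independence of choices is not actually needed, since the statement only asserts invariance up to isomorphism and any isomorphism of the absolute Galois groups already suffices.
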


We define a representation of the automorphism group on the Galois cohomology.

\begin{definition}\label{def_phi_p}
    Let $X$ be an irreducible SFT and $x_0 \in X$. Let $f \in \aut(X)$ and set $x_1=f(x_0)$. For each $i \in \mathcal{I}(X, x_0)$, fix a representative $(Y_i, y_{i,0}, \vp_i)$ and set $F_i=\vp_i^{-1}(x_1)$. Let $F(x_1)=\varprojlim F_i$. Let $\beta_{f,\xi}$ be a representative of $\rho_X(f)$, where $\xi\in F(x_1)$. Define a linear map $\Upsilon_p(f): H^1(G_{X,x_0}, \mathbb{F}_p) \to H^1(G_{X,x_0}, \mathbb{F}_p)$ by
    \[
     (\Upsilon_p(f))(\chi)=\chi\circ \beta_{f,\xi}^{-1}.
    \]
    Since $\Upsilon_p(f)$ has the inverse map given by $\chi \mapsto \chi \circ \beta_{f,\xi}$, it is an automorphism of $H^1(G_{X,x_0}, \mathbb{F}_p)$. This yields a map $\Upsilon_p:\aut(X)\to \mathrm{GL}(H^1(G_{X,x_0}, \mathbb{F}_p))$. 
    
    We verify the well-definedness of this construction in the following lemma.
\end{definition}

\begin{lemma}\label{phi_p_welldef}
    In Definition~\ref{def_phi_p}, the linear isomorphism $\Upsilon_p(f)$ is independent of the choice of the representative of $\rho(f)$.
\end{lemma}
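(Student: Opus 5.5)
The representatives of $\rho(f)$ are exactly the automorphisms $\beta_{f,\xi}$ with $\xi\in F(x_1)$, and by Lemma~\ref{out_welldef} any two of them differ by an inner automorphism of $G_{X,x_0}$. The plan is therefore to show that precomposition with an inner automorphism acts trivially on $H^1(G_{X,x_0},\F_p)=\mathrm{Hom}_{\mathrm{cont}}(G_{X,x_0},\F_p)$ (Lemma~\ref{cohom_homconti}), mirroring the computation in Lemma~\ref{cohom_canonical}.

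Concretely, fix $\xi,\eta\in F(x_1)$. The proof of Lemma~\ref{out_welldef} produces an $h\in G_{X,x_0}$ with
\[
\beta_{f,\eta}(k)=h\,\beta_{f,\xi}(k)\,h^{-1}\quad\text{for all }k\in G_{X,x_0}.
\]
Inverting this relation gives $\beta_{f,\eta}^{-1}(g)=\beta_{f,\xi}^{-1}(h^{-1}gh)$ for every $g\in G_{X,x_0}$: setting $k=\beta_{f,\eta}^{-1}(g)$ yields $g=h\beta_{f,\xi}(k)h^{-1}$, so $\beta_{f,\xi}(k)=h^{-1}gh$.

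Now let $\chi$ be a continuous homomorphism to $\F_p$. Since $\F_p$ is abelian and $\chi$ is a homomorphism,
\[
\chi(\beta_{f,\xi}^{-1}(h^{-1}gh))=\chi(\beta_{f,\xi}^{-1}(h))^{-1}\cdot\chi(\beta_{f,\xi}^{-1}(g))\cdot\chi(\beta_{f,\xi}^{-1}(h)),
\]
where the product is written additively in $\F_p$, so the outer terms cancel. Hence $\chi\circ\beta_{f,\eta}^{-1}=\chi\circ\beta_{f,\xi}^{-1}$. Since $\chi$ was arbitrary, $\Upsilon_p(f)$ does not depend on $\xi$.

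Two points need a brief check. First, $\chi\circ\beta_{f,\xi}^{-1}$ is again continuous, so $\Upsilon_p(f)$ lands in $H^1$. This requires $\beta_{f,\xi}=\gamma_\xi\circ f_\sharp$ to be a homeomorphism. Both factors merely reindex the coordinates of the projective limit via order isomorphisms and the induced isomorphisms of finite groups, so each carries basic open sets $\{g : g_i=h_i\}$ to basic open sets. Second, the choice of representatives $(Y_i,y_{i,0},\vp_i)$ only changes the groups by the canonical isomorphisms of Definition~\ref{induce_mor}, which is compatible with the above. The continuity check is the only mildly nontrivial step; the main argument is just the abelian-coefficient cancellation.
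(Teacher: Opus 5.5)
Your argument is correct and is essentially the paper's: any two representatives of $\rho(f)$ differ by an inner automorphism, and a continuous homomorphism into the abelian group $\F_p$ is unchanged by conjugation. Your inversion $\beta_{f,\eta}^{-1}(g)=\beta_{f,\xi}^{-1}(h^{-1}gh)$ is more accurate than the paper's displayed step $\chi(h^{-1}\beta_f^{-1}(g)h)$, where the conjugating element should really be $\beta_f^{-1}(h)$; the cancellation works either way, and your continuity remark covers a point the paper leaves implicit.
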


\begin{proof}
    Let $\beta_f, \beta'_f \in \rho(f)$. Then, there exists $h\in G_{X,x_0}$ such that $\beta'_f(g)=h\cdot \beta_f(g)\cdot h^{-1}$ for any $g \in G_{X,x_0}$. For any $g\in G_{X,x_0}$ and $\chi\in H^1(G_{X,x_0},\F_p)$, we have
    \begin{align*}
    (\chi \circ (\beta'_f)^{-1})(g) 
    &= \chi(h^{-1} \cdot \beta_f^{-1}(g) \cdot h) \\
    &= \chi(h^{-1}) + \chi(\beta_f^{-1}(g)) + \chi(h) \\
    &= -\chi(h) + \chi(\beta_f^{-1}(g)) + \chi(h) \\
    &= (\chi \circ \beta_f^{-1})(g),
    \end{align*}
    which implies $\chi \circ (\beta'_f)^{-1}=\chi \circ \beta_f^{-1}$.
\end{proof}

The fact that $\rho$ is a group homomorphism immediately implies the following.

\begin{theorem}\label{gal_rep_homo}
    The map $\Upsilon_p:\aut(X)\to \mathrm{GL}(H^1(G_{X,x_0}, \mathbb{F}_p))$ is a group homomorphism.
\end{theorem}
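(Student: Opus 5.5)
The statement follows from the fact that $\rho$ is a homomorphism together with Lemma~\ref{phi_p_welldef}. The plan is to show directly that $\Upsilon_p(f_1\circ f_2)=\Upsilon_p(f_1)\circ\Upsilon_p(f_2)$ for all $f_1,f_2\in\aut(X)$. The main work is checking that the right action $\chi\mapsto\chi\circ\beta^{-1}$ turns composition of outer automorphism classes into composition of linear maps in the correct order. Since $\rho$ is a homomorphism, I may pick representatives whose composition is exactly a representative of $\rho(f_1\circ f_2)$.

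Concretely, I will fix $\xi_1\in F(f_1(x_0))$ and $\xi_2\in F(f_2(x_0))$. In the proof that $\rho$ is a homomorphism, an element $\xi_3\in F(f_1(f_2(x_0)))$ was constructed with
\[
\beta_{f_1,\xi_1}\circ\beta_{f_2,\xi_2}=\beta_{f_1\circ f_2,\xi_3}.
\]
By Lemma~\ref{phi_p_welldef}, $\Upsilon_p(f_1\circ f_2)$ may be computed using the representative $\beta_{f_1\circ f_2,\xi_3}$. Then for every $\chi\in H^1(G_{X,x_0},\F_p)$,
\[
\Upsilon_p(f_1\circ f_2)(\chi)=\chi\circ(\beta_{f_1,\xi_1}\circ\beta_{f_2,\xi_2})^{-1}=(\chi\circ\beta_{f_2,\xi_2}^{-1})\circ\beta_{f_1,\xi_1}^{-1}.
\]
The right-hand side equals $\Upsilon_p(f_1)\bigl(\Upsilon_p(f_2)(\chi)\bigr)$. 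The inverse in the definition of $\Upsilon_p$ is exactly what makes the order of composition come out right here. I should also confirm that $\chi\circ\beta^{-1}$ is again continuous. This holds because each $\beta_{f,\xi}$ is a topological automorphism: it is the composite of the reindexing maps $f_\sharp$ and $\gamma_\xi$, and these are homeomorphisms of the product topology.

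Finally, I will record that $\Upsilon_p(\id_X)=\id$. To see this, take $\xi$ to be the tuple of base points, so that $\Psi_\xi$ and $(\id_X)_*$ are both the identity and hence $\beta_{\id_X,\xi}=\id$. The hardest point is making sure the representatives can really be chosen compatibly, since $\xi_3$ depends on $\xi_1$ and $\xi_2$. Lemma~\ref{phi_p_welldef} removes this issue, because any representative of $\rho(f_1\circ f_2)$ gives the same linear map. Lemma~\ref{cohom_canonical} then shows the result does not depend on the base point.
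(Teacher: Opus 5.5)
Your proposal is correct and is essentially the paper's argument. The paper just observes that $\Upsilon_p=\tilde{\Upsilon}_p\circ\rho_X$, with $\tilde{\Upsilon}_p$ well defined by Lemma~\ref{phi_p_welldef}, and you make this explicit by using the compatible representatives $\beta_{f_1,\xi_1}\circ\beta_{f_2,\xi_2}=\beta_{f_1\circ f_2,\xi_3}$ from the proof that $\rho_X$ is a homomorphism, then checking that the inverse in $\chi\mapsto\chi\circ\beta^{-1}$ makes the composition come out in the right order. Your extra checks are harmless: $\Upsilon_p(\id_X)=\id$ is automatic for a multiplicative map into a group, and the closing base-point remark is not needed for the statement as given.
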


\begin{remark}
    The representation of the automorphism group on the absolute Galois group constructed in the previous section ignored the ambiguity of inner automorphisms. Therefore, $\Upsilon_p$ factors as follows. If we define $\tilde{\Upsilon}_p: \mathrm{Out}(G_{X,x_0}) \to \mathrm{GL}(H^1(G_{X,x_0}, \mathbb{F}_p))$ by
    \[
    [\beta] \mapsto \left( \chi \mapsto \chi \circ \beta^{-1} \right),
    \]
    then we have
    \[
    \Upsilon_p=\tilde{\Upsilon}_p \circ \rho_X.
    \]
    Lemma~\ref{phi_p_welldef} also shows the well-definedness of $\tilde{\Upsilon}_p$.
\end{remark}

\subsection{Frobenius Elements and Periodic Orbit}
    It is well known that analogies exist between dynamical systems and number theory. In the context of SFTs, pioneering work has been done by Parry, Pollicott, and others. In this subsection, following the spirit of their work, we define the Frobenius element. \\
    \indent Let $m \ge 1$ be an integer. For an irreducible SFT $X$, we denote the set of all periodic points with least period $m$ by $P_m(X)$, and denote the set of all periodic orbits of length $m$ by $\mathcal{P}_m(X)$. Depending on the context, we may implicitly assume that $\mathcal{P}_m(X) \ne \emptyset$.
    Let $x_0 \in X$ and $\chi\in H^1(G_X,\F_p)\setminus\{0\}=\mathrm{Hom}_{\mathrm{cont}}(G_X, \F_p)\setminus\{0\}$. Since $\mathrm{Ker}\ \chi$ is an open subgroup, by Lemma~\ref{open_normal_subgroup}, let $(Y, y_0, \vp)$ be a representative of the pointed Galois factor over $(X, x_0)$ corresponding to $\mathrm{Ker}\ \chi$. That is, we have
    \[
    \gal(Y/X,\vp)\cong G_X/\mathrm{Ker}\ \chi.
    \]
    Here, $Y$ depends on $\chi$, but to avoid cumbersome notation, we will not explicitly indicate this dependence in what follows unless necessary.
    Although $\mathrm{Im}\ \chi$ is a subgroup of $\F_p$, the group $\F_p$ has no non-trivial proper subgroups. Since $\chi \ne 0$, it follows that $\mathrm{Im}\ \chi = \F_p$. Thus, the first isomorphism theorem yields
    \[
    \gal(Y/X,\vp)\cong G_X/\mathrm{Ker}\ \chi\cong \F_p.
    \]
    Let $x \in P_m(X)$ and set $F_x = \vp^{-1}(x)$. Fix $y \in F_x$. Then we have
    \[
    \vp(\sigma_Y^m(y))=\sigma_X^m(\vp(y))=\sigma_X^m(x)=x,
    \]
    which implies $\sigma_Y^m(F_x) = F_x$. Since the Galois group acts on the fibers, the group $\F_p$ also acts on the fibers. We denote this action additively by $+$. Because this action is transitive and free (Proposition~\ref{transitiveaction}, Corollary~\ref{freeaction}), there exists a unique $c_{x,y,\chi} \in \F_p$ such that
    \[
    \sigma_Y^m(y)=y+c_{x,y,\chi}.
    \]

\begin{lemma}\label{frob_welldef}
    In the above construction, $c_{x,y,\chi}$ is independent of the choice of the representative $Y$ and the element $y \in F_x$ in the fiber. Furthermore, $c_{x,y,\chi}$ depends only on the periodic orbit $O_{\sigma_X}(x)$.
\end{lemma}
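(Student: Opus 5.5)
The plan is to make the $\F_p$-labelling of the Galois group explicit first, and then check each independence claim by a short commutation argument. The labelling comes from the proof of Lemma~\ref{open_normal_subgroup}. Writing $j=[(Y,y_0,\vp)]\in\mathcal{I}(X,x_0)$, the projection $\mathrm{pr}_j: G_{X,x_0}\to\gal(Y/X,\vp)$ is surjective with kernel $\mathrm{Ker}\,\chi$. Hence $\chi$ factors as $\chi=\bar\chi\circ\mathrm{pr}_j$ for a unique group isomorphism $\bar\chi:\gal(Y/X,\vp)\to\F_p$. The statement ``$y+a$'' then means $\tau(y)$, where $\tau=\bar\chi^{-1}(a)$. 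Once this is fixed, $c_{x,y,\chi}$ is defined by $\sigma_Y^m(y)=\bar\chi^{-1}(c_{x,y,\chi})(y)$, and uniqueness follows from freeness (Corollary~\ref{freeaction}).

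\emph{Independence of $y$.} Take $y'\in F_x$. By transitivity (Proposition~\ref{transitiveaction}) we have $y'=\tau(y)$ for some $\tau\in\gal(Y/X,\vp)$. Every element of $\aut(Y)$ commutes with $\sigma_Y$. Using this and the fact that $\gal(Y/X,\vp)\cong\F_p$ is abelian, we get
\[
\sigma_Y^m(y')=\tau(\sigma_Y^m(y))=\tau\circ\bar\chi^{-1}(c)(y)=\bar\chi^{-1}(c)(\tau(y))=\bar\chi^{-1}(c)(y'),
\]
so $c_{x,y',\chi}=c_{x,y,\chi}$.

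\emph{Dependence only on the orbit.} Replace $x$ by $x'=\sigma_X^k(x)$, which is again in $P_m(X)$. The point $y''=\sigma_Y^k(y)$ lies in $F_{x'}$. Since $\sigma_Y^k$ commutes both with $\sigma_Y^m$ and with $\bar\chi^{-1}(c)$, we get $\sigma_Y^m(y'')=\bar\chi^{-1}(c)(y'')$. Combined with the previous step, this gives $c_{x',\cdot,\chi}=c_{x,\cdot,\chi}$.

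\emph{Independence of the representative.} This is the step needing the most care. Let $(Y',y_0',\vp')$ be another representative of $j$, with pointed conjugacy $f:(Y,y_0)\to(Y',y_0')$ satisfying $\vp=\vp'\circ f$. Then $f$ maps $F_x$ bijectively onto $F'_x=\vp'^{-1}(x)$ and commutes with the shifts. The main obstacle is to check that the labellings match, namely $\bar\chi'(f\tau f^{-1})=\bar\chi(\tau)$. For this I would use the remark after Lemma~\ref{projective_system}: changing representatives changes the projective system by the isomorphism $\Phi_j(\tau)=f\tau f^{-1}$ of Definition~\ref{induce_mor}, and Lemma~\ref{well_def_transition_map} makes this compatible with the projections. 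Consequently $\mathrm{pr}'_j=\Phi_j\circ\mathrm{pr}_j$ as maps from the limit, and therefore $\bar\chi'=\bar\chi\circ\Phi_j^{-1}$. Granting this, the computation is
\[
\sigma_{Y'}^m(f(y))=f(\sigma_Y^m(y))=f\circ\bar\chi^{-1}(c)(y)=\bigl(f\,\bar\chi^{-1}(c)\,f^{-1}\bigr)(f(y))=\bar\chi'^{-1}(c)(f(y)),
\]
which gives the same constant $c$.

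If needed, I would also remark that changing the base point $x_0$ does not affect $c$. This follows from Lemma~\ref{cohom_canonical}, since $\gamma_\xi$ merely re-points the same $Y$ without altering the Galois group or the fibers over $x$.
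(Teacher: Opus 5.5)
Your proposal is correct and follows the same three steps as the paper: transitivity plus commutation of Galois elements with $\sigma_Y$ for the choice of $y$, transport along the conjugacy for the choice of representative, and applying $\sigma_Y^k$ for the orbit. The difference is that you make the $\F_p$-labelling $\bar\chi$ explicit and check both $\bar\chi'=\bar\chi\circ\Phi_j^{-1}$ and where commutativity of $\F_p$ is used; the paper leaves these implicit when it writes $\varphi(y+c)=\varphi(y)+c$ and $(y+c)+a=y'+c$.
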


\begin{proof}
    First, we show the independence of the choice of the element in the fiber. Let $y' \in F_x$. Since the action of $\F_p$ on the fiber is transitive, there exists an $a \in \F_p$ such that $y' = y + a$. Then, by the commutativity of the shift and the Galois group action, we have
    \[
    \sigma_Y^m(y')=\sigma_Y^m(y)+a=(y+c_{x,y,\chi})+a=y'+c_{x,y,\chi}.
    \]
    Next, we show the independence of the choice of the representative. Suppose that $(Y', y'_0, \vp')$ and $(Y, y_0, \vp)$ are conjugate, and let $\varphi: Y \to Y'$ be the conjugacy map. Noting that $\varphi$ also commutes with the action of the Galois group, we obtain
    \[
    \sigma_{Y'}^m(\varphi(y))=\varphi(\sigma_Y^m(y))=
    \varphi(y+c_{x,y,\chi})=\varphi(y)+c_{x,y,\chi}. 
    \]
    Finally, we show that it depends only on the orbit. Let $x' \in O_{\sigma_X}(x)$. Then, there exists an $n \in \Z$ such that $x' = \sigma_X^n(x)$. Letting $y' = \sigma_Y^n(y)$, we have
    \[
    \sigma_Y^m(y')=\sigma_Y^m(\sigma_Y^n(y))=
    \sigma_Y^n(\sigma_Y^m(y))=\sigma_Y^n(y+c_{x,y,\chi})
    =y'+c_{x,y,\chi}.
    \]
\end{proof}

\begin{definition}
    Let $\Gamma \in \mathcal{P}_m(X)$ and $x \in \Gamma$. By Lemma~\ref{frob_welldef}, we may write $c_{x,y,\chi}$ as $c_{\Gamma,\chi}$. This is called the Frobenius element of $\chi$ at $\Gamma$. By setting $\mathrm{Frob}_\Gamma(\chi) = c_{\Gamma,\chi}$, we define a map $\mathrm{Frob}_\Gamma: H^1(G_X, \F_p) \to \F_p$. Here, we set $\mathrm{Frob}_\Gamma(0) = 0$.
\end{definition}

In what follows, we prove the linearity and equivariance of $\mathrm{Frob}_\Gamma$. To this end, we first define the absolute Frobenius element.

\begin{definition}\label{def_absolute_frobenius}
    Let $(X, x_0)$ be a pointed irreducible SFT and $m \ge 1$ be an integer. Fix $\Gamma \in \mathcal{P}_m(X)$ and $x \in \Gamma$. 
    For each $i \in \mathcal{I}(X, x_0)$, we fix a representative $(Y_i, y_{i,0}, \vp_i)$. Set $F_{x,i} = \vp_i^{-1}(x)$ and $F(x) = \varprojlim F_{x,i}$. For each $i \in \mathcal{I}(X, x_0)$, the Galois group $\gal(Y_i/X, \vp_i)$ acts transitively and freely on $F_{x,i}$ (Corollary~\ref{freeaction}, Proposition~\ref{transitiveaction}). Therefore, by the same construction as in Definition~\ref{action_lim_fiber_def}, the absolute Galois group $G_X$ acts naturally on $F(x)$, and by the same argument as in Lemma~\ref{action_lim_fiber_welldef_free_transitive}, it can be seen that this action is free and transitive.
    Fix $\tilde{y} = (y_i)_{i \in \mathcal{I}(X,x_0)} \in F(x)$. Then, for each $i \in \mathcal{I}(X,x_0)$, we obtain
    \[
    \vp_i(\sigma_{Y_i}^m(y_i))=\sigma_X^m(\vp_i(y_i))=\sigma_X^m(x)=x.
    \]
    Thus, we have $\sigma_{Y_i}^m(y_i) \in F_{x,i}$. Let $\sigma(\tilde{y}) := (\sigma_{Y_i}(y_i))_{i \in \mathcal{I}(X,x_0)}$ denote the sequence obtained by applying the shift map coordinate-wise. Since the shift map commutes with the transition maps $\alpha_{ij}$ between intermediate factors, it follows that $\sigma^m(\tilde{y}) \in F(x)$. Because the action of $G_X$ on $F(x)$ is free and transitive, there exists a unique group element $g_{x, \tilde{y}} \in G_X$ satisfying
    \begin{align*}
     g_{x, \tilde{y}} \cdot \tilde{y} = \sigma^m(\tilde{y}). 
    \end{align*}
    We call this $g_{x, \tilde{y}}$ the absolute Frobenius element with respect to the periodic point $x$ and the fiber element $\tilde{y}$.
\end{definition}

\begin{lemma}\label{absolute_frobenius_welldef}
    In the above setting, the following hold:
    \begin{enumerate}
    \item For any $x' \in \Gamma$ and $\tilde{z} \in F(x')$, the elements $g_{x', \tilde{z}}$ and $g_{x, \tilde{y}}$ are conjugate in $G_X$.
    \item Suppose that $g' \in G_X$ is conjugate to $g_{x, \tilde{y}}$. Then, there exists a $\tilde{z} \in F(x)$ such that $g' = g_{x,\tilde{z}}$.
    \end{enumerate}
\end{lemma}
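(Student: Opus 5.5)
The plan is to rely on two facts. First, $G_X$ acts freely and transitively on each $F(x)$ (Definition~\ref{def_absolute_frobenius}). Second, this action commutes with the coordinatewise shift: each $\varphi_i\in\gal(Y_i/X,\vp_i)$ is an automorphism of $Y_i$, so $\varphi_i\circ\sigma_{Y_i}=\sigma_{Y_i}\circ\varphi_i$. Hence, for $g\in G_X$ and $\tilde y\in F(x)$, we get $\sigma^n(g\cdot\tilde y)=g\cdot\sigma^n(\tilde y)$ as elements of $\varprojlim\vp_i^{-1}(\sigma_X^n(x))$, for every $n\in\Z$. With this, both parts reduce to short computations.

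\textbf{Part (2).} Write $g'=hg_{x,\tilde y}h^{-1}$ with $h\in G_X$, and set $\tilde z=h\cdot\tilde y\in F(x)$. Then
\[
g'\cdot\tilde z=h g_{x,\tilde y}\cdot\tilde y=h\cdot\sigma^m(\tilde y)=\sigma^m(h\cdot\tilde y)=\sigma^m(\tilde z).
\]
By uniqueness in the definition (freeness of the action), $g'=g_{x,\tilde z}$. The same computation run backwards shows $g_{x,h\cdot\tilde y}=hg_{x,\tilde y}h^{-1}$. Together with transitivity on $F(x)$, this already gives part (1) in the case $x'=x$: all $g_{x,\tilde z}$ for $\tilde z\in F(x)$ are conjugate.

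\textbf{Part (1), general case.} Since $x'\in\Gamma$, we have $x'=\sigma_X^n(x)$ for some $n$. The coordinatewise shift $\sigma^n$ maps $F(x)$ into $F(x')$, because $\vp_i\circ\sigma_{Y_i}^n=\sigma_X^n\circ\vp_i$ and $\sigma_{Y_i}$ commutes with the transition maps $\alpha_{ij}$, which are sliding block codes. Put $\tilde w=\sigma^n(\tilde y)\in F(x')$. Then
\[
g_{x,\tilde y}\cdot\tilde w=\sigma^n(g_{x,\tilde y}\cdot\tilde y)=\sigma^{n+m}(\tilde y)=\sigma^m(\tilde w),
\]
so $g_{x',\tilde w}=g_{x,\tilde y}$. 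Applying the $x'=x$ case at the point $x'$ (transitivity on $F(x')$), any $\tilde z\in F(x')$ has the form $h\cdot\tilde w$, and therefore $g_{x',\tilde z}=hg_{x,\tilde y}h^{-1}$.

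The main point needing care is the commutation $\sigma^n(g\cdot\tilde y)=g\cdot\sigma^n(\tilde y)$, along with the claim that $\sigma^n(\tilde y)$ is again a compatible family in the projective limit. Both follow coordinatewise: elements of $\gal(Y_i/X,\vp_i)\subseteq\aut(Y_i)$ commute with $\sigma_{Y_i}$, and $\alpha_{ij}\circ\sigma_{Y_j}=\sigma_{Y_i}\circ\alpha_{ij}$. No choice of representatives enters, because the $Y_i$ and base points $y_{i,0}$ are fixed throughout and only fibers over different points of $X$ are used.
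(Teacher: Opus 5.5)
Your proof is correct and takes essentially the same route as the paper. Part (2) is identical. For part (1), the paper does in one computation what you split into two steps: it transports $\tilde y$ to $\sigma^k(\tilde y)\in F(x')$, picks $h$ with $h\cdot\sigma^k(\tilde y)=\tilde z$, and uses freeness together with the commutation of the action with the coordinatewise shift to conclude $g_{x',\tilde z}h=hg_{x,\tilde y}$.
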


\begin{proof}
    Fix $x' \in \Gamma$ and $\tilde{z} \in F(x')$. Then, there exists an integer $k$ such that $x'=\sigma_X^k(x)$. A simple calculation shows that $\sigma^k(\tilde{y})\in F(x')$. By the same argument as in Lemma~\ref{action_lim_fiber_welldef_free_transitive}, the action of $G_X$ on $F(x')$ is transitive. Thus, there exists an $h\in G_X$ such that $h\cdot\sigma^k(\tilde{y})=\tilde{z}$. Then we have
    \begin{align*}
        (g_{x',\tilde{z}}h)\cdot(\sigma^k(\tilde{y}))&=g_{x',\tilde{z}}\cdot (h\cdot\sigma^k(\tilde{y}))\\
        &=\sigma^m(h\cdot\sigma^k(\tilde{y}))\\
        &=h\cdot(\sigma^m(\sigma^k(\tilde{y})))\\
        &=h\cdot(\sigma^k(\sigma^m(\tilde{y})))\\
        &=h\cdot(\sigma^k(g_{x,\tilde{y}}\cdot\tilde{y}))\\
        &=(hg_{x,\tilde{y}})\cdot(\sigma^k(\tilde{y})).
    \end{align*}
    Since the action of $G_X$ on $F(x')$ is free by the same argument as in Lemma \ref{action_lim_fiber_welldef_free_transitive}, we have $g_{x',\tilde{z}}h=hg_{x,\tilde{y}}$. Therefore, $g_{x',\tilde{z}}$ and $g_{x,\tilde{y}}$ are conjugate.\\
    \indent Next, we show the second claim.
    Suppose that $g'$ is conjugate to $g_{x,\tilde{y}}$. Then, there exists an $h \in G_X$ such that $g' = h g_{x, \tilde{y}} h^{-1}$. Let $\tilde{z} = h \cdot \tilde{y}$. By the well-definedness of the action, we have $\tilde{z} \in F(x)$. Then we have
    \begin{align*}
        g'\cdot(\tilde{z})&=g'\cdot(h\cdot \tilde{y})\\
        &=(hg_{x,\tilde{y}}h^{-1}h)\cdot \tilde{y}\\
        &=(hg_{x,\tilde{y}})\cdot\tilde{y}\\
        &=h\cdot(g_{x,\tilde{y}}\cdot\tilde{y})\\
        &=h\cdot(\sigma^m(\tilde{y}))\\
        &=\sigma^m(h\cdot \tilde{y})\\
        &=\sigma^m(\tilde{z})\\
        &=g_{x,\tilde{z}}\cdot(\tilde{z}).
    \end{align*}
    Since the action is free, we obtain $g' = g_{x,\tilde{z}}$. 
\end{proof}

\begin{definition}
    By Lemma~\ref{absolute_frobenius_welldef}, the conjugacy class to which $g_{x, \tilde{y}}$ belongs depends only on the orbit $\Gamma\in\mathcal{P}_m$. Therefore, we denote this conjugacy class by $\mathrm{Frob}_\Gamma^{\mathrm{abs}}\subseteq G_X$ and call it the absolute Frobenius conjugacy class of the orbit $\Gamma$.
\end{definition}

\begin{lemma}\label{teketeke}
    Let $\Gamma \in \mathcal{P}_m(X)$ and let $\chi \in H^1(G_X, \F_p)$. Then the following hold:
    \begin{enumerate}
        \item For any $g, g' \in \mathrm{Frob}_\Gamma^{\mathrm{abs}}$, we have $\chi(g) = \chi(g')$.
        \item For any $g \in \mathrm{Frob}_\Gamma^{\mathrm{abs}}$, we have $\mathrm{Frob}_\Gamma(\chi) = \chi(g)$.
    \end{enumerate}
\end{lemma}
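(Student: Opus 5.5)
Part (1) is formal. The elements of $\mathrm{Frob}_\Gamma^{\mathrm{abs}}$ form a single conjugacy class in $G_X$, so for $g,g'$ in it we can write $g'=hgh^{-1}$ with $h\in G_X$. Since $\chi$ is a homomorphism into the abelian group $\F_p$, we get $\chi(g')=\chi(h)+\chi(g)-\chi(h)=\chi(g)$. This is the same computation as in Lemma~\ref{cohom_canonical}.

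For part (2), the case $\chi=0$ is immediate, since both sides vanish by the convention $\mathrm{Frob}_\Gamma(0)=0$. So assume $\chi\neq0$.

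By part (1) it suffices to treat one element $g=g_{x,\tilde y}$, where $x\in\Gamma$ and $\tilde y=(y_i)\in F(x)$. Let $N=\mathrm{Ker}\,\chi$. By Lemma~\ref{open_normal_subgroup} there is a pointed Galois factor $(Y,y_0,\vp)$ with a natural surjection $q:G_X\to\gal(Y/X,\vp)$ whose kernel is $N$. Its class is some index $i_\chi\in\mathcal{I}(X,x_0)$, and we may take $(Y,y_0,\vp)=(Y_{i_\chi},y_{i_\chi,0},\vp_{i_\chi})$. Then $q=\mathrm{pr}_{i_\chi}$, because the construction in that lemma shows $q$ factors through a projection followed by a transition map $p_{i_\chi k}$, and the projective system is compatible. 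Moreover $\chi=\iota\circ\mathrm{pr}_{i_\chi}$ for the isomorphism $\iota:\gal(Y/X,\vp)\cong G_X/N\xrightarrow{\bar\chi}\F_p$. This $\iota$ is exactly the identification used to write the Galois action additively in defining $c_{x,y,\chi}$. I will fix that identification once and note that Lemma~\ref{frob_welldef} makes $c$ independent of the remaining choices.

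Now project the defining relation $g\cdot\tilde y=\sigma^m(\tilde y)$ to the coordinate $i_\chi$. This gives $\mathrm{pr}_{i_\chi}(g)(y_{i_\chi})=\sigma_Y^m(y_{i_\chi})$ with $y_{i_\chi}\in\vp^{-1}(x)$. In the additive notation this reads $\sigma_Y^m(y_{i_\chi})=y_{i_\chi}+\iota(\mathrm{pr}_{i_\chi}(g))=y_{i_\chi}+\chi(g)$. On the other hand, by definition $\sigma_Y^m(y_{i_\chi})=y_{i_\chi}+c_{x,y_{i_\chi},\chi}$. Freeness of the action (Corollary~\ref{freeaction}) then gives $c_{\Gamma,\chi}=\chi(g)$, again using Lemma~\ref{frob_welldef} for independence of the point and the fiber element.

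The main obstacle is bookkeeping rather than mathematics. We must make sure that the identification $\gal(Y/X,\vp)\cong\F_p$ implicitly used in defining $\mathrm{Frob}_\Gamma(\chi)$ is the one induced by $\chi$ through $\mathrm{pr}_{i_\chi}$, and not some other isomorphism, which could differ by an automorphism of $\F_p$. I would make this explicit by declaring that the additive action is $\varphi\cdot y$ with $\varphi$ labelled by $\bar\chi(\varphi)$. I would also check that replacing the representative $Y$ by a conjugate one transports $\mathrm{pr}_{i_\chi}$ compatibly, via Definition~\ref{induce_mor} and Lemma~\ref{well_def_transition_map}.
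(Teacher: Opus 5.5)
Your proposal is correct and follows the paper's proof. Part (1) uses that $\F_p$ is abelian. Part (2) takes the Galois factor attached to $\mathrm{Ker}\,\chi$ via Lemma~\ref{open_normal_subgroup}, projects $g_{x,\tilde y}\cdot\tilde y=\sigma^m(\tilde y)$ to that coordinate, and compares with the definition of $c_{x,y,\chi}$ using freeness. Your extra bookkeeping makes explicit three points the paper's proof uses silently: the case $\chi=0$, identifying the natural surjection with $\mathrm{pr}_{i_\chi}$, and requiring the additive structure on the fiber to be the one induced by $\bar\chi$.
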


\begin{proof}
    The first claim is clear. We show the second claim. Let $N=\mathrm{Ker}\ \chi$, let $(Y,y_0,\vp)$ be a representative of the pointed intermediate factor corresponding to $N$ in Lemma~\ref{open_normal_subgroup}, and set $G=\gal(Y/X, \vp)$. By the first isomorphism theorem, we have $G\cong G_X/N\cong \F_p$. Let $\bar{\chi}:G\to \F_p$ be the isomorphism giving this relation. Letting $\pi:G_X\to G$ be the natural projection, we have $\chi=\bar{\chi}\circ\pi$.
    Fix $x\in\Gamma$ and $\tilde{y} \in F(x)$, and let $g_{x, \tilde{y}} \in \mathrm{Frob}_\Gamma^\mathrm{abs}$. Set $i=[(Y,y_0,\vp)]$, and let $\pi_i:F(x)\to \vp^{-1}(x)$ be the projection onto the $i$-th coordinate. set $y=\pi_i(\tilde{y})$. Comparing the $i$-th coordinates of both sides of the defining equation of the absolute Frobenius element
    \[
    g_{x, \tilde{y}}\cdot \tilde{y}=\sigma^m(\tilde{y}),
    \]
    we obtain
    \[
    (\pi(g_{x, \tilde{y}}))(y)=\sigma^m_{Y}(y).
    \]
    Thus, we have
    \[
        y+\chi(g)=y+\bar{\chi}(\pi(g_{x, \tilde{y}}))
        =(\pi(g_{x, \tilde{y}}))(y)
        =\sigma^m_{Y}(y).
    \]
    Therefore, we obtain $\mathrm{Frob}_\Gamma(\chi)=\chi(g_{x, \tilde{y}})$.
\end{proof}

    In what follows, for $g \in \mathrm{Frob}_\Gamma^{\mathrm{abs}}$, we write $\chi(\mathrm{Frob}_{\Gamma}^{\mathrm{abs}}) = \chi(g)$. By Lemma~\ref{teketeke}, this is well-defined.

\begin{proposition}\label{frob_linear}
    The map $\mathrm{Frob}_\Gamma:H^1(G_X,\F_p)\to \F_p$ is an $\F_p$-linear map.
\end{proposition}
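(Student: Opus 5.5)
The plan is to reduce linearity of $\mathrm{Frob}_\Gamma$ to evaluation at a single group element, using Lemma~\ref{teketeke}. Fix once and for all $x\in\Gamma$, $\tilde{y}\in F(x)$, and the absolute Frobenius element $g=g_{x,\tilde{y}}\in\mathrm{Frob}_\Gamma^{\mathrm{abs}}$ from Definition~\ref{def_absolute_frobenius}. I claim that
\[
\mathrm{Frob}_\Gamma(\chi)=\chi(g)\quad\text{for every }\chi\in H^1(G_X,\F_p).
\]
Lemma~\ref{teketeke}(2) gives this for nonzero $\chi$. For $\chi=0$, both sides are $0$ by the convention $\mathrm{Frob}_\Gamma(0)=0$. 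Since $g$ is chosen independently of $\chi$, $\mathrm{Frob}_\Gamma$ then coincides with the evaluation map $\mathrm{ev}_g:\mathrm{Hom}_{\mathrm{cont}}(G_X,\F_p)\to\F_p$, $\chi\mapsto\chi(g)$.

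Next I check that $\mathrm{ev}_g$ is $\F_p$-linear. The vector space structure on $H^1(G_X,\F_p)=\mathrm{Hom}_{\mathrm{cont}}(G_X,\F_p)$ is pointwise, so
\[
(\chi_1+\chi_2)(g)=\chi_1(g)+\chi_2(g),\qquad (a\chi)(g)=a\,\chi(g)\quad(a\in\F_p).
\]
Hence $\mathrm{Frob}_\Gamma(\chi_1+\chi_2)=\mathrm{Frob}_\Gamma(\chi_1)+\mathrm{Frob}_\Gamma(\chi_2)$ and $\mathrm{Frob}_\Gamma(a\chi)=a\,\mathrm{Frob}_\Gamma(\chi)$.

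The main obstacle is justifying that Lemma~\ref{teketeke}(2) really identifies $c_{\Gamma,\chi}$ with $\chi(g)$ compatibly across different $\chi$. The issue is that the additive $\F_p$-action on the fiber of $Y_\chi$ is defined through an isomorphism $\gal(Y/X,\vp)\cong G_X/\mathrm{Ker}\,\chi\cong\F_p$. I need to make sure this isomorphism is the one induced by $\chi$ itself, namely $\bar{\chi}$ with $\chi=\bar{\chi}\circ\pi$, and not an arbitrary identification. Otherwise $\chi$ and $a\chi$, which share the kernel and hence the same $Y$, would give inconsistent constants.

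I would make this convention explicit, reading the construction preceding Lemma~\ref{frob_welldef} as "$t\in\gal(Y/X)$ acts as $+\bar{\chi}(t)$". With this convention the scaling case is direct: for $a\chi$ the induced isomorphism is $a\bar{\chi}$, so $c_{\Gamma,a\chi}=a\,c_{\Gamma,\chi}$.

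The additivity case is where Lemma~\ref{teketeke} earns its keep. The factors $Y_{\chi_1}$, $Y_{\chi_2}$ and $Y_{\chi_1+\chi_2}$ differ, but all three Frobenius constants are computed from the single element $g$ through the compatible projections $\pi_i$ of $\tilde{y}$. This is exactly what makes the sum decompose.
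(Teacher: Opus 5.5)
Your proposal is correct and follows essentially the same route as the paper: the paper also fixes $g\in\mathrm{Frob}_\Gamma^{\mathrm{abs}}$, uses Lemma~\ref{teketeke} to write $\mathrm{Frob}_\Gamma(\chi)=\chi(g)$, and deduces linearity from the pointwise vector space structure on $\mathrm{Hom}_{\mathrm{cont}}(G_X,\F_p)$. Your point that the identification $\gal(Y/X,\vp)\cong\F_p$ must be the one induced by $\chi$ (namely $\bar{\chi}$) is well taken; it is exactly the convention the paper uses implicitly in proving Lemma~\ref{teketeke}, so once that lemma is granted, your separate paragraphs on scaling and additivity are redundant.
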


\begin{proof}
    Fix $\chi_1,\chi_2\in H^1(G_X,\F_p)$, $a\in \F_p$ and $g\in \mathrm{Frob}_\Gamma^{\mathrm{abs}}$. Then
    \[
    \mathrm{Frob}_\Gamma(a\chi_1+\chi_2)=(a\chi_1+\chi_2)(g)=a\chi_1(g)+\chi_2(g)=a\mathrm{Frob}_\Gamma(\chi_1)+\mathrm{Frob}_\Gamma(\chi_2).
    \]
    This completes the proof.
\end{proof}

Since $f \in \aut(X)$ maps $m$-periodic orbits to $m$-periodic orbits, $f$ can be regarded as a permutation on $\mathcal{P}_m(X)$. \\
\indent The following theorem shows the equivariance.

\begin{theorem}\label{frob_equiv}
    Let $f\in\aut(X)$, $\Gamma\in\mathcal{P}_m(X)$, and $\chi\in H^1(G_X,\F_p)$. Let $\beta_{f,\xi}\in \aut(G_X)$ be a representative of $\rho_X(f)\in\out(G_X)$. Then we have
    \[
    \beta_{f,\xi}(\mathrm{Frob}_\Gamma^{\mathrm{abs}})=\mathrm{Frob}_{f(\Gamma)}^{\mathrm{abs}}.
    \]
\end{theorem}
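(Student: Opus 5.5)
Since $\beta_{f,\xi}$ is a group automorphism of $G_X$, it carries conjugacy classes to conjugacy classes. So it suffices to exhibit a single element of $\mathrm{Frob}_\Gamma^{\mathrm{abs}}$ whose image is a Frobenius element for $f(\Gamma)$. Concretely, fix $x\in\Gamma$, $\tilde{y}=(y_k)_k\in F(x)$ and $g=g_{x,\tilde{y}}$. I will construct $\tilde{z}\in F(f(x))$ with $\beta_{f,\xi}(g)\cdot\tilde{z}=\sigma^m(\tilde{z})$. Freeness of the action on $F(f(x))$ (as in Lemma~\ref{action_lim_fiber_welldef_free_transitive}) then gives $\beta_{f,\xi}(g)=g_{f(x),\tilde{z}}\in\mathrm{Frob}_{f(\Gamma)}^{\mathrm{abs}}$, since $f(x)\in f(\Gamma)$ and $f(\Gamma)\in\mathcal{P}_m(X)$. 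Lemma~\ref{absolute_frobenius_welldef} then upgrades this to equality of the full conjugacy classes.

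First I unwind $\beta_{f,\xi}=\gamma_\xi\circ f_\sharp$ coordinatewise, as in the proof that $\rho$ is a homomorphism. For $i\in\mathcal{I}(X,x_0)$ put $k(i)=f_*^{-1}(\Psi_\xi(i))=[(Y_i,\xi_i,f^{-1}\circ\vp_i)]$. Choose the pointed conjugacy $\theta_i:(Y_i,\xi_i)\to(Y_{k(i)},y_{k(i),0})$ with $\vp_{k(i)}\circ\theta_i=f^{-1}\circ\vp_i$; by Lemma~\ref{medium_unique} it is unique. Then
\[
\beta_{f,\xi}(g)_i=\theta_i^{-1}\circ g_{k(i)}\circ\theta_i ,
\]
where the conjugation is exactly the identification of Galois groups in Definition~\ref{induce_mor}. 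Also $i\mapsto k(i)$ is an order isomorphism of $\mathcal{I}(X,x_0)$.

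Next I define $\tilde{z}=(z_i)_i$ by $z_i=\theta_i^{-1}(y_{k(i)})$. Then $\vp_i(z_i)=f(\vp_{k(i)}(y_{k(i)}))=f(x)$, so $z_i\in\vp_i^{-1}(f(x))$. Because $\theta_i$ is a conjugacy it commutes with $\sigma$. Therefore
\[
\beta_{f,\xi}(g)_i(z_i)=\theta_i^{-1}(g_{k(i)}(y_{k(i)}))=\theta_i^{-1}(\sigma^m(y_{k(i)}))=\sigma^m(z_i),
\]
which is the desired identity coordinatewise.

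The main obstacle is checking that $\tilde{z}$ really lies in $F(f(x))$. This means $\alpha_{ij}(z_j)=z_i$ for $i\le j$, i.e.\ the family $\{\theta_i\}$ intertwines the transition maps: $\theta_i\circ\alpha_{ij}=\alpha_{k(i)k(j)}\circ\theta_j$. I would prove this as in Lemma~\ref{well_def_transition_map}. Both sides are lifts of $f^{-1}\circ\vp_j$ along $\vp_{k(i)}$. At the point $\xi_j$, both sides give $y_{k(i),0}$: on the left because $\alpha_{ij}(\xi_j)=\xi_i$ (as $\xi\in F(x_1)$), and on the right because the maps are pointed. Theorem~\ref{liftunique} then yields equality. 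Applying this identity to $y_{k(j)}$, and using $\alpha_{k(i)k(j)}(y_{k(j)})=y_{k(i)}$ because $\tilde{y}\in F(x)$, gives the required compatibility.
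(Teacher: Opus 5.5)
Your proof is correct and follows the same route as the paper. In both, for each $i$ you set $k=f_*^{-1}(\Psi_\xi(i))$, take $z_i$ to be (the transport of) $y_k$, and check coordinatewise that $\beta_{f,\xi}(g)_i(z_i)=\sigma^m(z_i)$, which gives $\beta_{f,\xi}(g_{x,\tilde y})=g_{f(x),\tilde z}$. You are more careful than the paper, which silently identifies $Y_k$ with $Y_i$, never checks $\tilde z\in F(f(x))$, and does not spell out the passage to whole conjugacy classes; your conjugacies $\theta_i$, the lift-uniqueness proof of $\theta_i\circ\alpha_{ij}=\alpha_{k(i)k(j)}\circ\theta_j$, and your final appeal to Lemma~\ref{absolute_frobenius_welldef} fill exactly those gaps.
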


\begin{proof}
    Fix $x \in \Gamma$. set $x' = f(x) \in f(\Gamma)$. For each $i \in \mathcal{I}(X, x_0)$, fix a representative $(Y_i, y_{i,0}, \vp_i)$ and let $G_i = \gal(Y_i/X, \vp_i)$ be its Galois group. Set $F_{x,i} = \vp_i^{-1}(x)$ and $F(x) = \varprojlim F_{x,i}$ . Let $\tilde{y} = (y_i)_{i \in \mathcal{I}(X, x_0)} \in F(x)$. Let $g = g_{x,\tilde{y}} \in G_X$ be the absolute Frobenius element, and let $h = \beta_{f,\xi}(g) \in G_X$. Fix $i \in \mathcal{I}(X, x_0)$. From the definitions of $\gamma_\xi$ and $f_\sharp$ in Definition~\ref{hogehoge}, letting $k = f_*^{-1}(\Psi_\xi(i))$, we have $h_i = g_k$ and $k = [(Y_i, \xi_i, f^{-1} \circ \vp_i)]$. Thus, we have
    \[
    (f^{-1}\circ\vp_i)(y_k)=x.
    \]
    This implies that $y_k \in \vp_i^{-1}(x')$. Let $z_i = y_k \in \vp_i^{-1}(x')$ and $\tilde{z} = (z_i)_{i \in \mathcal{I}(X, x_0)}$. Then we have
    \[
    \sigma_{Y_i}^m(z_i)=\sigma_{Y_i}^m(y_k)=g_k(y_k)=h_i(z_i).
    \]
    Since $i$ was arbitrary, this equation means that $h = g_{x',\tilde{z}}$. That is,
    \[
    \beta_{f,\xi}(\mathrm{Frob}_\Gamma^{\mathrm{abs}}) = \mathrm{Frob}_{f(\Gamma)}^{\mathrm{abs}}
    \]
    holds. 
\end{proof}

\begin{corollary}\label{frob_equiv_cor}
    Let $f \in \aut(X)$, $\Gamma \in \mathcal{P}_m(X)$, and $\chi \in H^1(G_X, \F_p)$. Then we have
    \[
    \mathrm{Frob}_{f(\Gamma)}((\Upsilon_p(f))(\chi))=\mathrm{Frob}_\Gamma(\chi).
    \]
\end{corollary}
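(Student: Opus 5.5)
We will derive Corollary~\ref{frob_equiv_cor} directly from Theorem~\ref{frob_equiv} and Lemma~\ref{teketeke}. The idea is to rewrite both sides as values of characters at absolute Frobenius elements, and then move $\beta_{f,\xi}$ from the character onto the group element.

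First, fix a representative $\beta_{f,\xi}$ of $\rho_X(f)$. By Definition~\ref{def_phi_p} and Lemma~\ref{phi_p_welldef}, we have $\Upsilon_p(f)(\chi)=\chi\circ\beta_{f,\xi}^{-1}$. Next, choose any $g\in\mathrm{Frob}_\Gamma^{\mathrm{abs}}$. By Theorem~\ref{frob_equiv}, the element $\beta_{f,\xi}(g)$ lies in $\mathrm{Frob}_{f(\Gamma)}^{\mathrm{abs}}$. Strictly, the theorem's proof shows $\beta_{f,\xi}(g_{x,\tilde y})=g_{f(x),\tilde z}$, which is an element of the class $\mathrm{Frob}_{f(\Gamma)}^{\mathrm{abs}}$, and this membership is all we need. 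Note that $f(\Gamma)\in\mathcal{P}_m(X)$, so Lemma~\ref{teketeke} applies to both orbits.

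We then compute using Lemma~\ref{teketeke}(2), applied to the character $\chi\circ\beta_{f,\xi}^{-1}$. This is a continuous homomorphism, because $\beta_{f,\xi}$ is a topological group automorphism: it is a composition of the reindexing isomorphisms $f_\sharp$ and $\gamma_\xi$. The computation reads
\[
\mathrm{Frob}_{f(\Gamma)}(\Upsilon_p(f)(\chi))=(\chi\circ\beta_{f,\xi}^{-1})(\beta_{f,\xi}(g))=\chi(g)=\mathrm{Frob}_\Gamma(\chi).
\]
In the first equality we are free to use the particular element $\beta_{f,\xi}(g)$ of $\mathrm{Frob}_{f(\Gamma)}^{\mathrm{abs}}$ because of Lemma~\ref{teketeke}(1). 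The last equality is Lemma~\ref{teketeke}(2) again, now for $\Gamma$.

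The only point needing care is the case $\chi=0$, and the corresponding case $\Upsilon_p(f)(\chi)=0$, which happens exactly when $\chi=0$. There $\mathrm{Frob}$ was defined to be $0$ by convention, so we must check that Lemma~\ref{teketeke}(2) remains consistent. It does, since both sides are then $0=\chi(g)$. We expect no real obstacle here; the substance is entirely contained in Theorem~\ref{frob_equiv}.
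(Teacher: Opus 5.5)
Your proposal is correct and follows the paper's own argument. Both write $\Upsilon_p(f)(\chi)=\chi\circ\beta_{f,\xi}^{-1}$, use Theorem~\ref{frob_equiv} to carry an element of $\mathrm{Frob}_\Gamma^{\mathrm{abs}}$ into $\mathrm{Frob}_{f(\Gamma)}^{\mathrm{abs}}$, and evaluate both sides via Lemma~\ref{teketeke}; your remarks on the continuity of $\chi\circ\beta_{f,\xi}^{-1}$ and the convention $\mathrm{Frob}_\Gamma(0)=0$ are valid details that the paper leaves implicit.
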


\begin{proof}
    By Theorem~\ref{frob_equiv} and Lemma~\ref{teketeke}, we have
    \begin{align*}
        \mathrm{Frob}_{f(\Gamma)}((\Upsilon_p(f))(\chi))&=((\Upsilon_p(f))(\chi))(\mathrm{Frob}_{f(\Gamma)}^{\mathrm{abs}})\\
        &=(\chi\circ\beta_{f,\xi}^{-1})(\beta_{f,\xi}(\mathrm{Frob}_\Gamma^{\mathrm{abs}}))\\
        &=\chi(\mathrm{Frob}_\Gamma^{\mathrm{abs}})\\
        &=\mathrm{Frob}_\Gamma(\chi).
    \end{align*}
\end{proof}

\subsection{Local-Global Evaluation map}
In general, the representation $\Upsilon_p$ is too complex for direct computation. A simple way to extract information is to take restrictions or quotients. Here, we focus only on extracting information about the Frobenius elements. For this purpose, we first introduce a local-global evaluation map.

\begin{definition}
    Let $m \ge 1$ be an integer. Let $V_m(X, p)$ be a finite-dimensional $\F_p$-vector space defined by
    \[
    V_m(X,p)=\bigoplus_{\Gamma \in \mathcal{P}_m(X)} \mathbb{F}_p.
    \]
    Then, we have $\dim(V_m(X,p))=|\mathcal{P}_m(X)|$.
    We define a map 
    \[\mathrm{Ev}_m: H^1(G_X, \mathbb{F}_p) \to V_m(X,p)
    \]
    by
    \[
    \mathrm{Ev}_m(\chi) = (\mathrm{Frob}_\Gamma(\chi))_{\Gamma \in \mathcal{P}_m(X)}.
    \]
    We call $\mathrm{Ev}_m$ the local-global evaluation map of degree $m$. By Proposition~\ref{frob_linear}, $\mathrm{Ev}_m$ is a linear map. 
\end{definition}

In what follows, we prove the surjectivity of $\mathrm{Ev}_m$. Let $X$ be an SFT over the alphabet $\AA$, and let $N \in \N$. Let $f: \BB_{2N+1}(X) \to \F_p$ be a map. We also denote the continuous map $X \to \F_p$ induced by $f$ by $f$. That is,
    \[
    f(x)=f(x_{[-N,N]})\in \F_p.
    \]
We define a map $\sigma_f:X \times \F_p\to X \times \F_p$ by
    \[
    \sigma_f(x, c) = (\sigma_X(x), c+f(x) ).
    \]

\begin{lemma}\label{skew_sft}
    Under the above setting, suppose furthermore that there exists a periodic point $z \in X$ such that
    \[
    \sum_{i=0}^{k-1}f(\sigma_X^i(z))\ne 0,
    \]
    where $k$ is the minimal period of $z$. Then, there exists an irreducible SFT conjugate to the dynamical system $(X \times \F_p, \sigma_f)$.
\end{lemma}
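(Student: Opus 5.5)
The plan is to realize $(X\times\F_p,\sigma_f)$ directly as a shift space over the alphabet $\AA\times\F_p$, check that it is of finite type, and then prove irreducibility. For irreducibility, the nonvanishing of the cycle sum along the periodic point $z$ will be combined with the fact that $\F_p$ has no nontrivial proper subgroups.

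\textbf{Step 1: reduction to a one-symbol cocycle.} Throughout, $X$ is taken to be irreducible. I first pass to the $(2N+1)$-block presentation $X^{[2N+1]}$, composed with $\sigma^{-N}$, so that $f(x)$ becomes a function of the single coordinate $x_0$. This is a conjugacy $h$ with $f$ replaced by $f\circ h^{-1}$. The cycle sums of $f$ along periodic orbits are preserved, because they are sums over a whole orbit. So I may assume $X=\mathsf{X}_G$ for an essential, strongly connected graph $G=(V,E)$ and $f:E\to\F_p$.

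\textbf{Step 2: the SFT model.} Define
\[
W=\{(x,c)\in (E\times\F_p)^\Z \mid x\in X,\ c_{i+1}=c_i+f(x_i)\ \text{for all } i\in\Z\},
\]
and $\Theta:X\times\F_p\to W$ by $\Theta(x,c)_i=(x_i,c_i)$. Here $c_0=c$, and $c_i$ is obtained by adding $f(x_0)+\cdots+f(x_{i-1})$ for $i>0$ and subtracting the analogous sums for $i<0$. The set $W$ is cut out by the finitely many forbidden words of $X$ together with two-letter constraints, so it is an SFT. The map $\Theta$ is a bijection whose inverse is $(x,c)\mapsto(x,c_0)$. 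Both $\Theta$ and its inverse are continuous, since each coordinate depends on finitely many coordinates. A direct computation gives $\Theta\circ\sigma_f=\sigma_W\circ\Theta$. Hence $\Theta$ is a conjugacy.

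\textbf{Step 3: irreducibility (the main point).} Equivalently, $W$ is the edge shift of the skew-product graph $\tilde G$ with vertices $V\times\F_p$ and an edge $(e,c)$ from $(i(e),c)$ to $(t(e),c+f(e))$. It suffices to show that $\tilde G$ is strongly connected. Given vertices $(v,a)$ and $(w,b)$, I will use strong connectivity of $G$ to choose two paths. The first goes from $v$ to $u:=i(z_0)$. The second goes from $u$ back to $w$. Here $z$ corresponds to a cycle $\gamma$ of length $k$ based at $u$ with $s:=\sum_{j<k}f(\gamma_j)\neq0$. Concatenating the first path, $n$ copies of $\gamma$, and the second path gives a path in $G$ from $v$ to $w$. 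Its lift starting at $(v,a)$ ends at $(w,\,a+t+ns)$, where $t$ is the sum of $f$ along the two connecting paths. Since $s\neq0$ in the field $\F_p$, I can choose $n\in\{0,\dots,p-1\}$ with $ns=b-a-t$. So $(w,b)$ is reachable from $(v,a)$.

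I expect the main care to be needed in Step 1. The recoding must actually turn $f$ into an edge function while keeping the hypothesis about $z$. After that, Step 3 is the only substantive argument, and it is exactly where the hypothesis $\sum f(\sigma^i z)\neq0$ and the primality of $p$ enter. Without them, $W$ would split into $p$ disjoint copies. Finally, $\tilde G$ is essential because $G$ is, so $W$ is an irreducible SFT conjugate to $(X\times\F_p,\sigma_f)$.
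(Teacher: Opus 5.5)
Your proof is correct and uses the same core argument as the paper: realize $(X\times\F_p,\sigma_f)$ as an SFT over $\AA\times\F_p$ via the recurrence $c_{i+1}-c_i=f(\sigma_X^i(x))$, then get irreducibility by inserting $n$ copies of the periodic block of $z$ and solving a linear equation in $\F_p$, which is possible because the cycle sum is nonzero and $p$ is prime. The difference is only in packaging: you first recode $f$ to a one-symbol function on edges (take the window large enough that the block presentation is an edge shift), so irreducibility becomes strong connectivity of the skew-product graph on vertices $(v,c)$, whereas the paper keeps the $(2N+1)$-window and proves topological transitivity directly on product cylinders, which requires the bookkeeping with $ck>2N$, $c\not\equiv 0 \pmod p$, the padding words $l,r$, and the constants $C_1,C_2$.
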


\begin{proof}
    By passing to a higher block presentation if necessary, we may assume that $X$ is a 1-step SFT. (For the concept of 1-step, see, for example, Lind and Marcus \cite{introduction}.) Let $\BB = \AA \times \F_p$. We define a subshift $Y$ of $\BB^{\Z}$ by
    \[
     Y=\{(x,c)\in \BB^{\Z}\mid c_{i+1}-c_i=f(\sigma^i_X(x))\text{ for any }i\in\Z\}.
    \]
    Since the elements of $Y$ are determined by the local rule $f: \BB_{2N+1} \to \F_p$ and the two-term recurrence relation $c_{i+1}-c_i=f(\sigma^i_X(x))$, the set of forbidden words $\FF$ can be constructed solely from blocks of length $2N+2$. Therefore, $Y$ is an SFT. If we define $h: X \times \F_p \to Y$ by
    \[
    h(x,c)=(x,c_0),
    \]
    then $h$ gives a conjugacy. \\
    \indent In what follows, we will prove the irreducibility of $Y$. However, by Lemma~\ref{irred_toptrans_equiv}, it suffices to prove the topological transitivity of the dynamical system $(X \times \mathbb{F}_p, \sigma_f)$.
    Let $\mathcal{U}$ and $\mathcal{V}$ be non-empty open sets in $X \times \mathbb{F}_p$. Since the cylinder sets form a basis for the topology of $X$, and the singleton sets form a basis for the topology of $\F_p$, we may assume that $\mathcal{U}$ and $\mathcal{V}$ are of the following form:
    Let $u, v \in \BB(X)$ with $u \ne \varepsilon$ and $v \ne \varepsilon$, set the cylinder sets $U = [u]_0$ and $V = [v]_0$, and let $a, b \in \F_p$ such that $\mathcal{U} = U \times \{a\}$ and $\mathcal{V} = V \times \{b\}$. By the assumption, we choose a periodic point $z \in X$ of minimal period $k$ such that
     \[
    S=\sum_{i=1}^{k-1}f(\sigma_X^i(z))\ne 0.
    \]
    Choose an integer $c \ge 1$ such that $ck > 2N$ and $c \not\equiv 0 \pmod{p}$. Then, we have $cS \not\equiv 0 \pmod{p}$. Let $w_z = z_{[0, ck-1]} \in \BB_{ck}(X)$ be the periodic block of $z$ of length $ck$. By the irreducibility of $X$, we can choose words $w_1, w_2$ such that $uw_1w_z, w_zw_2v \in \BB(X)$. Furthermore, we choose a word $l \in \BB_N(X)$ such that $luw_1w_z \in \BB(X)$, and a word $r \in \BB_N(X)$ such that $w_zw_2vr \in \BB(X)$. Let $q \in \{2, 3, \dots, p+1\}$. We define $w_q$ by
    \[
    w_q=luw_1w_z^qw_2vr.
    \]
    Since $X$ is a 1-step SFT, we have $w_q \in \BB(X)$.
    Set $m_q = |u| + |w_1| + q|w_z| + |w_2|$, and choose an $x^{(q)} \in X$ such that
    \[
    x^{(q)}_{[-N, m_q+|v|+N-1]}=w_q.
    \]
    By construction, we have $x^{(q)} \in U$ and $\sigma_X^{m_q}(x^{(q)}) \in V$. Let us set $C_1, C_2 \in \F_p$ as
    \begin{align*}
        C_1&=\sum_{j=0}^{|u|+|w_1|+ck-1}f(x^{(q)}_{[j-N,j+N]})\\
        C_2&=\sum_{j=|u|+|w_2|+(q-1)ck}^{m_q-1}f(x^{(q)}_{[j-N,j+N]}).
    \end{align*}
    By the definition of $w_q$, $C_1$ and $C_2$ are constants independent of $q$. Therefore, if we set $C = C_1 + C_2 - 2cS$, then $C$ is a constant independent of $q$.
    Since $p$ is a prime and $cS \not\equiv 0 \pmod{p}$, the congruence equation for $q$
    \[
    a+C+qcS\equiv b\pmod{p}
    \]
    has a solution in the range $2 \le q \le p+1$. In what follows, let $q$ be a solution to this congruence equation. Furthermore, since 
    \begin{align*}
        \sum_{j=0}^{m_q-1}f(\sigma_X^j(x^{(q)}))=&\sum_{j=0}^{m_q-1}f(x^{(q)}_{[j-N,j+N]})\\
        =&\sum_{j=0}^{|u|+|w_1|+ck-1}f(x^{(q)}_{[j-N,j+N]})\\&\qquad+
        \sum_{j=|u|+|w_1|+ck}^{|u|+|w_1|+(q-1)ck-1}f(x^{(q)}_{[j-N,j+N]})\\&\qquad+
        \sum_{j=|u|+|w_1|+(q-1)ck}^{m_q-1}f(x^{(q)}_{[j-N,j+N]})\\
        =&C_1+(q-2)cS+C_2\\
        =&C+qcS,
    \end{align*}
    we have
    \begin{align*}
        \sigma_f^{m_q}(x^{(q)}, a)&=\left(\sigma_X^{m_q}(x^{(q)}), \sum_{j=0}^{m_q-1}f(\sigma_X^j(x^{(q)}))\right)\\
        &=\left(\sigma_X^{m_q}(x^{(q)}), a+C+qcS\right)\\
        &=\left(\sigma_X^{m_q}(x^{(q)}), b\right)\in\mathcal{V},
    \end{align*}
    which implies that $\sigma_f^{m_q}(\mathcal{U})\cap\mathcal{V}\ne\emptyset$. Thus, topological transitivity follows.
\end{proof}

\begin{theorem}\label{ev_surj}
    Let $m \ge 1$ be an integer. Then the local-global evaluation map $\mathrm{Ev}_m: H^1(G_X, \mathbb{F}_p) \to V_m(X, p)$ is surjective.
    
\end{theorem}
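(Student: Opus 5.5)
Since $\mathrm{Ev}_m$ is linear (Proposition~\ref{frob_linear}) and $V_m(X,p)$ is spanned by the standard basis vectors $e_\Gamma$, $\Gamma\in\mathcal{P}_m(X)$, it suffices to construct, for each fixed $\Gamma_0\in\mathcal{P}_m(X)$, a character $\chi\in H^1(G_X,\F_p)$ with $\mathrm{Frob}_{\Gamma_0}(\chi)=1$ and $\mathrm{Frob}_\Gamma(\chi)=0$ for all $\Gamma\ne\Gamma_0$ in $\mathcal{P}_m(X)$. The character is obtained from a skew product as in Lemma~\ref{skew_sft}. Set $Y=X\times\F_p$ with $\sigma_f(x,c)=(\sigma_X(x),c+f(x))$ and $\vp(x,c)=x$. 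The translations $\tau_a(x,c)=(x,c+a)$ commute with $\sigma_f$ and with $\vp$, so they lie in $\aut(Y/X,\vp)$. Since $\vp$ is $p$-to-one, Proposition~\ref{cj_le_deg} then forces $\vp$ to be a Galois factor with group $\F_p$, provided $Y$ is an irreducible SFT. The Frobenius element at a periodic orbit $\Gamma$ of period $m$ is read off from $\sigma_f^m(x,c)=(x,c+\sum_{i=0}^{m-1}f(\sigma_X^i x))$, so $\mathrm{Frob}_\Gamma$ is the Birkhoff sum $S_m f(x)$, $x\in\Gamma$, which is independent of the choice of $x$ in the orbit.

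\textbf{Choice of $f$.} Pick $N$ large enough that distinct points of $P_m(X)$ have distinct central blocks $x_{[-N,N]}$; this is possible because $P_m(X)$ is finite. Fix $x^*\in\Gamma_0$ and define $f$ on $\BB_{2N+1}(X)$ by $f(w)=1$ if $w=x^*_{[-N,N]}$ and $f(w)=0$ otherwise. For $x\in\Gamma$ with $\Gamma\in\mathcal{P}_m(X)$, the points $\sigma^i x$ ($0\le i<m$) are exactly the points of $\Gamma$, each occurring once. Their blocks equal $x^*_{[-N,N]}$ only when $\sigma^ix=x^*$. Hence $S_mf=1$ on $\Gamma_0$ and $S_mf=0$ on every other $\Gamma$. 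In particular the hypothesis of Lemma~\ref{skew_sft} holds with $z=x^*$ and $k=m$, so $(Y,\sigma_f)$ is conjugate to an irreducible SFT. (The lemma's displayed sum starts at $i=1$ in its proof, but I will use the sum from $i=0$ as in the statement.)

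\textbf{From the factor to $\chi$.} Choose a base point $y_0\in\vp^{-1}(x_0)$. Then $(Y,y_0,\vp)$ defines an index $i\in\mathcal{I}(X,x_0)$ with $G_i\cong\F_p$ via $\tau_a\mapsto a$. Let $\chi=\bar\chi\circ\mathrm{pr}_i$, where $\bar\chi:G_i\to\F_p$ is this isomorphism; $\chi$ is continuous because $\mathrm{pr}_i$ is continuous and $\F_p$ is discrete. By Lemma~\ref{teketeke}, $\mathrm{Frob}_\Gamma(\chi)=\chi(g)$ for $g\in\mathrm{Frob}^{\mathrm{abs}}_\Gamma$. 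Comparing $i$-th coordinates of $g\cdot\tilde y=\sigma^m(\tilde y)$ gives $\mathrm{pr}_i(g)(y)=\sigma_f^m(y)=\tau_{S_mf(x)}(y)$, so $\chi(g)=S_mf(x)$.

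\textbf{Main obstacle.} The delicate point is this last identification. The argument of Lemma~\ref{teketeke} goes through $\mathrm{Ker}\,\chi$ and Lemma~\ref{open_normal_subgroup}, whose factor is only determined up to conjugacy and an isomorphism with $\F_p$. I will avoid this by working directly with the coordinate $i$, as above, instead of appealing to the uniqueness statement. I also need to check that the identification of $Y$ with an actual SFT, via the conjugacy $h$ in Lemma~\ref{skew_sft}, transports the translations $\tau_a$ and the map $\vp$ compatibly. This holds because $h$ is a conjugacy intertwining all of these maps.
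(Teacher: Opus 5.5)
Your proposal is correct and follows essentially the paper's own route: a skew product $X\times\F_p$ over a cocycle supported on the central block of a periodic point, made irreducible by Lemma~\ref{skew_sft}, whose Galois group $\{\tau_a\}\cong\F_p$ yields the required character. The differences are cosmetic. You reduce to basis vectors by linearity of $\mathrm{Ev}_m$, whereas the paper puts the value $v_i$ on the cylinder $U_i$ for an arbitrary $v\neq 0$. You also obtain $\mathrm{Frob}_\Gamma(\chi)$ via Lemma~\ref{teketeke} and the $i$-th coordinate, instead of reading it off directly from $\sigma_Y^{|\Gamma_i|}(x^{(i)},0)=(x^{(i)},v_i)$; this makes explicit an identification the paper leaves implicit.
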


\begin{proof} 
    Since $\mathcal{P}_m(X)$ is a finite set, we may write $\mathcal{P}_m(X) = \{\Gamma_1, \dots, \Gamma_r\}$. For each $1 \le i \le r$, fix a representative $x^{(i)} \in \Gamma_i$. Let $S$ be the set of all periodic points in these orbits, given by
    \[
    S=\bigcup_{i=1}^r\Gamma_i=\{ \sigma_X^j(x^{(i)})\mid 1 \le i \le r, \, 0 \le j < |\Gamma_i|\}.
    \]
    Since $S$ is a finite set, there exists an $N \ge 1$ such that for any $s, t \in S$ with $s \ne t$, we have $s_{[-N,N]} \ne t_{[-N,N]}$. For each orbit $\Gamma_i$, we denote the cylinder set containing the representative $x^{(i)}$ by
    \[
    U_i=[(x^{(i)})_{[-N,N]}]_{-N}=\{x\in X\mid x_{[-N,N]}=(x^{(i)})_{[-N,N]}\}.
    \]
    Then, the following hold:
    \begin{itemize}
        \item If $i \ne j$, then $U_i \cap U_j = \emptyset$.
    \item For any $1 \le i \le r$ and $1 \le j \le |\Gamma_i|-1$, we have $\sigma_X^j(x^{(i)}) \notin U_i$.
    \end{itemize}
    \indent Fix $v = (v_1, v_2, \dots, v_r) \in \bigoplus_{i=1}^r \mathbb{F}_p$. Our goal is to find a $\chi$ such that $\mathrm{Ev}_m(\chi) = v$. Since $\mathrm{Ev}_m(0) = 0$, we may assume that $v \ne 0$.
    We define a continuous map $f:X\to \F_p$ by
    \begin{align*}
    f(x)=\left\{ 
    \begin{aligned} 
    & v_i &(x \in U_i)\\
    & 0 &(\text{otherwise})
    \end{aligned}
    \right. .
\end{align*}
    Since the sets $\{U_i\}_{1\le i\le r}$ are pairwise disjoint, $f$ is well-defined. Let $Y=X\times \F_p$, and define $\sigma_Y:Y\to Y$ by
    \[
    \sigma_Y(x,c)=(\sigma_X(x),c+f(x)).
    \]
    Since $v\ne 0$, by applying Lemma~\ref{skew_sft}, the dynamical system $(Y,\sigma_Y)$ can be identified with an irreducible SFT. If we define a map $\vp: Y\to X$ by $\vp(x,c)=x$, then $\vp$ is an unramified factor map with $\deg(\vp)=p$. For $a\in\F_p$, we define an automorphism $\tau_a:Y\to Y$ by
    \[
    \tau_a(x,c)=(x,c+a).
    \]
    Then we have $\vp\circ\tau_a=\vp$, which implies $\tau_a\in\aut(Y/X,\vp)$. Therefore, we obtain
    \[
    \aut(Y/X,\vp)=\{\tau_a\mid a\in\F_p\}.
    \]
    Since $|\aut(Y/X,\vp)|=p=\deg(\vp)$, $\vp$ is a Galois factor. Furthermore, the map $F:\F_p\to \gal(Y/X,\vp)$ given by $a\mapsto\tau_a$ provides an isomorphism $\gal(Y/X,\vp)\cong \F_p$. 
    Since the natural projection $\pi:G_X\to \gal(Y/X,\vp)$ is a continuous surjective homomorphism, the map $\chi=F^{-1}\circ\pi:G_{X}\to \F_p$ is also a continuous surjective homomorphism. That is, $0\ne\chi\in H^1(G_X,\F_p)$. For each $1\le i\le r$, we have
    \begin{align*}
        \sigma_Y^{|\Gamma_i|}(x^{(i)},0)&=\left(\sigma_X^{|\Gamma_i|}(x^{(i)}),\sum_{j=0}^{|\Gamma_i|-1}f(\sigma_X^j(x^{(i)}))\right)\\
        &=(x^{(i)},v_i+0+0+\cdots+0)\\
        &=(x^{(i)}, v_i)\\
        &=\tau_{v_i}(x^{(i)},0)\\
        &=(x^{(i)},0)+v_i.
    \end{align*}
    This implies that $\mathrm{Frob}_{\Gamma_i}(\chi)=v_i$. From the above, we have shown that $\mathrm{Ev}_m(\chi)=v$, which implies the surjectivity.
\end{proof}

\subsection{Permutations of Periodic Orbits and Galois Representations}\label{permu_of_per_orb}
In this subsection, we factor $\Upsilon_p$ through $\mathrm{Ker}(\mathrm{Ev}_m)$. As a result, we can extract from $\Upsilon_p$ only the information regarding permutations of periodic orbits. One of the pioneering works on permutations of periodic orbits is due to Boyle and Krieger \cite{BoyleKrieger}. The sign homomorphism and the gyration number homomorphism play central roles in their work. In what follows, we show that the rich information contained in $\Upsilon_p$ includes the information of the sign homomorphism. We denote the sign homomorphism by $\mathrm{OS}_m \colon \aut(X) \to \{-1,1\}.$

\begin{lemma}\label{gal_rep_ker_inv}
    Let $f \in \aut(X)$, and let $m \ge 1$ be an integer. Then we have
    \[
    (\Upsilon_p(f))(\mathrm{Ker}(\mathrm{Ev}_m))=\mathrm{Ker}(\mathrm{Ev}_m).
    \]
\end{lemma}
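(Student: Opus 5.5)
The plan is to use the equivariance of Frobenius elements, Corollary~\ref{frob_equiv_cor}, together with the fact that $f$ permutes the finite set $\mathcal{P}_m(X)$. First I show the inclusion $(\Upsilon_p(f))(\mathrm{Ker}(\mathrm{Ev}_m))\subseteq\mathrm{Ker}(\mathrm{Ev}_m)$. Then I apply the same inclusion to $f^{-1}$ to obtain equality.

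Let $\chi\in\mathrm{Ker}(\mathrm{Ev}_m)$, so that $\mathrm{Frob}_\Gamma(\chi)=0$ for every $\Gamma\in\mathcal{P}_m(X)$. Fix any $\Gamma'\in\mathcal{P}_m(X)$. Since $f$ is an automorphism commuting with $\sigma_X$, it maps periodic points of least period $m$ bijectively to periodic points of least period $m$. Hence $f$ induces a bijection of $\mathcal{P}_m(X)$, and $\Gamma:=f^{-1}(\Gamma')\in\mathcal{P}_m(X)$ with $f(\Gamma)=\Gamma'$. Corollary~\ref{frob_equiv_cor} then gives
\[
\mathrm{Frob}_{\Gamma'}\bigl((\Upsilon_p(f))(\chi)\bigr)=\mathrm{Frob}_{f(\Gamma)}\bigl((\Upsilon_p(f))(\chi)\bigr)=\mathrm{Frob}_\Gamma(\chi)=0 .
\]
Since $\Gamma'$ was arbitrary, $\mathrm{Ev}_m((\Upsilon_p(f))(\chi))=0$, which proves the inclusion.

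For the reverse inclusion, I apply the first step to $f^{-1}\in\aut(X)$, which gives $(\Upsilon_p(f^{-1}))(\mathrm{Ker}(\mathrm{Ev}_m))\subseteq\mathrm{Ker}(\mathrm{Ev}_m)$. By Theorem~\ref{gal_rep_homo}, $\Upsilon_p(f^{-1})=\Upsilon_p(f)^{-1}$. So every $\chi\in\mathrm{Ker}(\mathrm{Ev}_m)$ can be written as $\chi=\Upsilon_p(f)\bigl(\Upsilon_p(f)^{-1}\chi\bigr)$ with $\Upsilon_p(f)^{-1}\chi\in\mathrm{Ker}(\mathrm{Ev}_m)$, and this yields equality.

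There is no real obstacle here. The only point requiring care is that Corollary~\ref{frob_equiv_cor} relates $\mathrm{Frob}_{f(\Gamma)}$ to $\mathrm{Frob}_\Gamma$ independently of the choice of representative $\beta_{f,\xi}$. This is guaranteed by Lemma~\ref{phi_p_welldef} and Lemma~\ref{teketeke}, since $\chi$ is constant on conjugacy classes. It also uses that $f$ preserves least periods, which is immediate because $f\circ\sigma_X=\sigma_X\circ f$ and $f$ is bijective.
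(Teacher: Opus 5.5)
Your proof is correct and follows essentially the same route as the paper: the inclusion $(\Upsilon_p(f))(\mathrm{Ker}(\mathrm{Ev}_m))\subseteq\mathrm{Ker}(\mathrm{Ev}_m)$ comes from Corollary~\ref{frob_equiv_cor} applied at $f^{-1}(\Gamma')$. The reverse inclusion comes from running the same argument for $f^{-1}$ and using $\Upsilon_p(f^{-1})=\Upsilon_p(f)^{-1}$ from Theorem~\ref{gal_rep_homo}.
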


\begin{proof}
    Fix $\chi \in \mathrm{Ker}(\mathrm{Ev}_m)$ and $\Gamma \in \mathcal{P}_m(X)$. By Corollary~\ref{frob_equiv_cor}, we have
    \[
    \mathrm{Frob}_\Gamma((\Upsilon_p(f))(\chi))=\mathrm{Frob}_{f^{-1}(\Gamma)}(\chi)=0,
    \]
    which implies that $(\Upsilon_p(f))(\chi) \in \mathrm{Ker}(\mathrm{Ev}_m)$.
    This means that $\mathrm{Ker}(\mathrm{Ev}_m) \subseteq (\Upsilon_p(f))^{-1}(\mathrm{Ker}(\mathrm{Ev}_m))$, and hence $(\Upsilon_p(f))(\mathrm{Ker}(\mathrm{Ev}_m)) \subseteq \mathrm{Ker}(\mathrm{Ev}_m)$ holds.
    For the reverse inclusion, since $\Upsilon_p$ is a group homomorphism (Theorem~\ref{gal_rep_homo}), we have $\Upsilon_p(f^{-1}) = (\Upsilon_p(f))^{-1}$. Thus, it suffices to apply the same argument to $f^{-1}$.
\end{proof}

\begin{definition}
    Let $f \in \aut(X)$, and let $m \ge 1$ be an integer. The map $\Upsilon_p(f): H^1(G_X, \F_p) \to H^1(G_X, \F_p)$ induces a linear isomorphism on $H^1(G_X, \F_p)/\mathrm{Ker}(\mathrm{Ev}_m)$.
    That is, we define
    \[
    \overline{\Upsilon_p}(f)_m:H^1(G_X,\F_p)/\mathrm{Ker}(\mathrm{Ev}_m)\to H^1(G_X,\F_p)/\mathrm{Ker}(\mathrm{Ev}_m)
    \]
    by
    \[
    (\overline{\Upsilon_p}(f)_m)([\chi]) := [(\Upsilon_p(f))(\chi)].
    \]
    This map is well-defined by Lemma~\ref{gal_rep_ker_inv}, and since $\Upsilon_p(f)$ is a linear isomorphism, $\overline{\Upsilon_p}(f)_m$ is also a linear isomorphism.
\end{definition}

\begin{lemma}\label{ker_welldef}
    Let $m \ge 1$ be an integer, $\Gamma \in \mathcal{P}_m(X)$, and $\chi, \chi' \in H^1(G_X, \F_p)$. If $[\chi] = [\chi'] \in H^1(G_X, \F_p)/\mathrm{Ker}(\mathrm{Ev}_m)$, then we have $\mathrm{Frob}_{\Gamma}(\chi)=\mathrm{Frob}_{\Gamma}(\chi')$.
\end{lemma}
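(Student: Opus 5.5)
The argument is a direct consequence of the linearity of $\mathrm{Frob}_\Gamma$ established in Proposition~\ref{frob_linear}. If $[\chi] = [\chi']$ in $H^1(G_X,\F_p)/\mathrm{Ker}(\mathrm{Ev}_m)$, then by definition of the quotient the difference $\chi - \chi'$ lies in $\mathrm{Ker}(\mathrm{Ev}_m)$. By the definition of the local-global evaluation map, $\mathrm{Ev}_m(\chi-\chi') = (\mathrm{Frob}_{\Gamma'}(\chi-\chi'))_{\Gamma'\in\mathcal{P}_m(X)}$, so every coordinate vanishes; in particular the coordinate indexed by the given orbit $\Gamma$ yields $\mathrm{Frob}_\Gamma(\chi-\chi')=0$.

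Next I would invoke Proposition~\ref{frob_linear} to split this as
\[
\mathrm{Frob}_\Gamma(\chi)-\mathrm{Frob}_\Gamma(\chi') = \mathrm{Frob}_\Gamma(\chi-\chi') = 0,
\]
which gives the claimed equality $\mathrm{Frob}_\Gamma(\chi)=\mathrm{Frob}_\Gamma(\chi')$ in $\F_p$.

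There is essentially no obstacle here; the only point requiring a moment's care is that $\mathrm{Frob}_\Gamma$ was defined with the convention $\mathrm{Frob}_\Gamma(0)=0$ and that linearity (Proposition~\ref{frob_linear}) covers the case where $\chi-\chi'=0$ or where the Galois factors attached to $\chi$, $\chi'$, and $\chi-\chi'$ differ. Since the proof of Proposition~\ref{frob_linear} goes through the absolute Frobenius class via Lemma~\ref{teketeke}, which holds uniformly for all $\chi$ including $0$ (as $0(g)=0$), this is already handled, and the lemma shows that $\mathrm{Frob}_\Gamma$ descends to a well-defined linear functional on the quotient $H^1(G_X,\F_p)/\mathrm{Ker}(\mathrm{Ev}_m)$.
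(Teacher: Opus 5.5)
Your proposal is correct and follows exactly the paper's argument: $[\chi]=[\chi']$ gives $\chi-\chi'\in\mathrm{Ker}(\mathrm{Ev}_m)$, hence $\mathrm{Frob}_\Gamma(\chi-\chi')=0$, and linearity of $\mathrm{Frob}_\Gamma$ (Proposition~\ref{frob_linear}) finishes it. Your extra remark about the convention $\mathrm{Frob}_\Gamma(0)=0$ being compatible with Lemma~\ref{teketeke} is a harmless and accurate side check.
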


\begin{proof}
    Suppose that $[\chi]=[\chi']\in H^1(G_X,\F_p)/\mathrm{Ker}(\mathrm{Ev}_m)$. Then, we have $\chi-\chi'\in \mathrm{Ker}(\mathrm{Ev}_m)$. Therefore, $\mathrm{Frob}_{\Gamma}(\chi-\chi')=0$. By the linearity of $\mathrm{Frob}_{\Gamma}$ (Proposition~\ref{frob_linear}), we obtain the desired conclusion.
\end{proof}

\begin{definition}
    Let $m \ge 1$ be an integer, and let $\Gamma \in \mathcal{P}_m(X)$. For $[\chi] \in H^1(G_X, \F_p)/\mathrm{Ker}(\mathrm{Ev}_m)$, we write
    \[
    \mathrm{Frob}_{\Gamma}([\chi])=\mathrm{Frob}_{\Gamma}(\chi).
    \]
    This is well-defined by Lemma~\ref{ker_welldef}.
\end{definition}

\begin{theorem}\label{det_os}
    Let $f \in \aut(X)$, and let $m \ge 1$ be an integer. Then, there exists a basis for $H^1(G_X, \F_p)/\mathrm{Ker}(\mathrm{Ev}_m)$ such that the matrix representation of $\overline{\Upsilon_p}(f)_m$ with respect to this basis is the permutation matrix of the permutation on $\mathcal{P}_m(X)$ induced by $f$.
\end{theorem}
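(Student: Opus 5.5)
The plan is to identify the quotient with $V_m(X,p)$ via $\mathrm{Ev}_m$, and then to use the equivariance of Frobenius elements to read off the matrix.

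First, by Theorem~\ref{ev_surj}, $\mathrm{Ev}_m$ is surjective. The first isomorphism theorem then gives a linear isomorphism
\[
\overline{\mathrm{Ev}}_m\colon H^1(G_X,\F_p)/\mathrm{Ker}(\mathrm{Ev}_m)\to V_m(X,p),\qquad [\chi]\mapsto(\mathrm{Frob}_\Gamma([\chi]))_{\Gamma\in\mathcal{P}_m(X)}.
\]
This map is well-defined by Lemma~\ref{ker_welldef}. Let $\{e_\Gamma\}_{\Gamma\in\mathcal{P}_m(X)}$ be the standard basis of $V_m(X,p)$. We then take the basis $\{[\chi_\Gamma]\}$ of the quotient with $[\chi_\Gamma]=\overline{\mathrm{Ev}}_m^{-1}(e_\Gamma)$. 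Concretely, $\chi_\Gamma$ is any character with $\mathrm{Frob}_{\Gamma'}(\chi_\Gamma)=\delta_{\Gamma,\Gamma'}$, for instance the one built in the proof of Theorem~\ref{ev_surj}. Note that this basis does not depend on $f$.

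Next, we compute $\overline{\Upsilon_p}(f)_m[\chi_\Gamma]=[\Upsilon_p(f)(\chi_\Gamma)]$ by evaluating its Frobenius elements. Fix $\Gamma'\in\mathcal{P}_m(X)$. Apply Corollary~\ref{frob_equiv_cor} to the orbit $f^{-1}(\Gamma')$, which is again in $\mathcal{P}_m(X)$ because $f$ preserves least periods. This gives
\[
\mathrm{Frob}_{\Gamma'}(\Upsilon_p(f)(\chi_\Gamma))=\mathrm{Frob}_{f^{-1}(\Gamma')}(\chi_\Gamma)=\delta_{\Gamma,f^{-1}(\Gamma')}=\delta_{f(\Gamma),\Gamma'}.
\]
Hence $\overline{\mathrm{Ev}}_m(\overline{\Upsilon_p}(f)_m[\chi_\Gamma])=e_{f(\Gamma)}$. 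Since $\overline{\mathrm{Ev}}_m$ is injective, it follows that $\overline{\Upsilon_p}(f)_m[\chi_\Gamma]=[\chi_{f(\Gamma)}]$.

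Therefore $\overline{\Upsilon_p}(f)_m$ permutes the chosen basis vectors exactly as $f$ permutes $\mathcal{P}_m(X)$, so its matrix is the corresponding permutation matrix. The only delicate point is the bookkeeping of $f$ versus $f^{-1}$ in Corollary~\ref{frob_equiv_cor}: we must check that the result is the permutation matrix of $f$, with column $\Gamma$ having its $1$ in row $f(\Gamma)$, and not that of $f^{-1}$. The definition $\Upsilon_p(f)(\chi)=\chi\circ\beta_{f,\xi}^{-1}$ is what makes this come out as $f$. No further obstacle is expected, since the surjectivity of $\mathrm{Ev}_m$, the hard input, is already established.
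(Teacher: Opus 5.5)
Your proposal is correct and follows the paper's proof essentially step for step. Like the paper, you identify the quotient with $V_m(X,p)$ via the isomorphism $\overline{\mathrm{Ev}_m}$ induced by the surjective $\mathrm{Ev}_m$ (Theorem~\ref{ev_surj}), take preimages of the standard basis, and then use Corollary~\ref{frob_equiv_cor} with the injectivity of $\overline{\mathrm{Ev}_m}$ to obtain $[\chi_\Gamma]\mapsto[\chi_{f(\Gamma)}]$. Your bookkeeping $\delta_{\Gamma,f^{-1}(\Gamma')}=\delta_{f(\Gamma),\Gamma'}$ matches the paper's computation with $\tau$.
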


\begin{proof}
    Let $\mathcal{P}_m(X) = \{\Gamma_1, \dots, \Gamma_r\}$. By Theorem~\ref{ev_surj}, the $m$-th local-global evaluation map $\mathrm{Ev}_m$ is surjective. Thus, by the first isomorphism theorem, we obtain an isomorphism
    \[
    \overline{\mathrm{Ev}_m}:H^1(G_X,\F_p)/\mathrm{Ker}(\mathrm{Ev}_m)\to V_m(X,p).
    \]
    Let $e_1, \dots, e_r$ be the standard basis for $V_m(X, p)$. For each $1 \le i \le r$, let $\chi_i \in H^1(G_X, \F_p)$ be a representative of $(\overline{\mathrm{Ev}_m})^{-1}(e_i)$. This determines a basis $[\chi_1], \dots, [\chi_r]$ for $H^1(G_X, \F_p)/\mathrm{Ker}(\mathrm{Ev}_m)$.
    Then, for any $1 \le i, j \le r$, we have $\mathrm{Frob}_{\Gamma_j}([\chi_i]) = \delta_{ij}$, where $\delta_{ij}$ is the Kronecker delta. Let $\tau \in \mathfrak{S}_r$ be the permutation on $\mathcal{P}_m(X)$ induced by $f$. Fix $1 \le j \le r$. By Corollary~\ref{frob_equiv_cor}, for each $1 \le i \le r$, we have
    \begin{align*}
        \mathrm{Frob}_{\Gamma_i}((\overline{\Upsilon_p}(f)_m)([\chi_j]))
        &=\mathrm{Frob}_{f^{-1}(\Gamma_i)}([\chi_j]) \\
        &=\mathrm{Frob}_{\Gamma_{\tau^{-1}(i)}}([\chi_j])\\
        &=\left\{\begin{aligned}
            &1   &(i=\tau(j)) \\
            &0   &(i\ne\tau(j))
        \end{aligned}
        \right.\\
        &=\mathrm{Frob}_{\Gamma_i}([\chi_{\tau(j)}]).
\end{align*}
    That is,
    \[
    \overline{\mathrm{Ev}_m}((\overline{\Upsilon_p}(f)_m)([\chi_j]))=\overline{\mathrm{Ev}_m}([\chi_{\tau(j)}]).
    \]
    Since $\overline{\mathrm{Ev}_m}$ is injective, it follows that
    \[
    (\overline{\Upsilon_p}(f)_m)([\chi_j])=[\chi_{\tau(j)}].
    \]
    This yields the conclusion.
\end{proof}

Since the determinant of a permutation matrix is the sign of the permutation, we obtain the following corollary.

\begin{corollary}\label{cor_det_os}
    Let $f \in \aut(X)$, and let $m \ge 1$ be an integer. Then we have
    \[
    \det(\overline{\Upsilon_p}(f)_m) \equiv \mathrm{OS}_m(f) \pmod p.
    \]
\end{corollary}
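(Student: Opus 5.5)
The plan is to read the corollary off Theorem~\ref{det_os}, which computes the matrix of $\overline{\Upsilon_p}(f)_m$ exactly. By that theorem there is a basis $[\chi_1],\dots,[\chi_r]$ of $H^1(G_X,\F_p)/\mathrm{Ker}(\mathrm{Ev}_m)$, where $r=|\mathcal{P}_m(X)|$. In this basis the matrix of $\overline{\Upsilon_p}(f)_m$ is the permutation matrix $P_\tau$ over $\F_p$. Here $\tau\in\mathfrak{S}_r$ is the permutation of $\mathcal{P}_m(X)$ induced by $f$. The determinant of a linear map does not depend on the choice of basis. Hence $\det(\overline{\Upsilon_p}(f)_m)=\det(P_\tau)$, computed in $\F_p$.

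The second step is the standard identity $\det(P_\tau)=\mathrm{sgn}(\tau)$. This holds over any commutative ring. It follows from the Leibniz expansion $\det(P_\tau)=\sum_{\pi\in\mathfrak{S}_r}\mathrm{sgn}(\pi)\prod_i (P_\tau)_{i\pi(i)}$. Exactly one term survives, namely $\pi=\tau^{-1}$ or $\pi=\tau$ depending on convention, and both have the same sign. The surviving term equals the image of $\mathrm{sgn}(\tau)\in\{\pm1\}\subset\Z$ in $\F_p$.

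Finally, $\mathrm{OS}_m(f)$ is by definition the sign of the permutation of $\mathcal{P}_m(X)$ induced by $f$, so $\mathrm{OS}_m(f)=\mathrm{sgn}(\tau)$. Reducing modulo $p$ gives $\det(\overline{\Upsilon_p}(f)_m)\equiv \mathrm{OS}_m(f)\pmod p$.

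There is no real obstacle here; the content lies in Theorem~\ref{det_os}. Two minor points need a remark. For $p=2$ the congruence holds but carries no information, since $-1\equiv 1$. If $\mathcal{P}_m(X)=\emptyset$, both sides equal $1$: the quotient space is zero, its determinant is $1$ by convention, and the empty permutation has sign $1$.
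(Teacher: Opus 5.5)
Your proposal is correct and follows the paper's own argument. The paper deduces the corollary in one line from Theorem~\ref{det_os}, using the fact that a permutation matrix has determinant equal to the sign of the permutation; your added details are all sound (basis-independence of the determinant, the Leibniz-expansion justification, and the remarks on $p=2$ and $\mathcal{P}_m(X)=\emptyset$).
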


\subsection{Application: Conjugacy Problem for Automorphism Groups}\label{subsection_application_conj}

Determining whether two automorphisms are conjugate is always a problem of interest in group theory. Note that although the term ``conjugacy'' is used in both dynamical systems and group theory, the conjugacy referred to here is in the group-theoretic sense. 
The following theorem shows that the Galois representation serves as a conjugacy invariant. The proof is straightforward and is therefore omitted.

\begin{theorem}
    The characteristic polynomial, determinant, and trace of $\overline{\Upsilon_p}(\cdot)_m:\aut(X)\to \mathrm{GL}(H^1(G_X,\F_p)/\mathrm{Ker}(\mathrm{Ev}_m))$ are conjugacy invariants of automorphisms. 
\end{theorem}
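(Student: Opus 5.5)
The plan is to deduce everything from the fact that $\overline{\Upsilon_p}(\cdot)_m$ is a group homomorphism $\aut(X)\to \mathrm{GL}(H^1(G_X,\F_p)/\mathrm{Ker}(\mathrm{Ev}_m))$. Once this is in hand, the three invariants follow from standard linear algebra.

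First I would check the homomorphism property. By Theorem~\ref{gal_rep_homo}, $\Upsilon_p(f\circ g)=\Upsilon_p(f)\circ\Upsilon_p(g)$. By Lemma~\ref{gal_rep_ker_inv}, each $\Upsilon_p(f)$ preserves $W=\mathrm{Ker}(\mathrm{Ev}_m)$. For any class $[\chi]$ we then compute
\[
\overline{\Upsilon_p}(f\circ g)_m([\chi])=[\Upsilon_p(f)(\Upsilon_p(g)(\chi))]=\overline{\Upsilon_p}(f)_m\bigl(\overline{\Upsilon_p}(g)_m([\chi])\bigr).
\]
This shows $\overline{\Upsilon_p}(f\circ g)_m=\overline{\Upsilon_p}(f)_m\circ\overline{\Upsilon_p}(g)_m$ and $\overline{\Upsilon_p}(f^{-1})_m=(\overline{\Upsilon_p}(f)_m)^{-1}$.

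Next, let $f'=h\circ f\circ h^{-1}$ with $h\in\aut(X)$, and set $P=\overline{\Upsilon_p}(h)_m$. The homomorphism property gives $\overline{\Upsilon_p}(f')_m=P\,\overline{\Upsilon_p}(f)_m\,P^{-1}$. Similar linear endomorphisms have the same characteristic polynomial, since
\[
\det(tI-PAP^{-1})=\det\bigl(P(tI-A)P^{-1}\bigr)=\det(tI-A).
\]
The determinant and trace are, up to sign, coefficients of the characteristic polynomial, so they are invariant as well. This step uses only that the quotient space is finite-dimensional. That holds because $\overline{\mathrm{Ev}_m}$ identifies it with $V_m(X,p)$ of dimension $|\mathcal{P}_m(X)|$ (Theorem~\ref{ev_surj}), so the characteristic polynomial and the determinant are defined.

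There is no real obstacle here. The only point needing care is that the homomorphism property descends to the quotient, and this rests entirely on the invariance of $W$ from Lemma~\ref{gal_rep_ker_inv}. As a consistency check, Theorem~\ref{det_os} shows that these invariants are those of the permutation matrix of $f$ acting on $\mathcal{P}_m(X)$. The characteristic polynomial therefore records the cycle type of that permutation; in particular the trace is the number of $f$-fixed orbits mod $p$. These quantities are visibly unchanged under group conjugation, since $h$ maps the cycles of $f$ bijectively onto the cycles of $hfh^{-1}$.
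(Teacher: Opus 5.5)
Your argument is correct and is the straightforward one the paper omits. Invariance of $\mathrm{Ker}(\mathrm{Ev}_m)$ makes $\overline{\Upsilon_p}(\cdot)_m$ a homomorphism, so conjugate automorphisms give similar operators on a finite-dimensional space, and similar operators share characteristic polynomial, determinant and trace. One minor inaccuracy in your closing remark, which the proof does not rely on: over $\F_p$ the characteristic polynomial $\prod (t^{\ell}-1)$ (product over the cycles, $\ell$ the cycle lengths) is determined by the cycle type but need not determine it. For example, with $p=2$, a transposition of two orbits and the identity on them both give $(t+1)^2$.
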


In the following, we give an example of determining that two specific automorphisms are not conjugate to each other by using the representation $\overline{\Upsilon_p}(\cdot)_m$. By Theorem~\ref{det_os}, $\overline{\Upsilon_p}(\cdot)_m$ inherently contains the information of the permutations of periodic orbits. Therefore, we remark that the non-conjugacy of the automorphisms in the following example can be determined simply by observing the permutations of the periodic orbits, without calculating $\overline{\Upsilon_p}(\cdot)_m$. However, while many existing algebraic invariants, such as representations to dimension groups, lose non-abelian information like the commutator subgroup, the representation $\Upsilon_p$ retains such non-abelian information abundantly. The following example can be viewed as an illustration of this very fact. By factoring $\Upsilon_p$ through a subgroup different from $\mathrm{Ker}(\mathrm{Ev}_m)$ or by appropriately restricting it, it seems possible to achieve more non-trivial conjugacy determinations and to understand the structure of automorphism groups. We leave these as future tasks.

\begin{example}
    Consider $\AA=\{1,2,3,4,5,6,7,8\}$ and $X=\AA^\Z=\Sigma_8$. Let $\tau, \tau' \in \mathfrak{S}_8$ be permutations on the alphabet given by
    \begin{align*}
        &\tau=(1,2)(3,4)\\
        &\tau'=(1,2)(3,4)(5,6)(7,8),
    \end{align*}
    and let $f = \tau_\infty$ and $g = \tau'_\infty$ be the automorphisms induced by them. We aim to prove that $f$ and $g$ are not conjugate, that is, there exists no $h$ such that $f = hgh^{-1}$.
    Since the alternating group $\mathfrak{A}_8$ is a perfect group, the commutator subgroup $[\mathfrak{A}_8, \mathfrak{A}_8]$ is equal to $\mathfrak{A}_8$. Because both $\tau$ and $\tau'$ are even permutations, they can be written as products of commutators. This means that such information vanishes under representations such as the sign homomorphism or the action on the dimension group. Indeed, for any $m \ge 1$, we have $\mathrm{OS}_m(f) = \mathrm{OS}_m(g) = 1$.
    Now, let us consider the trace of $\overline{\Upsilon_3}(\cdot)_1$ for $m=1$ and $p=3$. First, the set of fixed points of $X$ is given by $\mathcal{P}_1=\{\Gamma_i=\{i^\infty\}\mid 1\le i\le 8\}$.
    By Theorem~\ref{det_os}, the matrix representation of $\overline{\Upsilon_3}(f)_1$ is equal to the permutation matrix associated with $\tau$, and similarly for $g$. That is, we have
    \[
    \overline{\Upsilon_3}(f)_1=
    \begin{bmatrix}
        0 & 1 & 0 & 0 & 0 & 0 & 0 & 0 \\
1 & 0 & 0 & 0 & 0 & 0 & 0 & 0 \\
0 & 0 & 0 & 1 & 0 & 0 & 0 & 0 \\
0 & 0 & 1 & 0 & 0 & 0 & 0 & 0 \\
0 & 0 & 0 & 0 & 1 & 0 & 0 & 0 \\
0 & 0 & 0 & 0 & 0 & 1 & 0 & 0 \\
0 & 0 & 0 & 0 & 0 & 0 & 1 & 0 \\
0 & 0 & 0 & 0 & 0 & 0 & 0 & 1
    \end{bmatrix},
     \overline{\Upsilon_3}(g)_1=
    \begin{bmatrix}
    0 & 1 & 0 & 0 & 0 & 0 & 0 & 0 \\
1 & 0 & 0 & 0 & 0 & 0 & 0 & 0 \\
0 & 0 & 0 & 1 & 0 & 0 & 0 & 0 \\
0 & 0 & 1 & 0 & 0 & 0 & 0 & 0 \\
0 & 0 & 0 & 0 & 0 & 1 & 0 & 0 \\
0 & 0 & 0 & 0 & 1 & 0 & 0 & 0 \\
0 & 0 & 0 & 0 & 0 & 0 & 0 & 1 \\
0 & 0 & 0 & 0 & 0 & 0 & 1 & 0
    \end{bmatrix}.
    \]
    Thus, we obtain
    \[
    \mathrm{Tr}(\overline{\Upsilon_3}(f)_1)=4\equiv 1\not\equiv0=\mathrm{Tr}(\overline{\Upsilon_3}(g)_1)\pmod{3},
    \]
    which shows that $f$ and $g$ are not conjugate.
\end{example}

\begin{remark}
    The choice of $\mathfrak{A}_8$ in the above example is not essential; one can construct infinitely many analogous examples via a similar construction. Furthermore, it is likely that the marker homomorphisms using $\mathfrak{A}_8$ can also be distinguished by $\overline{\Upsilon_p}(\cdot)_m$.
\end{remark}

\section{Questions}
We conclude this paper by presenting some questions that naturally arise from our results. 

\begin{question}
    Fix a specific irreducible SFT $X$, such as a full shift or the golden mean shift. Then, for any finite group $G$, does there exist a Galois factor $\vp: Y \to X$ over $X$ whose Galois group is isomorphic to $G$? More strongly, does a Galois factor of the form $\vp: X \to X$ always exist? For this question, see Subsection~\ref{subsection_inverse galois}.
\end{question}

\begin{question}
Can the Galois theory of irreducible SFTs be generalized further?
In this paper, we have restricted our attention to irreducible SFTs, but what about broader classes of dynamical systems? Examples include sofic shifts, countable state Markov shifts, or dynamical systems satisfying certain specific conditions. Furthermore, the Galois factors considered in this paper are exclusively finite-to-one, a situation analogous to considering only finite extensions in field theory. Is it possible to consider an analogue of infinite extensions? In particular, can we identify a class of dynamical systems that admits an object corresponding to the algebraic closure in the Galois theory of fields or the universal cover in the theory of covering spaces? \\
\indent Note that, drawing an analogy from the contents of Subsection~\ref{section_three} and the theory of covering spaces, it seems intuitively reasonable to simply replace the $n$-to-$1$ condition with a local homeomorphism condition. At this stage, however, the correct formulation of this condition remains unclear.
\end{question}

\begin{question}
    Can we consider analogues of concepts already existing in the Galois theory of fields or the Galois theory of covering spaces within the Galois theory of SFTs? For example, given an unramified factor map $\alpha: Y \to X$ that is not necessarily a Galois factor, does there always exist a Galois factor that can be regarded as its Galois closure?
\end{question}

\begin{question}
   Can we compute the absolute Galois group or the first Galois cohomology group for specific SFTs? Beyond specific computations, what are the general topological and algebraic properties of $G_X$ as a profinite group? For example, what is its cohomological dimension? Does $G_X$ become a free profinite group?
\end{question}

\begin{question}
    In this paper, we considered the first Galois cohomology with coefficients in $\F_p$. Can we extract other information by changing the coefficients? Furthermore, is it also possible to extract information from higher-degree cohomology groups?
\end{question}

\begin{question}
    In number theory or algebra, class field theory describes the maximal abelian extension of a field.
    Is there a ``Class Field Theory'' for SFTs? In other words, can we explicitly describe the abelianization $G_X^{\mathrm{ab}} = G_X / \overline{[G_X, G_X]}$ of the absolute Galois group? 
\end{question}

\begin{question}
    Are there relationships between the absolute Galois group (as well as Galois cohomology) and other invariants? In Corollary~\ref{cor_det_os}, we showed that the information regarding the sign of permutations is contained in $\Upsilon_p$. To what extent does $\Upsilon_p$ capture information from other invariants, such as the dimension group or the Bowen–Franks group, and conversely, what information does it fail to capture? In relation to this, Corollary~\ref{cor_shiftker} demonstrated that at least $\langle\sigma_X\rangle$ is contained in $\mathrm{Ker }\rho_X$, but how large is $\mathrm{Ker }\rho_X$ in general?
    Furthermore, does the topological entropy impose any constraints on the structure of the absolute Galois group? Intuitively, one might expect that a smaller entropy leads to a simpler structure of the absolute Galois group (or Galois cohomology), whereas a larger entropy causes them to become smaller. Is this intuition correct?
\end{question}

\begin{question}
    The study of shift equivalence and strong shift equivalence in SFTs has a long history. Many standard invariants of SFTs are shift equivalence invariants. This implies a natural trade-off: while these invariants are highly tractable to compute, they naturally capture a coarser level of information. How fine or coarse is the information captured by the absolute Galois group and Galois cohomology? For instance, is it possible to construct two irreducible SFTs $X_1$ and $X_2$ that are shift equivalent but not strong shift equivalent, yet have different absolute Galois groups $G_{X_1}$ and $G_{X_2}$? Furthermore, although it is probably false, could the absolute Galois group serve as a complete invariant?
\end{question}

\begin{question}
    SFTs have long been regarded as an important class since they arise from Markov partitions of dynamical systems. What dynamical meaning does a Galois factor of an irreducible SFT carry for the original dynamical system? Furthermore, can the Galois group, the absolute Galois group, or Galois cohomology impose any constraints on the structure of the original dynamical system?
\end{question}

\begin{question}
    Does the Galois theory of SFTs satisfy the axioms of a Galois category?
\end{question}

\section*{Acknowledgment}
I am deeply grateful to Professor Tomoo Yokoyama for his many helpful suggestions regarding this work.

\section*{AI Usage Declaration}
The artificial intelligence tool Gemini was utilized during the conceptualization and outlining phases of this study to assist with initial brainstorming, text refinement, English translation, and grammatical checking. The generated ideas were independently verified and further developed by the author, who also thoroughly reviewed and revised all outputs. The final responsibility for the accuracy and originality of this manuscript rests entirely with the author.

\section*{Appendix}
We present the proof of Lemma~\ref{orbitspaceSFT}. We first recall some concepts.
Let $G=(V,E)$ be a finite directed graph, and let $H$ be a finite subgroup of $\aut(G)$. The group $H$ acts naturally on $G$. For $v, w \in V$, we write $v \sim w$ if there exists $h \in H$ such that $h(v) = w$. Similarly, for $e, f \in E$, we write $e \sim f$ if there exists $h \in H$ such that $h(e) = f$. These define equivalence relations $\sim$ on $V$ and $E$. We denote their equivalence classes by $[v]$ and $[e]$, respectively. Considering the quotient sets $V/H = V/\sim = \{[v] \mid v \in V\}$ and $E/H = E/\sim = \{[e] \mid e \in E\}$, the pair $G/H = (V/H, E/H)$ naturally forms a directed graph. This is called the quotient graph of $G$ by $H$.
Next, let $G'=(V',E')$ be another graph, and let $p:G'\to G$ be a surjective graph homomorphism. We say that $(G', p)$ is a covering of $G$ if for every $v' \in V'$, the map $p$ induces bijections from $E_{v'}(G')$ to $E_{p(v')}(G)$ and from $E^{v'}(G')$ to $E^{p(v')}(G)$.
We are now ready to prove Lemma~\ref{orbitspaceSFT}.

\begin{proof}
    Fix $\id_{Y}\ne\varphi\in H$, and define $f_\varphi:Y\to \R$ by
    \[
    f_\varphi(y)=d_Y(\varphi(y),y),
    \]
    where $d_Y$ is a metric on $Y$. Since $\varphi$ is continuous, $f_\varphi$ is also continuous. Because $Y$ is compact, $f_\varphi$ attains its minimum value $\varepsilon_\varphi\ge 0$. Since the action of $H$ is free, we have $\varphi(y)\ne y$ for all $y\in Y$, which implies $\varepsilon_\varphi>0$. Let $\varepsilon>0$ be defined by
    \[
    \varepsilon=\min\{\varepsilon_\varphi\mid \varphi\in H\setminus\{\id_Y\}\}
    \]
    and choose $N\in\N$ such that $2^{-N}<\varepsilon$. 
    Since $H$ is a finite group, let $L$ be the maximum window size of all $\varphi\in H$ (i.e., each $\varphi$ is induced by a local rule $\BB_L(Y)\to\BB_L(Y)$). Let $M=\max\{2N+1,L\}$, and let $Y^{[M]}$ be the $M$-th higher block shift of $Y$.
    Since $Y^{[M]}$ is an irreducible SFT, we may assume that $Y^{[M]}$ is an edge shift $Y^{[M]}=\mathsf{X}_G$ of some strongly connected finite directed graph $G=(V,E)$. Because $Y^{[M]}$ is topologically conjugate to $Y$, we can regard $H$ as a subgroup of $\aut(Y^{[M]})$. By taking the $M$-th higher block shift, we may assume that the window size of $\varphi$ is $1$. That is, each element $\varphi\in H$ is induced by a block code $\hat{\varphi}:E\to E$. By naturally extending this correspondence on the edge set to the vertex set, we obtain a graph endomorphism $\hat{\varphi}:G\to G$. Furthermore, since $\varphi$ is an automorphism of the shift space, $\hat{\varphi}$ must be a graph automorphism.
    Now, let
    \[
    \hat{H}=\{\hat{\varphi}\in \aut(G)\mid \varphi\in H\}.
    \]
    This is a finite subgroup of $\aut(G)$ and acts naturally on $G$. We claim that this action of $\hat{H}$ is free. Indeed, fix $\hat{\varphi}\in \hat{H}\setminus\{\id_G\}$ and $e\in E$. There exists $y\in Y^{[M]}$ such that $y_{[0]}=e$. By the choice of $N$ and the definition of the higher block representation, we have $d_{Y^{[M]}}(\varphi(y),y)\ge1$, which means, by the definition of the metric on the shift space, that $(\varphi(y))_{[0]}\ne y_{[0]}$. This implies $\hat{\varphi}(e)\ne e$, showing that the action of $\hat{H}$ on $E$ is free. The freeness of the action of $\hat{H}$ on $V$ follows similarly.\\
    \indent Based on the above, by replacing $Y$ with $Y^{[M]}$, redefining the graph $G=(V,E)$ accordingly, and using the definition of $L$, we may assume from the beginning that $Y$ satisfies the following:
    \begin{itemize}
        \item $Y$ is an edge shift of a strongly connected finite directed graph $G=(V,E)$.
        \item The window size of each $\varphi\in H$ is $1$.
        \item $H$ acts freely on $G$.
    \end{itemize}
    Let $p:G\to G/\hat{H}$ be the projection onto the quotient graph. We show that $(G,p)$ is a covering of $G/\hat{H}$. Fix $v\in V$. It suffices to show that both $p^+=p|_{E_v(G)}:E_v(G)\to E_{p(v)}(G/\hat{H})$ and $p^-=p|_{E^v(G)}:E^v(G)\to E^{p(v)}(G/\hat{H})$ are bijections. Since the arguments are similar, we only prove this for $p^+$. \\
    \indent First, we show surjectivity. Fix $\tilde{e}\in E_{p(v)}(G/\hat{H})$. There exists $e'\in E$ such that $\tilde{e}=[e']$. Since $i([e'])=p(v)=[v]$, we have $i(e')\in [v]$. Thus, there exists $\hat{\varphi}\in \hat{H}$ such that $i(e')=\hat{\varphi}(v)$. Set $e=\hat{\varphi}^{-1}(e')$. Then we have
    \[
    i(e)=i(\hat{\varphi}^{-1}(e'))=\hat{\varphi}^{-1}(i(e'))=\hat{\varphi}^{-1}(\hat{\varphi}(v))=v.
    \]
    This shows that $e\in E_v(G)$. Furthermore,
    \[
    p^+(e)=[\hat{\varphi}^{-1}(e')] = [e'] = \tilde{e},
    \]
    which implies that $p^+$ is surjective. Next, we prove injectivity. Suppose $e_1,e_2 \in E_v(G)$ satisfy $[e_1]=p^+(e_1)=p^+(e_2)=[e_2]$. Then there exists $\hat{\varphi}\in \hat{H}$ such that $e_2=\hat{\varphi}(e_1)$. We have
    \[
    v=i(e_2)=i(\hat{\varphi}(e_1))=\hat{\varphi}(i(e_1))=\hat{\varphi}(v).
    \]
    Since the action of $\hat{H}$ is free, we have $\hat{\varphi}=\id_G$. Therefore, $e_2=e_1$ holds. \\
    \indent Let us define an SFT by $W=\mathsf{X}_{G/\hat{H}}$. Since $G$ is strongly connected, $G/\hat{H}$ is also strongly connected, meaning $W$ is an irreducible SFT. The projection $p:G\to G/\hat{H}$ provides a correspondence between edges. Because the alphabet of $Y$ is $E$ and the alphabet of $W$ is $E/\hat{H}$, $p$ acts as a block code. This naturally induces a sliding block code $p_\infty:Y\to W$. We will prove the following claim regarding this map:\\
    \textbf{Claim. }Let $y, y' \in Y$. Then the following are equivalent:
    \begin{enumerate}\item There exists $\varphi \in H$ such that $y' = \varphi(y)$.\item $p_\infty(y) = p_\infty(y')$.\end{enumerate}
    \begin{proof}[Proof of Claim]
    First, assume that there exists $\varphi \in H$ such that $y' = \varphi(y)$. Then, for any $i \in \Z$, we have $y'_i = \hat{\varphi}(y_i)$, and therefore $p(y'_i) = [y'_i] = [y_i] = p(y_i)$. This means $p_\infty(y) = p_\infty(y')$. Conversely, suppose $p_\infty(y) = p_\infty(y')$. Fix $i \in \Z$. By assumption, we have $[y_i] = p(y_i) = p(y'_i) = [y'_i]$, this means that there exists $\hat{\varphi}_i \in \hat{H}$ such that $y'_i = \hat{\varphi}_i(y_i)$. Then, we have
    \[
    i(y'_{i+1})=t(y'_i)=t(\hat{\varphi}_i(y_i))=\hat{\varphi}_i(t(y_i))=\hat{\varphi}_i(i(y_{i+1}))=i(\hat{\varphi}_i(y_{i+1})).
    \]
    This implies that $y'_{i+1}$ and $\hat{\varphi}_i(y_{i+1})$ share the same initial vertex, say $v \in V$. Furthermore, we have 
    \[
    p(y'_{i+1})=p(y_{i+1})=[y_{i+1}]=
    [\hat{\varphi}_i(y_{i+1})]=p(\hat{\varphi}_i(y_{i+1})).
    \]
    Since $(G, p)$ is a covering of $G/\hat{H}$, the restriction 
    \[
    p|_{E_v(G)} : E_v(G) \to E_{p(v)}(G/\hat{H})
    \]
    is a bijection. Because both $y'_{i+1}$ and $\hat{\varphi}_i(y_{i+1})$ belong to $E_v(G)$ and map to the same element under $p$, we have $y'_{i+1} = \hat{\varphi}_i(y_{i+1})$.
    Combining this with $y'_{i+1} = \hat{\varphi}_{i+1}(y_{i+1})$, we obtain $\hat{\varphi}_i(y_{i+1}) = \hat{\varphi}_{i+1}(y_{i+1})$. Thus, $(\hat{\varphi}_{i+1}^{-1} \circ \hat{\varphi}_i)(y_{i+1}) = y_{i+1}$. Since $\hat{H}$ acts freely on $G$, this implies $\hat{\varphi}_{i+1}^{-1} \circ \hat{\varphi}_i = \id_G$, meaning $\hat{\varphi}_{i+1} = \hat{\varphi}_i$.
    Consequently, we have $\dots = \hat{\varphi}_{-1} = \hat{\varphi}_0 = \hat{\varphi}_1 = \dots$. By setting $\hat{\varphi} = \hat{\varphi}_0$, we see that $y'_j = \hat{\varphi}(y_j)$ holds for any $j \in \Z$. This $\hat{\varphi}$ corresponds to an element $\varphi \in H$ satisfying $y' = \varphi(y)$, which completes the proof of the claim.
\end{proof}
    The map $p_\infty: Y \to W$ induces a map $\overline{p_\infty}: Y/H \to W$ defined by $\overline{p_\infty}([y]) = p_\infty(y)$. By the claim, this map is well-defined. Since $p_\infty$ is surjective, $\overline{p_\infty}$ is also surjective. We now show that $\overline{p_\infty}$ is injective. Suppose $\overline{p_\infty}([y]) = \overline{p_\infty}([y'])$. By definition, we have $p_\infty(y) = p_\infty(y')$. By the claim, there exists $\varphi \in H$ such that $y' = \varphi(y)$, which implies $[y] = [y']$. Thus, injectivity follows.
    Next, we show continuity. Let $q_H: Y \to Y/H$ be the natural projection. From the definitions, we have $\overline{p_\infty} \circ q_H = p_\infty$. The natural projection $q_H$ is a quotient map, and $p_\infty$ is continuous since it is a sliding block code. Therefore, by the universal property of quotient maps (Lemma~\ref{universalityofquotientmap}), $\overline{p_\infty}$ is continuous. Since $\overline{p_\infty}: Y/H \to W$ is a continuous bijection from a compact space to a Hausdorff space, it is a homeomorphism. Finally, we show that $\overline{p_\infty}$ commutes with the shift maps. Fix $[y] \in Y/H$. We have
    \begin{align*}
        (\overline{p_\infty}\circ\sigma^*_Z)([y])&=\overline{p_\infty}(\sigma^*_Z([y]))\\
        &=\overline{p_\infty}([\sigma_Y(y)])\\
        &=p_\infty(\sigma_Y(y))\\
        &=\sigma_W(p_\infty(y))\\
        &=\sigma_W(\overline{p_\infty}([y]))\\
        &=(\sigma_W\circ \overline{p_\infty})([y]).
    \end{align*}
    Therefore, $\overline{p_\infty}\circ\sigma^*_Z = \sigma_W\circ \overline{p_\infty}$. We conclude that $\overline{p_\infty}: Z = Y/H \to W$ gives a topological conjugacy.
    \end{proof}

\bibliographystyle{plain}
\bibliography{bibbib}

@book {introduction,
    AUTHOR = {Lind, Douglas and Marcus, Brian},
     TITLE = {An introduction to symbolic dynamics and coding},
 PUBLISHER = {Cambridge University Press, Cambridge},
      YEAR = {1995},
     PAGES = {xvi+495},
      ISBN = {0-521-55124-2; 0-521-55900-6},
   MRCLASS = {58F03 (15A48 54H20 58F20 94A24 94B60)},
  MRNUMBER = {1369092},
MRREVIEWER = {Petr\ K\.urka},
       DOI = {10.1017/CBO9780511626302},
       URL = {https://doi.org/10.1017/CBO9780511626302},
}

@article {cyr_kra1,
    AUTHOR = {Cyr, Van and Kra, Bryna},
     TITLE = {The automorphism group of a shift of subquadratic growth},
   JOURNAL = {Proc. Amer. Math. Soc.},
  FJOURNAL = {Proceedings of the American Mathematical Society},
    VOLUME = {144},
      YEAR = {2016},
    NUMBER = {2},
     PAGES = {613--621},
      ISSN = {0002-9939,1088-6826},
   MRCLASS = {37B50 (37B10 68R15)},
  MRNUMBER = {3430839},
MRREVIEWER = {Dominik\ Kwietniak},
       DOI = {10.1090/proc12719},
       URL = {https://doi.org/10.1090/proc12719},
}

@article {cyr_kra2,
    AUTHOR = {Cyr, Van and Kra, Bryna},
     TITLE = {The automorphism group of a minimal shift of stretched
              exponential growth},
   JOURNAL = {J. Mod. Dyn.},
  FJOURNAL = {Journal of Modern Dynamics},
    VOLUME = {10},
      YEAR = {2016},
     PAGES = {483--495},
      ISSN = {1930-5311,1930-532X},
   MRCLASS = {37B10 (54H20 68R15)},
  MRNUMBER = {3565928},
MRREVIEWER = {Ronnie\ Pavlov},
       DOI = {10.3934/jmd.2016.10.483},
       URL = {https://doi.org/10.3934/jmd.2016.10.483},
}

@article {donoso,
    AUTHOR = {Donoso, Sebasti\'an and Durand, Fabien and Maass, Alejandro
              and Petite, Samuel},
     TITLE = {On automorphism groups of low complexity subshifts},
   JOURNAL = {Ergodic Theory Dynam. Systems},
  FJOURNAL = {Ergodic Theory and Dynamical Systems},
    VOLUME = {36},
      YEAR = {2016},
    NUMBER = {1},
     PAGES = {64--95},
      ISSN = {0143-3857,1469-4417},
   MRCLASS = {37B10 (68R15)},
  MRNUMBER = {3436754},
MRREVIEWER = {Bryna\ Kra},
       DOI = {10.1017/etds.2015.70},
       URL = {https://doi.org/10.1017/etds.2015.70},
}

@article {cyr_franks_kra_petite,
    AUTHOR = {Cyr, Van and Franks, John and Kra, Bryna and Petite, Samuel},
     TITLE = {Distortion and the automorphism group of a shift},
   JOURNAL = {J. Mod. Dyn.},
  FJOURNAL = {Journal of Modern Dynamics},
    VOLUME = {13},
      YEAR = {2018},
     PAGES = {147--161},
      ISSN = {1930-5311,1930-532X},
   MRCLASS = {37B10 (22F50 37B15)},
  MRNUMBER = {3918261},
MRREVIEWER = {Alfredo\ M. G. Costa},
       DOI = {10.3934/jmd.2018015},
       URL = {https://doi.org/10.3934/jmd.2018015},
}

@article {pavlov_schmieding,
    AUTHOR = {Pavlov, Ronnie and Schmieding, Scott},
     TITLE = {Local finiteness and automorphism groups of low complexity
              subshifts},
   JOURNAL = {Ergodic Theory Dynam. Systems},
  FJOURNAL = {Ergodic Theory and Dynamical Systems},
    VOLUME = {43},
      YEAR = {2023},
    NUMBER = {6},
     PAGES = {1980--2001},
      ISSN = {0143-3857,1469-4417},
   MRCLASS = {37B10 (20F50 20F65)},
  MRNUMBER = {4583802},
MRREVIEWER = {Alfredo\ M. G. Costa},
       DOI = {10.1017/etds.2022.7},
       URL = {https://doi.org/10.1017/etds.2022.7},
}

@article {Hedlund,
    AUTHOR = {Hedlund, G. A.},
     TITLE = {Endomorphisms and automorphisms of the shift dynamical system},
   JOURNAL = {Math. Systems Theory},
  FJOURNAL = {Mathematical Systems Theory. An International Journal on
              Mathematical Computing Theory},
    VOLUME = {3},
      YEAR = {1969},
     PAGES = {320--375},
      ISSN = {0025-5661},
   MRCLASS = {54.82},
  MRNUMBER = {259881},
MRREVIEWER = {W.\ R.\ Utz},
       DOI = {10.1007/BF01691062},
       URL = {https://doi.org/10.1007/BF01691062},
}

@article {ryan_shiftcenter,
    AUTHOR = {Ryan, J. Patrick},
     TITLE = {The shift and commutativity},
   JOURNAL = {Math. Systems Theory},
  FJOURNAL = {Mathematical Systems Theory. An International Journal on
              Mathematical Computing Theory},
    VOLUME = {6},
      YEAR = {1972},
     PAGES = {82--85},
      ISSN = {0025-5661},
   MRCLASS = {54H15 (54H20)},
  MRNUMBER = {305376},
MRREVIEWER = {R.\ L.\ Adler},
       DOI = {10.1007/BF01706077},
       URL = {https://doi.org/10.1007/BF01706077},
}

@article {boyle_lind_rudolph,
    AUTHOR = {Boyle, Mike and Lind, Douglas and Rudolph, Daniel},
     TITLE = {The automorphism group of a shift of finite type},
   JOURNAL = {Trans. Amer. Math. Soc.},
  FJOURNAL = {Transactions of the American Mathematical Society},
    VOLUME = {306},
      YEAR = {1988},
    NUMBER = {1},
     PAGES = {71--114},
      ISSN = {0002-9947,1088-6850},
   MRCLASS = {54H20 (20B27 28D15 34C35 58F11)},
  MRNUMBER = {927684},
MRREVIEWER = {Meir\ Smorodinsky},
       DOI = {10.2307/2000831},
       URL = {https://doi.org/10.2307/2000831},
}

@article {bowen_franks,
    AUTHOR = {Bowen, Rufus and Franks, John},
     TITLE = {Homology for zero-dimensional nonwandering sets},
   JOURNAL = {Ann. of Math. (2)},
  FJOURNAL = {Annals of Mathematics. Second Series},
    VOLUME = {106},
      YEAR = {1977},
    NUMBER = {1},
     PAGES = {73--92},
      ISSN = {0003-486X},
   MRCLASS = {58F15},
  MRNUMBER = {458492},
       DOI = {10.2307/1971159},
       URL = {https://doi.org/10.2307/1971159},
}

@article {williams_shift_equiv,
    AUTHOR = {Williams, R. F.},
     TITLE = {Classification of subshifts of finite type},
   JOURNAL = {Ann. of Math. (2)},
  FJOURNAL = {Annals of Mathematics. Second Series},
    VOLUME = {98},
      YEAR = {1973},
     PAGES = {120--153; errata, ibid. (2) 99 (1974), 380--381},
      ISSN = {0003-486X},
   MRCLASS = {58F20 (28A65)},
  MRNUMBER = {331436},
MRREVIEWER = {C.\ B.\ Thomas},
       DOI = {10.2307/1970908},
       URL = {https://doi.org/10.2307/1970908},
}

@article {dimensiongroup,
    AUTHOR = {Krieger, Wolfgang},
     TITLE = {On dimension functions and topological {M}arkov chains},
   JOURNAL = {Invent. Math.},
  FJOURNAL = {Inventiones Mathematicae},
    VOLUME = {56},
      YEAR = {1980},
    NUMBER = {3},
     PAGES = {239--250},
      ISSN = {0020-9910,1432-1297},
   MRCLASS = {28D20 (54H20)},
  MRNUMBER = {561973},
MRREVIEWER = {Douglas\ Lind},
       DOI = {10.1007/BF01390047},
       URL = {https://doi.org/10.1007/BF01390047},
}

@article {Nasu,
    AUTHOR = {Nasu, Masakazu},
     TITLE = {Constant-to-one and onto global maps of homomorphisms between
              strongly connected graphs},
   JOURNAL = {Ergodic Theory Dynam. Systems},
  FJOURNAL = {Ergodic Theory and Dynamical Systems},
    VOLUME = {3},
      YEAR = {1983},
    NUMBER = {3},
     PAGES = {387--413},
      ISSN = {0143-3857,1469-4417},
   MRCLASS = {58F35 (34C35)},
  MRNUMBER = {741394},
MRREVIEWER = {R.\ L.\ Adler},
       DOI = {10.1017/S0143385700002042},
       URL = {https://doi.org/10.1017/S0143385700002042},
}

@article {Jung,
    AUTHOR = {Jung, Uijin},
     TITLE = {Open maps between shift spaces},
   JOURNAL = {Ergodic Theory Dynam. Systems},
  FJOURNAL = {Ergodic Theory and Dynamical Systems},
    VOLUME = {29},
      YEAR = {2009},
    NUMBER = {4},
     PAGES = {1257--1272},
      ISSN = {0143-3857,1469-4417},
   MRCLASS = {37B10 (54H20)},
  MRNUMBER = {2529648},
MRREVIEWER = {Michael\ H.\ Schraudner},
       DOI = {10.1017/S0143385708000692},
       URL = {https://doi.org/10.1017/S0143385708000692},
}

@book{ShatzShatz2016,
url = {https://doi.org/10.1515/9781400881857},
title = {Profinite Groups, Arithmetic, and Geometry},
author = {Stephen S. Shatz and Stephen S. Shatz},
publisher = {Princeton University Press},
address = {Princeton},
doi = {doi:10.1515/9781400881857},
isbn = {9781400881857},
year = {2016},
lastchecked = {2026-04-03}
}

@book{parry1990zeta,
  title={Zeta functions and the periodic orbit structure of hyperbolic dynamics},
  author={Parry, William and Pollicott, Mark},
  series={Ast{\'e}risque},
  number={187-188},
  year={1990},
  publisher={Soci{\'e}t{\'e} math{\'e}matique de France},
  pages={272},
  url={https://www.numdam.org/item/AST_1990__187-188__1_0/}
}

@article {BoyleKrieger,
    AUTHOR = {Boyle, Mike and Krieger, Wolfgang},
     TITLE = {Periodic points and automorphisms of the shift},
   JOURNAL = {Trans. Amer. Math. Soc.},
  FJOURNAL = {Transactions of the American Mathematical Society},
    VOLUME = {302},
      YEAR = {1987},
    NUMBER = {1},
     PAGES = {125--149},
      ISSN = {0002-9947,1088-6850},
   MRCLASS = {54H20 (28D05 54H15)},
  MRNUMBER = {887501},
MRREVIEWER = {Manfred\ Denker},
       DOI = {10.2307/2000901},
       URL = {https://doi.org/10.2307/2000901},
}

@article {TQFT,
    AUTHOR = {Gilmer, Patrick M.},
     TITLE = {Topological quantum field theory and strong shift equivalence},
   JOURNAL = {Canad. Math. Bull.},
  FJOURNAL = {Canadian Mathematical Bulletin. Bulletin Canadien de
              Math\'ematiques},
    VOLUME = {42},
      YEAR = {1999},
    NUMBER = {2},
     PAGES = {190--197},
      ISSN = {0008-4395,1496-4287},
   MRCLASS = {57R56 (57M12 57M27)},
  MRNUMBER = {1692009},
MRREVIEWER = {Olivier\ Collin},
       DOI = {10.4153/CMB-1999-023-4},
       URL = {https://doi.org/10.4153/CMB-1999-023-4},
}

\end{document}